\newcommand{\mel}{\MoveEqLeft}
\newtheorem{theorem}{Theorem}[section]
\newtheorem{proposition*}{Proposition\textsuperscript{*}}
\newtheorem{corollary}[theorem]{Corollary}
\newtheorem{corollary*}{Corollary\textsuperscript{*}}
\newtheorem{proposition}[theorem]{Proposition}
\newtheorem{lemma}[theorem]{Lemma}
\theoremstyle{definition}
\newtheorem{definition}[theorem]{Definition}
\newtheorem{remark}[theorem]{Remark} 
\newtheorem{example*}{Example\textsuperscript{*}}
\numberwithin{equation}{section}
\newcommand{\1}{\mathbbm{1}}
\newcommand{\ra}{\rangle}
\def\Summe#1{\sum\limits_{i=#1}^{\infty}}
\def\limes {\lim\limits_{n\to\infty}}
\def\Limes#1#2 {\lim\limits_{#1\rightarrow #2}}
\renewcommand{\Re}{\operatorname{Re}}
\renewcommand{\Im}{\operatorname{Im}}
\newcommand{\psiin}{\psi_{\text{in}}}
\newcommand{\psiout}{\psi_{\text{out}}}
\DeclareMathOperator{\im}{im}
\DeclareMathOperator{\sgn}{sgn}
\DeclareMathOperator{\Id}{Id}
\def\od{\omega(\delta}
\def\eps{\epsilon}
\DeclareMathOperator{\arsinh}{arsinh}
\def\R{\mathbb{R}}
\def\C{\mathbb{C}}
\def\HH{\mathbb{H}}
\def\B{\mathbf{B}}
\def\T{\mathbb{T}}
\def\Z{\mathbb{Z}}
\def\F{\mathcal{F}}
\renewcommand{\S}{\mathcal{S}}
\newcommand{\K}{\mathcal{K}}
\renewcommand{\P}{\mathcal{P}}
\newcommand{\Q}{\mathcal{Q}}
\newcommand{\Chi}{\mathcal{X}}
\def\sym{\text{sym}}
\def\N{\mathbb{N}}
\DeclareMathOperator{\spt}{spt}
\def\Xint#1{\mathchoice
{\XXint\displaystyle\textstyle{#1}}%
{\XXint\textstyle\scriptstyle{#1}}%
{\XXint\scriptstyle\scriptscriptstyle{#1}}%
{\XXint\scriptscriptstyle\scriptscriptstyle{#1}}%
\!\int}
\def\XXint#1#2#3{{\setbox0=\hbox{$#1{#2#3}{\int}$ }
\vcenter{\hbox{$#2#3$ }}\kern-.59\wd0}}
\def\avint{\Xint-}
\newcommand{\edge}{{\scriptstyle\mid}}
\renewcommand{\div}{\grad\cdot}
\renewcommand{\epsilon}{\varepsilon}
\newcommand{\laplace}{\Delta}
\def\intT{\int_{\T^d}}
\def\intR{\int_{\R^d}}
\DeclareMathOperator{\artanh}{artanh}
\DeclareMathOperator{\const}{const}
\DeclareMathOperator{\curl}{curl}
\DeclareMathOperator{\vol}{Vol}
\DeclareMathOperator{\spann}{Span}
\def\ob{o.B.d.A. }
\def\leb{\mathcal{L}}
\def\scalar#1#2{\langle #1,#2 \rangle}
\def\de{\partial}
\renewcommand{\div}{\operatorname{div}}
\def\dx{\,\mathrm{d}x}
\def\dr{\,\mathrm{d}r}
\def\dz{\,\mathrm{d}z}
\def\dy{\,\mathrm{d}y}
\def\dt{\,\mathrm{d}t}
\def\ds{\,\mathrm{d}s}
\def\dr{\,\mathrm{d}r}
\def\da{\,\mathrm{d}\alpha}
\def\dl{\,\mathrm{d}l}
\newcommand{\red}[1]{{\textcolor{red}{#1}}} 
\newcommand{\green}[1]{{\textcolor{green}{#1}}} 
\newcommand{\blue}[1]{{\textcolor{blue}{#1}}} 
\newcommand{\mres}{\mathbin{\vrule height 1.6ex depth 0pt width
0.13ex\vrule height 0.13ex depth 0pt width 1.3ex}}
\def\proj{\mathrm{proj}}
\newcommand{\vin}{\varphi_{\text{in}}}
\newcommand{\vout}{\varphi_{\text{out}}}
\newcommand{\rin}{\rho_{\text{in}}}
\newcommand{\rout}{\rho_{\text{out}}}
\newcommand{\Ha}{\mathcal{H}}
\newcommand{\Din}{\mathcal{D}_{\text{in}}}
\newcommand{\Dout}{\mathcal{D}_{\text{out}}}
\newcommand{\david}[1]{\textcolor{red}{#1}}
\newcommand{\alberto}[1]{\textcolor{blue}{#1}}
\newcommand{\antonio}[1]{{\color{green!50!black}{#1}}}
\DeclareMathOperator\sech{sech}
\newcommand{\al}{\alpha}
\newcommand{\be}{\beta}
\newcommand{\ep}{\varepsilon}
\newcommand{\vep}{\varepsilon}
\newcommand{\ga}{\gamma}
\newcommand{\ka}{\kappa}
\newcommand{\la}{\lambda}
\newcommand{\om}{\omega}
\newcommand{\si}{\sigma}
\newcommand{\te}{\theta}
\newcommand{\vp}{\varphi}
\newcommand{\ze}{\zeta}
\newcommand{\De}{\Delta}
\newcommand{\Ga}{\Gamma}
\newcommand{\La}{\Lambda}
\newcommand{\Si}{\Sigma}
\newcommand{\Om}{\Omega}
\newcommand{\Omc}{{\Omega^c}}
\newcommand{\opsi}{\overline{\psi}}
\newcommand{\of}{\overline{f}}
\newcommand{\bchi}{\boldsymbol{\chi}}
\newcommand{\bx}{\mathbf{x}}
\newcommand{\ba}{\mathbf{a}}
\newcommand{\bA}{\mathbf{A}}
\newcommand{\bsi}{\boldsymbol{\si}}
\newcommand{\btau}{\boldsymbol{\tau}}
\newcommand{\bxi}{{\boldsymbol{\xi}}}
\newcommand{\bs}{\mathbf{s}}
\newcommand{\bS}{\mathbf{S}}
\newcommand{\bw}{\mathbf{w}}
\newcommand{\bv}{\mathbf{v}}
\newcommand{\bk}{\mathbf{k}}
\newcommand{\bn}{\mathbf{n}}
\newcommand{\bm}{\mathbf{m}}
\newcommand{\bz}{\mathbf{z}}
\newcommand{\bze}{\boldsymbol{\ze}}
\newcommand{\bvep}{\boldsymbol{\vep}}
\newcommand{\bal}{\boldsymbol{\alpha}}
\newcommand{\bee}{\mathbf{e}}
\newcommand{\bB}{\mathbf{B}}
\newcommand{\bp}{\mathbf{p}}
\newcommand{\bJ}{\mathbf{J}}
\newcommand{\bde}{{\boldsymbol{\delta}}}
\newcommand\grad{\boldsymbol{\nabla}}
\newcommand{\tA}{\widetilde{A}}
\newcommand{\tC}{\widetilde{C}}
\newcommand{\tPsi}{\widetilde{\Psi}}
\newcommand{\Th}{\tilde{h}}
\newcommand{\tH}{\widetilde{H}}
\newcommand{\tcH}{\widetilde{\cH}}
\newcommand{\tK}{\tilde{K}}
\newcommand{\tS}{\tilde{S}}
\newcommand{\tc}{\tilde{c}}
\newcommand{\tih}{\tilde{h}}
\newcommand{\tF}{\widetilde{F}}
\newcommand{\tl}{\tilde{l}}
\newcommand{\ts}{\tilde{s}}
\newcommand{\tbs}{\tilde{\bs}}
\newcommand{\tv}{\widetilde{v}}
\newcommand{\tw}{\widetilde{w}}
\newcommand{\tPhi}{\widetilde{\Phi}}
\newcommand\tep{\tilde\varepsilon}
\newcommand{\tom}{\widetilde{\omega}_\ep}
\newcommand{\tu}{\widetilde{u}_\ep}
\newcommand{\tes}{\widetilde{e}_\bs}
\newcommand{\ten}{\widetilde{e}_\bn}
\newcommand{\sse}{\mathsf{e}}
\newcommand{\ssE}{\mathsf{E}}
\newcommand{\ssh}{\mathsf{h}}
\newcommand{\ssC}{\mathsf{C}}
\newcommand{\ssH}{\mathsf{H}}
\newcommand{\ssI}{\mathsf{I}}
\newcommand{\ssJ}{\mathsf{J}}
\newcommand{\ssZ}{\mathsf{Z}}
\newcommand{\fL}{\mathfrak{L}}
\newcommand{\hq}{\hat{q}}
\newcommand{\hG}{\widehat{G}}
\newcommand{\hF}{\widehat{\mathcal F}}
\newcommand{\hPhi}{\widehat{\Phi}}
\newcommand{\hPsi}{\widehat{\Psi}}
\newcommand{\hv}{\widehat{v}}
\newcommand{\hD}{\widehat{D}}
\newcommand{\hJ}{\widehat{J}}
\newcommand{\hcH}{\widehat{\mathcal H}}
\newcommand{\hcT}{\widehat{\mathcal T}}
\def\CC{\mathbb{C}}
\def\NN{\mathbb{N}}
\def\RR{\mathbb{R}}
\def\BB{\mathbb{B}}
\def\ZZ{\mathbb{Z}}
\def\RP{\mathbb{RP}}
\def\TT{\mathbb{T}}
\def\HH{\mathbb{H}}
\def\bD{\mathbb{D}}
\newcommand{\bE}{\mathbb{E}}
\newcommand{\bP}{\mathbb{P}}
\renewcommand\SS{\mathbb{S}}
\newcommand\BG{{\overline G}}
\newcommand\bh{{\bar h}}
\newcommand\bsharp{b^{\sharp}}
\newcommand\Bsharp{B^{\flat}}
\newcommand{\dds}{\frac{{\rm d}}{{\rm d}s}}
\newcommand{\cA}{{\mathcal A}}
\newcommand{\cB}{{\mathcal B}}
\newcommand{\cC}{{\mathcal C}}
\newcommand{\cD}{{\mathcal D}}
\newcommand{\cE}{{\mathcal E}}
\newcommand{\cF}{{\mathcal F}}
\newcommand{\cG}{{\mathcal G}}
\newcommand{\cH}{{\mathcal H}}
\newcommand{\cI}{{\mathcal I}}
\newcommand{\cK}{{\mathcal K}}
\newcommand{\cJ}{{\mathcal J}}
\newcommand{\cL}{{\mathcal L}}
\newcommand{\cLa}{\mathscr{L}_a}
\newcommand{\cM}{{\mathcal M}}
\newcommand{\cN}{{\mathcal N}}
\newcommand{\cO}{{\mathbf O}}
\newcommand{\cV}{{\mathbf V}}
\newcommand{\cP}{{\mathcal P}}
\newcommand{\cQ}{{\mathcal Q}}
\newcommand{\cR}{{\mathcal R}}
\newcommand{\ccR}{\mathcal{R}}
\newcommand{\cS}{{\mathcal S}}
\newcommand{\cT}{{\mathcal T}}
\newcommand{\cX}{{\mathcal X}}
\newcommand{\cY}{{\mathcal Y}}
\newcommand{\cU}{{\mathbf U}}
\newcommand{\cW}{{\mathcal W}}
\newcommand\cZ{\mathcal Z}
\newcommand\nuT{\widetilde{\nu}}
\newcommand\psiT{\widetilde{\Psi}}
\newcommand\LaT{\widetilde{\Lambda}}
\newcommand\phiT{\widetilde{\Phi}}
\newcommand{\fsl}{\mathfrak{sl}}
\newcommand{\fsu}{\mathfrak{su}}
\newcommand{\fa}{\mathfrak{a}}
\newcommand{\fD}{{\mathfrak D}}
\newcommand{\fB}{{\mathfrak B}}
\newcommand{\fF}{{\mathfrak F}}
\newcommand{\fH}{{\mathfrak H}}
\newcommand{\fK}{{\mathfrak K}}
\newcommand{\fP}{{\mathfrak P}}
\newcommand{\fS}{{\mathfrak S}}
\newcommand{\fW}{{\mathfrak W}}
\newcommand{\fZ}{{\mathfrak Z}}
\newcommand{\su}{{\mathfrak{su}}}
\newcommand{\gl}{{\mathfrak{gl}}}
\newcommand{\fU}{{\mathfrak{U}}}
\newcommand{\fg}{{\mathfrak{g}}}
\def\Bx{\bar x}
\def\Bmu{\bar\mu}
\def\Bbe{\overline\beta}
\def\BC{\,\overline{\!C}{}}
\def\BH{\,\overline{H}{}}
\def\BV{\,\overline{V}{}}
\newcommand{\BcH}{\overline{\cH}{}}
\def\BM{\,\overline{\!M}{}}
\def\BP{\,\overline{\!P}{}}
\def\BQ{\,\overline{\!Q}{}}
\def\BR{\,\overline{\!R}{}}
\def\BcP{\overline{\cP}{}}
\def\BcB{\overline{\cB}{}}
\def\BcR{\overline{\mathcal{R}}{}}
\def\Hs{H_{\mathrm{s}}}
\def\Zs{Z_{\mathrm{s}}}
\newcommand{\pa}{\partial}
\newcommand{\pd}{\partial}
\newcommand\minus\backslash
\newcommand{\id}{{\rm id}}
\newcommand{\nid}{\noindent}
\newcommand{\abs}[1]{\left|#1\right|}
\def\ket#1{|#1\rangle}
\def\Strut{\vrule height12pt width0pt}
\def\BStrut{\vrule height12pt depth6pt width0pt}
\let\ds\displaystyle
\newcommand{\ms}{\mspace{1mu}}
\newcommand\lan\langle
\newcommand\ran\rangle
\newcommand\defn[1]{\emph{\textbf{#1}}}
\newcommand\half{\tfrac12}
\DeclareRobustCommand{\Bint}
{\mathop{%
		\text{%
			\settowidth{\intwidth}{$\int$}%
			\makebox[0pt][l]{\makebox[\intwidth]{$-$}}%
			$\int$}}}
\newcommand\Lone{\xrightarrow[\mathrm{a.s.}]{L^1}}
\newcommand\st{^{\mathrm{s}}}
\newcommand\un{^{\mathrm{u}}}
\newcommand{\sign}{\operatorname{sign}}
\newcommand{\spec}{\operatorname{spec}}
\newcommand{\myspan}{\operatorname{span}}
\newcommand{\card}{\operatorname{card}}
\newcommand{\erf}{\operatorname{erf}}
\newcommand{\supp}{\operatorname{supp}}
\newcommand{\Span}{\operatorname{span}}
\newcommand{\Ad}{\operatorname{Ad}}
\newcommand{\ad}{\operatorname{ad}}
\newcommand{\inte}{\operatorname{int}}
\newcommand{\wlim}{\operatornamewithlimits{w-lim}}
\newcommand{\slim}{\operatornamewithlimits{s-lim}}
\newcommand{\Ker}{\operatorname{Ker}}
\newcommand{\iu}{{\mathrm i}}
\newcommand{\I}{{\mathrm i}}
\newcommand{\e}{{\mathrm e}}
\newcommand{\dd}{{\,\mathrm d}}
\DeclareMathOperator\Div{div} \DeclareMathOperator\Ric{Ric}
\DeclareMathOperator\Real{Re} \DeclareMathOperator\Met{Met}
\DeclareMathOperator\Imag{Im}
\DeclareMathOperator\Argmax{argmax}
\DeclareMathOperator\Vol{Vol} \DeclareMathOperator\SU{SU}
\DeclareMathOperator\End{End} \DeclareMathOperator\Hom{Hom}
\DeclareMathOperator\Spec{Spec}
\DeclareMathOperator\diag{diag} 
\DeclareMathOperator\Int{int}
\DeclareMathOperator\dist{dist} 
\DeclareMathOperator\sdist{sdist} 
\DeclareMathOperator\diam{diam}
\newcommand{\norm}[1]{\left\lVert#1\right\rVert}
\newcommand{\norml}[1]{\|#1\|}
\newcommand{\braket}[2]{\langle #1,#2\rangle}
\DeclareMathOperator{\tr}{Tr}
\newcommand\Co{C^{\omega}}
\newcommand\MU{\mu_{0,2}}
\newcommand\LA{\la_{l,0}}
\newcommand\M{\rm\mathbf{M}}
\newcommand{\Xl}{\mathcal{X}}
\newcommand{\Yl}{\mathcal{Y}}
\renewcommand\leq\leqslant
\renewcommand\geq\geqslant
\newcommand\ev{_{l}}
\newcommand\D{_{\mathrm{D}}}
\newcommand\DN{_{\mathrm{DN}}}
\newcommand\hphi{\widehat{\phi}}
\renewcommand{\theenumi}{(\roman{enumi})}
\renewcommand{\labelenumi}{\theenumi}
\newcommand\dif[2]{\frac{\pd #1}{\pd #2}}
\newcommand\Dif[2]{\frac{\dd #1}{\dd #2}}
\newcommand\loc{_{\mathrm{loc}}}
\newcommand\BOm{\overline\Om}
\DeclareMathOperator\Fix{Fix}
\newcommand\tV{\widetilde{V}}
\newcommand\tX{\widetilde{X}}
\newcommand\m{(m-1)(m-2)}
\newcommand\erre{R+a-4}
\newcommand{\Omg}{\Om^{{\rm far}}}
\newcommand\tOm{\widetilde{\Om}}
\newcommand\trho{\tilde{\rho}}
\newcommand{\blankbox}[2]{%
  \parbox{\columnwidth}{\centering
    \setlength{\fboxsep}{0pt}%
    \fbox{\raisebox{0pt}[#2]{\hspace{#1}}}%
  }%
}
\DeclareMathOperator\Diff{Diff}
\newcommand\Per{\mathrm{Per}_{\mathrm{hyp}}}
\newcommand\diff{\Diff^+_\mu(\BSi)}
\newcommand\hrho{\widehat\rho}
\newcommand\hsi{\widehat\si}
\newcommand\tcT{\widetilde\cT}
\newcommand{\tvp}{\widetilde\vp}
\DeclareMathOperator{\ind}{ind}
\newcommand{\hcG}{\widehat\cG}
\title[Desingularization of vortex sheets]{Desingularization of vortex sheets\\ for the 2D Euler equations}
\author[A. Enciso]{Alberto Enciso}
\address{   
\newline
\textbf{{\small Alberto Enciso}} 
\vspace{0.15cm}
\newline \indent Instituto de Ciencias Matem\'aticas, Consejo Superior de Investigaciones Cient\'\i ficas, 28049 Madrid, \indent Spain}
\email{aenciso@icmat.es}
 \author[A. J. Fern\'andez]{Antonio J.\ Fern\'andez}
 \address{ \vspace{-0.4cm}
\newline 
\textbf{{\small Antonio J. Fern\'andez}} 
\vspace{0.15cm}
\newline \indent Departamento de Matem\'aticas, Universidad Aut\'onoma de Madrid, 28049 Madrid, Spain}
 \email{antonioj.fernandez@uam.es}
 \author[D. Meyer]{David Meyer}
 \address{ \vspace{-0.4cm}
\newline 
\textbf{{\small David Meyer}} 
\vspace{0.15cm}
\newline \indent Instituto de Ciencias Matem\'aticas, Consejo Superior de Investigaciones Cient\'\i ficas, 28049 Madrid, \indent Spain}
 \email{david.meyer@icmat.es}
\keywords{}
\subjclass[2020]{}
\begin{document}
\begin{abstract}
We show how to regularize vortex sheets by means of smooth, compactly supported vorticities that asymptotically evolve according to the Birkhoff--Rott vortex sheet dynamics. More precisely, consider a vortex sheet initial datum~$\om^0_{\mathrm{sing}}$, which is a signed Radon measure supported on a closed curve. We construct a family of initial vorticities~$\om^0_\ep\in C^\infty_c(\RR^2)$ converging to~$\om^0_{\mathrm{sing}}$ distributionally as $\ep\to 0^+$, and show that the corresponding solutions $\om_\ep(x,t)$ to the 2D incompressible Euler equations converge to the measure defined by the Birkhoff--Rott system with initial datum $\om^0_{\mathrm{sing}}$. The regularization relies on a layer construction designed to exploit the key observation that the Kelvin--Helmholtz instability has a strongly anisotropic effect: while vorticities must be analytic in the ``tangential'' direction, the way layers can be arranged in the ``normal'' direction is essentially arbitrary.
\end{abstract}

\maketitle

\section{Introduction}

The 2D incompressible Euler equations describe the movement of ideal fluids on the plane. In vorticity form, this system of equations can be written as
\begin{equation} \label{eq 2dEuler}
    \left\{
    \begin{aligned}
\ & \pd_t \om + u \cdot \nabla \om = 0 \quad && \textup{in } \R^2 \times (0,T)\,,\\
  & u  := K * \om && \textup{in } \R^2 \times [0,T)\,, \\      
  & \om(\cdot,0) = \om_0 && \textup{in } \R^2\,,
    \end{aligned}
    \right.
\end{equation}
where $\om: \R^2 \times[0,T) \to \R$ is the vorticity of the fluid, and $u: \R^2 \times [0,T ) \to \R^2$ is the corresponding velocity field, which is linked to the vorticity through the Biot--Savart law. Here, $\om_0: \R^2 \to \R$ is the initial value of the vorticity and
$$
K(x):= \frac{x^{\perp}}{2\pi|x|^2}\,, \quad x^{\perp} := (-x_2,x_1)\,,
$$
denotes the Biot--Savart kernel, so that the velocity field~$u$ is divergence-free and $\nabla^\perp\cdot u =\omega$.

In many physical situations, the vorticity tends to concentrate on a lower dimensional subset of physical space. Consequently, a classical topic in the study of the incompressible Euler equations is the analysis of solutions whose vorticity is concentrated on small sets, and the justification of effective models for these situations. The simplest case is that of vorticities supported on $N$ planar domains with small diameter. By letting these domains shrink down to points~$\xi_1^0,\dots,\xi_N^0\in\RR^2$, one formally arrives at the {\em $N$-vortex model}. This model posits that the evolution of the limit vorticity, which is the collection of deltas
\begin{equation}\label{E.om0sing_pt}
\omega^0_{N,\, \mathrm{sing}}(x):=\sum_{n=1}^N \varpi_n\, \delta(x-\xi_n^0)\,,
\end{equation}
with $\varpi_n\in\RR$,
should be morally given by 
\[
\omega_{N,\,\mathrm{sing}}(x,t):=\sum_{n=1}^N \varpi_n\, \delta(x-\xi_n(t))\,.
\]
Here $(\xi_n)_{n=1}^N:[0,T_0)\to\RR^{2N}$ is the only solution to the Kirchhoff--Routh system of ODEs
\begin{equation}\label{E.vortex}
\dot\xi_n(t)=\sum_{m\neq n}\varpi_m \, K(\xi_m(t)-\xi_n(t))\,,\qquad \xi_n(0)=\xi_n^0\,,\qquad n \in \{1, \ldots,N\}\,,
\end{equation}
which is defined for some~$T_0>0$.

To connect these heuristic considerations with actual Euler flows, one proceeds by desingularizing the system of $N$~point vortices. To this end, let us fix an arbitrary cutoff function, say $\chi\in C^\infty_c(\RR^2)$, which we normalize so that $\int_{\RR^2}\chi(x)\dx=1$. Using that the function $\chi_\epsilon(x):=\epsilon^{-2}\chi(x/\ep)$ converges weakly to the delta distribution as $\ep\to0^{+}$, one can consider the following regularization of~\eqref{E.om0sing_pt}:
\begin{equation}\label{E.om0_pt}
	\omega^0_{N,\epsilon}(x):=\sum_{n=1}^N \varpi_n\, \chi_\epsilon(x-\xi_n^0)\,.
\end{equation}
It is standard that $\om^0_{N,\ep}\to \om_{N,\,\mathrm{sing}}$ in the sense of distributions  as $\ep\to0^+$, and that the Euler equations~\eqref{eq 2dEuler} admit a unique global solution with initial datum~\eqref{E.om0_pt}, which we denote by $\omega_{N,\epsilon}\in C^\infty(\RR^2\times[0,\infty))$. However, the well-posedness theory for the Euler equations does not provide any a priori information on the structure of the solution.

A rigorous connection between Euler flows and the $N$-vortex model~\eqref{E.vortex} was first established in two classical results of Marchioro and Pulverenti~\cite{MP, MP2}.  Their main result in \cite{MP2} ensures that there exists some time $T>0$, independent of the small parameter~$\epsilon$, such that the unique Euler flow with initial datum~\eqref{E.om0_pt}, $\om_{N,\epsilon}$, satisfies
\[
\omega_{N,\epsilon}(\cdot, t) \to \omega_{N,\,\mathrm{sing}}(\cdot,t)
\]
in the sense of distributions as $\ep \to 0^+$, for all $t \in [0,T)$. With a suitably chosen cutoff, much more precise estimates for the convergence of~$\om_\epsilon$ to $\omega_{\mathrm{sing}}$ were derived by D\'avila, del Pino, Musso and Wei in~\cite{DDMW} using gluing techniques. Vortex desingularization problems, both for the incompressible Euler equations and for related models such as the Navier--Stokes equations, have garnered significant attention in recent years; see, for example, \cite{Marchioro, Gallay, DG, CJY, DDMP, DDMP2, HHM}.


An important yet still insufficiently understood class of solutions with concentrated vorticity are vortex sheets. These are Euler flows whose vorticity is concentrated on a curve, which corresponds to considering an initial vorticity given by a measure of the form
\begin{equation}\label{E.om0_sing}
	\omega^0_{\mathrm{sing}}(x):=\varpi^0(s)\, \delta(x-\Gamma(s)) \ds\,.
\end{equation}
Here $\Gamma(s)$ is a sufficiently smooth curve, and the vorticity amplitude $\varpi^0(s)$ is a smooth function.

A well-known formal computation suggests that the evolution of this vorticity should morally be of the form
\begin{equation}\label{E.om_sing}
	\omega_{\mathrm{sing}}(x,t):=\varpi(s,t)\, \delta(x-\gamma(s,t)) \ds\,,
\end{equation}
where the curve $\gamma(\cdot,t)$ and the vorticity amplitude $\varpi(\cdot,t)$ at time~$t$ are obtained by solving the Birkhoff--Rott equations
\begin{equation}\label{E.BRintro}
\left\{
\begin{aligned}
    & \pd_t \ga(s,t)+ c(s,t) \pd_s\ga(s,t) = {\rm BR}[\ga;\, \varpi](s,t)\,,\\
    & \pd_t \varpi(s,t) + \pd_s (c(s,t) \varpi(s,t)) = 0\,,
\end{aligned}
\right.
\end{equation}
with initial conditions
$$
\ga(\cdot,0) = \Ga\,, \quad \textup{ and } \quad \varpi(\cdot,0) = \varpi^0\,.
$$
Here
\begin{equation}\label{E.BR_intro}
{\rm BR}[\ga;\, \varpi](s,t) := {\rm p.v.} \int_{s_0}^{s_1} K(\ga(s,t) - \ga(\varsigma,t)) \varpi(\varsigma,t) \dd \varsigma
\end{equation}
is the Birkhoff--Rott operator, $[s_0,s_1]$ is the interval on which the curve is defined, and $c(s,t)$ is an arbitrary fixed function whose choice is immaterial. Indeed, the appearance of the function~$c$ is associated with the fact that the measure~\eqref{E.om_sing} does not depend on how one parametrizes the curve (and the vorticity density), and, in a natural way, solutions to the Birkhoff--Rott equations with different~$c$ give rise to the same measure.
Lopes Filho, Nussenzveig and Schochet~\cite{Nussenzveig} have recently shown that, under minimal assumptions, the vorticity defined by~\eqref{E.om_sing} is a weak solution to the Euler equations \eqref{eq 2dEuler} if and only if $(\ga,\varpi)$ satisfies~\eqref{E.BRintro}.

The well-posedness theory for vortex sheets is remarkably hard. This is mainly due to the fact that they are subject to Kelvin--Helmholtz instability~\cite{Moore}, so that higher Fourier modes of a small perturbation of a flat profile can get exponentially amplified in the evolution. While the Birkhoff--Rott equations are ill-posed on Sobolev spaces, as shown by Caflisch and Orellana in the seminal paper \cite{CO}, they are locally well-posed on analytic spaces~\cite{SS}. This can be heuristically understood as a consequence of the fact that the Birkhoff--Rott equations are elliptic in nature, a property that Lebeau~\cite{Lebeau} and Wu~\cite{Wu} have exploited to show that, under suitable assumptions, solutions to~\eqref{E.BRintro} become analytic instantaneously. It is known~\cite{CO} that analytic initial data can develop singularities in finite time. The important case of self-similar spirals (see \cite[Chapter 8]{Sa}) was analyzed in detail in \cite{SWZ,JS,CK}.  In addition, various special types of solutions, such as translating and rotating sheets, have been constructed in \cite{Cao, CW, CG, GP, MS}.

From the perspective of the 2D Euler equations, a celebrated result of Delort~\cite{Delort} ensures the existence of weak solutions to~\eqref{eq 2dEuler} with initial datum of the form~\eqref{E.om0_sing}, provided that the vorticity density~$\varpi^0$ does not change sign. 
However, even with this distinguished sign assumption, the uniqueness and structure of such solutions remain unknown. On the other hand, in the framework of Euler flows whose vorticity is no longer a measure, Sz\'ekelyhidi \cite{Sz} used convex integration techniques to show that there exist infinitely many bounded weak solutions to \eqref{eq 2dEuler} with a specific flat vortex sheet as initial data. Further developments are given in~\cite{Mengual,Extension}.


In this paper, we are concerned with the dynamics of regularized vortex sheets and their connection with the Birkhoff--Rott equations. As a consequence of the aforementioned analytic subtleties, the situation here is much less clear-cut than in the case of point vortices, and in fact, there are only two rigorous results on this topic. To analyze the vortex sheets, both impose periodic boundary conditions in the first coordinate (i.e., $x\in\TT\times\RR$) and assume that the curve $\Gamma(s):=(s,h_\Gamma(s))$ is a graph.

First, a classical result due to Benedetto and Pulvirenti~\cite{BP} considers the evolution of a thin, almost flat vortex layer of constant vorticity. This corresponds to taking a  patch-type vorticity of the form $\om_\ep^{0,\mathrm{BP}}(x):=\ep^{-1}\,\mathds{1}_{\Lambda_\ep}(x)$, where
$$
\Lambda_\ep:=\Big\{x=(x_1,x_2)\in\TT\times\RR: |x_2-h_\Gamma(x_1)|<\tfrac12\ep\varpi^0(x_1)\Big\}\,,
$$
is a small tubular neighborhood of the curve~$\Gamma$ whose width, of order~$\ep$, is proportional to the vorticity amplitude. Since the initial vorticity is bounded and compactly supported, the Euler equations with this datum have a unique solution $\om_\ep(x,t)$ by the Yudovich theory~\cite{Yudovich}. The authors show that, if $(\Gamma,\varpi)$ is a small analytic perturbation of a flat sheet of constant vorticity (i.e, the functions~$\varpi^0$ and $h_\Gamma$ are analytic and close enough to~1), then there exists some positive time $T$ independent of~$\ep$ such that, for all $t \in [0,T)$, 
$$
\om_\ep^{\mathrm{BP}}(\cdot,t) \to \om_{\mathrm{sing}}(\cdot,t)\,,
$$
in the sense of distribution, as $\ep \to 0^+$. Note that the Birkhoff--Rott evolution~\eqref{E.BRintro} does determine $(\gamma,\varpi)$ for some positive time because the initial data is analytic. A drawback of this construction is that the regularized vorticity is discontinuous. It is not obvious a priori how to adapt this approach to achieve higher regularity; in fact, the authors explicitly point out that the construction should \textit{not} work if one replaces the indicator function by a general cutoff function~\cite[Section 4]{BP}.

The second result, recently obtained by Caflisch, Lombardo, and Sammartino~\cite{Caflisch}, considers the setting of reasonably small analytic perturbations of a flat sheet, where the slope satisfies certain quantitative bounds. In this paper, the regularized vorticity is no longer an indicator function; instead, these authors develop a functional setting which enables them to consider analytic vorticities $\om_\ep^{0,{\mathrm{CLS}}}$ that decay exponentially away from $\Gamma$ with a decay length of order~$\ep$. Hence, the vorticity does not have compact support, but it is smooth and sharply peaked around the curve~$\Gamma$ for small~$\ep$. The central result of their paper establishes a similar convergence result in this setting. To control the exponential decay of the solutions, the fact that~$\Gamma$ is a horizontal graph on the cylinder $\TT\times\RR$ is crucially used.

Our objective in this paper is to achieve the regularization of vortex sheets by smooth, compactly supported data on~$\RR^2$, in a manner that applies to sheets that are not necessarily graphs. Consequently,  we let $\Gamma:\TT\to\RR^2$ be an analytic planar curve parametrized by arclength, and $\varpi^0:\TT\to\RR$ be any analytic function. The fact that these functions are defined on~$\TT:=\RR/\ZZ$ instead of on an open interval simply reflects that we want the curve~$\Gamma$ to be closed. Without any loss of generality, we assume that $\Ga$ has unit length. 



To state our main result, let us denote by $\dot\Gamma(s)$ the derivative of~$\Gamma(s)$ with respect to the arclength parameter~$s$.
For the sake of concreteness\footnote{Although throughout the paper we will consider significantly more general initial vorticities of the form $ \om^0_\ep(x):=\int_{-1}^1\int_{\TT}\varpi_{\ep,l}^0(s)\, \delta(x-\Gamma(s)-\ep (l+\eta^0_{\ep}(s))\dot\Gamma(s)) \,\mathrm{d}s \dl$, which offers greater flexibility for constructing solutions, the simpler choice of initial vorticity considered in the introduction suffices to present the key features of our construction.}, consider the initial vorticity
\begin{equation}\label{E.myom0_intro}
    \om^0_\ep(x):=\int_{-1}^1\int_{\TT}\varpi^0(s)\,\chi(l)\, \delta\big(x-\Gamma(s)-\ep l \dot\Gamma(s)^\perp\big) \,\mathrm{d}s \dl\,,
\end{equation}
where $\ep$ is a small positive constant.
Assuming that $\chi\in C^\infty_c((-1,1))$ is a nonnegative cutoff function, normalized so that $\int_\RR \chi(l)\dl=1$, it is not hard to see that $\om_\ep^0\in C_c^\infty(\RR^2)$, that its support is contained a tubular neighborhood  around the curve~$\Ga$ of width~$\ep$, and that $\om_\ep^0\to\om^0_{\mathrm{sing}}$ in the sense of distributions as $\ep\to0^+$.

Our main result asserts that the regularization of vortex sheets given by~\eqref{E.myom0_intro} (or, indeed, by the considerably more general expressions discussed in Section~\ref{S.setup}) does possess all the properties that one may desire. A somewhat informal statement of this fact is the following; for a precise, more general statement, see Theorem~\ref{main thm}. It is worth mentioning that a minor modification of the proof (which amounts to replacing the Birkhoff--Rott operator~\eqref{E.BR_intro} by its periodic counterpart) yields an analogous result for the Euler equations on~$\TT^2$ or on~$\TT\times\RR$.

\begin{theorem}\label{T.main_intro}
Consider the initial vorticities~$\om^0_\ep\in C^\infty_c(\RR^2)$ given by~\eqref{E.myom0_intro}, which converge distributionally to the vortex sheet~\eqref{E.om0_sing} as $\ep \to 0^+$, and let $\om_\ep\in C^\infty(\RR^2\times [0,\infty))$ denote the unique solution to the Euler equations~\eqref{eq 2dEuler} with initial datum~$\om_\ep^0$.

If the curve~$\Gamma$ and the initial vorticity density~$\varpi^0$ are analytic, there exists some time $T>0$, which does not depend on~$\ep$, such that, for all $t\in(0,T)$, $\om_\ep(\cdot,t)$ converges distributionally as $\ep \to 0^+$ to the singular measure $\om_{\mathrm{sing}}(\cdot,t)$ defined by~\eqref{E.om_sing} in terms of the solution to the Birkhoff--Rott equations~\eqref{E.BRintro} with initial datum $(\Gamma,\varpi^0)$. 
\end{theorem}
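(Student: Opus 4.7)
The plan is to seek the Euler solution $\om_\ep$ in the form of a \emph{layered ansatz}
\begin{equation*}
\om_\ep(x,t)=\int_{-1}^{1}\int_{\TT}\varpi_{\ep,l}(s,t)\,\chi(l)\,\delta\bigl(x-\gamma_{\ep,l}(s,t)\bigr)\,\dd s\,\dd l,
\end{equation*}
where $l\in(-1,1)$ labels layers, each curve $\gamma_{\ep,l}(\cdot,t)\colon\TT\to\RR^2$ is smooth, $\gamma_{\ep,l}(\cdot,0)=\Gamma+\ep l\,\dot\Gamma^\perp$, $\varpi_{\ep,l}(\cdot,0)=\varpi^0$, and the map $(s,l)\mapsto\gamma_{\ep,l}(s,t)$ remains a diffeomorphism onto a tube of width $\sim\ep$ for all $t\in[0,T]$. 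Under the diffeomorphism condition, the pushforward interpretation makes $\om_\ep$ a genuine element of $C^\infty_c(\RR^2)$ of size $\sim 1/\ep$; by Yudovich uniqueness any such $L^\infty$ distributional Euler solution with initial datum~\eqref{E.myom0_intro} must coincide with the smooth solution featured in the statement. Thus, once the layered system is solved, there is nothing more to compare: the ansatz \emph{is} the unique Euler solution.

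\textbf{Derivation of the layered system.} A direct computation based on the pushforward formula shows that $\om_\ep$ solves \eqref{eq 2dEuler} if and only if, for every $l\in(-1,1)$,
\begin{align*}
\pd_t\gamma_{\ep,l}+c_{\ep,l}\,\pd_s\gamma_{\ep,l}&=\int_{-1}^{1}\mathrm{BR}_{\ep,l,l'}\bigl[\gamma_{\ep,\cdot};\varpi_{\ep,\cdot}\bigr]\,\chi(l')\,\dd l',\\
\pd_t\varpi_{\ep,l}+\pd_s\bigl(c_{\ep,l}\,\varpi_{\ep,l}\bigr)&=0,
\end{align*}
where
\begin{equation*}
\mathrm{BR}_{\ep,l,l'}[\gamma;\varpi](s,t)=\mathrm{p.v.}\!\int_{\TT}\!K\bigl(\gamma_{\ep,l}(s,t)-\gamma_{\ep,l'}(\varsigma,t)\bigr)\,\varpi_{\ep,l'}(\varsigma,t)\,\dd\varsigma.
\end{equation*}
The principal value is needed only for the self-interaction $l=l'$, which reproduces the classical Birkhoff--Rott operator; for $l\neq l'$ the kernel is bounded on the scale $\ep|l-l'|$. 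The reparametrization functions $c_{\ep,l}$ are arbitrary, and I would fix them so that each layer remains close to arclength parametrization uniformly in $\ep$, removing the tangential drift that is most dangerous for analytic estimates.

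\textbf{Analytic well-posedness and passage to the limit.} The heart of the proof is to solve this coupled system on a time interval $[0,T]$ whose length is independent of $\ep$, in a scale of Banach spaces that is \emph{analytic in~$s$} (necessary to tame Kelvin--Helmholtz) but merely \emph{bounded in $l$} (reflecting the arbitrary normal profile $\chi$). A suitable family of norms is
\begin{equation*}
\|f\|_{\rho}=\sup_{l\in[-1,1]}\sum_{k\in\ZZ}|\widehat f_l(k)|\,e^{\rho|k|},\qquad \rho>0,
\end{equation*}
on which the layered system is amenable to an abstract Cauchy--Kovalevskaya theorem of Nirenberg--Nishida type, used previously for the standard Birkhoff--Rott equations in~\cite{SS}. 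The anisotropy is essential: continuity in $l$ survives integration against $\chi(l')\dd l'$ and the analytic estimates close layer-by-layer. In the limit $\ep\to 0^+$, the layered system degenerates to the classical Birkhoff--Rott equations \eqref{E.BRintro} (all layers converge to a single curve $\gamma$ independent of $l$); a perturbative argument in the same analytic scale then yields $\gamma_{\ep,l}(s,t)\to\gamma(s,t)$ and $\varpi_{\ep,l}(s,t)\to\varpi(s,t)$ uniformly in $l$, and testing the pushforward against $\phi\in C^\infty_c(\RR^2)$,
\begin{equation*}
\langle\om_\ep(\cdot,t),\phi\rangle=\int_{-1}^{1}\!\!\int_{\TT}\varpi_{\ep,l}(s,t)\chi(l)\phi(\gamma_{\ep,l}(s,t))\dd s\dd l\xrightarrow[\ep\to 0^+]{}\int_{\TT}\varpi(s,t)\phi(\gamma(s,t))\dd s,
\end{equation*}
where I have used $\int\chi=1$. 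This is exactly the distributional convergence $\om_\ep(\cdot,t)\to\om_{\mathrm{sing}}(\cdot,t)$.

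\textbf{Main obstacle.} The critical difficulty is obtaining uniform-in-$\ep$ existence time and analyticity radius for the layered system, precisely because the cross-layer kernels develop a singularity at scale $\ep$ as $\ep\to 0^+$. One must split the $l'$-integral into a near-layer piece ($|l-l'|\lesssim 1$), treated as a perturbation of the self-interaction and controlled using the fact that the $\ep|l-l'|$ normal separation keeps the kernel bounded after integration against $\chi(l')\dd l'$, and a far-layer piece, where the elliptic smoothing of the classical Birkhoff--Rott operator applies. Preserving the non-crossing of layers (so that the map $(s,l)\mapsto\gamma_{\ep,l}(s,t)$ remains a diffeomorphism on $[0,T]$, justifying the Yudovich-based identification of the ansatz with the true Euler solution) is a companion issue which should follow from the same analytic bounds together with the lower bound of order~$\ep$ on the initial Jacobian.
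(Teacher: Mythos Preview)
Your strategy is essentially the paper's: write $\om_\ep$ as a continuum of sheets indexed by $l$, derive a layered Birkhoff--Rott system, solve it via Nishida's Cauchy--Kovalevskaya theorem in a scale analytic in~$s$ and merely continuous in~$l$, then identify with the Euler solution by Yudovich uniqueness. The implementation differs in two inessential respects: the paper parametrizes each layer as a graph $\gamma_{\ep,l}(s,t)=\Gamma(s)+\nu_{\ep,l}(s,t)\,\dot\Gamma(s)^\perp$ over the fixed reference curve (rather than near-arclength), and uses the norm $\sup_{|\beta|\le\rho}\|f(\cdot+i\beta)\|_{C^{1/2}(\TT)}$ (rather than a Wiener weight), which dovetails with a Calder\'on--Zygmund analysis of the model kernels $x^m\ep^r/|xa+\ep b|_{\CC}^{\,m+r+1}$ that drives the uniform-in-$\ep$ bounds. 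Non-crossing is propagated by carrying $\partial_l\eta_{\ep,l}\sim\ep^{-1}\partial_l\nu_{\ep,l}-1$ as an additional unknown in the Nishida system.

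There is, however, a real gap in your convergence step. The claim ``$\varpi_{\ep,l}(s,t)\to\varpi(s,t)$ uniformly in~$l$'' is false, and a layer-wise perturbative comparison with Birkhoff--Rott will not close. As $\ep\to0$ the velocity at layer~$l$ does \emph{not} converge to the Birkhoff--Rott velocity: by the jump relation for the single-layer potential, its tangential component picks up the shear correction
\[
\tfrac12\int_{-1}^{1}\bigl(\mathds{1}_{l'>l}-\mathds{1}_{l'<l}\bigr)\,\varpi_{\ep,l'}\,\chi(l')\,\dd l',
\]
which depends on~$l$. Hence each $\varpi_{\ep,l}$ is advected by an $l$-dependent tangential drift and the layer densities fan out rather than collapse to a single $\varpi$. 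What does converge to the Birkhoff--Rott density is the \emph{integrated} quantity $\int_{-1}^{1}\varpi_{\ep,l}\,\chi(l)\,\dd l$, because the shear is antisymmetric under $(l,l')\leftrightarrow(l',l)$ and cancels upon integration; the paper isolates exactly this cancellation (its Proposition on the limit of $U_\ep$ and the handling of the error term $E_4^\ep$). Since your final pushforward identity only needs this integrated density together with the uniform-in-$l$ convergence of the curves (which does hold, the shear being purely tangential), the conclusion survives --- but the averaging, not a layer-by-layer comparison, is the mechanism you have to use.
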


The key insight underlying our construction is that the instability induced by the Birkhoff--Rott equations affects the dynamics of regularized vortex sheets in a highly anisotropic manner. Roughly speaking, the mechanism at play is as follows. Writing the regularized initial datum in terms of a continuum of parallel sheets, we need to impose that all these sheets are analytic to establish a relation with the Birkhoff--Rott equations, which are only locally well-posed on analytic spaces. However, it turns out that one can get by with much less regularity in the normal direction, as suggested by the result of Benedetto and Pulverenti \cite{BP}. As we will see, this distinction between the tangential and normal regularity is essential and remains valid for vorticities much more general than that in~\eqref{E.myom0_intro}. Interestingly, a related phenomenon was already encountered by Fefferman and Rodrigo's analysis of the lifespan of almost-sharp fronts for the SQG equations \cite{FR1, FR2, FR3}, and a somewhat similar anisotropic regularity effect is a key idea in~\cite{FR3}.

The paper is organized as follows. In Section~\ref{S.setup}, we introduce the geometric and functional setting we will use, and state our main result, namely Theorem~\ref{main thm}. Then, in Section~\ref{S.proofMain}, we present the proof of Theorem~\ref{main thm}. To streamline the presentation, the proofs of most results stated there are postponed to later sections. Specifically, in Section~\ref{S.vortexSheets}, we derive an effective system of evolution equations used to obtain the family of solutions we seek. The existence of this family is established using Nishida’s Cauchy--Kovalevskaya theorem. The key estimates required for applying this theorem are proved in Sections~\ref{S.Geo} and~\ref{S.Kernel}. First, in Section~\ref{S.Geo}, we prove the necessary geometric estimates; then, in Section~\ref{S.Kernel}, we analyze the kernel appearing in the effective system of equations. Finally, in Section~\ref{Sec 7}, we address the results that imply convergence to the solution of the Birkhoff--Rott equations. 

\section{Geometric setup and main result} \label{S.setup}

For $\rho>0$, we set $\T_\rho:=(\R/\Z)\times[-\rho,\rho]$ and denote by $\cH(\T_\rho)$ the space of holomorphic functions~$f$ on the strip
\[
\big\{z=s+i\beta\in\CC : s\in\R,\; -\rho\leq \beta\leq \rho\big\}\,,
\]
satisfying the periodicity condition
\[
f(z)=f(z+1)\,.
\]
We denote by 
$$
\cX_\rho:=\big\{f\in \cH(\T_\rho):  f(\T)\subset\R\big\}\,,
$$
the space of holomorphic functions $f:\T_\rho\to\C$ that arise as complexifications of real-valued analytic functions on $\T:= \R / \Z$. We regard $\cX_\rho$ as a Banach space, equipped with the norm 
\begin{align} \label{E.norm rho}
\norm{f}_{\rho}:=\sup_{|\beta|\leq \rho}\norm{f(\cdot+i\beta)}_{C^{\frac{1}{2}}(\T)}.
\end{align}
\indent We fix some non-intersecting closed planar curve, parametrized by arc-length. Rescaling it if necessary, without loss of generality, we can assume that the curve~$\Gamma=(\Gamma_1,\Gamma_2)$ has unit length, which yields a map $\Gamma:\T \to\R^2$. We suppose that the curve is analytic, so its complexification, which we still denote by $\Gamma$, is in $\cX_{\rho_0}^2$ for some fixed $\rho_0>0$. For convenience, we can assume that $\Gamma^{(5)}\in \cX_{\rho_0}^2$. We also assume that none of the ($\C^2$-valued) curves $\Gamma(\cdot+i\beta)$ is self-intersecting for $|\beta|\leq \rho_0$. In fact, for technical reasons, without any loss of generality, we can assume the stronger condition that for every $\delta>0$ we have \begin{align}
\min_{|\beta|\leq \rho_0,\, |s-\alpha|\geq\delta}\, \Re\left\{ \left[\Gamma_1(s+i\beta)-\Gamma_1(\alpha+i\beta)\right]^2+\left[\Gamma_2(s+i\beta)-\Gamma_2(\alpha+i\beta)\right]^2\right\}>0 \,. \label{tec no self int}
\end{align}
Indeed, by continuity and the fact that $\Gamma|_{\T}$ does not self-intersect, this condition always holds provided that $\rho_0$ is sufficiently small. Let us also recall that, possibly after reducing $\rho_0 > 0$, reparametrizing a curve by its arc-length preserves holomorphy, provided that $\dot{\Gamma} \neq 0$ at every point.

%

Next, observe that there is a neighborhood of the graph of $\Gamma$ in~$\R^2$ where every point $x$ can be uniquely written as 
\begin{equation}
x=\Gamma(s)+n\dot{\Gamma}(s)^\perp\,. \label{def coord}
\end{equation}
More precisely, by the analytic inverse function theorem~\cite[Theorem 2.5.3]{Krantz-Parks}, there is some constant $R_0>0$ and some tubular neighborhood $U_\Gamma\subset\R^2$ of the curve~$\Gamma$ such that~\eqref{def coord} defines an analytic diffeomorphism 
$
\TT\times(-R_0,R_0)\ni (s,n)\mapsto x\in U_\Gamma\,.
$
We will denote this diffeomorphism and its inverse by
\[
(s,n)\mapsto \bx(s,n)\qquad \text{and} \qquad x\mapsto (\bs(x),\bn(x))\,,
\]
respectively. Note that
\begin{align}
e_\bs(x):=\frac{\nabla \bs(x)}{|\nabla \bs(x)|^2}\,, \quad e_\bn(x):=\nabla \bn(x)\,,\label{def es}
\end{align}
is an analytic orthogonal (but not orthonormal) frame at every point $x\in U_\Gamma$. Since $|\nabla\bn|=1$, we have chosen the normalization factors so that
\[
e_\bs\cdot\nabla\bs=e_\bn\cdot\nabla\bn=1\quad \text{and}\quad e_\bs\cdot\nabla\bn=e_\bn\cdot\nabla\bs=0 \qquad \textup{in } U_\Gamma\,.
\]
As the curve~$\Gamma$ is analytic, the vector fields $e_\bs,e_\bn$ can be uniquely extended to complex-valued fields in $\cX_{\rho_0}^2$, which we still denote by  $e_\bs,e_\bn$.
Note as well that, for every $x\in U_\Gamma$, \begin{align}
e_\bn(x)=\dot{\Gamma}(\bs(x))^\perp\qquad   \text{and}\qquad e_\bs(x)=\dot{\Gamma}(\bs(x))+\bn(x)\ddot{\Gamma}(\bs(x))^\perp, \label{expr tang}
\end{align}
as can be seen by differentiating \eqref{def coord} and taking the dot product with $\dot{\Gamma}(\bs(x))^\perp$ and $\dot{\Gamma}(\bs(x))+\bn(x)\ddot{\Gamma}(\bs(x))^\perp$, respectively. 
 
We consider an initial vorticity for \eqref{eq 2dEuler} which is given as an infinite superposition of almost parallel vortex sheets. To make this precise, we start by choosing a family of analytic functions $(\eta_{\ep}^0)\subset \cX_{\rho_0}$ depending on a small parameter~$\ep>0$, which we will eventually use to ensure that our initial vorticity concentrates around the curve~$\Gamma$. Then, we define the functions
\begin{align}
\nu_{\ep,l}^0(s):=\eps[l+\eta_{\ep}^0(s)]\in \cX_{\rho_0}\,, \label{def eta}
\end{align}
which we parametrize by a real variable $l\in [-1,1]$, and use them to define a parametrized family of analytic curves as
\begin{align}
\gamma_{\ep,l}^0(s):=\Gamma(s)+\nu_{\ep,l}^0(s)\dot{\Gamma}(s)^\perp\in \cX_{\rho_0}^2\,. \label{def gamma}
\end{align}
Next, we take another parametrized family of analytic functions $(\varpi_{\ep,l}^0)_{l\in [-1,+1]}\subset \cX_{\rho_0}$, which will play the role of vorticity intensities in our construction. We want this family to have compact support in the parameter~$l$, so we impose that $\varpi_{\ep,l}^0 \equiv 0$ for $|l| \geq \frac12$. Concerning the dependence on the parameter~$l$, we shall assume that $l\mapsto \varpi_{\ep,l}^0$ is in $C^0((-1,1),\cX_{\rho_0})$. 

Having this family of curves and intensities at hand, we consider an initial datum of the form 
\begin{equation} \label{def omega0}
\om_{\ep}^0(x) := \begin{cases}
  \tom^0(\bs(x),\bn(x)) &\text{if } x\in U_\Gamma\,,\\
  0 & \text{if } x\not\in U_\Gamma\,,
\end{cases}
\end{equation}



\noindent where  $\tom^0$ is such that for any test function $\Phi\in C^0(\R^2)$, one has \begin{equation}
\langle \omega_\ep^0, \Phi \rangle =\int_{\T}\int_{-1}^1\varpi_{\eps,l}^0(s)\Phi(\bx(s,\nu_{\eps,l}^0(s)))\dd s\dd l\,.\end{equation}

%
\noindent It is straightforward to check that $\om_\ep^0 \in C_c^0(\R^2)$, and that for small enough~$\ep$, its support is contained in a thin tubular neighborhood of the curve $\Gamma$:
\begin{equation} \label{E.suppom0}
\supp \om_\ep^0 \subset \left\{ x\in\RR^2: \mathrm{dist}(x,\Gamma) \leq \ep \Big(1+ \|\eta_{\ep}^0\|_{L^\infty(\TT)} \Big) \right\}\,.
\end{equation}
Moreover, $\om_\ep^0$ is as regular as the dependence on $l$ allows. In other words, if we assume that $\varpi_{\ep,l}^0$ is a $C^r$ functions of~$l$ for some $0\leq r \leq \infty$, the initial datum $\om_\ep^0$ given in \eqref{def omega0} belongs to $C_c^r(\R^2)$. 

Yudovich's theory (see~\cite{Yudovich} or~\cite[Theorem 7.27]{Bahouri11}) guarantees the existence and uniqueness of weak solutions to \eqref{eq 2dEuler} for any initial vorticity in $L^1(\RR^2) \cap L^{\infty}(\R^2)$. Moreover, if the initial vorticity is in $ C^{r}_c(\R^2)$ for some $1\leq r \leq \infty$, then the unique solution to the Euler equation~\eqref{eq 2dEuler} is in $C_w([0,+\infty), C_c^r(\R^2))$ (where $C_w$ denotes the space of functions which are weakly continuous). Although the arguments we will use work directly with weaker regularity in the variable~$l$, we are primarily interested in the case where  $\varpi_{\ep,l}^0$ is a smooth function of~$l$, which ensures that the vorticity is a $C^\infty$~function of~$x\in\RR^2$ and~$t\in[0,+\infty)$.

For any initial vorticity $\om_0 \in L^1(\R^2) \cap L^{\infty}(\R^2)$, the unique solution to \eqref{eq 2dEuler} is given by \cite{Yudovich}
$$
\om(x,t) := \om_0(X^{-1}_t(x))\,,
$$
where $X_t^{-1}$ denotes the inverse of the particle-trajectory map at time~$t$, namely the homeomorphism $X_t(x):=X(x,t)$ defined by the value at time~$t$ of the unique solution to the ODE
$$
\frac{\partial}{\partial t} X(x,t) = u(X(x,t),t)\,, \qquad X(x,0) = x\,.
$$
Thus, if we denote by $\om_\ep$ the unique solution to \eqref{eq 2dEuler} with initial datum $\om_\ep^0$ given in \eqref{def omega0}, then
\begin{equation} \label{def omega}
    \begin{aligned}
    \om_\ep(x,t) & = \om_\ep^0(X^{-1}_t(x))  \,.
    \end{aligned}
\end{equation}
Therefore, the support of~$\om_\ep(\cdot,t)$ is contained in the set
\[
\big\{X_t(\ga_{\eps,l}^0(s)): s\in\TT,\; l\in[- 1,1]\big\}\,.
\]
Note that, since the velocity field $u_\ep := K * 
 \om_\ep$ is log-Lipschitz, for small enough $\ep$, the image of the curve $\gamma_{\ep,l}^0$ under the particle-trajectory map~$X_t$ is contained in the tubular neighborhood~$U_\Gamma$ for any sufficiently short time. 

We can now assume that, for any small enough~$t$ and each $s \in \TT$, there exists a unique point $\si_t(s) \in \TT$ (depending on $l$) satisfying
\begin{equation} \label{E.sgraph}
  \bs(X_t(\ga_{\ep,l}^0(\si_t(s)))) = s\,.  
\end{equation}
We shall assume that the map $\si_t:\TT\to\TT$ is a homeomorphism for each small enough~$t$. Geometrically, this assumption amounts to saying that the transported curve $X_t(\ga_{\ep,l}^0)$ remains a graph over the reference curve~$\Gamma$ (which will be an immediate consequence of the equations for small enough times). Note that $\si_0$ is the identity.

We reparametrize the transported curves as 
\begin{equation} \label{def gamma evolved}
    \gamma_{\ep,l}(s,t) := X_t(\ga_{\ep,l}^0(\si_t(s)))\,.
\end{equation}
Similarly as in the case of the family of initial curves, let us introduce the notation 
\begin{equation}  \label{def nu eps l}
\nu_{\eps,l}(s,t) :=\bn(X_t(\ga_{\ep,l}^0(\si_t(s))))
\end{equation}
for the $n$-coordinate of these curves, which we will eventually write as
\begin{equation}  \label{def nu eps l2}
\nu_{\ep,l}(s,t) - \nu_{\ep,\ell}(s,t) := \ep \int_\ell^l \big( 1+ \pd_\mu \eta_{\ep,\mu}(s,t) \big) \dd \mu\,.
\end{equation}
In terms of this family of evolved curves, it is natural to make the ansatz that the vorticity~\eqref{def omega} at a small enough time~$t$ can be written as
\begin{equation}\label{E.ansatzomega}
    \om_\ep(x,t):=\begin{cases}
     \tom(\bs(x),\bn(x),t)   &\text{if } x\in U_\Gamma\,,\\
  0 & \text{if } x\not\in U_\Gamma\,,
    \end{cases}
\end{equation}
with
\begin{equation}\label{E.ansatzomegabis}
    \tom(s,n,t):=\int_{-1}^1 \varpi_{\ep,l}(s,t) \, \delta(n-\nu_{\ep,l}(s,t))   \, \mathrm{d}l\,.
\end{equation}
The vorticity should be interpreted as the compactly supported time-dependent distribution that acts on $\Phi\in C^0(\RR^2)$ as
\begin{equation}\label{def Phi om ep}
\langle  \om_\ep(\cdot,t), \Phi\rangle :=\int_\TT \int_{-1}^1 \varpi_{\ep,l}(s,t) \,\Phi(\bx(s,\nu_{\ep,l}(s,t)))\, \mathrm{d}l\,\mathrm{d}s\,.
\end{equation}
Let us also stress here that, by continuity, the structure \eqref{def gamma evolved}--\eqref{E.ansatzomegabis} for $\omega_\eps$ will also persist until one of the quantities $\nu_{\eps,l}(\cdot,t),\varpi_{\eps,l}(\cdot,t)$ is not in $C^0(\T)$ anymore. 

Of course, the vorticity intensity $\varpi_{\ep,l}$ at time~0 must satisfy
\begin{equation}\label{E.initialcond}
\varpi_{\ep,l}(s,0)=\varpi_{\ep,l}^0(s)
\end{equation}
and, by definition,
\begin{equation}\label{E.initialcond2}
\nu_{\eps,l}(s,0)=\nu_{\ep,l}^0(s)\,,\qquad \eta_{\eps,l}(s,0)=\eta^0_\ep(s)\,.
\end{equation}

Having this notation and this geometric setting at hand, we can now give a more precise version of our main result. Here and in what follows, we will say that a quantity $\mu_l(s)$ is in the space $C^k\mathcal \cX_\rho$ with $\rho>0$ and $k$ a nonnegative integer if $l\mapsto \mu_l(\cdot)$ is in $C^k((-1,1),\mathcal \cX_\rho)$ and
\[
\|\mu_l\|_{C^k\mathcal \cX_\rho}:=\sum_{j=0}^ k\sup_{|l|\leq 1}\|\partial_l^j\mu_l\|_\rho<\infty\,.
\]




\begin{theorem}\label{main thm}
Let $\rho_0 > 0$ be a real number. We fix a reference curve $\Gamma\in\cX_{\rho_0}^2$ satisfying \eqref{tec no self int} and $\Gamma^{(5)}\in \cX_{\rho_0}^2$ and, for $\ep \in (0,1)$, consider a family of initial data  $(\eta_{\ep}^0)_\ep\subset  \cX_{\rho_0}$ and $ (\varpi_{\ep,l}^0)_\ep \subset C^0 \cX_{\rho_0}$, with $\varpi^0_{\ep,l}\equiv0$ for all $|l|>\frac12$, satisfying the uniform bound
\[
C_0:=\sup_{0<\ep<1} \left(\, \|\eta_{\ep}^{0}\|_{ \cX_{\rho_0}}+ \|\dot{\eta}_{\ep}^{0}\|_{ \cX_{\rho_0}} + \|\varpi_{\ep,l}^0\|_{C^0\mathcal \cX_{\rho_0}}\,\right)<\infty\,.
\]
Then, there exists $\ep_0 \in (0,1)$ such that, for every $\ep \in (0,\ep_0]$, there exist a constant $C_1 > 0$ and some time $T>0$, depending on $C_0$ but not on~$\ep$, such that, for all $0 < \rho < \rho_0-C_1T$:
\begin{itemize}
    \item[\rm (i)] The solution $\om_\ep$ to the Euler equation~\eqref{eq 2dEuler} with initial datum \eqref{def omega0} is of the form \eqref{E.ansatzomega}, \eqref{E.ansatzomegabis} for all $t \in [0,T)$. In particular, $\omega_\ep \in C([0,T), C_c^0(\R^2))$.\smallbreak
    \item[\rm (ii)] If $\omega_\ep^0 \in C_c^{r}(\R^2)$ for some $0 < r < \infty$, then $\omega_\ep \in C_w([0,T), C_c^r(\R^2))$. \smallbreak
    \item[\rm (iii)] If $\omega_\ep^0 \in C_c^{\infty}(\R^2)$, then     
    $\om_\ep\in C^\infty([0,T)\times\RR^2)$. \smallbreak
    \item[\rm (iv)] $\varpi_{\ep,l}(\cdot,t)\equiv0$  for all $|l|>\frac34$ and all $t \in [0,T)$. In particular, for all $t \in [0,T)$, the support of $\om_\ep(\cdot,t)$ is contained in a tubular neighborhood of~$\Gamma$ of width~$ C_2 \ep$, for some $C_2 > 0$ depending on $C_0$, but not on $\ep$. \smallbreak
   \item[\rm (v)] If we assume that
   \begin{equation} \label{E.convergenceInitialdata}
        \int_{-1}^1 \varpi_{\ep,l}^0 \dd l \to \varpi_0^0 \quad \textup{in } \cX_{\rho}\,, \quad \textup{as } \ep \to 0^{+}\,,
   \end{equation}
and denote by $(\nu_0, \varpi_0)$ the unique solution to the Birkhoff--Rott equations (written as a graph over the curve $\Ga$ as in \eqref{ev pi 2}) with initial datum $(0, \varpi_0^0)$, there exists some time $T' \in (0,T]$, depending on $C_0$ but not on $\ep$, such that, for all $0 < \rho' < \rho$ and all $t \in [0,T')$, 
\begin{equation}
\hspace{1.5cm} \norm{\nu_{\ep,l}(\cdot,t) - \nu_0(\cdot,t)}_{\rho'} +\norm{ \int_{-1}^1 \varpi_{\ep,l}(\cdot,t) \dd l - \varpi_0(\cdot,t)}_{\rho'}\lesssim \eps+\norm{\int_{-1}^1\varpi_{\eps,l}^0\dd l-\varpi_0^0}_{\rho}\,.
\end{equation}
In particular, setting 
\begin{equation} \label{E.omegaSingTh}
\omega_{\rm sing}(x,t) := \left\{
\begin{aligned}
&\widetilde{\om}_0(\bs(x),\bn(x),t)   &&\text{if } x\in U_\Gamma\,,\\
& 0 && \text{if } x\not\in U_\Gamma\,,
\end{aligned}
\right.
\end{equation}
with
\begin{equation*}
    \widetilde{\omega}_0(s,n,t):= \varpi_{0}(s,t) \, \delta(n-\nu_{0}(s,t))\,,
\end{equation*}
it follows that, for all $t \in [0,T')$,
$$
\om_\ep(\cdot,t) \to \om_{\rm sing}(\cdot,t)\,, \quad \textup{as } \ep \to 0^{+}\,,
$$
in the sense of distributions. 
\end{itemize}
\end{theorem}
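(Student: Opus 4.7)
The plan is to reduce the Euler evolution to a closed effective system of analytic evolution equations for the $l$-parametrized pair $(\eta_{\ep,l}, \varpi_{\ep,l})$, solve it on a shrinking scale of analytic spaces $\{\cX_\rho\}$ via Nishida's abstract Cauchy--Kovalevskaya theorem, and then read off (i)--(v) from the structure of the solution. To derive the effective system, I would use that the ansatz \eqref{E.ansatzomega}--\eqref{E.ansatzomegabis} forces the sheets $\gamma_{\ep,l}(\cdot,t)$ to be material curves for the flow $X_t$ (up to the intrinsic $s$-reparametrization $\sigma_t$), while conservation of vorticity along trajectories prescribes the evolution of $\varpi_{\ep,l}$. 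Projecting $u_\ep = K\ast\omega_\ep$ on the frame $(e_\bs,e_\bn)$ of \eqref{def es}--\eqref{expr tang} yields a system of the schematic form
\[
\pd_t \eta_{\ep,l}(s,t) = \cF_1[\eta,\varpi](s,t,l)\,, \qquad \pd_t \varpi_{\ep,l}(s,t) = \cF_2[\eta,\varpi](s,t,l)\,,
\]
where each $\cF_j$ is built from the double integral $\int_\TT\int_{-1}^{1} K(\gamma_{\ep,l}(s,t)-\gamma_{\ep,\ell}(\varsigma,t))\,\varpi_{\ep,\ell}(\varsigma,t)\,\mathrm{d}\ell\,\mathrm{d}\varsigma$ together with explicit analytic geometric factors.

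The technical heart of the proof is to verify the Nishida--Ovsiannikov hypotheses for $\cF_1,\cF_2$, namely a quantitative Lipschitz bound of the form $\|\cF_j[u]-\cF_j[v]\|_{\rho'} \leq C(\rho-\rho')^{-1}\|u-v\|_\rho$ on $\cX_\rho$, uniformly in $\ep\in(0,\ep_0]$. The obstruction concentrates in the Biot--Savart convolution: when $\ell=l$ it reproduces the principal-value singularity of the Birkhoff--Rott operator, and the analyticity of the tangential functional setting is dictated precisely by the need to absorb this with the characteristic $(\rho-\rho')^{-1}$ loss, which reflects the underlying elliptic nature of Birkhoff--Rott. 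By contrast, when $\ell\neq l$ the two sheets are transversally separated by distance of order $\ep|l-\ell|$, so the $\ell$-integration produces only an integrable logarithmic singularity and requires no regularity in the normal variable $l$, continuity sufficing. This dichotomy is the anisotropic regularity mechanism highlighted in the introduction; its rigorous implementation, together with the geometric estimates on the coordinate change and on $(e_\bs,e_\bn)$, are the content of Sections~\ref{S.Geo}--\ref{S.Kernel}.

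Once the abstract theorem yields a solution $(\eta_{\ep,l},\varpi_{\ep,l})$ on strips $\T_\rho$ with $\rho=\rho_0-C_1 t>0$, conclusions (i)--(iv) are structural. For (i), I would verify that the distribution $\omega_\ep$ defined by \eqref{def Phi om ep} is a weak solution of \eqref{eq 2dEuler} and then invoke Yudovich uniqueness, which identifies it with the genuine Euler flow. Parts (ii) and (iii) follow because $\pd_l$ commutes with the effective system and the Nishida estimates propagate to $C^r\cX_\rho$-norms uniformly in $\ep$, while $x$-regularity is inherited from the analytic coordinate change on $U_\Gamma$. Part (iv) is transport-driven: the equation for $\varpi_{\ep,l}$ has no source in those values of $l$ for which the initial datum vanishes, so the $l$-support is preserved, and combining this with the $\cX_\rho$-control of $\nu_{\ep,l}$ furnishes the stated tubular bound on the spatial support.

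For the Birkhoff--Rott limit in (v), the idea is to compare the effective system at $\ep>0$ with its formal $\ep=0$ limit, in which all sheets $\gamma_{\ep,l}$ collapse onto a single curve and the only surviving density is $\int_{-1}^{1}\varpi_{\ep,\ell}\,\mathrm{d}\ell$, yielding exactly the Birkhoff--Rott system written as a graph over $\Gamma$. The Nishida framework that produces the local solution also produces stability in $\cX_{\rho'}$ jointly in $\ep$ and in the initial data, which gives the claimed estimate $\lesssim \ep + \|\int_{-1}^{1}\varpi_{\ep,\ell}^0\,\mathrm{d}\ell - \varpi_0^0\|_\rho$. Distributional convergence $\omega_\ep(\cdot,t)\to\omega_{\mathrm{sing}}(\cdot,t)$ is then immediate from \eqref{def Phi om ep} by testing against a continuous function. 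I expect the main obstacle throughout to be the kernel analysis sketched above: a naive estimate would demand analyticity also in~$l$, which would force the data away from the compactly supported vorticities we care about, so the careful separation of the $\ell=l$ contribution (which must be treated by the analytic machinery) from the $\ell\neq l$ contribution (which is tamed by transverse averaging) must be executed with estimates that remain uniform as $\ep\to0^+$.
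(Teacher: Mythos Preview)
Your overall strategy---derive an effective system for the layer data, solve it via Nishida's abstract Cauchy--Kovalevskaya theorem, then compare with Birkhoff--Rott---matches the paper's. Two points deserve comment.

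First, the paper does not work with the pair $(\eta_{\ep,l},\varpi_{\ep,l})$ but with the four-tuple $(\nu_{\ep,l},\,\partial_s\nu_{\ep,l},\,\partial_l\eta_{\ep,l},\,\varpi_{\ep,l})$ on the constrained scale $\cZ_\rho\subset\cY^4_\rho$. Carrying $\partial_s\nu_{\ep,l}$ as an independent unknown is what makes the kernel bounds of Proposition~\ref{main est} close: the singular-integral estimates (Lemmas~\ref{L.ab} and~\ref{bds d}) need $C^{3/2}$-control of $\digamma=\Gamma+\nu\,\dot\Gamma^\perp$, i.e.\ $\partial_s\nu\in\cX_\rho$, and without it the Nishida loss would be $(\rho-\rho')^{-2}$ rather than $(\rho-\rho')^{-1}$. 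Tracking $\partial_l\eta$ rather than $\eta$ is likewise forced by the relation $\nu_{\ep,l}-\nu_{\ep,\ell}=\ep\int_\ell^l(1+\partial_\mu\eta)\,d\mu$, which keeps the transverse separation in the kernel of order $|l-\ell|$ uniformly in~$\ep$. Your kernel heuristic (``split $\ell=l$ from $\ell\neq l$'') is not how the paper proceeds: the estimate is uniform over all $l\neq\ell$, the point being that the model convolution kernels of Lemma~\ref{SingInt} are bounded on $C^{1/2}$ independently of $|b|\sim|l-\ell|$.

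Second, and more substantively, your route to (v) via ``Nishida stability jointly in $\ep$ and the data'' hides a real obstruction. The velocity $U_\ep(w)(l,\cdot)$ does \emph{not} converge to the Birkhoff--Rott velocity $K_0$ as $\ep\to0$: by Proposition~\ref{Prop K conv} there is an additional jump
\[
-\frac{\gamma_{\ep,0}'}{2|\gamma_{\ep,0}'|^2}\int_{-1}^{1}\bigl(\mathds{1}_{\mu\geq l}-\mathds{1}_{\mu\leq l}\bigr)\,\varpi_{\ep,\mu}\,d\mu,
\]
the tangential discontinuity of the limiting sheet. A comparison of the $\ep$-system with Birkhoff--Rott at fixed $l$ therefore fails at order one. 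The paper's Gronwall argument circumvents this by comparing not $\nu_{\ep,l}$ with $\nu_0$ but the symmetrized $\bar\nu_\ep=\tfrac12(\nu_{\ep,1}+\nu_{\ep,-1})$, for which the two jumps cancel, and by exploiting in the $\varpi$-equation the antisymmetry
\[
\int_{-1}^1\!\!\int_{-1}^1\bigl(\mathds{1}_{\mu\geq l}-\mathds{1}_{\mu\leq l}\bigr)\,\varpi_{\ep,\mu}\,\varpi_{\ep,l}\,d\mu\,dl=0.
\]
These cancellations are what makes the error terms $E_2^\ep$ and $E_4^\ep$ of Step~3 genuinely $O(\ep)$; they do not fall out of an abstract stability statement.

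A minor point: parts (ii) and (iii) follow directly from Yudovich (resp.\ classical) well-posedness once (i) identifies the Yudovich solution with the ansatz; there is no need to propagate $C^r\cX_\rho$-norms through Nishida.
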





\begin{remark}
    Our proof also works if we only assume that $\varpi_{\ep,l}^0 \equiv 0$ for $|l| \geq \frac12$ and that $ l \mapsto \varpi_{\ep,l}$ is in $L^{\infty}((-1,1), \cX_{\rho_0})$ instead of in $C^0((-1,1),\cX_{\rho_0})$. This allows us to recover Benedetto and Pulvirenti's result~\cite{BP} on desingularization of the vortex sheet via vortex patches, without any of their assumptions on the geometry of the sheet. 
\end{remark}



\section{Proof of the main result} \label{S.proofMain}



In this section we present the proof of Theorem \ref{main thm}. To streamline the presentation, we will state several key auxiliary results whose proofs are relegated to later sections.

\subsection*{Step 1: Derivation of an effective evolution equation}

As a first step towards the proof of Theorem \ref{main thm}, we shall derive the evolution equations for $\nu_{\ep,l}$, $\pd_l \eta_{\ep,l}$ and $\varpi_{\ep,l}$ (see \eqref{def nu eps l}--\eqref{E.ansatzomegabis}). To do so, let us introduce some more notation. First, it is convenient to consider the expression of the velocity field $u_\ep := K* \om_\ep$ inside the tubular neighborhood~$U_\Gamma$ in terms of the coordinates $(s,n)$, introduced in \eqref{def coord}. We therefore set
\begin{equation} \label{def tilde}
\tu (s, n, t): = u_\ep (\bx (s, n), t)\,.
\end{equation}
Likewise, we define the orthogonal basis
$$
\tes(s,n) := e_\bs(\bx(s,n))\,, \quad \textup{and} \quad \ten(s,n) :=  e_\bn (\bx(s,n))\,.
$$
Note that the unit field $\ten(s,n)$ is independent of~$n$, so henceforth we will simply write $\ten(s)$. Now, let
\begin{equation}\label{E.usun}
    \tu^{\,\bs}(s,n,t):=\tu(s,n,t)\cdot \frac{\tes(s,n)}{|\tes(s,n)|^2}\,,\qquad  \tu^{\,\bn}(s,n,t):=\tu(s,n,t)\cdot {\ten(s)}
\end{equation}
denote the components of the velocity field in this basis, in terms of which one can write
\[
\tu(s,n,t)=\tu^{\,\bs}(s,n,t) \, \tes(s,n)+ \tu^{\,\bn}(s,n,t) \, \ten(s)\,.
\]

 We are now ready to give the system of evolution equations that $\varpi_{\ep,l}(s,t)$ and $\nu_{\eps,l}(s,t)$ must satisfy for~\eqref{def omega} to be a solution to the Euler equations. The proof of this result is postponed to Section \ref{S.vortexSheets}.

\begin{proposition} \label{P.Evolution} Let 
\begin{equation}\label{E.kappa}
\kappa(s,n) := \frac{\pd}{\pd s} \Big( \frac{\tes(s,n)}{|\tes(s,n)|^2}\Big)\,.
\end{equation}
Assume that the solution $\omega_\eps$ to the Euler equations has the structure \eqref{def gamma evolved}--\eqref{E.ansatzomegabis} and that $$\norm{\de_l\eta_{\eps,l}}_{L^\infty(\T \times [0,T))}<1$$ for all $|l| \leq 1$. Then, for $s\in\TT$:
\begin{align} \label{ev gamma}
    & \textup{\rm a) }\  \pd_t \nu_{\eps,l}(s,t) + \tu^{\, \bs} (s,\nu_{\eps,l}(s,t),t)\, \pd_s \nu_{\ep,l}(s,t) =  \tu^{\, \bn}(s,\nu_{\eps,l}(s,t),t) \,, \\
    & \label{ev eta} \textup{\rm b) }\ \pd_t \pd_l \eta_{\ep,l}(s,t) + \pd_s\Big( \tu^{\, \bs} (s,\nu_{\eps,l}(s,t),t) \big(1+ \pd_l \eta_{\ep,l}(s,t) \big) \Big) \\ \nonumber & \hspace{2.55cm} =  \big( \kappa(s,\nu_{\ep,l}(s,t)) \cdot \tu(s,\nu_{\eps,l}(s,t),t) \big)  \big(1+ \pd_l \eta_{\ep,l}(s,t) \big)\,, \\
    & \label{ev pi} \textup{\rm c) }\ \pd_t \varpi_{\ep,l}(s,t) + \pd_s \big(\tu^{\, \bs} (s, \nu_{\ep,l}(s,t),t)  \varpi_{\ep,l}(s,t)\big)= 0\,.
\end{align}
\end{proposition}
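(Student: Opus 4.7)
The plan is to combine the Yudovich identity $\om_\eps(\cdot,t)=\om_\eps^0\circ X_t^{-1}$ with the ansatz \eqref{def gamma evolved}--\eqref{E.ansatzomegabis} in order to express the unknowns $\nu_{\eps,l}$ and $\varpi_{\eps,l}$ in Lagrangian form, and then to extract the three evolution equations by projecting onto the orthogonal frame $\{\tes,\ten\}$.

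First, I would pair $\om_\eps(\cdot,t)$ with a generic test function~$\Phi$ via $\langle\om_\eps(\cdot,t),\Phi\rangle=\langle\om_\eps^0,\Phi\circ X_t\rangle$ and perform the change of variable $s\mapsto \si_t(s)$ in the resulting $s$-integral. Comparing with \eqref{def Phi om ep} yields the Lagrangian identity
\begin{equation*}
\varpi_{\eps,l}(s,t)=\varpi_{\eps,l}^0(\si_t(s))\,\pd_s\si_t(s),
\end{equation*}
which determines $\varpi_{\eps,l}$ in terms of the initial datum and the reparametrization map. Next, to derive \eqref{ev gamma}, I would differentiate $\ga_{\eps,l}(s,t)=X_t(\ga_{\eps,l}^0(\si_t(s)))$ in~$t$, obtaining
\begin{equation*}
\pd_t\ga_{\eps,l}=u_\eps(\ga_{\eps,l},t)+\frac{\pd_t\si_t}{\pd_s\si_t}\,\pd_s\ga_{\eps,l}\,.
\end{equation*}
On the other hand, writing $\ga_{\eps,l}(s,t)=\bx(s,\nu_{\eps,l}(s,t))$ and using $\pd_s\bx=\tes$ and $\pd_n\bx=\ten$ gives $\pd_t\ga_{\eps,l}=(\pd_t\nu_{\eps,l})\ten$ together with $\pd_s\ga_{\eps,l}=\tes+(\pd_s\nu_{\eps,l})\ten$. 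Projecting the previous identity onto the orthogonal directions $\tes$ and $\ten$, the $\tes$-component forces the tangential drift $\pd_t\si_t/\pd_s\si_t=-\tu^{\,\bs}(s,\nu_{\eps,l},t)$, while the $\ten$-component is precisely \eqref{ev gamma}. Feeding this expression for $\pd_t\si_t/\pd_s\si_t$ into the time derivative of $\varpi_{\eps,l}=\varpi_{\eps,l}^0(\si_t)\pd_s\si_t$ and simplifying would produce \eqref{ev pi}, which is equivalent to the fact that the one-form $\varpi_{\eps,l}\,ds$ is transported by the tangential velocity $-\tu^{\,\bs}$.

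Finally, for \eqref{ev eta}, I would differentiate \eqref{ev gamma} in~$l$ and use $\pd_l\nu_{\eps,l}=\eps(1+\pd_l\eta_{\eps,l})$, which leads to
\begin{equation*}
\pd_t\pd_l\eta_{\eps,l}+\tu^{\,\bs}\pd_s\pd_l\eta_{\eps,l}+\pd_n\tu^{\,\bs}\,\pd_s\nu_{\eps,l}(1+\pd_l\eta_{\eps,l})=\pd_n\tu^{\,\bn}(1+\pd_l\eta_{\eps,l})\,.
\end{equation*}
To recast this in the conservative form of \eqref{ev eta}, I would use the divergence-free condition $\div u_\eps=0$ expressed in the curvilinear frame. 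A direct computation using $\nabla\bs|_{\bx}=\tes/|\tes|^2$, $\nabla\bn|_{\bx}=\ten$, $\pd_n\ten=0$, and the definition \eqref{E.kappa} of $\kappa$ would give the identity
\begin{equation*}
\pd_s\tu^{\,\bs}+\pd_n\tu^{\,\bn}=\kappa(s,n)\cdot\tu(s,n,t)\,.
\end{equation*}
Inserting this together with $\pd_s[\tu^{\,\bs}(s,\nu_{\eps,l},t)]=\pd_s\tu^{\,\bs}+\pd_n\tu^{\,\bs}\pd_s\nu_{\eps,l}$ into the previous display would then regroup the terms into \eqref{ev eta}.

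The main technical point, and the only step that is not pure bookkeeping with the chain rule and the flow map, is the evaluation of $\div u_\eps=0$ in these non-orthonormal curvilinear coordinates and the identification of the correction term as precisely $\kappa\cdot\tu$ with $\kappa$ given by \eqref{E.kappa}. The hypothesis $\|\pd_l\eta_{\eps,l}\|_{L^\infty}<1$ is used only to ensure that $\pd_l\nu_{\eps,l}\ne 0$, so that the layered parametrization by $l$ remains non-degenerate on the time interval where the derivation applies.
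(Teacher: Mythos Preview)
Your argument is correct, and for part a) it coincides with the paper's. For parts b) and c) you take a more direct route than the paper does.

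For b), the paper subtracts \eqref{ev gamma} at two layers $l$ and $l'$, rewrites the result as integrals in $n$ over $[\nu_{\ep,l},\nu_{\ep,l'}]$, invokes the incompressibility identity $\pd_s\tu^{\,\bs}+\pd_n\tu^{\,\bn}=\kappa\cdot\tu$ under the integral sign, then changes variables back to $l$ (here is where the paper uses $\|\pd_l\eta_{\ep,l}\|_{L^\infty}<1$ explicitly, for invertibility of $l\mapsto\nu_{\ep,l}$) and finally takes $l'\to l$ via Lebesgue differentiation. You instead differentiate \eqref{ev gamma} in $l$ pointwise and regroup. Your way is shorter; the paper's integral formulation has the mild advantage that it never writes $\pd_n\tu^{\,\bs}$ or $\pd_n\tu^{\,\bn}$ evaluated at a single layer, only their integrals across layers, but since both proofs already assume $u_\ep\in C^1$ (with log-Lipschitz handled by approximation) this is only a cosmetic difference.

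For c), the paper tests $\om_\ep$ against indicator functions of flow-transported regions $\{(s,l):\sigma_t^{-1}(a)\le s\le\sigma_t^{-1}(b),\,l_1\le l\le l_2\}$, passes to the per-layer conservation law by sending $l_2-l_1\to0$, and then applies the Reynolds transport theorem to the conserved integral. You instead read off the Lagrangian formula $\varpi_{\ep,l}(s,t)=\varpi_{\ep,l}^0(\sigma_t(s))\,\pd_s\sigma_t(s)$ directly from the duality $\langle\om_\ep(\cdot,t),\Phi\rangle=\langle\om_\ep^0,\Phi\circ X_t\rangle$ together with the change of variable $s\mapsto\sigma_t(s)$, and differentiate in $t$ using $\pd_t\sigma_t=-\tu^{\,\bs}\pd_s\sigma_t$. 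This is again more compact; note that extracting the per-layer identity from the integrated one uses that $(s,l)\mapsto\bx(s,\nu_{\ep,l}(s,t))$ is injective, which is where the hypothesis $\|\pd_l\eta_{\ep,l}\|_{L^\infty}<1$ enters in your argument as well.
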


Having this result at hand, and before proceeding further, let us point out that, by arguing as in the proof of Proposition~\ref{P.Evolution}, one can verify that the Birkhoff--Rott equations, when written as a graph over the curve $\Gamma$, read as
\begin{equation}\left\{\begin{aligned}  
 & \pd_t \nu_{0}(s,t) + \widetilde{u}_0^{\, \bs} (s,\nu_{0}(s,t),t)\, \pd_s \nu_{0}(s,t) =  \widetilde{u}_0^{\, \bn}(s,\nu_{0}(s,t),t) \,, \\
    & \label{ev pi 2} \pd_t \varpi_{0}(s,t) + \pd_s \big(\widetilde{u}_0^{\, \bs} (s, \nu_{0}(s,t),t)  \varpi_{0}(s,t)\big)= 0\,.
\end{aligned}
\right.
\end{equation}
Here the vortex sheet $\omega_0$ is written as \begin{equation}\label{E.ansatzomega2}
    \om_0(x,t):=\begin{cases}
     \widetilde{\om}_0(\bs(x),\bn(x),t)   &\text{if } x\in U_\Gamma\,,\\
  0 & \text{if } x\not\in U_\Gamma\,,
    \end{cases}
\end{equation}
with
\begin{equation}\label{E.ansatzomegabis2}
    \widetilde{\omega}_0(s,n,t):= \varpi_{0}(s,t) \, \delta(n-\nu_{0}(s,t))\,.
\end{equation}
Also, note that the velocity $\widetilde{u}_0$ at the curve $\gamma_0(s,t):=\Gamma(s)+\nu_0(s,t)\dot{\Gamma}(s)^\perp$ can be recovered from $\nu_0$ and $\varpi_0$ through the Biot--Savart law:
\begin{align}
\widetilde{u}_0(s,\nu_0(s,t),t) & =\frac{1}{2\pi}\, {\rm p.v.}\int_{\T} \frac{\left(\gamma_0(s,t)-\gamma_0(\varsigma,t)\right)^\perp}{|\gamma_0(s,t)-\gamma_0(\varsigma,t)|^2}\varpi_0(\varsigma,t)\dd\varsigma\\
& = \frac{1}{2\pi} {\rm p.v.}\,\int_{\T} \frac{\left(\gamma_0(s,t)-\gamma_0(s+\varsigma,t)\right)^\perp}{|\gamma_0(s,t)-\gamma_0(s+\varsigma,t)|^2}\varpi_0(s+\varsigma,t)\dd\varsigma\,.
\label{sheet velo}
\end{align}

\subsection*{Step 2: Local existence for the effective system}

We aim to prove that the effective equations derived in Proposition~\ref{P.Evolution} are locally well-posed. Essentially as a consequence of the Kelvin--Helmholtz instability inherent to vortex sheets, we will need to consider solutions that are analytic in the~$s$ variable and use a theorem of Cauchy--Kovalevskaya type.
 
For this, we start by decomposing the velocity $\tu(s,\nu_{\ep,l}(s,t),t)$ into the contributions of the individual curves in \eqref{E.ansatzomega} by making use of \eqref{def nu eps l} and \eqref{def nu eps l2}. With $s\in\TT$, as assumed in the derivation of the effective equations, this amounts to the formula
\begin{align}
\tu(s,\nu_{\ep,l}(s,t),t) & =\frac{1}{2\pi}\int_{-1}^1\int_{\T} \frac{(\gamma_{\ep,l}(s,t)-\gamma_{\ep,\ell}(\varsigma,t))^\perp}{|\gamma_{\ep,l}(s,t)-\gamma_{\ep,\ell}(\varsigma,t)|^2}\varpi_{\ep,\ell}(\varsigma,t) \dd \varsigma\dd \ell \\
& = \frac{1}{2\pi}\int_{-1}^1\int_{\T} \frac{(\gamma_{\ep,l}(s,t)-\gamma_{\ep,\ell}(s+\varsigma,t))^\perp}{|\gamma_{\ep,l}(s,t)-\gamma_{\ep,\ell}(s+\varsigma,t)|^2}\varpi_{\ep,\ell}(s+\varsigma,t) \dd \varsigma\dd \ell\,.\label{E.uep1}
\end{align}

Given functions $\nu_1,\nu_2,g,\varpi\in \cX_\rho$, and under the assumption that $\nu_1,\nu_2,g$ are small enough, let us define the operator
\begin{equation}\label{E.defKep}
    K_\ep[\nu_1,\nu_2,g,\varpi](z):=\frac1{2\pi}\int_\TT \frac{\Xi_\ep[\nu_1,\nu_2,g](z,\varsigma)^\perp}{[\Xi_1^\ep[\nu_1,\nu_2,g](z,\varsigma)]^2+[\Xi_2^\ep[\nu_1,\nu_2,g](z,\varsigma)]^2}\varpi(z+\varsigma)\, \dd\varsigma\,.
\end{equation}
Here, $\Xi^\ep[\nu_1,\nu_2,g]\equiv(\Xi_1^\ep[\nu_1,\nu_2,g],\Xi_2^\ep[\nu_1,\nu_2,g])$ is defined as
\begin{gather*}
\Xi^\ep[\nu_1,\nu_2,g](z,\varsigma):=\Ga(z) - \Ga(z+\varsigma)
    +  [\nu_1(z)-\ep g(z)]\,\dot{\Ga}(z)^{\perp}-\nu_1(z+\varsigma)\dot{\Ga}(z+\varsigma)^{\perp}\,,
\end{gather*}
for $z\in\TT_\rho$ and $\varsigma \in \T$.

Extending the action of the operator~$K_\ep$ to functions that depend on~$(z,t)$ in the obvious way, our next goal is to consider the effective system~\eqref{ev gamma}--\eqref{ev pi} as an evolution problem for functions that are holomorphic in the variable $z\in\TT_\rho$. For this purpose, for $z\in\TT_\rho$ one can {\em define}\/ the field~$\tu$ appearing in this system as
\begin{align}
\tu(z,\nu_{\ep,l}(z,t),t) & := \int_{-1}^1 K_{\ep}\left[\nu_{\ep,\ell},\, \pd_s \nu_{\ep,\ell},\, \int_l^\ell (1+\pd_\mu \eta_{\ep,\mu}) \dd \mu,\,\varpi_{\ep,\ell}\right](z,t) \dd \ell\,.
\end{align}
By construction, this coincides with~\eqref{E.uep1} for $z = s\in\TT$. 


 This formulation will allow us to apply Nishida's Cauchy--Kovalevskaya theorem~\cite{Nishida} to the evolution equations found in Proposition \ref{P.Evolution}. For the benefit of the reader, we state here the precise version of the theorem we are going to use:

\begin{theorem}[Nishida~\cite{Nishida}]\label{Nishida}
Let $  \{Y_\rho\}_{\rho\in[0,\infty)}$ be a scale of Banach spaces, and let $B^R_\rho:=\{w\in Y_\rho: \|w\|_{Y_\rho}<R\}$ denote the ball in~$Y_\rho$ of radius $R$. Assume that, for any $0\leq \rho'<\rho$, $Y_\rho$ is a linear subspace of $ Y_{\rho'}$ and $\| w \|_{Y_{\rho'}} \leq \| w \|_{Y_\rho}$ for all $w\in Y_\rho$. Consider the initial value problem 
\begin{equation}\label{E.Nishida}
    w'(t) = F(w(t), t)\,, \qquad 
    w(0) = w_0\,,
\end{equation}
and assume the following conditions on $F$ and $w_0$,  for  some positive numbers $R ,\delta ,\rho_0 $:
\begin{itemize}
    \item[(i)]  For every $0 < \rho' < \rho < \rho_0$, the map $F: B_\rho^R \times(-\delta,\delta)\to Y_{\rho'}, \ (w, t) \mapsto F(w, t),$ is continuous.
    \item[(ii)] For any $0<\rho' < \rho < \rho_0$ and all $w^1, w^2 \in B_\rho^R$, and for any $t\in (-\delta,\delta)$, $F$ satisfies
    \begin{equation*}
        \| F(w^1, t) - F(w^2, t) \|_{Y_{\rho'}} \leq \frac{C}{\rho - \rho'}\, \| w^1 - w^2 \|_{Y_\rho}\,,
    \end{equation*}
    where $C > 0$ is a fixed constant independent of $t, w^1, w^2, \rho$ or $\rho'$.
    \item[(iii)] $w^0 \in B_{\rho_0}^{R_0}$ for some $0 < R_0 < R$ and $F(w^0, \cdot ): (- \delta,\delta)\to Y_\rho$, $\,t \mapsto F(w^0,t)$, is a continuous function for every $0<\rho < \rho_0$. Moreover,  it satisfies, with a fixed constant $K > 0$,
    \begin{equation}
        \sup_{|t|<\delta}\| F(w^0, t) \|_{Y_\rho} \leq \frac{K}{\rho_0 - \rho}.
    \end{equation}
\end{itemize}
Under these hypotheses, there is a positive constant $\delta_0$ (depending only on $R_0$, $R$, $C$ and $K$) such that there exists a unique function $w(t)$ which, for every $0\leq \rho < \rho_0$, is a continuously differentiable function of $t\in (-(\rho_0 - \rho)\delta_0,\,(\rho_0 - \rho)\delta_0)$ with values in $B_\rho^R$ that satisfies the initial value problem~\eqref{E.Nishida}.
\end{theorem}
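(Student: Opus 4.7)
The approach is a Picard iteration on the scale $\{Y_\rho\}$, with the essential twist that the loss of radius at each iteration step (condition (ii)) must be compensated by shrinking the effective space in which one works. Specifically, define $w_0(t) := w^0$ (constant in $t$) and iterate
\[ w_{n+1}(t) := w^0 + \int_0^t F(w_n(s), s)\, ds. \]
The one-step difference estimate, obtained from condition (ii), reads
\[ \|w_{n+1}(t)-w_n(t)\|_{Y_{\rho'}} \leq \frac{C}{\rho-\rho'}\int_0^{|t|} \|w_n(s)-w_{n-1}(s)\|_{Y_\rho}\, ds \]
for every $0 \leq \rho' < \rho < \rho_0$, while condition (iii) provides the base case $\|w_1(t)-w^0\|_{Y_\rho} \leq K|t|/(\rho_0-\rho)$.

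To iterate the loss-of-radius estimate $n$ times from $\rho_0$ down to a target $\rho < \rho_0$, I would choose equispaced intermediate radii $\bar\rho_k := \rho + (\rho_0-\rho)(1-k/n)$ for $k=0,\ldots,n$, so that each individual loss equals $(\rho_0-\rho)/n$. Applying the one-step estimate $n$ times telescopically and using the base case yields
\[ \|w_{n+1}(t)-w_n(t)\|_{Y_\rho} \leq K\,\frac{(Cn)^n\,|t|^{n+1}}{(\rho_0-\rho)^{n+1}\,n!}, \]
where the $n!$ comes from the $n$-fold iterated time integral. Stirling's asymptotic $n^n/n! \sim e^n/\sqrt{2\pi n}$ shows that this is dominated by a geometric series with ratio $Ce|t|/(\rho_0-\rho)$, so the sequence $(w_n(t))$ is Cauchy in $Y_\rho$, and converges, provided $|t| < (\rho_0-\rho)\delta_0$ with $\delta_0 := 1/(Ce)$ (or possibly smaller, to absorb the Stirling constants).

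The uniform bound $w_n(t) \in B_\rho^R$, which is required to legitimately apply condition (ii) at every step, must be verified inductively: the total displacement $\|w_n(t)-w^0\|_{Y_\rho} \leq \sum_{k<n}\|w_{k+1}-w_k\|_{Y_\rho}$ is bounded by the tail of the geometric series, and taking $\delta_0$ smaller if necessary guarantees it stays below $R-R_0$. Continuity of the limit $w(t)$ in $t$ follows from uniform convergence together with (i) and (iii); differentiability of $w(t)$ and its satisfying \eqref{E.Nishida} follow from the integral equation, continuity of $F$, and the fundamental theorem of calculus. Uniqueness is obtained by running the same loss-of-radius mechanism on the difference of two putative solutions, producing an inequality of the form $\|v(t)\|_{Y_\rho} \leq \frac{Ct}{\rho_0-\rho}\sup_{[0,t]}\|v\|_{Y_{\rho_0}}$ which, iterated, forces $v \equiv 0$ on the same time interval.

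\textbf{Main obstacle.} The core technical point is the combinatorial bookkeeping in the iteration: the factor $n^n(\rho_0-\rho)^{-n}$ arising from the $n$ equispaced subdivisions must be balanced against the factor $1/n!$ from iterated time integration. The choice of \emph{equispaced} intermediate radii is essential and, up to an overall multiplicative factor, is the unique subdivision that, combined with Stirling, produces a purely geometric series in $|t|/(\rho_0-\rho)$. Any pointwise-in-$t$ estimate, or any unbalanced subdivision of the radius loss, would destroy this compensation; this is the precise mechanism by which the Cauchy--Kovalevskaya phenomenon (local existence despite unbounded derivative loss) manifests abstractly in Nishida's framework.
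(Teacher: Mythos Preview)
The paper does not prove this theorem: it is quoted verbatim as a result of Nishida \cite{Nishida} and used as a black box in Step~2 of Section~\ref{S.proofMain}. There is therefore no ``paper's own proof'' to compare against.

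That said, your sketch is the standard argument and matches Nishida's original proof (itself a streamlining of Nirenberg's). The Picard scheme, the equispaced subdivision $\bar\rho_k = \rho + (\rho_0-\rho)(1-k/n)$, the resulting bound $K(Cn)^n|t|^{n+1}/\big((\rho_0-\rho)^{n+1}n!\big)$, and the Stirling reduction to a geometric series with ratio $Ce|t|/(\rho_0-\rho)$ are exactly the ingredients in \cite{Nishida}. One small point worth tightening: the inductive verification that $w_n(t)\in B_\rho^R$ must be carried out \emph{simultaneously} with the difference estimate (since condition~(ii) is only assumed on $B_\rho^R$), so the choice of $\delta_0$ small enough to keep the total displacement below $R-R_0$ should be fixed at the outset rather than adjusted a posteriori. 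Your uniqueness argument is also correct in spirit; in Nishida's paper it is handled by the same iteration, and Nirenberg's earlier note gives a slightly sharper uniqueness statement (on any time interval where two solutions coexist, without the size restriction).
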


As already announced, we would like to apply Theorem \ref{Nishida} to solve the evolution equations found in Proposition \ref{P.Evolution}. To that end, we first set 
$$
\nu_\ep[l,s](t):= \nu_{\ep,l}(s,t)\,, \quad \eta_{\ep}[l,s](t) := \eta_{\ep,l}(s,t)\,, \quad \textup{and} \quad \varpi_{\ep}[l,s](t):= \varpi_{\ep,l}(s,t)\,,
$$
and define
\begin{equation} \label{E.defw}
\begin{aligned}
    w[l,s](t) & = \big(w_1[l,s](t),\, w_2[l,s](t),\, w_3[l,s](t),\, w_4[l,s](t)\big) \\
    &:=\big(\nu_\ep[l,s](t),\, \pd_s \nu_\ep[l,s](t),\, \pd_l\eta_\ep[l,s](t),\, \varpi_{\ep}[l,s](t) \big)\,.
\end{aligned}
\end{equation}
Also, we introduce 
\begin{equation} \label{E.Uepsilon}
U_\ep\big(w[\cdot,\cdot]\big)(l,s) :=  \int_{-1}^1 K_{\ep}\left[\,w_1[\ell,\cdot],\, w_2[\ell,\cdot],\, \int_l^\ell (1+w_3[\mu,\cdot]) \dd \mu,\,w_4[\ell,\cdot]\right](s) \dd \ell
\end{equation}
and we further define:
\begin{equation}
\begin{aligned}
    \ U_\ep^{\bn}\big(w[\cdot,\cdot]\big)(l,s) &:= U_\ep\big(w[\cdot,\cdot]\big)(l,s) \cdot \ten(s)\,, \\
 U_\ep^{\bs}\big(w[\cdot,\cdot]\big)(l,s) &:= U_\ep\big(w[\cdot,\cdot]\big)(l,s) \cdot \frac{\tes(s,w_1[l,s])}{|\tes(s,w_1[l,s])|^2}\,,\\
     \K\big(w[\cdot,\cdot]\big)(l,s) &:=  \kappa(s,w_1[l,s])\,.
\end{aligned}
\end{equation}
Since $\K\big(w[\cdot,\cdot]\big)$ only depends on the first component of~$w$, in what follows we will also use the notation
\[
\K\big(w_1[\cdot,\cdot]\big)\,.
\]

Considering functions that depend on $[l,s](t)$ in the obvious way, we then consider the initial value problem
\begin{equation} \label{E.initialValueProblem}
w'(t) = F(w(t), t)\,, \quad 
    w(0) = w^0\,,
\end{equation}
where
\begin{align*}
    & F_1(w) := U_\ep^{\bn}(w) - U_\ep^{\bs}(w) w_2\,, \\
    & F_2(w) := \pd_s \big(U_\ep^{\bn}(w) - U_\ep^{\bs}(w) w_2 \big) \,,  \\
    & F_3(w) := \Big( \K(w) \cdot U_\ep(w)\Big) (1+w_3) - \pd_s\Big(U_\ep^{\bs}(w) (1+w_3)\Big)\,,\\
    & F_4(w) := - \pd_s \Big( U_\ep^{\bs}(w) w_4 \Big)\,,
\end{align*}
and where
\begin{equation} \label{E.dataIntialValueProblem}
w^0\,[l,s]:= (\nu_{\ep,l}^0(s),\, \dot{\nu}^0_{\ep,l}(s),\, 0,\,\varpi_{\ep,l}^0(s))\,,
\end{equation}
so that \eqref{E.initialValueProblem} is the system introduced in Proposition \ref{P.Evolution}.


 We want to use Theorem \ref{Nishida} to solve \eqref{E.initialValueProblem}--\eqref{E.dataIntialValueProblem}. Hence, for $\rho >0$, let us introduce the scale of Banach spaces 
$$
\cY_\rho^k:=\bigg\{f:\T_\rho\times [-1,1]\rightarrow \C^k\bigg|\ l\mapsto f_j(\cdot,l)\in C([-1,1],\cX_\rho)\, \textup{ \& }\sup_{|l| \leq 1} \|f_j(\cdot,l)\|_\rho 
 <\infty\,,\ 1\leq j\leq k\bigg\},
$$
endowed with the norm
$$
\|f\|_{\cY_\rho^k} := \sum_{j=1}^k \, \sup_{|l| \leq 1} \|f_j(\cdot,l)\|_\rho \,.
$$
Of course, the variables $w_1,w_2,w_3$ in \eqref{E.defw} are not independent of each other, and we cannot expect the system to behave well for arbitrary data in $\cY^4_\rho$. We define the set of admissible data as $$
\mathcal{Z}_\rho:=\left\{ w\in\cY^4_\rho\,\bigg|\, \de_sw_1[l,\cdot]=w_2[l,\cdot] \textup{ and } w_1[l,s](t)-w_1[\ell,s](t)=\eps \int_\ell^l (1+w_3[\mu,s](t)) \dd \mu, \  \forall\  l,\ell\right\}.
$$
Note that, for every $\rho > 0$, $\cZ_\rho$ is a Banach subspace of $\cY^4_\rho$ endowed with the same norm as $\cY_\rho^4.$ Moreover,  if $w \in \cZ_\rho$ and $F(w) \in \cY^4_{\rho'}$ for some $0 < \rho' < \rho$, then $F(w)\in \mathcal{Z}_{\rho'}$. Indeed, this is trivial for the coupling between $w_1$ and  $w_2$, and for the coupling between $w_1$ and $w_3$ it follows from the derivation of the equation \eqref{ev eta}.

Having this functional setting at hand, in the next two results, whose proofs are postponed to Sections \ref{S.Geo} and \ref{S.Kernel}, we prove the required estimates to verify that \eqref{E.initialValueProblem} fits the framework of Theorem \ref{Nishida}.

\begin{proposition}\label{geo est}
There exists a constant $\mathcal{C}_1>0$, depending only on $\Ga$, such that, for every $0 \leq \rho \leq \rho_0\,,$  if $w_1^1, w_1^2 \in \cY^1_\rho$ satisfy
$$
 \sup_{|l| \leq 1} \Big\{ \norm{w_1^1[l,\cdot]}_{\rho} +  \norm{w_1^2[l,\cdot]}_{\rho} \Big\}  < \mathcal{C}_1\,,
$$
then
\begin{align}
 \sup_{|l| \leq 1} \bigg\{ \norm{\K\big(w_1^i[\cdot,\cdot]\big)(l,\cdot)}_{\rho}+ \norm{\frac{e_\bs(\cdot,w_1^i[l,\cdot])}{|e_\bs(\cdot,w_1^i[l,\cdot])|^2}}_{\rho} \bigg\}  \lesssim 1\,, \quad i \in \{1,2\}\,,
\end{align}
and
\begin{align}
     \sup_{|l| \leq 1} \bigg\{ & \norm{\K\big(w_1^1[\cdot,\cdot]\big)(l,\cdot)-\K\big(w_1^2[\cdot,\cdot]\big)(l,\cdot)}_{\rho} \\
     & + \norm{\frac{\tes(\cdot,w_1^1[l,\cdot])}{|\tes(\cdot,w_1^1[l,\cdot])|^2}-\frac{\tes(\cdot,w_1^2[l,\cdot])}{|\tes(\cdot,w_1^2[l,\cdot])|^2}}_{\rho} \bigg\} \lesssim \, \sup_{|l| \leq 1} \norm{w_1^1[l,\cdot]-w_1^2[l,\cdot]}_\rho \,,
\end{align}
with implicit constants depending on $\Ga$, but not on $\rho$. 
\end{proposition}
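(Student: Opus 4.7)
\emph{Proof proposal.} The plan is to reduce everything to closed-form algebraic manipulations in the Banach algebra $\cX_\rho$. From \eqref{expr tang} and the definition $\tes(s,n)=e_\bs(\bx(s,n))$ we have
\begin{equation*}
\tes(s,n)=\dot\Gamma(s)+n\,\ddot\Gamma(s)^\perp,\qquad |\tes(s,n)|^2=\dot\Gamma\cdot\dot\Gamma+2n\,\dot\Gamma\cdot\ddot\Gamma^\perp+n^2\,\ddot\Gamma^\perp\cdot\ddot\Gamma^\perp,
\end{equation*}
where the inner products are the bilinear (unconjugated) ones that extend holomorphically to $\TT_{\rho_0}$. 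Since $\Gamma$ is parametrized by arclength, $\dot\Gamma\cdot\dot\Gamma\equiv 1$ on $\TT$, hence on the whole strip by analytic continuation. Using $\partial_s\tes(s,n)=\ddot\Gamma(s)+n\,\dddot\Gamma(s)^\perp$, the function $\kappa(s,n)$ from \eqref{E.kappa} becomes an explicit rational expression in $\dot\Gamma,\ddot\Gamma,\dddot\Gamma$ and $n$. The assumption $\Gamma^{(5)}\in\cX_{\rho_0}^2$ controls all three derivatives uniformly by a constant depending only on $\Gamma$; crucially, evaluating at $n=w_1^i[l,s]$ involves no differentiation of $w_1^i$.

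Next, I would use that $\|\cdot\|_\rho$ makes $\cX_\rho$ into a Banach algebra, since $C^{1/2}(\TT)$ is one and the norm is a supremum over real translates of the argument. Substituting $n=w_1^i[l,s]$,
\begin{equation*}
|\tes(\cdot,w_1^i[l,\cdot])|^2=1+A\,w_1^i[l,\cdot]+B\,(w_1^i[l,\cdot])^2,\qquad A,B\in\cX_{\rho_0},
\end{equation*}
so for $\mathcal{C}_1$ sufficiently small the right-hand side is a small Banach-algebra perturbation of $1$, and a Neumann series argument yields $1/|\tes(\cdot,w_1^i[l,\cdot])|^2\in\cX_\rho$ with norm $\lesssim 1$. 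Multiplying against $\tes(\cdot,w_1^i[l,\cdot])=\dot\Gamma+w_1^i[l,\cdot]\,\ddot\Gamma^\perp$, and carrying out the analogous product for the (longer) rational expression defining $\K$, would then give the uniform upper bound of the statement.

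For the Lipschitz estimates, my plan is elementary telescoping in $\cX_\rho$. The frame satisfies $\tes(s,w_1^1)-\tes(s,w_1^2)=(w_1^1-w_1^2)\,\ddot\Gamma(s)^\perp$, and $|\tes(s,w_1^1)|^2-|\tes(s,w_1^2)|^2$ is a polynomial in $(w_1^1,w_1^2)$ with $\cX_\rho$-coefficients that factors through $w_1^1-w_1^2$. Combining this with the identity
\begin{equation*}
\frac{1}{|\tes_1|^2}-\frac{1}{|\tes_2|^2}=\frac{|\tes_2|^2-|\tes_1|^2}{|\tes_1|^2\,|\tes_2|^2}
\end{equation*}
and the uniform inverse bound from the previous step would yield the Lipschitz estimate for $\tes/|\tes|^2$; the same factorization applied to the rational expression for $\kappa$ would produce the Lipschitz estimate for $\K$. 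The only real obstacle is to verify that the denominators remain uniformly bounded below in modulus on the whole strip $\TT_\rho$, not merely on the real axis $\TT$. This is precisely what fixes the smallness of $\mathcal{C}_1$: it must be chosen so that $n\mapsto \tes(s,n)/|\tes(s,n)|^2$ and $n\mapsto\kappa(s,n)$ remain holomorphic in a fixed complex neighborhood of $\{0\}$ containing the range of every admissible $w_1^i[l,\cdot]$.
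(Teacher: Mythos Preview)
Your approach is correct and is a legitimately different route from the paper's. The paper factors the argument through a general composition lemma (Lemma~\ref{L.geoEst}): for any holomorphic $g:\TT_\rho\times B\to\CC$ with bounded $C^2$ norm, the map $s\mapsto g(s,w(s))$ is in $\cX_\rho$ with a Lipschitz dependence on $w$. The proof of that lemma does not use the Banach-algebra structure directly; instead it controls the $C^{1/2}$ seminorm of $g(\cdot,w^1)-g(\cdot,w^2)$ by a double-difference inequality (Lemma~\ref{L.doubleDifference}), which bounds $|g(a)-g(b)-g(c)+g(d)|$ in terms of $\|Dg\|_\infty|a-b-c+d|$ plus a $\|D^2g\|_\infty$ cross term. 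Proposition~\ref{geo est} then follows by checking that $\kappa$ and $\tes/|\tes|^2$ are real-analytic on $\TT\times(-c_0,c_0)$ and applying the composition lemma.

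You instead exploit that $\tes(s,n)$ is affine in $n$ with coefficients in $\cX_{\rho_0}$, so $|\tes|^2$ is a quadratic perturbation of $1$ and can be inverted by Neumann series in the Banach algebra $\cX_\rho$; the Lipschitz bounds then reduce to algebraic telescoping. This is shorter and avoids the double-difference machinery entirely, at the cost of being specific to functions with this polynomial-in-$n$ structure. The paper's abstraction pays off later: Lemma~\ref{L.geoEst} is reused in Section~\ref{Sec 7} for the coordinate functions $\check{s},\check{\ep}$, which are not polynomials in $n$. Your last paragraph about denominators being bounded below on the whole strip is slightly misplaced: in the Banach-algebra framework this is automatic from the Neumann series once $\|Aw+Bw^2\|_\rho$ is small, so no separate pointwise argument is needed.
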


\begin{proposition}\label{main est}
There exists a constant $\mathcal{C}_2 > 0$, depending only on $\Ga$, such that, for every $0 \leq \rho \leq \rho_0\,,$ if $w^1, w^2 \in \mathcal{Z}_\rho$ satisfy
\begin{equation} \label{ass small}
\sum_{j=1}^3 \bigg( \sup_{|l| \leq 1} \Big\{ \norm{w_j^1[l,\cdot]}_\rho + \norm{w_j^2[l,\cdot]}_\rho \Big\} \bigg)  
< \mathcal{C}_2\,,  \quad \textup{and} \quad \mathcal{C}_3 := \sup_{|l|\leq 1} \big\{  \norm{w_4^1[l,\cdot]}_\rho + \norm{w_4^2[l,\cdot]}_\rho \big\} < \infty \,,
\end{equation}
then
\begin{align}\label{bd K}
 \sup_{l,\, \ell \in [-1,1],\ l\neq \ell} \, \bigg\|K_{\ep}\bigg[w_1^i[\ell,\cdot],\,w_2^i[\ell,\cdot],\, \int_l^\ell (1+w_3^i[\mu,\cdot]) \dd \mu,\,w_4^i[\ell,\cdot]\bigg](\cdot) \bigg\|_\rho \lesssim 1\,, \quad i \in \{1,2\}\,, \quad 
\end{align}
and
\begin{equation}
\begin{aligned}
\sup_{l,\, \ell \in [-1,1],\ l\neq \ell} \,     \bigg\|&K_{\ep}\bigg[w_1^1[\ell,\cdot],\,w_2^1[\ell,\cdot],\, \int_l^\ell (1+w_3^1[\mu,\cdot]) \dd \mu,\,w_4^1[\ell,\cdot]\bigg](\cdot) \\
& - K_{\ep}\bigg[w_1^2[\ell,\cdot],\,w_2^2[\ell,\cdot],\, \int_l^\ell (1+w_3^2[\mu,\cdot]) \dd \mu,\,w_4^2[\ell,\cdot]\bigg](\cdot) \bigg\|_\rho  \lesssim \|w^1-w^2\|_{\cY^4_\rho}\,,  \label{E.KLipschitz}
\end{aligned}
\end{equation}
with implicit constants depending on $\Ga$ and~$\cC_3$, but not on $\rho$.
\end{proposition}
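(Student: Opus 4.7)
The approach is to exploit the specific structure of the denominator $|\Xi^\ep|^2$, showing that its leading part is an exact quadratic form in $\varsigma$ and $\ep g$ thanks to the arc-length identity, and then to decompose the integrand so as to isolate an explicitly integrable principal part that is uniformly controlled as $\ep g\to 0$. Throughout, the variable $z$ ranges over $\T_\rho$ while the integration variable $\varsigma$ is real, so translates $z+\varsigma$ stay on the same horizontal line of $\T_\rho$ and it suffices to produce $C^{1/2}(\T)$-estimates line by line.

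\textbf{Step 1 (Non-degeneracy of the denominator).} Expanding $\Ga(z+\varsigma)$ and $\nu_1(z+\varsigma)\dot{\Ga}(z+\varsigma)^\perp$ in $\varsigma$, and using the identity $w_2[\ell,z]=\de_s w_1[\ell,z]$ built into $\mathcal{Z}_\rho$, one rewrites
\[
\Xi^\ep(z,\varsigma)=-\varsigma\,\dot{\Ga}(z)-\ep g(z)\,\dot{\Ga}(z)^\perp+\varsigma\,r_1(z,\varsigma)+\varsigma^2\,r_2(z,\varsigma),
\]
with $r_1=O(\|w_1\|+\|w_2\|)$ and $r_2$ uniformly bounded. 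Analytic continuation of $\dot{\Ga}_1^2+\dot{\Ga}_2^2=1$ from $\T$ to $\T_{\rho_0}$, together with the algebraic identity $\dot{\Ga}\cdot\dot{\Ga}^\perp=0$, implies that the purely leading part contributes exactly $\varsigma^2+\ep^2 g(z)^2$ to $[\Xi^\ep_1]^2+[\Xi^\ep_2]^2$. Since \eqref{ass small} keeps $g(z)$ close to its real value $\int_\ell^l 1\,\mathrm{d}\mu$, we obtain
\[
\bigl|[\Xi^\ep_1(z,\varsigma)]^2+[\Xi^\ep_2(z,\varsigma)]^2\bigr|\gtrsim \varsigma^2+\ep^2|g(z)|^2
\]
for $|\varsigma|$ small, while \eqref{tec no self int} covers the complementary region $|\varsigma|\gtrsim 1$.

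\textbf{Step 2 (Uniform bound \eqref{bd K}).} A raw pointwise bound on the integrand yields only a logarithmic divergence as $\ep g\to 0$, so cancellation must be extracted. Split
\[
\frac{[\Xi^\ep(z,\varsigma)]^\perp}{|\Xi^\ep(z,\varsigma)|^2}\,\varpi(z+\varsigma)=\frac{[\Xi^\ep]^\perp\,[\varpi(z+\varsigma)-\varpi(z)]}{|\Xi^\ep|^2}+\varpi(z)\,\frac{[\Xi^\ep]^\perp}{|\Xi^\ep|^2}.
\]
For the first summand, the $C^{1/2}$-Hölder continuity of $\varpi$ produces a factor $|\varsigma|^{1/2}$, which together with Step~1 makes the integrand dominated by the integrable majorant $|\varsigma|^{1/2}(|\varsigma|^2+\ep^2|g|^2)^{-1/2}$. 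For the second summand we further substitute the leading model $-\varsigma\dot{\Ga}(z)-\ep g(z)\dot{\Ga}(z)^\perp$ in place of $\Xi^\ep$: the $\dot{\Ga}(z)$-component then integrates explicitly to a bounded multiple of $\arctan(1/(2\ep g))$, the $\dot{\Ga}(z)^\perp$-component is odd in $\varsigma$ and integrates to zero on $\T$, and the remainder coming from $\varsigma r_1+\varsigma^2 r_2$ is absolutely integrable by Step~1. Running the same estimates for $z+i\beta$ and for the Hölder difference at scale $|z_1-z_2|^{1/2}$ on each horizontal line produces the $C^{1/2}$-bound, and then taking the supremum over $|\beta|\leq\rho$ yields \eqref{bd K} with a constant depending on $\Ga$ and $\mathcal{C}_3$ but not on $\rho$ or $\ep$.

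\textbf{Step 3 (Lipschitz estimate \eqref{E.KLipschitz}).} Writing $\Xi^{\ep,i}$ and $\varpi^i$ for the objects associated with $w^i$, the elementary identity
\[
\frac{(\Xi^{\ep,1})^\perp\varpi^1}{|\Xi^{\ep,1}|^2}-\frac{(\Xi^{\ep,2})^\perp\varpi^2}{|\Xi^{\ep,2}|^2}=\frac{(\Xi^{\ep,1}-\Xi^{\ep,2})^\perp\varpi^1}{|\Xi^{\ep,1}|^2}+\frac{(\Xi^{\ep,2})^\perp(\varpi^1-\varpi^2)}{|\Xi^{\ep,2}|^2}+(\Xi^{\ep,2})^\perp\varpi^1\,\frac{|\Xi^{\ep,2}|^2-|\Xi^{\ep,1}|^2}{|\Xi^{\ep,1}|^2|\Xi^{\ep,2}|^2}
\]
reduces the problem to three pieces. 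The Taylor expansion of Step~1 gives $\Xi^{\ep,1}-\Xi^{\ep,2}=O(\varsigma)(w^1-w^2)+\ep(g^1-g^2)\dot{\Ga}(z)^\perp$, so that in the first and third pieces the extra factor of $\varsigma$ or $\ep(g^1-g^2)$ precisely compensates a power of the lower bound of Step~1 applied to both configurations; the middle piece is handled by Step~2 with $\varpi^1-\varpi^2$ in place of $\varpi$. Applying the two-piece decomposition of Step~2 to each of these contributions and summing yields \eqref{E.KLipschitz}, again with constants depending on $\Ga$ and $\mathcal{C}_3$ but independent of $\rho$ and $\ep$.

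The central technical difficulty, and the step which drives the choice of the $C^{1/2}$-based norm~\eqref{E.norm rho}, is the extraction of the cancellation in Step~2: the kernel $[\Xi^\ep]^\perp/|\Xi^\ep|^2$ is only logarithmically integrable and would not be uniformly bounded as $\ep g\to 0$ without the combined use of the explicit $\arctan$-integration of the normal component and the odd-symmetry of the tangential component. Preserving this cancellation simultaneously for every horizontal translate $z\mapsto z+i\beta$ in $\T_\rho$, and tracking that all constants remain uniform both in $\rho\leq\rho_0$ and in the parameter $\ep g$, is the main obstacle.
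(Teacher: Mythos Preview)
Your Step~2 contains a genuine gap. After subtracting $\varpi(z)$ and replacing the full kernel by the simple model $-\varsigma\dot\Ga(z)-\ep g(z)\dot\Ga(z)^\perp$, you assert that ``the remainder coming from $\varsigma r_1+\varsigma^2 r_2$ is absolutely integrable by Step~1.'' This is false for the $\varsigma r_1$ contribution: since $r_1=-\bigl(w_2[\ell,z]\dot\Ga(z)^\perp+w_1[\ell,z]\ddot\Ga(z)^\perp\bigr)$ is a nonzero constant in $\varsigma$, the remainder behaves like $|r_1|/|\varsigma|$ near the origin and its integral diverges as $|\log(\ep|l-\ell|)|$. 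The oddness and $\arctan$ cancellations you identify apply only to the simple model, not to this residual term. One can try to salvage the argument by further parity tracking (the dominant $\varsigma r_1$ piece over the model denominator $\varsigma^2+\ep^2 g^2$ is indeed odd), but this requires an additional layer of decomposition that you have not supplied, and the cross terms in the denominator difference $|\Xi^\ep|^2-(\varsigma^2+\ep^2 g^2)$ must be handled one by one. The paper avoids this entirely by taking as model the \emph{full} linearization $\tilde d(v,z)=v\digamma'(z)+\ep\zeta(z)$ with $\digamma'=\dot\Ga+w_2\dot\Ga^\perp+w_1\ddot\Ga^\perp$; the remainder is then $O(|\varsigma|^{3/2})$ and genuinely absolutely integrable, but the price is that the model kernel $J_1$ is no longer explicitly integrable and must be treated as a Calder\'on--Zygmund operator (Lemma~\ref{SingInt}).

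Your sentence ``running the same estimates for the H\"older difference at scale $|z_1-z_2|^{1/2}$'' is where the real work lies and is not a proof. Differencing the kernel in $z$ costs a factor $|z_1-z_2|^{1/2}/|\varsigma|$ on top of the $1/|\varsigma|$ singularity, which is not integrable; the paper spends Lemmas~\ref{Lem double diff}, \ref{L.J1}, \ref{L.J2}, \ref{L.J1LipschitzHolder}, and \ref{L.J2LipschitzHolder} extracting the needed cancellation through second-order (``double difference'') identities for $p/|p|_\C^2$ and reducing again to the singular integral bounds of Lemma~\ref{SingInt}. The same issue recurs in your Step~3: the naive splitting of $(\Xi^{\ep,1})^\perp/|\Xi^{\ep,1}|^2-(\Xi^{\ep,2})^\perp/|\Xi^{\ep,2}|^2$ produces pieces whose pointwise bound is $(|\varsigma|+\ep|l-\ell|)\|w^1-w^2\|/(|\varsigma|^2+\ep^2|l-\ell|^2)$, and integrating the $|\varsigma|$ part again gives a $|\log(\ep|l-\ell|)|$ loss rather than a uniform bound.
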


As an immediate corollary, we also obtain the following estimates. 
 
\begin{corollary} \label{C.estimateUepsilon}
Let $\mathcal{C}_2 > 0$ be as in Proposition \ref{main est}, depending only on $\Ga$. For every $0 \leq \rho \leq \rho_0\,,$ and every $w^1, w^2 \in \mathcal{Z}_\rho$ satisfying \eqref{ass small}, it follows that
    \begin{align} \label{E.UepsBounded}
    \sup_{|l| \leq 1} \norm{U_\ep\big(w^i[\cdot,\cdot]\big)(l,\cdot)}_\rho  \lesssim 1\,, \quad i \in \{1,2\}\,,
\end{align}
and that
\begin{equation}\label{E.UepsLipschitz}
\begin{aligned} 
    \sup_{|l| \leq 1}  \norm{U_\ep\big(w^1[\cdot,\cdot]\big)(l,\cdot) - U_\ep\big(w^2[\cdot,\cdot]\big)(l,\cdot)}_\rho 
     \lesssim \|w^1-w^2\|_{\cY^4_\rho}\,.
\end{aligned}
\end{equation}
\end{corollary}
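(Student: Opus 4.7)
The plan is to deduce the corollary as an immediate bookkeeping consequence of Proposition~\ref{main est}, by exploiting the fact that $U_\ep$ is, by construction, a finite-length integral in the parameter $\ell$ of the operator~$K_\ep$. More explicitly, recalling the definition~\eqref{E.Uepsilon},
\begin{equation}
U_\ep(w[\cdot,\cdot])(l,s) = \int_{-1}^1 K_\ep\!\left[w_1[\ell,\cdot],\, w_2[\ell,\cdot],\, \int_l^\ell (1+w_3[\mu,\cdot])\dd\mu,\, w_4[\ell,\cdot]\right]\!(s)\dd\ell\,,
\end{equation}
the main point is to bring the $\|\cdot\|_\rho$-norm inside the integral in~$\ell$. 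Since by~\eqref{E.norm rho} the norm $\|f\|_\rho=\sup_{|\beta|\leq \rho}\|f(\cdot+i\beta)\|_{C^{1/2}(\T)}$ is a supremum of seminorms each satisfying the triangle inequality, it obeys the standard Minkowski-type inequality
\begin{equation}
\left\|\int_{-1}^1 g(\ell,\cdot)\dd\ell\right\|_\rho \leq \int_{-1}^1 \|g(\ell,\cdot)\|_\rho\dd\ell\,,
\end{equation}
for any $g$ such that $\ell\mapsto g(\ell,\cdot)$ takes values in $\cX_\rho$ in a sufficiently measurable way; this measurability is guaranteed by the $C^0$-dependence of the components of $w^i$ on $\ell$.

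Applying this to $U_\ep(w^i)$, the bound~\eqref{E.UepsBounded} is an immediate consequence of the uniform estimate~\eqref{bd K} in Proposition~\ref{main est}: one simply estimates each integrand by its supremum over $l,\ell\in[-1,1]$ and picks up an extra harmless factor of $2$ from the length of the integration interval. I would need to check, beforehand, that the hypotheses of Proposition~\ref{main est} are really in force, namely that both $w^1$ and $w^2$ lie in $\mathcal Z_\rho$ and satisfy the smallness condition~\eqref{ass small}; this is granted by assumption.

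For the Lipschitz bound~\eqref{E.UepsLipschitz}, the same Minkowski inequality reduces matters to controlling the $\rho$-norm of the difference of two kernel expressions, to which the estimate~\eqref{E.KLipschitz} of Proposition~\ref{main est} directly applies. The only point that deserves a brief verification is that the third argument $\int_l^\ell (1+w_3[\mu,\cdot])\dd\mu$ depends in a Lipschitz way on $w_3$ in the $\cY^4_\rho$-norm, with a constant independent of $l,\ell$. This follows from
\begin{equation}
\left\|\int_l^\ell (w_3^1[\mu,\cdot]-w_3^2[\mu,\cdot])\dd\mu\right\|_\rho \leq 2\sup_{|\mu|\leq 1}\|w_3^1[\mu,\cdot]-w_3^2[\mu,\cdot]\|_\rho \leq 2\|w^1-w^2\|_{\cY^4_\rho}\,,
\end{equation}
so this dependence is absorbed into the right-hand side of~\eqref{E.KLipschitz} without difficulty.

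I do not anticipate any substantive obstacle in this corollary: essentially all of the analytic difficulty has already been absorbed into Propositions~\ref{geo est} and~\ref{main est}, and what remains is a routine application of the triangle inequality for $\cX_\rho$-valued integrals together with the fact that the $\ell$-interval has finite length. The only mildly delicate point, the Lipschitz dependence of the third slot of $K_\ep$ on $w_3$, is handled by the elementary bound displayed above.
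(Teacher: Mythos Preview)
Your proposal is correct and follows exactly the approach the paper intends: the paper states the corollary as an ``immediate'' consequence of Proposition~\ref{main est} without further proof, and your argument---bringing the $\|\cdot\|_\rho$-norm inside the $\ell$-integral via Minkowski and invoking the uniform bounds~\eqref{bd K} and~\eqref{E.KLipschitz}---is precisely the intended routine computation. Note that your separate verification of the Lipschitz dependence of the third slot on $w_3$ is actually redundant, since the estimate~\eqref{E.KLipschitz} in Proposition~\ref{main est} is already stated with the third argument $\int_l^\ell(1+w_3^i[\mu,\cdot])\dd\mu$ depending on $w_3^i$, so that dependence is already absorbed into the right-hand side $\|w^1-w^2\|_{\cY^4_\rho}$.
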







We can now prove the existence of a solution to \eqref{E.initialValueProblem} with $w(0) = w^0$ given in \eqref{E.dataIntialValueProblem}. We set $\mathcal{C}_0:= \min\{\mathcal{C}_1, \mathcal{C}_2\}$, depending only on $\Ga$, and, for $0 \leq \rho \leq \rho_0\,,$ consider $w^1, w^2 \in \mathcal{Z}_\rho$ satisfying
\begin{equation}
\sum_{j=1}^3 \bigg( \sup_{|l| \leq 1} \Big\{ \norm{w_j^1[l,\cdot]}_\rho + \norm{w_j^2[l,\cdot]}_\rho \Big\} \bigg) < \mathcal{C}_0  \quad \textup{and} \quad \mathcal{C}_3 := \sup_{|l|\leq 1} \big\{  \norm{w_4^1[l,\cdot]}_\rho + \norm{w_4^2[l,\cdot]}_\rho \big\} < \infty \,.
\end{equation}
First of all, using that $\cX_\rho$ is an algebra (since $C^\frac12(\T)$ is), we see that
\begin{align}
    & \sup_{|l|\leq 1} \norm{F_1(w^i[\cdot,\cdot])(l,\cdot)}_\rho \\
    & \qquad \leq  \sup_{|l| \leq 1} \norm{U_\ep(w^i[\cdot,\cdot])(l,\cdot)}_\rho \left( \|\ten\|_\rho + \sup_{|l|\leq 1} \bigg\{ \norm{w_2^i[l,\cdot]}_\rho \norm{\frac{\tes(\cdot,w_1^i[l,\cdot])}{|\tes(\cdot,w_1^i[l,\cdot])|^2}}_{\rho} \bigg\} \right), \  i \in \{1,2\}\,,
\end{align}
and thus, by Proposition \ref{geo est} and Corollary \ref{C.estimateUepsilon} we get that
\begin{equation} \label{E.F1bounded}
 \sup_{|l|\leq 1} \norm{F_1(w^i[\cdot,\cdot])(l,\cdot)}_\rho \lesssim 1\,, \quad i \in \{1,2\}\,.
\end{equation}
On the other hand, observe that
\begin{align*}
    & F_1(w^1) - F_1(w^2) = \big(U_\ep(w^1) - U_\ep(w^2)\big) \cdot \left( \ten(\cdot) + w_2^2\frac{\tes(\cdot,w_1^2)}{|\tes(\cdot,w_1^2)|^2}  \right)  \\ & \qquad  + (w_2^1-w_2^2) \, U_\ep(w^1) \cdot \frac{\tes(\cdot,w_1^1)}{|\tes(\cdot,w_1^1)|^2} + w_2^2U_\ep (w^1) \cdot \left( \frac{\tes(\cdot,w_1^1)}{|\tes(\cdot,w_1^1)|^2} - \frac{\tes(\cdot,w_1^2)}{|\tes(\cdot,w_1^2)|^2} \right).   
\end{align*}
Combining Proposition \ref{geo est} with Corollary \ref{C.estimateUepsilon}, and using again that $\cX_\rho$ is an algebra, we then get
\begin{align} \label{E.F1Lipschitz}
    \sup_{|l|\leq 1} \norm{F_1(w^1[\cdot,\cdot])(l,\cdot) - F_1(w^2[\cdot,\cdot])(l,\cdot)}_\rho \lesssim \|w^1 - w^2\|_{\cY^4_\rho}\,.
\end{align}

Now, to deal with $F_2$, we use that
$$
F_2(w^i[\cdot,\cdot])(s) = \pd_s \left(F_1(w^i[\cdot,\cdot])(s)\right)\,.
$$
Indeed, by Cauchy's integral formula, it follows that
$$
\sup_{|l| \leq 1} \norm{F_2(w^i[\cdot,\cdot])(l,\cdot)}_{\rho'} \lesssim \frac{1}{\rho-\rho'} \, \sup_{|l| \leq 1}\,\norm{F_1(w^i[\cdot,\cdot])(l,\cdot)}_\rho\,, \quad i \in \{1,2\}\,,
$$
uniformly in $0 \leq \rho' < \rho \leq \rho_0$. Hence, using \eqref{E.F1bounded} and \eqref{E.F1Lipschitz}, we get that 
\begin{equation} \label{E.F2bounded}
     \sup_{|l|\leq 1} \norm{F_2(w^i[\cdot,\cdot])(l,\cdot)}_{\rho'} \lesssim \frac{1}{\rho-\rho'}\,, \quad i \in \{1,2\}\,,
\end{equation}
and
\begin{align} \label{E.F2Lipschitz}
    \sup_{|l|\leq 1} \norm{F_2(w^1[\cdot,\cdot])(l,\cdot) - F_2(w^2[\cdot,\cdot])(l,\cdot)}_{\rho'} \lesssim \frac{1}{\rho-\rho'}\,\|w^1 - w^2\|_{\cY^4_\rho}\,.
\end{align}

 Next, observe that, for $i \in \{1,2\}$,
\begin{align}
    & \sup_{|l| \leq 1} \norm{F_3(w^i[\cdot,\cdot])(l,\cdot)}_{\rho'} \leq \sup_{|l| \leq 1} \norm{\pd_s \big( U_\ep^{\bs}(w^i[\cdot,\cdot])(l,\cdot)\, (1+w_3^i[l,\cdot])\big)}_{\rho'} \\
    & \qquad + \sup_{|l| \leq 1} \Big\{ \norm{1+w_3^i[l,\cdot]}_{\rho'} \, \norm{\K(w_1^i[\cdot,\cdot])(l,\cdot)}_{\rho'} \norm{U_\ep(w^i[\cdot,\cdot])(l,\cdot)}_{\rho'} \Big\}\,.
\end{align}
The first term on the right-hand side can be estimated as we did with $F_2$. Indeed, combining the Cauchy integral formula with Proposition \ref{geo est} and Corollary \ref{C.estimateUepsilon}, we get that,
$$
\sup_{|l| \leq 1} \norm{\pd_s \big( U_\ep^{\bs}(w^i[\cdot,\cdot])(l,\cdot)\, (1+w_3^i[l,\cdot])\big)}_{\rho'} \lesssim \frac{1}{\rho-\rho'}\,,
$$
uniformly in $0 \leq \rho' < \rho \leq \rho_0$. The second can be estimated directly by combining Proposition \ref{geo est} with Corollary \ref{C.estimateUepsilon}, as we did with $F_1$. In short, we get that
\begin{align} \label{E.F3bounded}
     \sup_{|l| \leq 1} \norm{F_3(w^i[\cdot,\cdot])(l,\cdot)}_{\rho'}  \lesssim \frac{1}{\rho-\rho'}\,.
\end{align}
Also, observe that
\begin{align}
    & F_3(w^1)-F_3(w^2) = (w_3^1-w_3^2)\,\K(w^1) \cdot U_\ep(w^1) \\
    & \quad  + (1+ w_3^2) \Big( \big(\K(w^1) - \K(w^2) \big) \cdot U_\ep(w^1)  +\K(w^2) \cdot \big( U_\ep(w^1) - U_\ep(w^2) \big) \Big) \\
    & \quad - \pd_s \bigg[ (w_3^1-w_3^2)\, \frac{e_\bs(\cdot,w_1^1)}{|e_\bs(\cdot,w_1^1)|^2} \cdot U_\ep(w^1) \\
    & \hspace{1.35cm} + (1+w_3^2) \bigg( \bigg( \frac{e_\bs(\cdot,w_1^1)}{|e_\bs(\cdot,w_1^1)|^2} - \frac{e_\bs(\cdot,w_1^2)}{|e_\bs(\cdot,w_1^2)|^2} \bigg) \cdot U_\ep(w^1) + \frac{e_\bs(\cdot,w_1^1)}{|e_\bs(\cdot,w_1^1)|^2} \cdot \big(U_\ep(w^1) - U_\ep(w^2)\big) \bigg) \bigg]\,.
\end{align}
Having this expression  at hand, one can argue as we did to prove \eqref{E.F1Lipschitz} and \eqref{E.F2Lipschitz} and conclude: 
\begin{align} \label{E.F3Lipschitz}
    \sup_{|l|\leq 1} \norm{F_3(w^1[\cdot,\cdot])(l,\cdot) - F_3(w^2[\cdot,\cdot])(l,\cdot)}_{\rho'} \lesssim \frac{1}{\rho-\rho'}\,\|w^1 - w^2\|_{\cY^4_\rho}\,.
\end{align}

Finally, to estimate $F_4$, we once again use Cauchy's integral formula to infer that
$$
\sup_{|l|\leq 1} \norm{F_4(w^i[\cdot,\cdot])(l,\cdot)}_{\rho'} \lesssim \frac{1}{\rho-\rho'}\, \sup_{|l| \leq 1} \norm{U_\ep^\bs(w^i[\cdot,\cdot])(l,\cdot)\, w_4^i[l,\cdot]}_{\rho}\,, \quad i \in \{1,2\}\,,
$$
uniformly in $0 \leq \rho' < \rho \leq \rho_0$. Arguing exactly as for the previous terms, we conclude that
\begin{align} \label{E.F4bounded}
     \sup_{|l| \leq 1} \norm{F_4(w^i[\cdot,\cdot])(l,\cdot)}_{\rho'}  \lesssim \frac{1}{\rho-\rho'}\,, \quad i \in \{1,2\}\,,
\end{align}
and that
\begin{align} \label{E.F4Lipschitz}
    \sup_{|l|\leq 1} \norm{F_4(w^1[\cdot,\cdot])(l,\cdot) - F_4(w^2[\cdot,\cdot])(l,\cdot)}_{\rho'} \lesssim \frac{1}{\rho-\rho'}\,\|w^1 - w^2\|_{\cY^4_\rho}\,.
\end{align}

Combining \eqref{E.F1bounded}, \eqref{E.F1Lipschitz}, \eqref{E.F2bounded}, \eqref{E.F2Lipschitz}, \eqref{E.F3bounded}, \eqref{E.F3Lipschitz}, \eqref{E.F4bounded} and \eqref{E.F4Lipschitz}, it is straightforward to check the assumptions of Theorem \ref{Nishida}. Thus, under suitable assumptions on the initial data $w(0) = w^0$ given in \eqref{E.dataIntialValueProblem}, we obtain a unique solution $w(t)$ to \eqref{E.initialValueProblem}, for a time of existence 
independent of $\ep$. This immediately follows from Theorem~\ref{Nishida} and the uniform estimates we just proved.  

We finally note that the existence of a continuous solution to the system \eqref{ev gamma}--\eqref{ev pi}, which we just proved, also implies that the solution $\omega_\eps$ has the desired structure \eqref{def gamma evolved}--\eqref{E.ansatzomegabis}. Indeed, this must hold on some non-empty time-interval by mere continuity and the only way $\omega_\eps$ can lose this structure is if one of $\nu_{\eps,l},\varpi_{\eps,l}$ blows up, which is not the case on $[0,T)$, as we have shown. 

We have now concluded the proof of Theorem \ref{main thm} (i). Also, note that (ii), (iii) and (iv) are immediate once we have shown the solution has the desired structure. Hence, we now focus on Theorem \ref{main thm} (v).

%
%
%
%
%
%
%
%
%

\subsection*{Step 3: Convergence to the Birkhoff--Rott equations} First, let us recall that the Birkhoff--Rott equations written as a graph over the curve $\Ga$ were introduced in \eqref{ev pi 2}. Moreover, taking into account \eqref{sheet velo}, for $\nu_1, \varpi \in \cX_\rho$, and under the assumption that $\nu_1$ is small enough, let us set
\begin{equation} \label{E.U0}
K_0[\nu_1,\varpi](z):= \frac{1}{2\pi} {\rm p.v.} \int_{\T} \frac{\Xi^0[\nu_1](z,\varsigma)^{\perp}}{[\Xi_1^0[\nu_1](z,\varsigma)]^2+[\Xi_2^0[\nu_1](z,\varsigma)]^2} \, \varpi(z+\varsigma) \dd \varsigma\,.
\end{equation}
Here, $\Xi^0[\nu_1] = (\Xi_1^0[\nu_1],\Xi_2^0[\nu_1])$ is defined as
$$
\Xi^0[\nu_1](z,\varsigma):= \Ga(z) - \Ga(z+\varsigma) + \nu_1(z) \dot{\Ga}(z)^{\perp} - \nu_1(z+\varsigma) \dot{\Ga}(z+\varsigma)^{\perp}\,, \quad \textup{for } z \in \T_\rho \textup{ and } \varsigma \in \T \,.
$$
For later purposes, we also write 
\begin{align}
&U_0^{\,\bn}((w_1,w_4)):=K_0[w_1,w_4]\cdot \widetilde{e}_{\bn}\,,\\
&U_0^{\,\bs}((w_1,w_4)):=K_0[w_1,w_4]\cdot \frac{\widetilde{e}_{\bs}(s,w_1(\cdot))}{|\widetilde{e}_{\bs}(s,w_1(\cdot))|^2}\,.
\end{align}

Now, we denote by
\begin{align}
w^\eps[l,s](t)=\big(\nu_{\eps,l}(s,t),\de_s\nu_{\eps,l}(s,t),\de_l\eta_{\eps,l}(s,t),\varpi_{\eps,l}(s,t)\big) \in \cZ_\rho\,,
\end{align}
the solution to \eqref{E.initialValueProblem} obtained in the previous subsection, and denote by
$$
w^0(s,t):= (\nu_0(s,t),\,\varpi_0(s,t))\,,
$$
the unique solution to \eqref{ev pi 2} with initial datum $(0,\varpi_0^0)$. Likewise, we set 
\begin{equation}\bar\nu_\eps:=\frac{1}{2}\left(\nu_{\eps,-1}+\nu_{\eps,1}\right)\,,
\end{equation}
and 
\begin{align}
    \bar{w}^\eps(s,t)=\left(\bar{\nu}_\eps(s,t),\int_{-1}^1 \varpi_{\eps,l}(s,t)\dd l\right)\,.
\end{align}
Before going further, let us stress that
\begin{equation} \label{E.nubarnul}
\|\nu_{\ep,l}(\cdot,t) - \bar{\nu}_\ep(\cdot,t)\|_{\rho} \leq 4 \ep \Big( 1+ \sup_{|l|\leq 1} \norm{\pd_l \eta_{\ep,l}(\cdot,t)}_{\rho} \Big) \lesssim \ep\,, \quad \textup{for all } t \in [0,T)\,.
\end{equation}
Hence, to control the difference $\nu_0(\cdot,t)-\nu_{\ep,l}(\cdot,t)$ for all $l \in [-1,1]$ and all $t \in [0,T)$, it is enough to control the difference $\nu_0(\cdot,t)-\bar{\nu}_\ep(\cdot,t)$. 

Now, taking into account \eqref{ev pi} and \eqref{ev pi 2}, we get that
\begin{equation} \label{E.diff-nu0}
    \pd_t(\nu_0 - \bar{\nu}_\ep) + E_1^\ep(w^0, \bar{w}^\ep,t) + E_2^\ep(t) = 0\,,
\end{equation}
with
\begin{align}
& \bullet\ E_1^\eps(w^0, \bar{w}^\ep,t) :=U_0^{\,\bs}(w^0)\de_s(\nu_0-\bar{\nu}_\eps)+(U_0^{\,\bs}(w^0)-U_0^{\,\bs}(\bar{w}^\eps))\de_s\bar{\nu}_\eps-\left(U_0^{\,\bn}(w^0)-U_0^{\,\bn}(\bar{w}^\eps)\right)\,,\\
& \bullet\ E_2^{\eps}(s,t):=\left(U_0^{\,\bs}(\bar{w}^\eps)-\frac{1}{2}\big(U_\eps^{\,\bs}(w^\eps)(1,s)+U_\eps^{\,\bs}(w^\eps)(-1,s)\big)\right)\de_s\bar{\nu}_\eps\\
&\qquad+\frac{1}{2}\left(U_\eps^{\,\bs}(w^\eps)(-1,s)-U_\eps^{\,\bs}(w^\eps)(1,s)\right)\de_s(\nu_{\eps,1}-\nu_{\eps,-1})\\
&\qquad-\left(U_0^{\,\bn}(\bar{w}^\eps)-\frac{1}{2}\left(U_\eps^{\,\bn}(w^\eps)(1,s)+U_\eps^{\,\bn}(w^\eps)(-1,s)\right)\right)\,.
\end{align}
Likewise, it follows that 
\begin{equation} \label{E.diff-pdsnu0}
    \pd_t \pd_s(\nu_0 - \bar{\nu}_\ep) + \pd_sE_1^\ep(w^0 - \bar{w}^\ep,t) + \pd_sE_2^\ep(t) = 0\,.
\end{equation}

On the other hand, taking into account \eqref{ev pi} and \eqref{ev pi 2}, we get that
\begin{equation} \label{E.diff-varpi0}
\pd_t  \left(\varpi_0-\int_{-1}^1 \varpi_{\eps,l}\dd l\right)+E_3^\eps(w^0, \bar{w}^\ep,t)+E_4^\eps(t)=0\,,
\end{equation}
with
\begin{align}
& \bullet\ E_3^\eps(w^0, \bar{w}^\ep, t) :=\de_s\left(U_0^{\,\bs}(w^0)\left(\varpi_0-\int_{-1}^1 \varpi_{\eps,l}\dd l\right)\right)+\de_s\left(\left(U_0^{\,\bs}(w^0)-U_0^{\,\bs}(\bar{w}^\eps)\right)\int_{-1}^1 \varpi_{\eps,l}\dd l\right)\,, \\
& \bullet\ E_4^\eps(s,t):=\de_s\left(U_0^{\,\bs}(\bar{w}^\eps)\int_{-1}^1 \varpi_{\eps,l}\dd l-\int_{-1}^1 U_\ep^{\, \bs}(w_\ep)\,\varpi_{\eps,l}\dd l\right).
\end{align}



Having \eqref{E.diff-nu0}, \eqref{E.diff-pdsnu0} and \eqref{E.diff-varpi0} at hand, we will use a Gronwall-type argument to prove Theorem \ref{main thm} (v). 

In the next two results, whose proofs are postponed to Section \ref{Sec 7}, we prove the estimates required to perform the Gronwall-type argument. First, we show that the $\eps$-regularized velocity $U_\eps$, defined in \eqref{E.Uepsilon}, converges to the velocity in \eqref{E.U0}, up to the expected jump discontinuity of the vortex sheet. For the identification of the sign of the jump, it is necessary to fix the orientation of $\Gamma$. We assume that $\Gamma$ is oriented so that $\dot{\Gamma}^\perp$ points inside the domain enclosed by $\Gamma$. The other case can be handled by arguing in a similar way.

\begin{proposition}\label{Prop K conv}
There exists a constant $\cC_4 > 0$, depending only on $\Ga$, such that, for every $0 \leq \rho < \rho_0$, if $w \in \cZ_\rho$ satisfies 
$$
\sum_{j=1}^3 \sup_{|l|\leq 1} \norm{w_j[l,\cdot]}_\rho < \cC_4 \quad \textup{and} \quad \cC_5:= \sup_{|l|\leq 1} \norm{w_4[l,\cdot]}_\rho < \infty\,,
$$
then, for all $0 \leq \rho' < \rho$, and all $l \in [-1,1]$, it follows that
\begin{align}
    \bigg\| &\,  U_\eps(w[\cdot,\cdot])(l,\cdot) -K_0\left[\frac{w_1[1,\cdot]+w_1[-1,\cdot]}{2},\int_{-1}^1w_4[\mu,\cdot]\dd\mu\right] \\
    & +\frac{\gamma_{\eps,0}'(\cdot)}{2|\gamma_{\eps,0}'(\cdot)|^2}\int_{-1}^{1}\left(\mathds{1}_{\mu\geq l}-\mathds{1}_{\mu\leq l})w_4[\mu,\cdot]\right)\dd\mu \,\bigg\|_{\rho'} \lesssim \ep A(\rho-\rho')\,,
\end{align}
and that
\begin{align}
    \bigg\| &\, \frac{1}{2}\Big(U_\eps(w[\cdot,\cdot])(1,\cdot)+U_\eps(w[\cdot,\cdot])(-1,\cdot)\Big) \\
    & -K_0\left[\frac{w_1[1,\cdot]+w_1[-1,\cdot]}{2},\int_{-1}^1w_4[\mu,\cdot]\dd\mu\right]\bigg\|_{\rho'}\lesssim \ep A(\rho-\rho')\,,
\end{align}
Here, $A:\R^+\rightarrow \R^+$ is some positive, decreasing function and the implicit constants depend on $\mathcal{C}_5$.
\end{proposition}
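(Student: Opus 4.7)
The starting point is the observation that, thanks to the admissibility identity
\[
w_1[\ell,z]-\ep\int_l^\ell(1+w_3[\mu,z])\,\dd\mu = w_1[l,z],
\]
the vector $\Xi^\ep[w_1[\ell,\cdot],\ldots](z,\varsigma)$ equals exactly $\gamma_{\ep,l}(z)-\gamma_{\ep,\ell}(z+\varsigma)$. Hence $K_\ep(z;\ell)$ is the Biot--Savart velocity at the point $\gamma_{\ep,l}(z)$ produced by a vortex sheet supported on the curve $\gamma_{\ep,\ell}$ with intensity $w_4[\ell,\cdot]$. For $\ell\neq l$ the evaluation point lies off the source curve at normal distance of order $\ep|l-\ell|$, so the integral is regular; only in the limit $\ep\to0^+$, where the curves coalesce, does one need the principal value that defines $K_0$. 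The one-sided limit should produce both the principal value and half the classical tangential jump across a vortex sheet, and this is the mechanism that generates the claimed jump term.

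Concretely, I would split $\int_\TT\dd\varsigma=\int_{|\varsigma|<\delta}+\int_{|\varsigma|\geq\delta}$ at a small fixed $\delta>0$ and analyze each piece. In the \emph{far-field} part, $|\Xi^\ep|$ is bounded below uniformly, so the integrand is smooth in $\ep$; Taylor-expanding $w_1[\ell,\cdot]=\tilde w_1+O(\ep)$ around the midpoint $\tilde w_1:=\tfrac12(w_1[1,\cdot]+w_1[-1,\cdot])$ (using admissibility) yields, for $|\varsigma|\geq\delta$,
\[
\frac{\Xi^\ep{}^\perp}{|\Xi^\ep|^2}\,w_4[\ell,z+\varsigma] = \frac{\Xi^0[\tilde w_1]^\perp}{|\Xi^0[\tilde w_1]|^2}\,w_4[\ell,z+\varsigma]+O(\ep).
\]
In the \emph{near-field} part, the Taylor expansion $\Xi^\ep(z,\ep\tau)=\ep\bigl(-\tau\dot\Gamma(z)+\hat a\,\dot\Gamma(z)^\perp\bigr)+O(\ep^2(1+\tau^2))$ with $\hat a:=\int_\ell^l(1+w_3[\mu,z])\,\dd\mu$, together with the rescaling $\varsigma=\ep\tau$, reduces the leading contribution to the $\ep$-independent model integral
\[
\frac{w_4[\ell,z]}{2\pi}\int_\RR\frac{-\hat a\,\dot\Gamma(z)-\tau\,\dot\Gamma(z)^\perp}{\hat a^2+\tau^2}\,\dd\tau = -\frac{\sgn(l-\ell)}{2}\,\dot\Gamma(z)\,w_4[\ell,z],
\]
where the odd-in-$\tau$ piece vanishes by symmetry.

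Integrating over $\ell\in[-1,1]$ and matching the two contributions gives the first claim. For the far-field piece, Lipschitz continuity of $K_0$ in the first slot (a direct analogue of Proposition \ref{main est} specialized to $\ep=0$) replaces $w_1[\ell,\cdot]$ by $\tilde w_1$ at cost $O(\ep)$, and linearity of $K_0$ in the vorticity slot assembles $\int_{-1}^1 K_0[\tilde w_1,w_4[\ell,\cdot]]\,\dd\ell=K_0[\tilde w_1,\int_{-1}^1 w_4[\ell,\cdot]\,\dd\ell]$; the cutoff at $|\varsigma|=\delta$ is absorbed into the principal-value structure of $K_0$. For the near-field piece, integrating $-\tfrac12\sgn(l-\ell)\dot\Gamma(z)\,w_4[\ell,z]$ in $\ell$ produces a linear combination of truncated integrals of $w_4$ that can be recast in terms of $\mathds{1}_{\mu\geq l}-\mathds{1}_{\mu\leq l}$, matching the jump term (the precise sign is fixed by the prescribed orientation of $\Gamma$); replacing $\dot\Gamma(z)$ by $\gamma_{\ep,0}'(z)/|\gamma_{\ep,0}'(z)|^2$ costs only $O(\ep)$ since $\nu_{\ep,0}=O(\ep)$. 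The second displayed estimate then follows from the first by averaging $l=1$ and $l=-1$: the jump weights cancel because $\mathds{1}_{\mu\geq 1}-\mathds{1}_{\mu\leq 1}+\mathds{1}_{\mu\geq -1}-\mathds{1}_{\mu\leq -1}=0$ almost everywhere on $(-1,1)$.

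The main obstacle I anticipate is the quantitative bookkeeping required to close the argument with an error of genuine size $O(\ep)$, not $O(\ep\log(1/\ep))$ or $O(\ep^{1/2})$. The Taylor expansion in the near-field degrades once $|\tau|\sim 1/\ep$, so the constants depend on $\delta$ through the matching at $|\varsigma|=\delta$; one must either choose $\delta=\delta(\ep)$ optimally or arrange the split so that the $\delta$-dependences cancel between near and far contributions. A second delicate point is propagating these estimates to complex $z\in\TT_{\rho'}$: uniform bounds on $\TT$ must be combined with the analyticity on $\TT_\rho$ and Cauchy-type estimates to extract the claimed dependence $A(\rho-\rho')$, exactly in the spirit of the analyses carried out in Sections \ref{S.Geo} and \ref{S.Kernel}.
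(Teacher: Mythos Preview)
Your approach is a reasonable direct computation, but it is genuinely different from the paper's, and the difference is precisely at the point you flag as the main obstacle.

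The paper never performs a near-field/far-field split or a rescaling argument. Instead it proceeds in two independent steps. First (Lemma~\ref{conv lem 2}), it observes that $K_\ep^{\ell,l}[w](s)=\nabla^\perp\Psi(\digamma(s)+\ep\zeta(s))$ where $\Psi$ is the single-layer potential of the sheet on the curve $\digamma$, and then simply invokes the classical jump relations for $\partial_n^\pm\Psi$ to obtain \emph{qualitative} convergence (in $L^\infty(\TT_\rho)$, no rate) of $K_\ep^{\ell,l}$ to $K_0^{\ell,\ell}$ plus the expected tangential jump $\tfrac12\sign(l-\ell)w_4[\ell,\cdot]$. Second (Lemma~\ref{norm deri lem}), it proves the uniform bound $\|\partial_\ep K_\ep^{\ell,l}\|_{\rho'}\lesssim A(\rho-\rho')$ by a trick that completely avoids your near-field analysis: since $\Psi$ is harmonic off the curve $\digamma$, one rewrites $0=\Delta_x\Psi$ in the coordinates $(\check s,\check\ep)$ adapted to the family $\digamma+\ep\zeta$, which expresses $\partial_\ep^2\Psi$ as a combination of $\partial_s^2\Psi$, $\partial_s\partial_\ep\Psi$, $\partial_s\Psi$, $\partial_\ep\Psi$ with coefficients bounded in $\cX_{\rho'}$. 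Each of these is already controlled by Proposition~\ref{main est} and Cauchy's integral formula. The quantitative $O(\ep)$ then follows from the fundamental theorem of calculus $K_\ep-K_{\ep'}=\int_{\ep'}^\ep\partial_\ep K_\ep$, and the upgrade from $L^\infty(\TT_\rho)$ convergence to $\cX_{\rho'}$ convergence uses a compactness/lower-semicontinuity argument (Lemma~\ref{lem lsc}).

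What this buys: the harmonicity trick delivers the clean $O(\ep)$ rate with no risk of a logarithmic loss and no $\delta$-matching, because it never looks inside the near-field integral at all. Your direct approach can in principle also reach $O(\ep)$, but you would need to carefully organize the cancellations between the rescaled near-field remainder and the truncated principal value of $K_0$ over $|\varsigma|<\delta$; the concern you raise about $O(\ep\log(1/\ep))$ is exactly the symptom of not having done this. The paper's route also handles the complexification uniformly: the $\partial_\ep$ bound is proved directly in $\cX_{\rho'}$, whereas in your plan the passage from real $s\in\TT$ to $z\in\TT_{\rho'}$ is left as a separate step.
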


\begin{remark}
It is natural to lose regularity here.  Roughly speaking, quantitative estimates for the difference between the velocity at different curves correspond to taking a normal derivative of the velocity at the curves.
Furthermore, it is also natural not to have a convergence of $U_\eps$ to $K_0$ without an additional correction term, because the limit of the velocity should contain the jump of the vortex sheet.
\end{remark}

The second result we require is the analogue of Proposition \ref{main est} and Corollary \ref{C.estimateUepsilon} for $K_0$. To simplify the notation, we set
$$
\cW_\rho:= \Big\{ f = (f_1,f_2) \in \cX_\rho^2\ \big|\ \pd_s f_1 \in \cX_\rho\Big\}\,.
$$
We regard $\cW_\rho$ as a Banach space, equipped with the norm
$$
\norm{f}_{\cW_\rho} := \|f_1\|_\rho + \|\pd_s f_1\|_\rho + \|f_2\|_\rho\,.
$$

\begin{lemma}\label{lem K0}
There exists a constant $\mathcal{C}_6 > 0$, depending only on $\Ga$, such that, for every $0 \leq \rho < \rho_0$, if $w^1, w^2 \in \cW_\rho$ satisfy
$$
\norm{w_1^1}_{\rho} + \norm{\pd_s w_1^1}_{\rho} + \norm{w_1^2}_{\rho} + \norm{\pd_s w_1^2}_{\rho} < \mathcal{C}_6 \quad \textup{and} \quad \mathcal{C}_7 := \norm{w_2^1}_\rho + \norm{w_2^2}_\rho < \infty\,,
$$
then
\begin{align}\label{bd K0}
  \left\|K_{0}[w^i] \right\|_\rho \lesssim 1 \,, \quad i \in \{1,2\}\,,
\end{align}
and
\begin{equation}
\left\|K_{0}[w^1]-K_0[w^2] \right\|_\rho \lesssim \norm{w^1-w^2}_{\cW_{\rho}}\,.
\end{equation}
The analogous estimates also hold for $U_0^{\,\bn}$ and $U_0^{\,\bs}$.
\end{lemma}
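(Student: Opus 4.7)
\textbf{Proof proposal for Lemma~\ref{lem K0}.} The plan is to mirror the strategy used for $K_\eps$ in Proposition~\ref{main est}, with one essential new ingredient: the integral defining $K_0$ carries a genuine principal value singularity, so one must first extract its Hilbert-transform structure. I would begin by factoring out the singularity, writing
\[
\Xi^0[w_1](z,\varsigma) = -\varsigma\,\Psi[w_1](z,\varsigma),
\]
where, by Taylor expansion in $\varsigma$, $\Psi[w_1]$ extends analytically up to $\varsigma=0$ with
\[
\Psi[w_1](z,0) = \dot\Ga(z) + \dot w_1(z)\,\dot\Ga(z)^\perp + w_1(z)\,\ddot\Ga(z)^\perp.
\]
Provided $\|w_1\|_\rho + \|\pd_s w_1\|_\rho < \cC_6$ with $\cC_6$ sufficiently small, $\Psi[w_1](z,0)$ stays uniformly close to $\dot\Ga(z)$, so $|\Psi[w_1]|$ is bounded below on $\T_\rho\times\T$. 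Uniform $\cX_\rho$-bounds and Lipschitz dependence on $w_1$ of the expression $\Psi[w_1]^\perp/|\Psi[w_1]|^2$ then follow purely from the Banach-algebra structure of $\cX_\rho$.

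With this factorization in hand, $K_0[w](z) = -\frac{1}{2\pi}\,\mathrm{p.v.}\!\int_\T \frac{\Psi[w_1](z,\varsigma)^\perp}{|\Psi[w_1](z,\varsigma)|^2}\,\frac{w_2(z+\varsigma)}{\varsigma}\,d\varsigma$. For the boundedness claim I would split the kernel as
\[
\frac{\Psi[w_1](z,\varsigma)^\perp}{|\Psi[w_1](z,\varsigma)|^2} \;=\; \frac{\Psi[w_1](z,0)^\perp}{|\Psi[w_1](z,0)|^2} + R[w_1](z,\varsigma),
\]
where $R[w_1](z,\varsigma)=O(|\varsigma|^{1/2})$ in the $\cX_\rho$-norm, so the contribution of $R[w_1]/\varsigma$ defines an absolutely convergent integral controlled by algebra. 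The frozen piece reduces, after writing $1/\varsigma = \pi\cot(\pi\varsigma)$ plus a bounded smooth kernel, to the periodic Hilbert transform of $w_2$ multiplied by a factor in $\cX_\rho$; since the periodic Hilbert transform is bounded on $C^{1/2}(\T)$ uniformly in the imaginary shift $\beta\in[-\rho,\rho]$, it acts boundedly on $\cX_\rho$ with no loss in $\rho$. Combined with the Banach-algebra property, this yields $\|K_0[w^i]\|_\rho \lesssim 1$.

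For the Lipschitz bound I would compute $K_0[w^1]-K_0[w^2]$ using the same decomposition. Differences in $w_2$ are handled by applying the previous boundedness estimate to $w_2^1-w_2^2$. Differences in $w_1$ reduce, via the elementary identity $|A|^{-2}-|B|^{-2} = (|B|^2-|A|^2)/(|A|^2|B|^2)$ and the lower bound on $|\Psi|$, to estimating $\Psi[w_1^1]-\Psi[w_1^2]$. A short computation gives
\[
\Psi[w_1^1](z,\varsigma) - \Psi[w_1^2](z,\varsigma) = \int_0^1 \pd_z\!\bigl[(w_1^1-w_1^2)\,\dot\Ga^\perp\bigr](z+t\varsigma)\,dt,
\]
which is controlled by $\|w_1^1-w_1^2\|_\rho + \|\pd_s(w_1^1-w_1^2)\|_\rho$. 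This is precisely why the definition of $\cW_\rho$ must include $\|\pd_s f_1\|_\rho$. The analogous estimates for $U_0^{\,\bn}$ and $U_0^{\,\bs}$ then follow by taking the appropriate dot products with $\ten(s)$ and $\tes(s,w_1)/|\tes(s,w_1)|^2$, and invoking Proposition~\ref{geo est} to control these factors uniformly and Lipschitz in $w_1$, combined once more with the algebra property of $\cX_\rho$.

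The main obstacle will be the clean handling of the principal value step: checking that the periodic Hilbert transform acts on $\cX_\rho$ with no loss in $\rho$, and that the remainder $R[w_1]$ is indeed integrable in $\varsigma$ with $\cX_\rho$-estimates uniform in $z$. In contrast with Proposition~\ref{main est}, where the $\eps g$ regularization produced the standard $1/(\rho-\rho')$ loss via Cauchy's integral formula, here no such loss is needed because the singular integrals respect the Hölder scale on which $\cX_\rho$ is modelled; the price one pays is instead the derivative $\pd_s w_1$ appearing in the $\cW_\rho$ norm.
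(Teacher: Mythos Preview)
Your direct approach is sound and would work, but the paper takes a genuinely different and more economical route. Rather than redoing the singular-integral analysis for the principal-value operator $K_0$, the paper realizes $K_0$ as a limit of the already-analyzed operators $K_\eps$: from Lemma~\ref{conv lem 2} one has, in $L^\infty(\T_\rho)$,
\[
K_0^{\ell,\ell}[w]=\lim_{\eps\to 0} K_\eps^{\ell,l}[w] - \frac{\digamma'(\cdot)}{2|\digamma'(\cdot)|^2}\,w_4[\ell,\cdot]\,.
\]
Proposition~\ref{main est} already gives the boundedness and Lipschitz estimates for $K_\eps^{\ell,l}$ \emph{uniformly in $\eps$}, and Lemma~\ref{lem lsc} (lower semicontinuity of $\|\cdot\|_\rho$ under distributional convergence, via compactness of $\cX_\rho\hookrightarrow\cX_{\rho'}$) transfers these bounds to the limit. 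The correction term is Lipschitz in $w$ by Lemma~\ref{L.geoEst}. The statements for $U_0^{\,\bn}$ and $U_0^{\,\bs}$ then follow exactly as you describe, via Proposition~\ref{geo est} and the algebra property.

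What each approach buys: the paper's limit-plus-lower-semicontinuity argument completely avoids a fresh Hölder analysis of the principal value, recycling all of Section~\ref{S.Kernel} for free. Your approach is self-contained and more explicit about the Hilbert-transform structure, but the step you flag as ``the main obstacle''---getting genuine $\cX_\rho$ (not merely $L^\infty$) control on the remainder $R[w_1]/\varsigma$---really does require the same kind of double-difference Hölder work as Lemma~\ref{L.J2}, not just absolute integrability and the algebra property. So your route is correct but substantially longer.
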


Having these two results at hand, and also Propositions \ref{geo est} and \ref{main est}, it is straightforward to prove that
\begin{equation} \label{E.error1}
\norm{E_1^\ep(w^0,\bar{w}^\ep)}_\rho \lesssim \norm{w^0-\bar{w}^\ep}_{\cW_\rho}, 
\end{equation}
and that, for all $0 < \rho' < \rho$,
\begin{equation} \label{E.error2}
\|E_2^\ep\|_{\rho'} \lesssim \ep A(\rho-\rho')\,.
\end{equation}
Note that, here and during the rest of the proof, we omit the dependence on $t$ for the ease of writing. 

Likewise, combining \eqref{E.error1} and \eqref{E.error2} with Cauchy's integral theorem, we get that
$$
\norm{\pd_s E_1^\ep(w^0,\bar{w}^\ep)}_{\rho'} \lesssim \frac{1}{\rho-\rho'} \norm{w^0-\bar{w}^\ep}_{\cW_\rho},
$$
and that
$$
\|\pd_s E_2^\ep\|_{\rho'} \lesssim \ep A\Big( \frac{\rho-\rho'}{2} \Big)\,,
$$
for all $0 < \rho' < \rho$.

Finally, arguing similarly, we get that
\begin{equation} \label{E.error3}
\norm{E_3^\ep(w^0, \bar{w}^\ep)}_{\rho'} \lesssim \frac{1}{\rho-\rho'} \norm{w^0-\bar{w}^\ep}_{\cW_\rho}, \quad \textup{for all } 0 < \rho' < \rho\,.
\end{equation}
Moreover, taking into account that
$$
\int_{-1}^1 \int_{-1}^1 (\mathds{1}_{\mu\geq l} - \mathds{1}_{\mu \leq l} ) \varpi_{\ep,\mu} \varpi_{\ep,l} \dd l \dd \mu = 0\,,
$$
and Proposition \ref{Prop K conv}, we see that
\begin{equation} \label{E.error4}
\norm{E_4^\ep}_{\rho'} \lesssim  \ep A \Big( \frac{\rho-\rho'}{2} \Big), \quad \textup{for all } 0 < \rho' < \rho\,.
\end{equation}

Once we have the estimates for $E_j^\ep$, namely \eqref{E.error1}, \eqref{E.error2}, \eqref{E.error3} and \eqref{E.error4}, Theorem \ref{main thm} (v) follows from a standard Gronwall-type argument. See, for instance, \cite[Sect. 3]{BP} or \cite[Sect. 10]{Caflisch} for two slightly different approaches to the same conclusion. 



\begin{remark} 
By~\eqref{E.convergenceInitialdata}, we have shown that $\omega_\eps(\cdot,t)$ converges distributionally to some $\omega^0(\cdot,t)$ for all $ t \in [0,T_0)$, as $\ep\to0^+$. The weaker result that this happens along some sequence $\ep_n\to0^+$ is easier, as one can argue directly as in~\cite[Sect. 2]{BP}.
%
%
\end{remark}

\section{Vortex sheet-type equations} \label{S.vortexSheets}

This section is devoted to the proof of the Proposition \ref{P.Evolution}. We keep here the notation introduced in Sections \ref{S.setup} and \ref{S.proofMain}.

\begin{proof}[Proof of Proposition \ref{P.Evolution}] We derive each equation separately.

\medbreak
\noindent \textbf{a)} Let $X_t$ be the particle-trajectory map at time $t$ associated to the velocity field $u_\ep$. For simplicity, here we assume that $u_\ep(\cdot,t)$ is $C^1(\R^2)$. The general case for a log-Lipschitz velocity field $u_\ep$, which is actually what we have, follows from a standard approximation argument.

Taking into account \eqref{E.sgraph} and  \eqref{def gamma evolved}, we get that
\begin{align*}
  & \frac{\pd}{\pd t}\, \big( \ga_{\ep,l}(s,t) \big) = \frac{\pd}{\pd t}\, \Big( X_t \big(\ga_{\ep,l}^0(\si_t(s))\big) \Big) \\
  & \qquad = u_\ep (\gamma_{\ep,l}(s,t),t) + \pd_t \si_t(s) \, \Big( {\rm D}X_t(\ga_{\ep,l}^0(\si_t(s)))\dot{\ga}_{\ep,l}^0(\si_t(s)) \Big) \\
  & \qquad = \tu(s,\nu_{\ep,l}(s,t),t) + \pd_t \si_t(s) \, \Big( {\rm D}X_t(\ga_{\ep,l}^0(\si_t(s))) \dot{\ga}_{\ep,l}^0(\si_t(s)) \Big)\,,
\end{align*}
and so that
\begin{equation} \label{E.pdtsigma}
\begin{aligned}
    0 & = \frac{\pd}{\pd t} \big(\bs(\ga_{\ep,l}(s,t))\big) = \nabla \bs (\ga_{\ep,l}(s,t)) \cdot  \frac{\pd}{\pd t}\, \big( \ga_{\ep,l}(s,t) \big)\\
    & = \tu^{\, \bs}(s,\nu_{\ep,l}(s,t),t) +  \pd_t \si_t(s) \, \Big( {\rm D}X_t(\ga_{\ep,l}^0(\si_t(s)))\dot{\ga}_{\ep,l}^0(\si_t(s)) \Big) \cdot \frac{\tes(s,\nu_{\ep,l}(s,t))}{|\tes(s,\nu_{\ep,l}(s,t))|^2}\,.
\end{aligned}
\end{equation}
Note that here we are using the identity $\bs(\ga_{\ep,l}(s,t)) = s$. Likewise, we have that 
\begin{align*}
     & \frac{\pd}{\pd s}\, \big( \ga_{\ep,l}(s,t) \big) = \frac{\pd}{\pd s}\, \Big( X_t \big(\ga_{\ep,l}^0(\si_t(s))\big) \Big) = \pd_s \si_t(s) \, \Big( {\rm D}X_t(\ga_{\ep,l}^0(\si_t(s))) \dot{\ga}_{\ep,l}^0(\si_t(s)) \Big)\,,
\end{align*}
and so the identity $\bs(\gamma_{\eps,l}(s,t))=s$ yields 
\begin{equation} \label{E.pdssigma}
\begin{aligned}
    1 & =  \frac{\pd}{\pd s} \big(\bs(\ga_{\ep,l}(s,t))\big) =   \frac{\pd}{\pd s}\, \big( \ga_{\ep,l}(s,t) \big) \cdot \nabla \bs (\ga_{\ep,l}(s,t)) \\
    & = \pd_s \si_t(s) \, \Big( {\rm D}X_t(\ga_{\ep,l}^0(\si_t(s)))\dot{\ga}_{\ep,l}^0(\si_t(s)) \Big) \cdot \frac{\tes(s,\nu_{\ep,l}(s,t))}{|\tes(s,\nu_{\ep,l}(s,t))|^2}\,, \quad i \in \{1,2\}\,.
\end{aligned}
\end{equation}
Rearranging \eqref{E.pdtsigma} and \eqref{E.pdssigma}, we see that \begin{equation}
\de_t\sigma_t(s)+\tu^{\,\bs}(s,\nu_{\eps,l}(s,t),t)\de_s\sigma_t(s)=0\,.\label{pde sigma}
\end{equation}

 Finally, using \eqref {def nu eps l2}, we see that
\begin{equation}\begin{aligned}
    & \pd_t \nu_{\ep,l}(s,t) = \frac{\pd}{\pd t} \big( \bn (\ga_{\ep,l}(s,t) \big) = \frac{\pd}{\pd t} \big( \gamma_{\ep,l}(s,t) \big) \cdot \nabla \bn (\ga_{\ep,l}(s,t))  \\
    & \qquad = \bigg( \tu(s,\nu_{\ep,l}(s,t),t) + \pd_t \si_t(s) \, \Big( {\rm D}X_t(\ga_{\ep,l}^0(\si_t(s))) \dot{\ga}_{\ep,l}^0(\si_t(s)) \Big) \bigg)\cdot \ten(s) \\
    & \qquad = \tu^{\, \bn}(s,\nu_{\ep,l}(s,t),t) + \pd_t \si_t(s) \Big( {\rm D}X_t(\ga_{\ep,l}^0(\si_t(s))) \dot{\ga}_{\ep,l}^0(\si_t(s)) \Big) \cdot \ten(s)\,,\label{detnu} 
\end{aligned}\end{equation}
and that
\begin{equation}\begin{aligned}
    & \pd_s \nu_{\ep,l}(s,t) = \frac{\pd}{\pd s} \big( \bn (\ga_{\ep,l}(s,t) \big) = \frac{\pd}{\pd s} \big( \ga_{\ep,l}(s,t) \big) \cdot \nabla \bn(\ga_{\ep,l}(s,t))\\
    & \qquad = \pd_s \si_t(s) \, \Big( {\rm D}X_t(\ga_{\ep,l}^0(\si_t(s))) \dot{\ga}_{\ep,l}^0(\si_t(s)) \Big) \cdot \ten(s)\,.\label{desnu}
\end{aligned}\end{equation}
Now, from those two equations and \eqref{pde sigma} we obtain that \begin{align*}
&\de_t\nu_{\eps,l}(s,t)+\tu^{\,\bs}(s,\nu_{\eps,l}(s,t),t)\de_s\nu_{\eps,l}(s,t)\\
&\quad=\tu^{\,\bn}(s,\nu_{\eps,l}(s,t),t)+\bigl(\de_t\sigma_t(s)+\tu^{\,\bs}(s,\nu_{\eps,l}(s,t),t)\de_s\sigma_t(s)\bigr)\left(\left(\mathrm{D}X_t(\gamma_{\eps,l}^0(\sigma_t(s)))\dot{\gamma}_{\eps,l}^0(\sigma_t(s))\right)\cdot\tilde{e}_{\bn}(s)\right)\\
&\quad=\tu^{\,\bn}(s,\nu_{\eps,l}(s,t),t)\,,
\end{align*}
which is exactly \eqref{ev gamma}.

\medbreak

\noindent \textbf{b)} Let $l, l' \in [-1,1]$ be fixed but arbitrary. By \eqref{ev gamma} and direct computations, we get that
\begin{align*}
    & \pd_t \big( \eta_{\ep,l'}(s,t) - \eta_{\ep,l}(s,t)) = \frac{1}{\ep} \pd_t \Big( \nu_{\ep,l'}(s,t)- \nu_{\ep,l}(s,t) \Big)  = \frac{1}{\ep} \big( \tu^{\, \bn}(s,\nu_{\ep,l'}(s,t),t) - \tu^{\, \bn}(s,\nu_{\ep,l}(s,t),t) \big)  \\
    & \qquad- \frac{1}{\ep} \Big( \tu^{\, \bs}(s,\nu_{\ep,l'}(s,t),t) \pd_s \nu_{\eps,l'}(s,t) - \tu^{\, \bs}(s,\nu_{\ep,l}(s,t),t) \pd_s \nu_{\eps,l}(s,t) \Big)\,.
\end{align*}
Now, on one hand, we have that
\begin{align*}
    & \frac{\pd}{\pd s} \left( \int_{\nu_{\ep,l}(s,t)}^{\nu_{\ep,l'}(s,t)} \tu^{\, \bs} (s,\tau,t) \, \dd\tau \right)\\
    & \quad = \int_{\nu_{\ep,l}(s,t)}^{\nu_{\ep,l'}(s,t)} \frac{\pd}{\pd s} \big(  \tu^{\, \bs} (s,\tau,t)  \big) \dd \tau + \tu^{\, \bs}(s,\nu_{\ep,l'}(s,t),t) \pd_s \nu_{\eps,l'}(s,t)  -  \tu^{\, \bs}(s,\nu_{\ep,l}(s,t),t) \pd_s \nu_{\eps,l}(s,t) \,,
\end{align*}
and on the other hand, that
\begin{align*}
    \int_{\nu_{\ep,l}(s,t)}^{\nu_{\ep,l'}(s,t)} \frac{\pd}{\pd \tau} \big (\tu^{\, \bn}(s,\tau,t) \big) \dd \tau = \tu^{\, \bn}(s,\nu_{\ep,l'}(s,t),t) - \tu^{\,\bn}(s,\nu_{\ep,l}(s,t),t)\,.
\end{align*}
Hence, it follows that
\begin{align*}
    & \pd_t \big( \eta_{\ep,l'}(s,t) - \eta_{\ep,l}(s,t)) \\
    & \qquad = - \frac{1}{\ep} \frac{\pd}{\pd s} \left( \int_{\nu_{\ep,l}(s,t)}^{\nu_{\ep,l'}(s,t)} \tu^{\, \bs} (s,\tau,t) \, \dd\tau \right)  + \frac{1}{\ep} \int_{\nu_{\ep,l}(s,t)}^{\nu_{\ep,l'}(s,t)} \bigg[  \frac{\pd}{\pd s}  \big( \tu^{\, \bs} (s,\tau,t) \big) + \frac{\pd}{\pd \tau}  \big( \tu^{\, \bn} (s,\tau,t) \big) \bigg] \dd \tau\,.
\end{align*}
Also, using the incompressibility of the velocity field $u_\ep$, we infer that
\begin{align*}
   & \frac{\pd}{\pd s}  \big( \tu^{\, \bs} (s,\tau,t)  \big) + \frac{\pd}{\pd \tau}  \big( \tu^{\, \bn} (s,\tau,t)  \big) \\
   & \qquad = \pd_s \tu(s,\tau,t) \cdot  \frac{\tes(s,\tau)}{|\tes(s,\tau)|^2} + \pd_\tau \tu(s,\tau,t) \cdot \ten(s) + \ka(s,\tau) \cdot \tu(s,\tau,t) \\
   & \qquad = \div_x u_\ep (\bx(s,\tau),t) + \ka(s,\tau) \cdot \tu(s,\tau,t) = \ka(s,\tau) \cdot \tu(s,\tau,t)\,,
\end{align*}
and therefore
\begin{align*}
    & \pd_t \big( \eta_{\ep,l'}(s,t) - \eta_{\ep,l}(s,t)) \\
    & \quad = - \frac{1}{\ep} \frac{\pd}{\pd s} \left( \int_{\nu_{\ep,l}(s,t)}^{\nu_{\ep,l'}(s,t)} \tu^{\, \bs} (s,\tau,t) \, \dd\tau \right) + \frac{1}{\ep} \int_{\nu_{\ep,l}(s,t)}^{\nu_{\ep,l'}(s,t)} \ka(s,\tau) \cdot \tu(s,\tau,t)\, \dd \tau\,.
\end{align*}
Next, observe that $\pd_l \nu_{\ep,l}= \ep (1+\pd_l \eta_{\ep,l})$. In particular, the map $l \mapsto \nu_{\ep,l}(s,t)$ is invertible whenever $\|\pd_l \eta_{\ep,l}\|_{L^{\infty}(\TT \times [0,T))} < 1$. Thus, doing a change of variables in each integral, we get that
\begin{align*}
     &\pd_t \big( \eta_{\ep,l'}(s,t) - \eta_{\ep,l}(s,t))  = - \frac{\pd}{\pd s} \left( \int_l^{l'}  \tu^{\, \bs}(s,\nu_{\ep,\tau}(s,t),t) \big(1+ \pd_\tau \eta_{\ep,\tau}(s,t) \big)\, \dd \tau \right) \\
     & \qquad + \int_l^{l'} \big( \ka(s,\nu_{\ep,\tau}(s,t)) \cdot \tu(s,\nu_{\ep,\tau}(s,t),t)  \big)\big(1+ \pd_\tau \eta_{\ep,\tau}(s,t) \big)\, \dd \tau\,,
\end{align*}
for all $l,l' \in [-1,1]$. Having this identity at hand, \eqref{ev eta} follows dividing both sides by $l'-l$ with $l \neq l'$, and then sending $l'-l$ to $0$ and using the Lebesgue differentiation theorem on each integral of the right hand side.

\medbreak
\noindent \textbf{c)} Let $X_t$ be the particle-trajectory map at time $t$ associated with the velocity field $u_\ep$, and consider a test function $\Phi\in L^\infty (\R^2)$ of the form \begin{equation}
\Phi(x):=\mathds{1}_{A(a,b,l_1,l_2)}(x)\,.
\end{equation}
with
\begin{equation}
A(a,b,l_1,l_2):=\bigcup_{l\in [l_1,l_2]}\bigl\{\gamma_{\eps,l}^0(s)\, |\, s\in [a,b]\bigr\}\,.
\end{equation}
Here, $\mathds{1}_A$ denotes the indicator function of a subset $A \subset \R^2$. By the incompressibility of the flow, 
\begin{align}
\int_{\R^2} \omega_\ep(x,t)(\Phi\circ X_t^{-1})(x)\dx=\int_{\R^2} \omega_\ep^0(x)\Phi(x)\dx\,.
\end{align}
Also, it clearly holds that 
\begin{equation}
\Phi\circ X_t^{-1}=\mathds{1}_{X_t(A(a,b,l_1,l_2))}\,,
\end{equation}
and that
\begin{equation}
X_t(A(a,b,l_1,l_2))=\bigcup_{l\in [l_1,l_2]}X_t\left(\bigl\{\gamma_{\eps,l}^0(s)\, |\, s\in [a,b]\bigr\}\right)=\bigcup_{l\in [l_1,l_2]}\bigl\{X_t(\gamma_{\eps,l}^0(s))\, |\, s\in [a,b]\bigr\}\,.
\end{equation}

 Now, using that the curves $\gamma_{\eps,l}(s,t)$ are transported by the flow by definition, and using the reparametrization map $\sigma$, defined in \eqref{E.sgraph} and \eqref{def gamma evolved}, we can further rewrite this set as \begin{equation}
\bigcup_{l\in [l_1,l_2]}\bigl\{X_t(\gamma_{\eps,l}^0(s))\, |\, s\in [a,b]\bigr\}=\bigcup_{l\in [l_1,l_2]}\bigl\{\gamma_{\eps,l}(\sigma_t^{-1}(s),t)\, |\, s\in [a,b]\bigr\}\,,
\end{equation}
where the inverse of $\sigma_t$ is taken with respect to the variable $s$, and $\sigma_t$ is invertible by the assumption that the curves are graphs over $\Gamma$, as explained in \eqref{E.sgraph}--\eqref{def gamma evolved} above. The function $\sigma_t^{-1}$ is monotone (otherwise it cannot be invertible) and hence maps $[a,b]$ to $[\sigma_t^{-1}(a),\sigma_t^{-1}(b)]$ bijectively. Thus, we have that 
\begin{equation}
\bigcup_{l\in [l_1,l_2]}\bigl\{\gamma_{\eps,l}(\sigma_t^{-1}(s),t)\, |\, s\in [a,b]\bigr\}=\bigcup_{l\in [l_1,l_2]}\bigl\{\gamma_{\eps,l}(\tilde{s},t)\, |\, \tilde{s}\in [\sigma_t^{-1}(a),\sigma_t^{-1}(b)]\bigr\}\,.
\end{equation}
In sum, we have obtained that \begin{equation}
\int_{\bigcup_{l\in [l_1,l_2]}\bigl\{\gamma_{\eps,l}(s,t)\, |\, s\in [\sigma_t^{-1}(a),\sigma_t^{-1}(b)]\bigr\}}\omega_\ep(x,t)\dx=\int_{\bigcup_{l\in [l_1,l_2]}\bigl\{\gamma_{\eps,l}^0(s)\, |\, s\in [a,b]\bigr\}}\omega_\ep^0(x)\dx\,.
\end{equation}
Rewriting this equation equation in terms of the densities $\varpi_{\ep,l}$ and using the definitions \eqref{E.ansatzomega} and \eqref{E.ansatzomegabis}, this equation reads as \begin{equation}
    \int_{\sigma_t^{-1}(a)}^{\sigma_t^{-1}(b)}\int_{l_1}^{l_2}\varpi_{\eps,l}(s,t)\dd s\dd l=\int_{a}^{b}\int_{l_1}^{l_2}\varpi_{\eps,l}^0(s)\dd s\dd l\,.
\end{equation}
We can now let $l_1-l_2\rightarrow 0$ and see from the continuity of the solution that\begin{equation}
\int_{\sigma_t^{-1}(a)}^{\sigma_t^{-1}(b)} \varpi_{\eps,l}(s,t)\dd s=\int_{a}^{b} \varpi_{\eps,l}^0(s)\dd s\,,
\end{equation}
where $l$ denotes the common limit of $l_1$ and $l_2$. In other words, the integral on the left hand side is a conserved quantity for every $l\in [-1,1]$ and for all $a,b\in \T$. Using the Reynolds transport theorem and the fundamental theorem of calculus, we may rewrite this fact as \begin{align}
0&=\int_{\sigma_t^{-1}(a)}^{\sigma_t^{-1}(b)}\de_t\varpi_{\eps,l}(s,t)\dd s+\varpi_{\eps,l}(\sigma_t^{-1}(b),t)\de_t\sigma_t^{-1}(b)-\varpi_{\eps,l}(\sigma_t^{-1}(a),t)\de_t\sigma_t^{-1}(a)\\
&=\int_{\sigma_t^{-1}(a)}^{\sigma_t^{-1}(b)} \left(\de_t\varpi_{\eps,l}(s,t)+\de_t\sigma_t^{-1}(b)\de_s\varpi_{\eps,l}(s,t) \right) \dd s+\varpi_{\eps,l}(\sigma_t^{-1}(a),t)\de_t(\sigma_t^{-1}(b)-\sigma_t^{-1}(a))\,.
\end{align} 

 Finally, to derive \eqref{ev pi} from this we need to compute $\de_t\sigma_t^{-1}$. By definition (see \eqref{E.sgraph}), it holds that $\bs(X_t(\gamma_{\eps,l}^0(s))=\sigma_t^{-1}(s)$, differentiating this identity in $t$ and using \eqref{E.usun} yields \begin{equation}
\de_t\sigma_t^{-1}(s)=\nabla \bs(X_t(\gamma_{\eps,l}^0(s))\cdot u_\ep(X_t(\gamma_{\eps,l}^0(s),t)=\tu^{\, \bs} (\sigma_t^{-1}(s),\nu_{\eps,l}(\sigma_t^{-1}(s),t),t)\,.
\end{equation}

 We can then let $a-b\rightarrow 0$ and see that
\begin{align}
0&=\lim_{a-b\rightarrow 0}\frac{1}{\sigma_t^{-1}(b)-\sigma_t^{-1}(a)}\,\biggl(\int_{\sigma_t^{-1}(a)}^{\sigma_t^{-1}(b)} \left(\de_t\varpi_{\eps,l}(s,t)+\de_t\sigma_t^{-1}(b)\de_s\varpi_{\eps,l}(s,t) \right)\dd s\\
&\quad+\varpi_{\eps,l}(\sigma_t^{-1}(a),t)\de_t(\sigma_t^{-1}(b)-\sigma_t^{-1}(a))\biggl)\\
&=\de_t\varpi_{\eps,l}(\tilde{s},t)+\tu^{\,\bs}(\tilde{s},\nu_{\eps,l}(\tilde{s},t),t)\de_s\varpi_{\eps,l}(\tilde{s},t)\\
&\quad+\varpi_{\eps,l}(\tilde{s},t)\lim_{a-b\rightarrow 0}\frac{\tu^{\, \bs} (\sigma_t^{-1}(b),\nu_{\eps,l}(\sigma_t^{-1}(b),t),t)-\tu^{\, \bs} (\sigma_t^{-1}(a),\nu_{\eps,l}(\sigma_t^{-1}(a),t),t)}{\sigma_t^{-1}(b)-\sigma_t^{-1}(a)}\\
&=\de_t\varpi_{\eps,l}(\tilde{s},t)+\de_s(\tu^{\,\bs}(\tilde{s},\nu_{\eps,l}(\tilde{s},t),t)\varpi_{\eps,l}(\tilde{s},t))),
\end{align}
where $\tilde{s}$ denotes the common limit of $\sigma_t^{-1}(b)$ and $\sigma_t^{-1}(a)$.
\end{proof}

\section{Geometric estimates} \label{S.Geo}

This section is devoted to proving Proposition \ref{geo est}. We start with a preliminary technical lemma whose proof is a straightforward consequence of the fundamental theorem of calculus. 

\begin{lemma} \label{L.doubleDifference}
    Let $B\subset \C^2$ be convex, and let $g: B \subset \C^2 \to \C$ be a $C^2$-function. Then, for all $a,b,c,d \in B$, it follows that
    $$
    |g(a)-g(b)-g(c)+g(d)| \leq \|{\rm D}g \|_{L^{\infty}(B)} |a-b+d-c|+ \|{\rm D}^2 g\|_{L^{\infty}(B)} \big(|a-b|+|c-d| \big)\big(|a-c| + |b-d| \big)\,.
    $$
\end{lemma}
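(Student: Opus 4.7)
The plan is to apply the fundamental theorem of calculus twice, exploiting convexity to guarantee that all relevant line segments lie in $B$. This is the standard way to bound a mixed second difference by a quantity that measures how far the four points are from forming a degenerate ``parallelogram''.

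First, I would introduce the two line segments $\gamma_1(t):=b+t(a-b)$ and $\gamma_2(t):=d+t(c-d)$, both of which lie in $B$ by convexity. The FTC gives
\[
g(a)-g(b)=\int_0^1 \mathrm{D}g(\gamma_1(t))\cdot(a-b)\,\mathrm{d}t, \qquad g(c)-g(d)=\int_0^1 \mathrm{D}g(\gamma_2(t))\cdot(c-d)\,\mathrm{d}t.
\]
Subtracting and adding $\mathrm{D}g(\gamma_1(t))\cdot(c-d)$ inside the integrand produces the decomposition
\[
g(a)-g(b)-g(c)+g(d)=\int_0^1 \mathrm{D}g(\gamma_1(t))\cdot[(a-b)-(c-d)]\,\mathrm{d}t+\int_0^1 [\mathrm{D}g(\gamma_1(t))-\mathrm{D}g(\gamma_2(t))]\cdot(c-d)\,\mathrm{d}t.
\]
The first integral is immediately bounded by $\|\mathrm{D}g\|_{L^\infty(B)}\,|a-b+d-c|$, which accounts for the first term on the right-hand side of the lemma.

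For the second integral I would apply the FTC a second time along the segment joining $\gamma_2(t)$ to $\gamma_1(t)$ (again contained in $B$ by convexity), giving $|\mathrm{D}g(\gamma_1(t))-\mathrm{D}g(\gamma_2(t))|\leq \|\mathrm{D}^2g\|_{L^\infty(B)}\,|\gamma_1(t)-\gamma_2(t)|$. Since $\gamma_1(t)-\gamma_2(t)=(1-t)(b-d)+t(a-c)$, we have $|\gamma_1(t)-\gamma_2(t)|\leq |a-c|+|b-d|$, and this bounds the second integral by $\|\mathrm{D}^2g\|_{L^\infty(B)}\,|c-d|\,(|a-c|+|b-d|)$. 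Repeating the argument with the roles of the two pairs $(a,b)$ and $(c,d)$ swapped (i.e.\ adding and subtracting $\mathrm{D}g(\gamma_2(t))\cdot(a-b)$ instead), one obtains the same estimate with $|c-d|$ replaced by $|a-b|$. Averaging these two identities yields the bound with factor $\tfrac{1}{2}(|a-b|+|c-d|)(|a-c|+|b-d|)$, which is dominated by the term in the lemma's statement; alternatively, one may simply note that either of the two one-sided bounds is already dominated by $(|a-b|+|c-d|)(|a-c|+|b-d|)$.

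There is no real obstacle here, since this is a direct manipulation. The only point requiring care is ensuring that all line segments traversed by the two applications of the FTC (the segments $[b,a]$, $[d,c]$, and, for each $t\in[0,1]$, the segment $[\gamma_2(t),\gamma_1(t)]$) are contained in $B$, which is exactly what convexity guarantees.
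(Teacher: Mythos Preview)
Your proof is correct and follows exactly the approach the paper has in mind: the paper states only that the lemma ``is a straightforward consequence of the fundamental theorem of calculus,'' and your two applications of the FTC along the segments $[b,a]$, $[d,c]$, and $[\gamma_2(t),\gamma_1(t)]$ carry this out precisely.
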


We now derive the key result of this subsection, from which Proposition \ref{geo est} will follow.

\begin{lemma} \label{L.geoEst}
    Let $g: \T_\rho \times B \to \C$ be a holomorphic function, where $B\subset\C$ is a neighborhood of $[-R_1,R_1]$, with $\rho,\, R_1 > 0$ fixed. Assume that $g(\T\times[-R_1,R_1])\subset\R$ and that $\norm{g}_{C^2(\T_\rho\times B)}<\infty$. Then, there exists $C > 0$, depending only on $g$ and~$B$, such that, if $w^j\in \cX_\rho$ are functions bounded as
$$
  \norm{w^j}_{\rho}   < C\,,
$$
with $j=1,2$, then the maps $s\mapsto g(s, w^j(s))$ are holomorphic on~$\T_\rho$. Moreover, they can be estimated as
\begin{align} \label{E.gBounded}
 \norm{g(\cdot,w^j(\cdot)))}_\rho &\lesssim 1\,, \\
\label{E.gLipschitz}
\norm{g(\cdot,w^1(\cdot))-g(\cdot,w^2(\cdot))}_\rho &\lesssim \,  \norm{w^1 - w^2}_\rho\,,
\end{align}
with implicit constants that depend only on~$C$, $g$ and~$B$.
\end{lemma}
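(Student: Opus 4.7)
\emph{Proof plan.} The first step is to ensure that the composition $s \mapsto g(s, w^j(s))$ is well-defined and holomorphic on all of $\T_\rho$. Since $B \subset \C$ is a neighborhood of $[-R_1, R_1]$, in particular it contains an open disk around the origin, so we can fix $C > 0$ small enough that $\{|z| \leq C\} \subset B$. For any $w \in \cX_\rho$ with $\|w\|_\rho < C$, the definition of the norm~\eqref{E.norm rho} forces $|w(z)| \leq \|w\|_\rho < C$ at every $z \in \T_\rho$, so $(z, w(z)) \in \T_\rho \times B$, and $s \mapsto g(s, w(s))$ is the composition of two holomorphic maps, hence holomorphic. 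Its periodicity follows from that of $w$ and of $g$ in the first variable.

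Next I would establish~\eqref{E.gBounded} by estimating, for each fixed $|\beta| \leq \rho$, the $C^{1/2}(\T)$ norm of the slice $s \mapsto g(s + i\beta, w^j(s + i\beta))$. The $L^\infty$ bound is immediate from the boundedness of $g$ on $\T_\rho \times B$. For the Hölder seminorm, I split
\[
g(z, w^j(z)) - g(z', w^j(z')) = \bigl[g(z, w^j(z)) - g(z', w^j(z))\bigr] + \bigl[g(z', w^j(z)) - g(z', w^j(z'))\bigr],
\]
bound the first bracket by $\|Dg\|_\infty |z - z'|$ and the second by $\|Dg\|_\infty |w^j(z) - w^j(z')| \leq \|Dg\|_\infty \|w^j\|_\rho |z - z'|^{1/2}$. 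Since $\T = \R/\Z$ has diameter $\leq 1$, one has $|z - z'| \leq |z - z'|^{1/2}$ on the relevant range, so the sum is $\lesssim |z - z'|^{1/2}$, uniformly in $\beta$.

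For the Lipschitz estimate~\eqref{E.gLipschitz}, the $L^\infty$ part follows from the fundamental theorem of calculus applied in the second variable of $g$, which gives $|g(z, w^1(z)) - g(z, w^2(z))| \leq \|\partial_2 g\|_\infty \|w^1 - w^2\|_\rho$. The main work lies in the Hölder seminorm, where I would invoke Lemma~\ref{L.doubleDifference} with the four points $a = (z, w^1(z))$, $b = (z, w^2(z))$, $c = (z', w^1(z'))$, $d = (z', w^2(z'))$. Then $|a - b + d - c|$ reduces to $|(w^1 - w^2)(z) - (w^1 - w^2)(z')| \leq \|w^1 - w^2\|_\rho |z - z'|^{1/2}$, while the quadratic term is controlled by
\[
\bigl(|w^1(z) - w^2(z)| + |w^1(z') - w^2(z')|\bigr)\bigl(2|z - z'| + \|w^1\|_\rho |z-z'|^{1/2} + \|w^2\|_\rho |z-z'|^{1/2}\bigr),
\]
which is $\lesssim \|w^1 - w^2\|_\rho |z - z'|^{1/2}$ after using the $C$-bound on $\|w^j\|_\rho$ and the diameter of $\T_\rho$.

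The main (though minor) subtlety is bookkeeping in the double-difference step: a naive estimate using only $\|Dg\|_\infty$ produces a term $\|w^1 - w^2\|_\rho + \|w^1 - w^2\|_{C^0}$ times $|z - z'|^{1/2}$ but no Lipschitz gain in the seminorm of $w^1 - w^2$ alone, which is why the full $C^2$ regularity of $g$ and the mixed-difference Lemma~\ref{L.doubleDifference} are essential to obtain a clean bound linear in $\|w^1 - w^2\|_\rho$.
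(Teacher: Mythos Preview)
Your proof proposal is correct and follows essentially the same approach as the paper: the holomorphy is obtained by taking $C$ small enough that the range of $w^j$ lies in $B$, the bound \eqref{E.gBounded} is split into an $L^\infty$ part and a H\"older part handled via the Lipschitz bound on $g$, and the Lipschitz estimate \eqref{E.gLipschitz} uses Lemma~\ref{L.doubleDifference} with precisely the four points $(z,w^1(z))$, $(z,w^2(z))$, $(z',w^1(z'))$, $(z',w^2(z'))$. The only minor point you leave implicit is that, to apply Lemma~\ref{L.doubleDifference}, one should first shrink $B$ so that $\T_\rho\times B$ is convex; the paper makes this explicit but it is a harmless technicality.
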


\begin{remark}
The assumptions in the previous result can be slightly relaxed. We need that
$$
   \norm{w^j}_{L^{\infty}(\T)}   < C_1\,,
$$
for some sufficiently small $C_1>0$, and that the $C^{\frac12}$-seminorm is finite, i.e.\
$$
 C_2:= \big[w^j\big]_{C^{\frac12}(\T)} <\infty\,.
$$
In this case, the implicit constants would depend on both $C_1$ and on (an upper bound of) $C_2$. 
\end{remark}

\begin{proof}
We consider functions $w^1, w^2 \subset \cX_\rho$ and, using that $g$ is holomorphic in $\T_\rho \times B \supset \T_\rho \times [-R_1,R_1]$, we get the existence of a constant $C > 0$ such that
$$
 \norm{w^j}_{\rho}  < C\,,
$$
implies that the maps $s\mapsto g(s, w^j(s))$ are holomorphic on~$\T_\rho$ for $j \in \{1,2\}$. To conclude the proof, we then have to show that \eqref{E.gBounded} and \eqref{E.gLipschitz} hold. Note that, taking $B$ smaller if necessary, we can assume that $\T_\rho \times B$ is convex. 

Let $j \in \{1,2\}$. By direct computations, we get that 
\begin{align}
    & \norm{g(\cdot,w^j)}_\rho  = \sup_{|\be| \leq \rho} \Bigg\{ \sup_{\al \in \T} \big|g(\al + i \be, w^j(\al + i \be)) \big| \\ 
    & \qquad  + \sup_{\substack{\alpha_1, \alpha_2 \in \TT\\ \alpha_1 \neq \alpha_2}} \frac{\big|g(\alpha_1+i\beta, w^j(\alpha_1+i\beta))-g(\alpha_2+i\beta, w^j(\alpha_2+i\beta))\big|}{|\al_1-\al_2|^{\frac12}} \Bigg\} \\
    & \quad \leq \sup\Big\{ |g(s,z)| : s \in \T_\rho\, \textup{ and }\, |z| \leq C \Big\} \\
    & \qquad + \sup_{|\be| \leq \rho} \Bigg\{ \sup_{\substack{\alpha_1, \alpha_2 \in \TT\\ \alpha_1 \neq \alpha_2}} \frac{\big|g(\alpha_1+i\beta, w^j(\alpha_1+i\beta))-g(\alpha_2+i\beta, w^j(\alpha_2+i\beta))\big|}{|\al_1-\al_2|^{\frac12}}  \Bigg\} \\
    & \quad \lesssim 1 + \sup_{|\be| \leq \rho} \Bigg\{ \sup_{\substack{\alpha_1, \alpha_2 \in \TT\\ \alpha_1 \neq \alpha_2}} \frac{\big|g(\alpha_1+i\beta, w^j(\alpha_1+i\beta))-g(\alpha_2+i\beta, w^j(\alpha_2+i\beta))\big|}{|\al_1-\al_2|^{\frac12}}  \Bigg\}\,.
\end{align}
Also, observe that
\begin{align}
    & \big|g(\alpha_1+i\beta, w^j(\alpha_1+i\beta))-g(\alpha_2+i\beta, w^j(\alpha_2+i\beta))\big| \\
    & \quad \leq \| {\rm D} g\|_{L^{\infty}(B)} \Big( \big|w^j(\alpha_1+i\beta) - w^j(\alpha_2+i\beta)\big| + |\alpha_1 - \alpha_2 | \Big) \\
    & \quad \lesssim  \Big( \big|w^j(\alpha_1+i\beta) - w^j(\alpha_2+i\beta)\big| + |\alpha_1 - \alpha_2 | \Big)\,.
\end{align}
Hence, using the control of the $\frac{1}{2}$-Hölder norm of $w^j$, we see that
\begin{align}
        \norm{g(\cdot,w^j(\cdot))}_\rho \lesssim 1 + \sup_{|\beta| \leq \rho} \big[w^j(\cdot + i \beta)\big]_{C^{\frac12}(\T)} \lesssim 1 + C\,,
\end{align}
and thus \eqref{E.gBounded} follows. 

Next, it follows that
\begin{align}
    & \norm{g(\cdot,w^1(\cdot)) - g(\cdot,w^2(\cdot))}_\rho \leq \sup_{\substack{\al \in \T \\|\beta| \leq \rho}} \big| g(\al+i\beta, w^1(\al + i \be)) - g(\al+i\be, w^2(\al+i\be)) \big| \\
    & \quad  + \sup_{|\beta| \leq \rho} \Bigg\{ \sup_{\substack{\al_1, \al_2 \in \T \\ \al_1 \neq \al_2}} |\al_1-\al_2|^{-\frac12} \bigg( g(\al_1+i\beta, w^1(\al_1 + i \be))  - g(\al_1+i\be, w^2(\al_1+i\be)) \\
    & \quad - g(\al_2+i\beta, w^1(\al_2 + i \be)) + g(\al_2+i\be, w^2(\al_2+i\be)) \bigg) \Bigg\}\,.
\end{align}
We estimate each term on the right-hand side separately. On the one hand, we have that
\begin{equation} \label{E.gLipschitz1}
\begin{aligned}
    & \sup_{\substack{\al \in \T \\|\beta| \leq \rho}} \big| g(\al+i\beta, w^1(\al + i \be)) - g(\al+i\be, w^2(\al+i\be)) \big| \\
    & \quad \leq \|{\rm D}g\|_{L^{\infty}(B)} \sup_{|\be| \leq \rho} \|w^1(\cdot+i\beta) - w^2(\cdot + i \be)\|_{L^{\infty}(\T)} \lesssim \norm{w^1-w^2}_\rho\,.
\end{aligned}
\end{equation}
On the other hand, taking into account Lemma \ref{L.doubleDifference}, we get that \allowdisplaybreaks
\begin{align}
    & \bigg| g(\al_1+i\beta, w^1(\al_1 + i \be))  - g(\al_1+i\be, w^2(\al_1+i\be)) \\
    & \quad - g(\al_2+i\beta, w^1(\al_2 + i \be)) + g(\al_2+i\be, w^2(\al_2+i\be)) \bigg| \\
    & \leq \| {\rm D} g\|_{L^{\infty}(B)} \Big| w^1(\al_1+i\be)-w^2\al_1+i\be) - w^1(\al_2+i\be)+w^2(\al_2+i\be) \Big| \\
    & \quad + \| {\rm D}^2 g\|_{L^{\infty}(B)}  \bigg( \Big( \big| w^1(\al_1+i\be)-w^2(\al_1+i\be)\big| + \big| w^1(\al_2+i\be)- w^2(\al_2+i\be) \big| \Big) \\
    & \quad \times \Big( \big| w^1(\al_1+i\be) -w^1(\al_2+i\be)  \big| + \big| w^2(\al_1+i\be) -  w^2(\al_2+i\be) \big| \Big)\bigg) \\ 
    & \leq \| {\rm D} g\|_{L^{\infty}(B)} \Big| w^1(\al_1+i\be)-w^2(\al_1+i\be) - w^1(\al_2+i\be)+w^2(\al_2+i\be) \Big| \\
    & \quad + 2  \| {\rm D}^2 g\|_{L^{\infty}(B)} \norm{w^1(\cdot+i\be)-w^2(\cdot+i\be)}_{L^{\infty}(\T)} \sum_{j=1}^2 \big| w^j(\al_1+i\be) -  w^j(\al_2+i\be) \big| \\
    & \lesssim  \Big| w^1(\al_1+i\be)-w^2(\al_1+i\be) - w^1(\al_2+i\be)+w^2(\al_2+i\be) \Big| \\
    & \quad + \norm{w^1(\cdot+i\be)-w^2(\cdot+i\be)}_{L^{\infty}(\T)} \sum_{j=1}^2 \big| w^j(\al_1+i\be) -  w^j(\al_2+i\be) \big|\,. 
\end{align}
Hence, we have that
\begin{equation}\label{E.gLipschitz2}
\begin{aligned} 
& \sup_{|\beta| \leq \rho} \Bigg\{ \sup_{\substack{\al_1, \al_2 \in \T \\ \al_1 \neq \al_2}} |\al_1-\al_2|^{-\frac12} \bigg( g(\al_1+i\beta, w^1(\al_1 + i \be))  - g(\al_1+i\be, w^2(\al_1+i\be)) \\
    & \qquad - g(\al_2+i\beta, w^1(\al_2 + i \be)) + g(\al_2+i\be, w^2(\al_2+i\be)) \bigg) \Bigg\}  \lesssim \norm{w^1-w^2}_\rho\,.
\end{aligned}
\end{equation}
Combining \eqref{E.gLipschitz1} and \eqref{E.gLipschitz2} we immediately obtain \eqref{E.gLipschitz} and conclude the proof. 
\end{proof}

The second ingredient that we need to prove Proposition \ref{geo est} is the following lemma. Here, we use a fixed reference curve $\Ga \in \cX_{\rho_0}^2$ for some $\rho_0 > 0$ as in Theorem \ref{main thm} with $\Ga^{(5)} \in \cX_{\rho_0}^2$.
 
\begin{lemma}\label{L.real analytic}
There exists $c_0 > 0$ such that $\tes$ and $\kappa$ are real analytic in the set $\T \times (-c_0,c_0)$. Furthermore, $|\tes| \geq \frac12$ in $\T \times (-c_0,c_0)$ and, for every $n \in (-c_0,c_0)$ we have $\tes(\cdot,n),\, \kappa(\cdot,n) \in \cX^2_{\rho_0}$. 
\end{lemma}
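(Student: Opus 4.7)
The plan is to reduce everything to explicit closed-form expressions for $\tes$ and $\kappa$ coming from \eqref{expr tang} and \eqref{E.kappa}, and to use the arclength parametrization of $\Gamma$ to collapse $|\tes|^2$ into something manifestly bounded away from zero. Real-analyticity in $(s,n)$ will then be immediate from the analyticity of $\Gamma$, and the $\cX_{\rho_0}^2$ bounds will follow because $\cX_{\rho_0}$ is a Banach algebra (since $C^{1/2}(\T)$ is) and because reciprocals of $\cX_{\rho_0}$-functions that are bounded uniformly below in modulus on $\T_{\rho_0}$ stay in $\cX_{\rho_0}$.

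First I would invoke \eqref{expr tang} to write $\tes(s,n) = \dot\Gamma(s) + n\,\ddot\Gamma(s)^{\perp}$, which already lies in $\cX_{\rho_0}^2$ for each $n$ and is real analytic on $\T\times\R$. Then I would exploit the arclength condition $\dot\Gamma\cdot\dot\Gamma \equiv 1$ on $\T$ -- which extends to $\T_{\rho_0}$ by analytic continuation, with $\cdot$ the complex-bilinear Euclidean form -- to get $\dot\Gamma\cdot\ddot\Gamma = 0$ and hence $\ddot\Gamma = \kappa_\Gamma\,\dot\Gamma^{\perp}$, where $\kappa_\Gamma := \dot\Gamma^\perp\cdot\ddot\Gamma \in \cX_{\rho_0}$ is the signed curvature. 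Since $(\dot\Gamma^\perp)^\perp = -\dot\Gamma$, this collapses to the clean identities
\[
\tes(s,n) = \bigl(1 - n\,\kappa_\Gamma(s)\bigr)\,\dot\Gamma(s), \qquad |\tes(s,n)|^2 = \bigl(1 - n\,\kappa_\Gamma(s)\bigr)^2.
\]
Choosing $c_0 \in \bigl(0,\,(2\|\kappa_\Gamma\|_{L^\infty(\T_{\rho_0})})^{-1}\bigr)$ forces $|1 - n\kappa_\Gamma(s)| \geq 1/2$ on $\T_{\rho_0}\times(-c_0,c_0)$, which gives $|\tes|\geq 1/2$ on the real slice and simultaneously keeps $1-n\kappa_\Gamma$ bounded below in modulus on the complex strip, so that $(1-n\kappa_\Gamma)^{-1} \in \cX_{\rho_0}$ by the standard quotient argument in $C^{1/2}$.

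Finally, dividing yields $\tes/|\tes|^2 = \dot\Gamma/(1 - n\kappa_\Gamma)$, so that \eqref{E.kappa} gives
\[
\kappa(s,n) = \frac{\ddot\Gamma(s)}{1 - n\,\kappa_\Gamma(s)} + n\,\frac{\dot\Gamma(s)\,\dot\kappa_\Gamma(s)}{\bigl(1 - n\,\kappa_\Gamma(s)\bigr)^2},
\]
with $\dot\kappa_\Gamma = \dot\Gamma^\perp\cdot\Gamma^{(3)}$. This is the step where the hypothesis $\Gamma^{(5)}\in\cX_{\rho_0}^2$ enters -- through $\Gamma^{(3)}$, to put $\dot\kappa_\Gamma\in\cX_{\rho_0}$ -- after which the Banach algebra property together with the reciprocal estimate place $\kappa(\cdot,n)\in\cX_{\rho_0}^2$, and the whole expression is manifestly real analytic on $\T\times(-c_0,c_0)$. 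I do not expect any genuine obstacle here; the only (very mild) subtlety is notational, namely that on $\T_{\rho_0}$ the expression $|\tes|^2$ must be interpreted as the holomorphic extension $\tes\cdot\tes$ (equivalently, $(1-n\kappa_\Gamma)^2$), after which the argument is essentially a one-line computation.
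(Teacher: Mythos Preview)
Your proof is correct and follows essentially the same approach as the paper's: both start from the formula $\tes(s,n)=\dot\Gamma(s)+n\,\ddot\Gamma(s)^\perp$, use the arclength condition to get the lower bound on $|\tes|$, and then read off the conclusions for $\kappa$ from the definition \eqref{E.kappa}. Your version is simply more explicit---you carry the arclength identity one step further to the closed-form simplification $\tes=(1-n\kappa_\Gamma)\dot\Gamma$ and the resulting formula for $\kappa$, whereas the paper just notes that $\mathrm{D}^2\tes(\cdot,n)\in\cX_{\rho_0}^2$ follows from $\Gamma^{(5)}\in\cX_{\rho_0}^2$ and that the arclength parametrization yields the required $c_0$, leaving the rest implicit.
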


\begin{proof}
    First of all, recall that
    $$
    \tes(s,n) = \dot{\Ga}(s) + n \ddot{\Ga}(s)^{\perp} \quad \textup{ and } \quad \ka(s,n):= \frac{\pd}{\pd s} \Big( \frac{\tes(s,n)}{|\tes(s,n)|^2} \Big)\,.
    $$
    By assumption, it holds that $\Gamma, \Gamma^{(5)} \in \cX_{\rho_0}^2$. Hence, $\mathrm{D}^2\tes(\cdot,n) \in \cX_{\rho_0}^2$ for all fixed $n$, and $\tes$ is real analytic in $\T \times \R$. Moreover, as $\Ga$ is parametrized by arclength, there exists $c_0 > 0$ such that $|\tes| \geq \frac12$ in $\TT \times (-c_0,c_0)$. Combining this lower bound with the definition of $\ka$, the result follows. 
\end{proof}

\begin{remark}
    This proof also shows that $\frac{\tes(\cdot,n)}{|\tes(\cdot,n)|^2} \in \cX_{\rho_0}^2$ for all fixed $n \in (-c_0,c_0)$, and that $\frac{\tes}{|\tes|^2}$ is real analytic in $\T \times (-c_0,c_0)$. Moreover, since $\ten(s) = \dot{\Ga}(s)^{\perp}$, we also have $\ten \in \cX_{\rho_0}^2$.
\end{remark}

We are now ready to prove Proposition \ref{geo est}.

\begin{proof}[Proof of Proposition \ref{geo est}]
The result follows combining Lemma \ref{L.real analytic} with Lemma \ref{L.geoEst} applied with $g = \kappa$ and $g = \frac{\tes}{|\tes|^2}$.
\end{proof}

\section{Kernel estimates} \label{S.Kernel}

In this section we prove Proposition \ref{main est}. Since the proof is rather long, we split it into several parts. We start by introducing some notation. For $z=(z_1,z_2)\in\CC^2$, we denote its modulus by
\[
|z|:=\sqrt{|z_1|^2+|z_2|^2}\in[0,\infty)\,.
\]
Also, when $\Re\{ z_1^2+z_2^2\}>0$ (as will be the case throughout the paper), we use the main branch of the square root on $\C\backslash (-\infty,0)$ to define
\begin{align} \label{E.complexificationModulus}
&|z|_{\C}:=\sqrt{z_1^2+z_2^2}\in \C\,,
\end{align}
and denote the corresponding quadratic form by \begin{align}
\scalar{z}{z'}_\C:=z_1z_1'+z_2z_2'\in \C\,.
\end{align}
Of course, \eqref{E.complexificationModulus} is not a norm. A direct calculation shows that \begin{align} \label{E.complex-modulus}
||z|_{\C}|\leq |z|\,.
\end{align}

Having this notation at hand, in the next subsection, we prove several technical lemmas which will be key to prove Proposition \ref{main est}.

\subsection{Preliminary lemmas} \label{S.preliminaryLemmas} We start with a technical result whose proof is just direct calculus. We provide some details for the benefit of the reader. 

\begin{lemma}\label{Lem double diff}
Let $p,q,r,t\in \C^2 \setminus \{0\}$, then \begin{equation}\begin{aligned}
&\left|\frac{p}{|p|_{\C}^2}-\frac{q}{|q|_{\C}^2}-\frac{r}{|r|_{\C}^2}+\frac{t}{|t|_{\C}^2}-\frac{p-q-r+t}{|p|_{\C}^2}-\frac{2p\scalar{p}{q-p+r-t}_{\C}}{|p|_{\C}^4}\right|\\
& \qquad \lesssim \frac{\max\{|p|,|q|,|r|,|t|\}^3}{\min\{||p|_{\C}|,||q|_{\C}|,||r|_{\C}|,||t|_{\C}|\}^6}\big(|p-q|+|r-t|\big)\big(|p-r|+|q-t|\big)\,.\label{dd est}
\end{aligned}\end{equation}
\end{lemma}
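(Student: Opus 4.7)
The plan is to recognize the quantity inside $|\cdot|$ as a second-order Taylor remainder. Setting $F(x):=x/|x|_{\C}^2$, one computes $DF(x)h = h/|x|_{\C}^2 - 2x\,\scalar{x}{h}_\C/|x|_{\C}^4$; then, using the identity $\scalar{p}{q-p+r-t}_\C = -\scalar{p}{p-q-r+t}_\C$, a direct check shows that the left-hand side of \eqref{dd est} equals
\begin{equation*}
\bigl|F(p) - F(q) - F(r) + F(t) - DF(p)(p-q-r+t)\bigr|.
\end{equation*}
So the task is to control a mixed second-order finite difference of $F$.

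My main tool would be the bilinear interpolation $\phi(u,s) := us\,p + (1-u)s\,q + u(1-s)\,r + (1-u)(1-s)\,t$ on $(u,s)\in[0,1]^2$, whose corners satisfy $\phi(1,1)=p$, $\phi(0,1)=q$, $\phi(1,0)=r$, $\phi(0,0)=t$. Since $\partial_u\partial_s\phi \equiv p-q-r+t$, integrating the chain-rule identity $\partial_u\partial_s[F\circ\phi] = D^2F(\phi)(\partial_u\phi,\partial_s\phi) + DF(\phi)(p-q-r+t)$ over $[0,1]^2$ gives
\begin{equation*}
F(p) - F(q) - F(r) + F(t) = \int_0^1\!\!\int_0^1 \bigl[D^2F(\phi)(\partial_u\phi,\partial_s\phi) + DF(\phi)(p-q-r+t)\bigr]\,du\,ds.
\end{equation*}
After writing $DF(\phi)=DF(p)+[DF(\phi)-DF(p)]$, the $DF(p)(p-q-r+t)$ piece exactly cancels the subtracted linearization, leaving two remainder terms, both controlled by $\sup|D^2F(\phi)|$.

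For the Hessian term, the bounds $|\partial_u\phi|\leq (1-s)|r-t|+s|p-q|$ and $|\partial_s\phi|\leq (1-u)|q-t|+u|p-r|$ lead directly, after integration in $(u,s)$, to the product $(|p-q|+|r-t|)(|p-r|+|q-t|)$ up to a universal constant. For the remaining piece, $|DF(\phi)-DF(p)|\lesssim \sup|D^2F|\cdot|\phi-p|$, and integrating $|\phi-p|\leq (1-u)s|q-p|+u(1-s)|r-p|+(1-u)(1-s)|t-p|$ yields a bound proportional to $\bigl(|p-q|+|p-r|+|p-t|\bigr)|p-q-r+t|$. I would then verify algebraically that this is $\lesssim (|p-q|+|r-t|)(|p-r|+|q-t|)$: the $|p-q|$ and $|p-r|$ summands are handled using the bounds $|p-q-r+t|\leq|p-r|+|q-t|$ and $|p-q-r+t|\leq|p-q|+|r-t|$ respectively, while for the $|p-t|$ summand one splits $|p-t|\leq|p-q|+|q-t|$ and applies each bound to the appropriate resulting piece.

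The last step is to estimate $\sup|D^2F|$ on the image of $\phi$. A direct calculation from the formula for $DF$ yields $|D^2F(x)|\lesssim |x|/||x|_{\C}|^4 + |x|^3/||x|_{\C}|^6 \lesssim |x|^3/||x|_{\C}|^6$, using $||x|_{\C}|\leq |x|$. The main—indeed the only—subtlety is that $|\cdot|_{\C}$ is a complex quadratic form rather than a genuine norm, so $||\phi(u,s)|_{\C}|$ could \emph{a priori} vanish inside $[0,1]^2$ even when $||p|_{\C}|,\ldots,||t|_{\C}|$ are all positive. However, in the regime where the bound is meaningful the differences $|p-q|,|p-r|,|p-t|$ are small compared to $\min\{||p|_{\C}|,\ldots,||t|_{\C}|\}$, so continuity of $|\cdot|_{\C}$ forces $||\phi|_{\C}|\gtrsim \min\{||p|_{\C}|,\ldots,||t|_{\C}|\}$ throughout the square, and the supremum of $|D^2F(\phi)|$ is controlled by $\max\{|p|,\ldots,|t|\}^3/\min\{||p|_{\C}|,\ldots,||t|_{\C}|\}^6$, as claimed.
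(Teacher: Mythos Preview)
Your recognition of the left-hand side as the second-order Taylor remainder of $F(x)=x/|x|_{\C}^2$, namely $\bigl|F(p)-F(q)-F(r)+F(t)-DF(p)(p-q-r+t)\bigr|$, is correct and clarifying. The gap is in the last paragraph, and it is not a technicality.

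The bilinear interpolant $\phi$ can cross the zero set of $x\mapsto|x|_{\C}^2$ even when all four corners avoid it, and your claim that this only happens outside ``the regime where the bound is meaningful'' is false. Take $p=(1,0)$, $q=(1+\epsilon,0)$, $r=(-1,0)$, $t=(-1+\epsilon,0)$ with $\epsilon>0$ small (real vectors, so $|\cdot|_{\C}$ is the ordinary norm). Here $\min\approx1$ while $|p-r|=2$ is \emph{not} small, and one computes $\phi(1/2,s)=(2s-1+\epsilon/2,0)$, which vanishes at $s=1/2-\epsilon/4$; your integral is therefore undefined. Yet $p-q-r+t=0$, so the left-hand side equals $|F(p)-F(q)-F(r)+F(t)|=2\epsilon^2/(1-\epsilon^2)$, while the right-hand side is of order $8\epsilon$: the estimate is very much nontrivial in this configuration. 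The same obstruction already appears in the real case, where the convex hull of four nonzero vectors may contain the origin.

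The paper's proof avoids interpolation entirely. It proceeds by successive algebraic rearrangements---starting from $\frac{p}{|p|_{\C}^2}-\frac{q}{|q|_{\C}^2}=\frac{p-q}{|p|_{\C}^2}+q\,\frac{|q|_{\C}^2-|p|_{\C}^2}{|p|_{\C}^2|q|_{\C}^2}$ and iterating---to derive an \emph{exact} identity expressing the quantity in \eqref{dd est} as a sum of seven explicit rational terms, each of which is then bounded pointwise using only $||z|_{\C}|\le|z|$. Since no path through $\C^2$ is ever taken, the singular set of $F$ never enters. If you want to salvage the calculus approach you would need either to restrict to a regime where a safe one-parameter interpolation exists and handle the complementary direction by a separate (necessarily algebraic) argument, or to deform the integration contour around the singularity; in either case the work is comparable to the paper's direct computation.
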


\begin{proof}
First, note that
\begin{equation} \label{2 dd est}
\begin{aligned}
&\frac{p}{|p|_{\C}^2}-\frac{q}{|q|_{\C}^2}-\frac{r}{|r|_{\C}^2}+\frac{t}{|t|_{\C}^2}=\frac{p-q}{|p|_{\C}^2}-\frac{r-t}{|r|_{\C}^2}+q\frac{|q|_{\C}^2-|p|_{\C}^2}{|p|_{\C}^2|q|_{\C}^2}-t\frac{|t|_{\C}^2-|r|_{\C}^2}{|r|_{\C}^2|t|_{\C}^2}\\
&=\frac{p-q-r+t}{|p|_{\C}^2}-(r-t)\frac{\scalar{p-r}{p+r}_{\C}}{|p|_{\C}^2|r|_{\C}^2}+q\frac{\scalar{q-p}{q+p}_{\C}}{|p|_\C^2|q|_{\C}^2}-t\frac{\scalar{t-r}{t+r}_{\C}}{|t|_\C^2|r|_{\C}^2}\,.
\end{aligned}
\end{equation}
 Now, observe that
\begin{align*}
    & t\frac{\scalar{t-r}{t+r}_{\C}}{|t|_\C^2|r|_{\C}^2} = q\frac{\scalar{t-r}{q+p}_{\C}}{|p|_\C^2|q|_{\C}^2} + q \scalar{t-r}{q+p}_{\C} \left( \frac{1}{|r|^2_\C |t|^2_\C}  - \frac{1}{|p|^2_\C |q|^2_\C} \right) \\
    & \qquad + q \frac{\scalar{t-r}{t-q+r-p}_\C }{|p|^2_\C |q|^2_\C} + (t-q) \frac{\scalar{t-r}{t+r}_\C}{|t|^2_\C |r|^2_\C}\,,
\end{align*}
and so that
\begin{align}
    & q\frac{\scalar{q-p}{q+p}_{\C}}{|p|_\C^2|q|_{\C}^2}-t\frac{\scalar{t-r}{t+r}_{\C}}{|t|_\C^2|r|_{\C}^2} =  q\frac{\scalar{q-p-t+r}{q+p}_{\C}} {|p|_\C^2|q|_{\C}^2} \\
    & \qquad -q \scalar{t-r}{q+p}_{\C} \left( \frac{1}{|r|^2_\C |t|^2_\C}  - \frac{1}{|p|^2_\C |q|^2_\C} \right) - q \frac{\scalar{t-r}{t-q+r-p}_\C }{|p|^2_\C |q|^2_\C} + (q-t) \frac{\scalar{t-r}{t+r}_\C}{|t|^2_\C |r|^2_\C}\,.
\end{align}
On the other hand, it follows that
\begin{align}
     & q\frac{\scalar{q-p-t+r}{q+p}_{\C}} {|p|_\C^2|q|_{\C}^2} = 2q \frac{\scalar{q-t-p+r}{p}_\C}{|p|^2_\C|q|^2_\C } + q \frac{\scalar{q-t-p+r}{q-p}_\C}{|p|^2_\C|q|^2_\C } \\
     & \quad = 2p \frac{\scalar{q-p-t+r}{p}_\C}{|p|_\C^4} + 2p \frac{\scalar{q-p-t+r}{p}_\C \scalar{p-q}{p+q}_\C}{|p|_\C^4 |q|_\C^2} \\
     & \qquad+ q \frac{\scalar{q-t-p+r}{q-p}_\C}{|p|^2_\C|q|^2_\C } + 2(q-p) \frac{\scalar{q-p-t+r}{p}_\C}{|p|^2_\C |q|^2_\C}\,.
\end{align}
Hence, we actually have that
\begin{align}
    & q\frac{\scalar{q-p}{q+p}_{\C}}{|p|_\C^2|q|_{\C}^2}-t\frac{\scalar{t-r}{t+r}_{\C}}{|t|_\C^2|r|_{\C}^2} = 2p \frac{\scalar{q-p-t+r}{p}_\C}{|p|_\C^4} + 2p \frac{\scalar{q-p-t+r}{p}_\C \scalar{p-q}{p+q}_\C}{|p|_\C^4 |q|_\C^2} \\
     & \qquad+ q \frac{\scalar{q-t-p+r}{q-p}_\C}{|p|^2_\C|q|^2_\C } + 2(q-p) \frac{\scalar{q-p-t+r}{p}_\C}{|p|^2_\C |q|^2_\C} \\
    & \qquad -q \scalar{t-r}{q+p}_{\C} \left( \frac{1}{|r|^2_\C |t|^2_\C}  - \frac{1}{|p|^2_\C |q|^2_\C} \right) - q \frac{\scalar{t-r}{t-q+r-p}_\C }{|p|^2_\C |q|^2_\C} + (q-t) \frac{\scalar{t-r}{t+r}_\C}{|t|^2_\C |r|^2_\C}\,.
\end{align}
Finally, we substitute everything into \eqref{2 dd est} and get that
\begin{align}
    & \frac{p}{|p|_{\C}^2}-\frac{q}{|q|_{\C}^2}-\frac{r}{|r|_{\C}^2}+\frac{t}{|t|_{\C}^2}-\frac{p-q-r+t}{|p|_{\C}^2}-\frac{2p\scalar{p}{q-p+r-t}_{\C}}{|p|_{\C}^4} \\  \label{E.pqrtComplete}
    & \quad= (t-r) \frac{\scalar{p-r}{p+r}_{\C}}{|p|_{\C}^2|r|_{\C}^2} + 2p \frac{\scalar{q-p-t+r}{p}_\C \scalar{p-q}{p+q}_\C}{|p|_\C^4 |q|_\C^2} \\
     & \qquad+ q \frac{\scalar{q-t-p+r}{q-p}_\C}{|p|^2_\C|q|^2_\C } + 2(q-p) \frac{\scalar{q-p-t+r}{p}_\C}{|p|^2_\C |q|^2_\C} \\
    & \qquad -q \scalar{t-r}{q+p}_{\C} \left( \frac{1}{|r|^2_\C |t|^2_\C}  - \frac{1}{|p|^2_\C |q|^2_\C} \right) - q \frac{\scalar{t-r}{t-q+r-p}_\C }{|p|^2_\C |q|^2_\C} + (q-t) \frac{\scalar{t-r}{t+r}_\C}{|t|^2_\C |r|^2_\C}\,.
\end{align}
Then, using \eqref{E.complex-modulus}, one can estimate each term on the right hand side and conclude the proof.
\end{proof}
%
%
%

In the next result, using classical harmonic analysis tools, we estimate some singular integrals in $C^\frac12(\T, \C)$. We do it for a general Calder\'on--Zygmund kernel depending on two complex parameters $a,b \in \C^2$. Here, $\phi$ is the periodic extension to $\T$ of a smooth even cutoff function such that
$$
\phi(s) = 1 \quad \textup{if } |s| \leq \frac18\,, \quad\textup{ and } \quad \phi(s) = 0 \quad \textup{if } |s| \geq \frac14\,.
$$

\begin{lemma}\label{SingInt}
Let $a,b\in \C^2\backslash \{0\}$ be fixed and such that, for some $A_0\geq 1$,
\begin{equation}\begin{aligned}
||a|_{\C}-1|\leq \frac{1}{10}\quad &\mathrm{and}\quad ||b|_{\C}-||b|_{\C}||\leq \frac{1}{4}||b|_{\C}|\,,\\
|a|\leq A_0\quad &\mathrm{and}\quad ||b|_\C|\geq \frac{1}{10 A_0}|b|\,, \label{arg comp modulus}
\end{aligned}\end{equation}
 and \begin{align}
|\scalar{a}{b}_{\C}|\leq  \frac{1}{100 A_0 }|b|\,.\label{kinda ort}
\end{align}
Consider convolution kernels $f: \T \to \C$ defined for $|x| \leq \frac12$ by
 \begin{align}
f(x):=\frac{x^m\eps^r}{|xa+\eps b|_{\C}^{r+m+1}}\phi(x)\, \mathds{1}_{c\leq |x|\leq \frac12}\,,
\end{align}
with $\ep \in (0,1)$, $c\in [0,1/2]$, $r,m\in \N \cup \{0\}$, and $r\neq 0$ or $m$ odd. Then,
\begin{equation} \label{E.convolutionHolder}
\norm{f *g}_{C^{\frac12}(\T)} \lesssim |b|^{-r}\, \norm{g}_{C^{\frac12}(\T)} \,, \quad \textup{for all } g \in C^{\frac12}(\T)\,.
\end{equation}
Moreover, for $k_1, k_2 \in \N \cup \{0\}$, the derivatives of f with respect to the parameters a and b satisfy
\begin{equation} \label{E.convolutionDerivativeHolder}
\norm{({\rm D}^{k_1}_a {\rm D}_b^{k_2} f) * g}_{C^{\frac12}(\T)}  \lesssim |b|^{-r-k_2}\, \norm{g}_{C^{\frac12}(\T)} \,, \quad \textup{for all } g \in C^{\frac12}(\T)\,.
\end{equation}
The implicit constants here depend only on $k_1, k_2, r,m$ and $A_0$.
\end{lemma}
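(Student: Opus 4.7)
The plan is first to extract uniform pointwise bounds on the denominator $|xa+\epsilon b|_\C$ from the algebraic hypotheses on $a,b$, and then to split the argument into two regimes: $r\geq 1$, where the kernel is absolutely integrable, and $r=0$ with $m$ odd, which is a genuine Calder\'on--Zygmund situation requiring cancellation. The preliminary step expands
\begin{equation*}
|xa+\epsilon b|_\C^2 = x^2|a|_\C^2 + 2x\epsilon\langle a,b\rangle_\C + \epsilon^2|b|_\C^2,
\end{equation*}
and observes that the first term has real part $\gtrsim x^2$ from $||a|_\C-1|\leq \tfrac{1}{10}$; the third term has real part $\gtrsim \epsilon^2|b|^2/A_0^2$ from \eqref{arg comp modulus} (the condition $||b|_\C - ||b|_\C||\leq\tfrac14||b|_\C|$ forces $\arg(|b|_\C^2)$ away from $\pi$, which in turn controls $\Re\{|b|_\C^2\}$ from below); and the middle term is dominated by the geometric mean of the other two thanks to \eqref{kinda ort}. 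This yields $||xa+\epsilon b|_\C|^2\gtrsim_{A_0} x^2+\epsilon^2|b|^2$, and hence the pointwise bound $|f(x)|\lesssim |x|^m\epsilon^r(x^2+\epsilon^2|b|^2)^{-(r+m+1)/2}\phi(x)\mathds{1}_{c\leq|x|\leq 1/2}$.

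For $r\geq 1$, the change of variables $x=\epsilon|b|y$ immediately produces
\begin{equation*}
\|f\|_{L^1(\T)} \lesssim \frac{1}{|b|^r}\int_0^\infty\frac{y^m}{(y^2+1)^{(r+m+1)/2}} dy \lesssim |b|^{-r},
\end{equation*}
the integral being convergent precisely because $r+m+1-m > 1$. Since the $C^{1/2}(\T)$ norm is translation-invariant, Young's inequality $\|f*g\|_{C^{1/2}}\leq \|f\|_{L^1}\|g\|_{C^{1/2}}$ gives \eqref{E.convolutionHolder} in this regime.

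For $r=0$ and $m$ odd, the kernel is not integrable and one must exploit cancellation. From the pointwise lower bound above one reads off the Calder\'on--Zygmund size and smoothness bounds $|f(x)|\lesssim 1/|x|$ and $|f'(x)|\lesssim 1/x^2$. For the cancellation, write $|xa+\epsilon b|_\C^2 = E(x)+O(x)$ with $E(x)=x^2|a|_\C^2+\epsilon^2|b|_\C^2$ even in $x$ and $O(x)=2x\epsilon\langle a,b\rangle_\C$ odd. The leading part $x^m/E(x)^{(m+1)/2}\cdot\phi(x)\mathds{1}$ is odd in $x$ and hence has vanishing annular integrals; expanding $(E+O)^{-(m+1)/2}$ in a Neumann-type series in the small quantity $O/E$ (whose smallness is quantified by \eqref{kinda ort}) produces correction terms that each gain an extra factor $\epsilon|x||\langle a,b\rangle_\C|/E(x)$, which is enough to make them absolutely integrable with $L^1$-norm $\lesssim 1$ uniformly in $\epsilon,|b|,c$. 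This verifies $|\int_{r_1\leq|y|\leq r_2}f(y)\,dy|\lesssim 1$. With these three ingredients in hand, the $C^{1/2}\to C^{1/2}$ bound follows from the standard dyadic argument: for $\delta=|x_1-x_2|$, write $f*g(x_1)-f*g(x_2) = \int_{|y|\leq 2\delta}+\int_{|y|>2\delta}$, controlling the first piece using the cancellation together with the H\"older modulus of continuity of $g$, and the second using the derivative bound on $f$ combined with the mean value theorem in $x$; both contributions are $\lesssim \delta^{1/2}[g]_{C^{1/2}}$.

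Finally, the derivative estimates \eqref{E.convolutionDerivativeHolder} reduce to the above by induction. Using $\partial_{a_i}|xa+\epsilon b|_\C^2 = 2x(xa_i+\epsilon b_i)$, each derivative $\partial_{a_i}f$ decomposes as a linear combination of two kernels of the same shape: one with $(m,r)$ replaced by $(m+2,r)$ and extra factor $a_i$ (bounded by $A_0$), and one with $(m+1,r+1)$ and extra factor $\epsilon b_i$. Similarly $\partial_{b_i}f$ produces kernels with $(m+1,r+1)$ and $(m,r+2)$, the latter carrying an extra $\epsilon b_i$ contributing a factor of $\epsilon|b|$. A careful bookkeeping shows that every $\partial_a$ keeps $r$ effectively unchanged while every $\partial_b$ costs a net factor $|b|^{-1}$, so iteration yields the claimed $|b|^{-r-k_2}$ bound, provided that the new kernels still fall within the two base cases. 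The parity ($m+2$ odd when $m$ is odd) and the hypothesis \eqref{arg comp modulus}--\eqref{kinda ort} on $a,b$ are preserved, and the ``even-$m$ with $r+1\geq 1$'' pieces land in the absolutely integrable regime. The main obstacle of the whole argument is the cancellation verification in the $r=0$, $m$ odd case: one must cleanly isolate the odd leading behaviour from the perturbative even correction, which is where the near-orthogonality hypothesis \eqref{kinda ort} plays a decisive role.
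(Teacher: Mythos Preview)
Your proposal is correct and follows essentially the same strategy as the paper: the same denominator lower bound, the same $L^1$/Young argument for $r\geq 1$, and Calder\'on--Zygmund theory for $r=0$ with $m$ odd, with derivatives handled by the same structural observation. Two minor points of comparison: the paper invokes a black-box multiplier theorem from Grafakos after verifying the standard size/H\"ormander/cancellation conditions, whereas you sketch the dyadic argument directly; and for the cancellation the paper uses the polarization identity $||xa+\epsilon b|_\C^2-|-xa+\epsilon b|_\C^2|=4|\Re\langle ax,\epsilon b\rangle_\C|$ rather than your Neumann expansion in $O/E$, though the two are equivalent in effect. One small omission in your write-up: the indicator $\mathds{1}_{c\leq |x|}$ makes $f$ discontinuous at $|x|=c$, so your appeal to $|f'(x)|\lesssim 1/x^2$ and the mean value theorem does not apply across that jump; the paper handles this by distinguishing the cases $|y|\geq c$ and $|y|<c$ when verifying the H\"ormander condition, and a similar case split is needed in your far-region estimate.
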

 
\begin{proof}
First of all, combining \eqref{arg comp modulus} and \eqref{kinda ort} with Young's inequality, we get that
\begin{equation} \label{lower bd}
\begin{aligned}
\Re|ax+\eps b|_{\C}^2 & = \Re\left\{x^2 |a|_{\C}^2+\eps^2|b|_{\C}^2+2 \ep x\scalar{a}{b}_{\C}\right\}\\
& \geq x^2 + \ep^2 ||b|_\C|^2 - x^2 (||a|_\C-1|^2 + 2||a|_\C-1| ) \\
& \quad - \ep^2 (||b|_\C-||b|_\C||^2 + 2||b|_\C|||b|_\C-||b|_\C|| ) - 2\ep |x| |\scalar{a}{b}_\C|  \gtrsim x^2 + \ep^2 |b|^2\,.
\end{aligned}
\end{equation}
Having this lower bound at hand, we analyze separately the cases $r > 0$ and $r =0$.
 
\noindent \textit{Case 1: $r > 0$}. Note that \eqref{lower bd} implies
\begin{equation} \label{E.pointwiseF}
|f(x)| \lesssim \frac{\ep^r |x|^m}{|x|^{r+m+1} + (\ep|b|)^{r+m+1}}\,, \quad \textup{for all } |x| \leq \frac12 \textup{ and all } r \geq 0\,.
\end{equation}
Hence, since we are dealing with the case where $r > 0$, we get that
\begin{align}
    & \int_{-\frac12}^{\frac12} |f(x)| \dd x\lesssim \ep^r  \int_{-\frac12}^{\frac12} \frac{|x|^m}{|x|^{r+m+1} + (\ep|b|)^{r+m+1}} \dd x \\
    & \qquad = \frac{2}{\ep |b|^{r+1}} \int_0^{\frac12}  \frac{\big(\frac{x}{\ep|b|}\big)^m}{1+\big(\frac{x}{\ep|b|}\big)^{r+m+1}} \dd x  \leq \frac{2}{|b|^r} \int_0^{\infty} \frac{\rho^m}{1+\rho^{r+m+1}} d \rho \lesssim|b|^{-r}\,.
\end{align}
In particular, the convolution $f *g$ is always well-defined for $g \in L^{\infty}(\T,\C)$. Furthermore, using Young's convolution inequality, we get that
\begin{align}
&\norm{g*f}_{L^\infty(\T) }\lesssim |b|^{-r}\norm{g}_{L^\infty(\T)}\,,\\
&\norm{g*f(\cdot)-g*f(\cdot+y)}_{L^\infty(\T)} \lesssim |b|^{-r} \norm{g(\cdot)-g(\cdot+y)}_{L^\infty(\T)}\lesssim |b|^{-r}|y|^\frac{1}{2}\norm{g}_{C^\frac{1}{2}(\T)},
\end{align}
which shows \eqref{E.convolutionHolder} in the case where $r > 0$.

\medbreak
\noindent \textit{Case 2: $r=0$.} In this case, the convolution needs to be estimated as a singular integral.
By standard Fourier multiplier theorems (see for instance \cite[Corollary \ 6.7.2 and Remark \ 6.5.2]{Grafakos2} and \cite[Theorem 4.4.1]{Grafakos1}) it suffices to show that \begin{align}
\sup_{0<R\leq \frac12}\frac{1}{R} \int_{-R}^R|f(x)||x|\dx &\lesssim 1 \,,\label{sing 1}\\
\sup_{y\neq 0}\int_{|x|\geq 2|y|}|f(x-y)-f(x)|\dx&\lesssim 1 \,, \label{sing 2}\\
\sup_{0<R_1 < R_2\leq \frac12}\left|\int_{R_1 \leq |x| \leq R_2}f(x)\dx\right|&\lesssim 1\,.\label{sing 3}
\end{align}
The first condition \eqref{sing 1} immediately follows from \eqref{E.pointwiseF} arguing as in the case where $r > 0$. Hence, we focus on the other two conditions. 

We first deal with \eqref{sing 2}. Here, we distinguish the cases $|y|\geq c$ and $|y|\leq c$. Note that in the first case
\begin{align}
    \int_{|x| \geq 2|y|} |f(x-y) - f(x)| \dd x = \int_{|x| \geq 2|y|} \left| \int_{-1}^0 f'(x+\tau y) y \dd \tau \right| \dd x\,.
\end{align}
Thus, using that
$$
|x+\tau y| \geq |x|-|y| \geq |y|\,, \quad \textup{for all } x,y \in [-1/2,1/2] \textup{ with }|x| \geq 2|y|\,,
$$
we get that
\begin{align}
      \int_{|x| \geq 2|y|} |f(x-y) - f(x)| \dd x \leq \int_{|z| \geq |y|} |y| |f'(z)| \dd z\,.
\end{align}
On the other hand, combining again \eqref{lower bd} with \eqref{arg comp modulus}, we get that
$$
|f'(z)| \lesssim \frac{|z|^{m-1}}{|z|^{m+1}+(\ep|b|)^{m+1}} + \frac{|z|^m}{|z|^{m+1}+(\ep|b|)^{m+1}} + \frac{|z|^m}{|z|^{m+2}+(\ep|b|)^{m+2}}\,,
$$
and so that
\begin{align}
         \int_{|x| \geq 2|y|} |f(x-y) - f(x)| \dd x \lesssim |y| \int_{|y|}^{\frac12} \frac{1}{\rho^2} \dd \rho \lesssim 1\,.
\end{align}
This immediately gives \eqref{sing 2} in the case where $|y|\geq c$.

In the other case, i.e. when $|y| \leq c$, we treat separately the cases where $|x|  \geq 2c$ and $|x| \leq 2c$. Note that here we are implicitly assuming that $c > 0$. If $|x| \leq 2c$, then $|x-y| \leq 3c$ and hence, using \eqref{E.pointwiseF}, we can estimate \begin{align}
    \int_{|x|\geq 2|y|}|f(x-y)-f(x)|\dd x\leq 2\int_{|x|\in [c,3c]}|f(x)|\dx\lesssim \int_{c}^{3c}\frac{1}{x}\dd x\lesssim \log(3c)-\log(c)\lesssim 1\,,
\end{align}
showing \eqref{sing 2}. On the other hand, if $|x| \geq 2c$, we have that
$$
|x + \tau y| \geq |x|-|y| \geq c\,,
$$
and we can argue exactly as we did when $|y| \geq c$ and obtain again \eqref{sing 2}.

We finally deal with \eqref{sing 3}. Let $0 < R_1 < R_2 \leq 1/2$. We need to consider three different cases separately.

\noindent \textit{Case 2.1: $R_2 \leq 10\ep|b|$.} In this case, using once again \eqref{E.pointwiseF}, we immediately get that
\begin{align}
    \left|\int_{R_1 \leq |x| \leq R_2}f(x)\dx\right| \leq \int_{-10\ep|b|}^{10\ep|b|} |f(x)| dx \lesssim \int_{-10}^{10} \frac{\rho^m}{1+\rho^{m+1}} \dd \rho \lesssim 1\,.
\end{align}

\noindent \textit{Case 2.2: $R_1 \geq 10\ep |b|$.} Since $m$ is odd, it follows that
$$
\left|\int_{R_1 \leq |x| \leq R_2}f(x)\dx\right| = \left| \int_{R_1}^{R_2} (f(x)+f(-x)) \dd x \right| \leq \int_{R_1}^{R_2} x^m \left| \frac{1}{|xa+\ep b|_\C^{m+1}} - \frac{1}{|-xa+\ep b|_\C^{m+1}} \right| \dd x \,.
$$
Moreover, using \eqref{kinda ort}, \eqref{lower bd}, and the polarization identity
$$
||xa+\ep b|_\C^2 - |-xa+\ep b|_\C^2| = 4 | \Re \scalar{ax}{\ep b}_\C|\,,
$$
we get that
$$
\left| \frac{1}{|xa+\ep b|_\C^{m+1}} - \frac{1}{|-xa+\ep b|_\C^{m+1}} \right| \lesssim \frac{|\Re \scalar{ax}{\ep b}_\C|}{x^{m+3}+(\ep|b|)^{m+3}} \lesssim \frac{x\ep|b|}{x^{m+3}+(\ep|b|)^{m+3}}\,, \quad \textup{for } x \in [R_1,R_2]\,.
$$
Thus, we infer that
$$
   \left|\int_{R_1 \leq |x| \leq R_2}f(x)\dx\right| \lesssim \frac{1}{\ep|b|} \int_{10\ep |b|}^{\frac12}  \frac{\big(\frac{x}{\ep|b|}\big)^{m+1}}{1+\big(\frac{x}{\ep|b|}\big)^{m+3}}\,\dd x \lesssim 1\,.
$$

\noindent \textit{Case 2.3: $R_1 < 10\ep|b| < R_2$.} Note that, in this case
$$
   \left|\int_{R_1 \leq |x| \leq R_2}f(x)\dx\right| \leq    \left|\int_{R_1 \leq |x| \leq 10\ep|b|}f(x)\dx\right| +    \left|\int_{10\ep|b| \leq |x| \leq R_2}f(x)\dx\right|\,.
$$
Hence, combining the previous cases, we get that
$$
 \left|\int_{R_1 \leq |x| \leq R_2}f(x)\dx\right| \lesssim 1\,.
$$

We have now proved \eqref{sing 1}--\eqref{sing 3}. Hence, by \cite[Corollary \ 6.7.2 and Remark \ 6.5.2]{Grafakos2} and \cite[Theorem 4.4.1]{Grafakos1}, we conclude that \eqref{E.convolutionHolder} holds. 

Concerning \eqref{E.convolutionDerivativeHolder}, we simply note that the derivatives of $f$ with respect to $a_j$ and $b_j$, for $j \in \{ 1,2\}$, are a linear combination of terms with the same structure as $f$, the only difference being that each derivative with respect to~$b_j$ introduces a factor~$\ep$. Each additional~$\ep$ can be absorbed in the estimates by picking an additional factor~$|b|$, with the same argument as above. This is the content of the estimate~\eqref{E.convolutionDerivativeHolder}.
\end{proof}

\subsection{First steps towards the proof of Proposition \ref{main est}} 
We consider the operator $K_\ep$ introduced in \eqref{E.defKep} and point out that one can equivalently write this operator as
$$
K_\ep[\nu_1, \nu_2, g, \varpi](s+i\be) = \frac{1}{2\pi} \int_\T  \frac{d^{\eps}(s+i\beta,\varsigma+i\beta)^\perp}{\big|d^{\eps}(s+i\beta,\varsigma+i\beta)\big|_{\C}^2}\,\varpi(\varsigma+i\beta)\dd \varsigma\,,
$$
where
\begin{align}
d^\ep(s+i\be, \varsigma+i\be) & := \tilde{\digamma}(s+i\be) - \tilde{\digamma}(\varsigma+i\be) + \ep \tilde{\ze}(s+i\be)\,,
\end{align}
with
\begin{align}
\tilde{\digamma}(a+ib) & := \Ga(a+ib) + \nu_1(a+ib)\, \dot{\Ga}(a+ib)^{\perp}\,, \\
\tilde{\ze}(a+ib) &:= - g(a+ib) \dot{\Ga}(a+ib)^{\perp}\,.
\end{align}
Moreover, for later purposes, and with some abuse of notation, we set
\begin{align}
& d_{l,\ell}^{\eps}(s+i\beta,\varsigma+i\beta)=\Ga(s+i\beta) - \Ga(\varsigma+i\beta)  +  w_1[\ell,s+i\beta]\dot{\Ga}(s+i\beta)^{\perp}-w_1[\ell,\varsigma+i\beta]\dot{\Ga}(\varsigma+i\beta)^{\perp} \\
&\quad- \ep\left( \int_l^{\ell} \big(1+ w_3[\mu,s+i\beta]\big)  \dd \mu \right) \dot{\Ga}(s+i\beta)^{\perp}= \digamma(s+i\beta)-\digamma(\varsigma+i\beta)+\eps\zeta(s+i\beta)\,,
\end{align}
with
\begin{equation} \label{E.digamma}
\digamma(a+ib) := \Ga(a+ib) + w_1[\ell,a+ib]\, \dot{\Ga}(a+ib)^{\perp}\,,
\end{equation}
and
\begin{equation} \label{E.zeta}
 \zeta(a+ib) := - \bigg( \int_l^{\ell} \big(1+ w_3[\mu,a+ib]\big)  \dd \mu \bigg) \dot{\Ga}(a+ib)^{\perp}\,.
\end{equation}
Then, we have that
\begin{equation} \label{E.holomorphicKepsilon}
\begin{aligned}
    &K_\ep\left[w_1[\ell,\cdot],\, w_2[\ell,\cdot],\, \int_l^\ell (1+w_3[\mu,\cdot]) \dd \mu,\, w_4[\ell,\cdot]\right](s+i\be) \\
    & \qquad =\frac{1}{2\pi} \int_\T   \frac{d_{l,\ell}^{\eps}(s+i\beta,\varsigma+i\beta)^\perp}{\big|d_{l,\ell}^{\eps}(s+i\beta,\varsigma+i\beta)\big|_{\C}^2}\,w_4[\ell,\varsigma+i\beta]\dd \varsigma\,,
    \end{aligned}
\end{equation}


\begin{remark}
    Both $\digamma$ and $\zeta$ depend on $w$, $l$ and $\ell$, but we do not reflect this in the notation for convenience. Moreover, taking into account the notation introduced in Section \ref{S.proofMain}, one can notice that $\digamma = \ga_{\ep,\ell}$ when restricted to $\T$.
\end{remark}

Having  this notation at hand, and before going any further, let us stress that through the rest of this section, we consider $0 \leq \rho < \rho_0$ and $w \in \cY^4_\rho$ satisfying 
\begin{align} \label{E.Zrho1}
    & \pd_s w_1[l,\cdot] = w_2[l,\cdot]\,, \quad \textup{for all } |l| \leq 1\,, \\
    & w_1[l,\cdot] - w_1[\ell,\cdot] = \ep \int_\ell^l \big(1+w_3[\mu,\cdot] \big) \dd \mu \,, \quad \textup{for all } l,\ell \in [-1,1]\,. \label{E.Zrho2}
\end{align}
Moreover, we assume in all the proofs that $l \neq \ell$ (which is not restrictive as $\{l-\ell\}$ is a zero set). Taking into account \eqref{E.Zrho1} and \eqref{E.Zrho2}, it is straightforward to check that
 \begin{equation}\label{direct est}
 \begin{aligned}
& \sup_{|\be| \leq \rho} \Big\{\norm{\digamma(\cdot +i\beta)}_{C^\frac{3}{2}(\T)}+ \frac{1}{|l-\ell|}\norm{\zeta(\cdot+i\beta)}_{C^\frac{1}{2}(\T)}+\eps\norm{\zeta(\cdot+i\beta)}_{C^\frac{3}{2}(\T)} \Big\} \\
& \qquad \lesssim 1+\sum_{j=1}^3 \sup_{|l| \leq 1} \norm{w_j[l,\cdot]}_\rho , 
\end{aligned}
\end{equation}
where we recall that $\digamma$ and $\zeta$ were given in \eqref{E.digamma} and \eqref{E.zeta} respectively. Also, we set
$$
\tilde{d}(v,s+i\beta) :=v\digamma'(s+i\beta)+\eps\zeta(s+i\beta)\,, 
$$
where there derivative refers to the derivative in $s$, and split the kernel in \eqref{E.holomorphicKepsilon} as \begin{equation}\begin{aligned}
& \frac{\big(d_{l,\ell}^{\eps}(s+i\beta,\varsigma+i\beta)\big)^\perp}{\big|d_{l,\ell}^{\eps}(s+i\beta,\varsigma+i\beta)\big|_{\C}^2}= J_1(s-\varsigma,s+i\beta)+J_2(s+i\beta,\varsigma+i\beta)+J_3(s+i\beta,\varsigma+i\beta)\,,\label{splitting J}
\end{aligned}\end{equation}
with
\begin{align}
    & J_1(s-\varsigma, s+i\be) := \frac{\big(\tilde{d}(s-\varsigma,s+i\beta)\big)^\perp}{\big|\tilde{d}(s-\varsigma,s+i\beta)\big|_{\C}^2}\phi(s-\varsigma)\,,\\
    & J_2(s+i\beta,\varsigma+i\beta) := \left(\frac{\big(d_{l,\ell}^{\eps}(s+i\beta,\varsigma+i\beta)\big)^\perp}{\big|d_{l,\ell}^{\eps}(s+i\beta,\varsigma+i\beta)\big|_{\C}^2}-\frac{\big(\tilde{d}(s-\varsigma,s+i\beta)\big)^\perp}{\big|\tilde{d}(s-\varsigma,s+i\beta)\big|_{\C}^2}\right)\phi(s-\varsigma)\,,\\
    & J_3(s+i\beta,\varsigma+i\beta):= \frac{\big(d_{l,\ell}^{\eps}(s+i\beta,\varsigma+i\beta)\big)^\perp}{\big|d_{l,\ell}^{\eps}(s+i\beta,\varsigma+i\beta)\big|_{\C}^2}(1-\phi(s-\varsigma))\,.
\end{align}
Here, $\phi$ is the periodic extension to $\T$ of a smooth even cutoff function such that
$$
\phi(s) = 1 \quad \textup{if } |s| \leq \frac18\,, \quad\textup{ and } \quad \phi(s) = 0 \quad \textup{if } |s| \geq \frac14\,.
$$

\subsection{Proof of (\ref{bd K})} This subsection is devoted to prove \eqref{bd K}. Having  \eqref{splitting J} at hand,  we analyze the part corresponding to each kernel $J_j$ separately. We start with the one corresponding to $J_1$. We treat it as a convolution operator on $\T$ with an extra dependence on $s + i\be \in \T_\rho\,.$ 

\begin{lemma} \label{L.ab}
    Let $a= a(s+i\be) := \digamma'(s+i\be) \in \C^2$ and $b = b(s+i\be):= \ze(s+i\be) \in \C^2$. There exists a constant $C_1 > 0$ depending only on $\Ga$ such that, if $w \in \cY^4_\rho$ satisfies \eqref{ass small} with $\mathcal{C}_2 \leq  C_1$, \eqref{E.Zrho1} and \eqref{E.Zrho2}, then $a$ and $b$ satisfy \eqref{arg comp modulus} and \eqref{kinda ort} for some $A_0 \geq 1$ depending only on $\Ga$.
\end{lemma}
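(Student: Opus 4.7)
My plan is to compute $a$ and $b$ explicitly in the orthogonal frame $\{\dot{\Gamma}(s+i\beta),\,\dot{\Gamma}(s+i\beta)^\perp\}$ and then verify each part of \eqref{arg comp modulus}--\eqref{kinda ort} by direct algebra, exploiting the fact that the arclength identity $\dot{\Gamma}\cdot\dot{\Gamma}=1$ extends holomorphically to all of $\TT_{\rho_0}$ as an identity with respect to the bilinear form $\langle\cdot,\cdot\rangle_\C$.

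Differentiating \eqref{E.digamma} and using the coupling $\pd_s w_1[\ell,\cdot]=w_2[\ell,\cdot]$ from \eqref{E.Zrho1}, together with the fact that arclength parametrization forces $\ddot{\Gamma}=\kappa_\Gamma\,\dot{\Gamma}^\perp$ for some scalar $\kappa_\Gamma$ (which lies in $\cX_{\rho_0}$ and is bounded, since $\Gamma^{(5)}\in\cX_{\rho_0}^2$), one obtains
$$
a(s+i\beta)=\bigl(1-\kappa_\Gamma(s+i\beta)\,w_1[\ell,s+i\beta]\bigr)\dot{\Gamma}(s+i\beta)+w_2[\ell,s+i\beta]\,\dot{\Gamma}(s+i\beta)^\perp,\qquad b(s+i\beta)=M(s+i\beta)\,\dot{\Gamma}(s+i\beta)^\perp,
$$
with $M(s+i\beta):=-\int_l^{\ell}\bigl(1+w_3[\mu,s+i\beta]\bigr)\dd\mu$.

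Next, extending the real identity $\dot{\Gamma}\cdot\dot{\Gamma}=1$ by analytic continuation to $\TT_{\rho_0}$ yields $\langle\dot{\Gamma},\dot{\Gamma}\rangle_\C=\langle\dot{\Gamma}^\perp,\dot{\Gamma}^\perp\rangle_\C=1$ and $\langle\dot{\Gamma},\dot{\Gamma}^\perp\rangle_\C=0$ throughout the strip. Substituting these relations into the expressions for $a$ and $b$, one computes the three key identities
$$
|a|_\C^2=(1-\kappa_\Gamma w_1)^2+w_2^2,\qquad |b|_\C^2=M^2,\qquad \langle a,b\rangle_\C=w_2\,M\,.
$$
From here everything is routine: $M=-(\ell-l)-\int_l^\ell w_3\,\dd\mu$ is a small complex perturbation of the real number $l-\ell$ when $\cC_2$ is small, so $|b|_\C=\sqrt{M^2}$ (principal branch) is close to the real number $l-\ell$, which gives the second condition in \eqref{arg comp modulus}; the relation $|a|_\C^2=1+(\text{small})$ together with a Taylor expansion of the square root gives $||a|_\C-1|\leq \tfrac{1}{10}$; $|a|\leq A_0$ follows from boundedness of $\dot{\Gamma}$, $\dot{\Gamma}^\perp$ and $\ddot{\Gamma}^\perp$ on $\TT_{\rho_0}$; the ratio $||b|_\C|/|b|=1/|\dot{\Gamma}^\perp|$ is bounded below by a positive constant depending only on $\Gamma$; and finally $|\langle a,b\rangle_\C|/|b|=|w_2|/|\dot{\Gamma}^\perp|\leq\cC_2/c_\Gamma$, which becomes $\leq 1/(100A_0)$ once $\cC_2$ is small enough.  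One then chooses $A_0$ depending only on $\Gamma$ (large enough to dominate $\sup|a|$ and $\sup|\dot{\Gamma}^\perp|/10$), and $C_1$ small enough in terms of $A_0,\,\|\kappa_\Gamma\|_{\rho_0}$ and $\inf_{\TT_{\rho_0}}|\dot{\Gamma}^\perp|$ to absorb all smallness requirements.

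\emph{Main obstacle.} There is no serious difficulty; the only subtle bookkeeping is keeping the complex bilinear modulus $|\cdot|_\C$ clearly distinct from the Hermitian modulus $|\cdot|$, and checking that $M^2$ stays in a neighbourhood of the positive real axis so that $\sqrt{M^2}=M$ with the principal branch, so that $|b|_\C$ is close to its absolute value as required by \eqref{arg comp modulus}. Everything else is a direct consequence of the holomorphic continuation of the arclength identity and the smallness assumption \eqref{ass small}.
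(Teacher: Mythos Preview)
Your proof is correct and follows essentially the same route as the paper: both use the holomorphic extension of the arclength identity $\langle\dot\Gamma,\dot\Gamma\rangle_\C=1$ (and its consequences $\langle\dot\Gamma,\dot\Gamma^\perp\rangle_\C=0$, $\langle\dot\Gamma^\perp,\dot\Gamma^\perp\rangle_\C=1$) to reduce the verification of \eqref{arg comp modulus}--\eqref{kinda ort} to elementary estimates on the small quantities $w_1,w_2,w_3$. Your version is slightly more streamlined because you first rewrite $\ddot\Gamma^\perp=-\kappa_\Gamma\dot\Gamma$ and obtain the clean closed formulas $|a|_\C^2=(1-\kappa_\Gamma w_1)^2+w_2^2$, $|b|_\C^2=M^2$, $\langle a,b\rangle_\C=w_2 M$, whereas the paper keeps $\ddot\Gamma^\perp$ explicit and argues via the local Lipschitz property of $|\cdot|_\C$; the content is the same. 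One cosmetic slip: $\sqrt{M^2}$ equals $M$ only when $\Re M>0$ and equals $-M$ otherwise, so $|b|_\C$ is close to $|l-\ell|$ rather than to $l-\ell$; this is exactly what is needed for the second line of \eqref{arg comp modulus} and does not affect your argument.
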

 
\begin{proof}
First of all, it directly follows from \eqref{direct est} that $||a|_\C| \leq |a|\lesssim 1$. Moreover, as $\Ga$ is parametrized by arclenght, we have that
\begin{equation} \label{E.arclengthComplex}
|\dot{\Ga}(s+i\be)|_\C = 1\,,
\end{equation}
and so that
\begin{align}
    ||a|_{\C}-1| = \Big|\big|\dot{\Ga}(s+i\be) + w_2[\ell,s+i\be] \dot{\Ga}(s+i\be)^{\perp} + w_1[\ell,s+i\be] \ddot{\Ga}(s+i\be)^{\perp}\big|_\C - \big|\dot{\Ga}(s+i\be)\big|_\C\Big|\,.
\end{align}

Now, observe that
\begin{align}
    & \Re\big|\dot{\Ga}(s+i\be) + w_2[\ell,s+i\be] \dot{\Ga}(s+i\be)^{\perp} + w_1[\ell,s+i\be] \ddot{\Ga}(s+i\be)^{\perp}\big|_\C \\
    & \qquad \geq 1 - C \big(\|w_1[\ell,\cdot]\|_\rho + \|w_2[\ell,\cdot]\|_\rho \big)\,,
\end{align}
for some $C > 0$ depending only on $\Ga$. Hence, there exists $C_1 > 0$ sufficiently small such that, if \eqref{ass small} holds with $\mathcal{C}_2 \leq C_1$, then
$$
\Re\big|\dot{\Ga}(s+i\be) + w_2[\ell,s+i\be] \dot{\Ga}(s+i\be)^{\perp} + w_1[\ell,s+i\be] \ddot{\Ga}(s+i\be)^{\perp}\big|_\C  \geq \frac12\,.
$$
On the other hand, observe that $|\cdot|_\C$ is locally Lipschitz away from the set of points $z \in \C^2$ where $|z|_\C = 0$. Hence, we have that
\begin{equation}
    ||a|_\C -1| \lesssim \left|w_2[\ell,s+i\be] \dot{\Ga}(s+i\be)^{\perp} + w_1[\ell,s+i\be] \ddot{\Ga}(s+i\be)^{\perp}\right| \lesssim \|w_1[\ell,\cdot]\|_\rho + \|w_2[\ell,\cdot]\|_\rho\,.
\end{equation}
Note that all the implicit constants here depend only on $\Ga$. Taking $C_1 > 0$ smaller if necessary we conclude that
$$
||a|_\C-1| \leq \frac{1}{10}\,.
$$

Next, observe that
\begin{align} 
    \left||b|_\C - ||b|_\C|\right| \leq \frac{1}{4} ||b|_\C| \ \Longleftrightarrow\ ||b|_\C| - \Re |b|_\C \leq \frac{1}{32}||b|_\C|  \Longleftrightarrow\  \Re |b|_\C \geq \frac{31}{32}||b|_\C|\,.
\end{align}
Moreover, by \eqref{E.arclengthComplex}, it follows that
$$
|b|_\C^2 = (l-\ell)^2 \left( 1+ \fint_l^\ell w_3[\mu,s+i\be] \dd \mu \right)^2\,,
$$
and so that
\begin{equation} \label{E.lowerboundReb}
\Re |b|_\C \geq |l-\ell| \Big(1-\sup_{|l|\leq 1} \norm{w_3[l,s+i\be]}_{L^{\infty}(\T)} \Big)\,,
\end{equation}
and that
$$
||b|_\C| \leq |l-\ell| \Big( 1+\sup_{|l|\leq 1} \norm{w_3[l,s+i\be]}_{L^{\infty}(\T)} \Big)\,.
$$

Hence, to have that
\begin{equation} \label{E.bminusbComplex}
||b|_\C - ||b|_\C|| \leq \frac14 ||b|_\C|\,,
\end{equation}
it is enough to impose that 
$$
\sup_{|l|\leq 1} \norm{w_3[l,\cdot +i\be]}_{L^{\infty}(\T)} \leq \frac{1}{64}\,.
$$
Taking $C_1 > 0$ smaller if necessary we conclude that \eqref{E.bminusbComplex} holds. On the other hand, 
$$
|b| \leq |l-\ell| (1+\sup_{|l| \leq 1} \|w_3[l,\cdot + i\be]\|_{L^{\infty}(\T)}) \|\dot{\Ga}(\cdot + i \be)\|_{L^{\infty}(\T)} \lesssim |l-\ell|  (1+\sup_{|l| \leq 1} \|w_3[l,\cdot + i\be]\|_{L^{\infty}(\T)}) \,,
$$
for some implicit constant depending only on $\Ga$. Using \eqref{E.lowerboundReb}, it is then straightforward to obtain the existence of $A_0 \geq 1$ depending only on $C_1 > 0$ (and thus on $\Ga$) such that
$$
||b|_\C|\geq \frac{1}{10 A_0}|b|\,.
$$
For later purposes, let us point out that taking $C_1 > 0$ smaller would not affect the existence of $A_0 \geq 1$. This constant actually depends on an upper bound of $C_1 > 0$. This concludes the proof of \eqref{arg comp modulus}. 

We now prove \eqref{kinda ort}.  To that end, let us first stress that
$$
\scalar{\dot{\Ga}(s+i\be)^{\perp}}{\dot{\Ga}(s+i\be)}_\C = 0\,.
$$
Hence, we have that
\begin{align}
    |\scalar{a}{b}_\C| & = \bigg| \bigg\langle w_2[\ell,s+i\be] \dot{\Ga}(s+i\be)^{\perp} \\
    & \qquad + w_1[\ell,s+i\be] \ddot{\Ga}(s+i\be)^{\perp}, \bigg( \int_l^{\ell} \big(1+ w_3[\mu,a+ib]\big)  \dd \mu \bigg) \dot{\Ga}(\varsigma+i\beta)^{\perp} \bigg\rangle_\C \bigg| \\
    & \leq ||w_2[\ell,s+i\be] \dot{\Ga}(s+i\be)^{\perp} + w_1[\ell,s+i\be] \ddot{\Ga}(s+i\be)^{\perp}|_\C| \, ||b|_\C| \\
    & \lesssim \big(\|w_1[\ell, \cdot + i \be]\|_{L^{\infty}(\T)} + \|w_2[\ell, \cdot + i \be]\|_{L^{\infty}(\T)} \big) |b|\,,
\end{align}
for some implicit constant depending only on $\Ga$. Taking $C_1 > 0$ smaller if necessary we get \eqref{kinda ort} and conclude the proof. 
\end{proof}

\begin{lemma} \label{L.J1}
Let $C_1 > 0$ be as in Lemma \ref{L.ab}. If $w \in \cY^4_\rho$ satisfies \eqref{ass small} with $\mathcal{C}_2 \leq  C_1$, \eqref{E.Zrho1} and \eqref{E.Zrho2}, then
$$
\sup_{l,\ell \in [-1,1]} \left\{ \sup_{|\be| \leq \rho} \norm{ \int_\T J_1(\cdot-\varsigma,\cdot+i\be) w_4[\ell,\varsigma+i\be] \dd \varsigma}_{C^{\frac12}(\T)} \right\} \lesssim\, \sup_{|l|\leq 1} \norm{w_4[l,\cdot]}_\rho \,.
$$
\end{lemma}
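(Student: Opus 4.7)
The plan is to factor out the $s$-dependent vector prefactors in the numerator of $J_1$ and reduce the problem to scalar singular integrals of the form covered by Lemma~\ref{SingInt}. Using $\tilde d(s-\varsigma,s+i\be)^\perp = (s-\varsigma)\digamma'(s+i\be)^\perp + \ep\,\zeta(s+i\be)^\perp$, one can write
\[
\int_\T J_1(s-\varsigma,s+i\be)\,w_4[\ell,\varsigma+i\be]\,\dd\varsigma = \digamma'(s+i\be)^\perp\, I_1(s) + \zeta(s+i\be)^\perp\, I_2(s)\,,
\]
where $I_1,I_2$ are the scalar integrals
\[
I_j(s):=\int_\T \frac{h_j(s-\varsigma)}{|(s-\varsigma)a(s)+\ep b(s)|_\C^2}\,w_4[\ell,\varsigma+i\be]\,\dd\varsigma\,,\qquad j=1,2\,,
\]
with $h_1(v):=v\phi(v)$, $h_2(v):=\ep\phi(v)$, $a(s):=\digamma'(s+i\be)$, and $b(s):=\zeta(s+i\be)$. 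After possibly shrinking $C_1$, Lemma~\ref{L.ab} ensures that $(a(s),b(s))$ meets the hypotheses \eqref{arg comp modulus}--\eqref{kinda ort} of Lemma~\ref{SingInt} uniformly in $s\in\T$ and $|\be|\leq\rho$, with some $A_0$ depending only on $\Ga$.

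Next I would freeze the coefficients at a reference point $s_0$ and apply Lemma~\ref{SingInt} to the resulting convolution operators with parameters $(r,m)=(0,1)$ for $I_1$ (so $m$ is odd) and $(r,m)=(1,0)$ for $I_2$ (so $r\neq 0$). This gives, with constants uniform in $s_0$, $\ep$, and $l,\ell$,
\[
\|K^{(1)}_{s_0}*g\|_{C^{1/2}(\T)}\lesssim \|g\|_{C^{1/2}(\T)}\,,\qquad \|K^{(2)}_{s_0}*g\|_{C^{1/2}(\T)}\lesssim |b(s_0)|^{-1}\|g\|_{C^{1/2}(\T)}\,,
\]
where $g:=w_4[\ell,\cdot+i\be]$ and $K^{(j)}_{s_0}$ is the kernel appearing in $I_j$ with $a,b$ frozen at $s_0$. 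To pass from these frozen estimates to the genuine $s$-dependent operators, I would decompose
\[
I_j(s_1)-I_j(s_2) = \bigl[(K^{(j)}_{s_1}*g)(s_1)-(K^{(j)}_{s_1}*g)(s_2)\bigr] + \int_\T \bigl[K^{(j)}_{s_1}(s_2-\varsigma)-K^{(j)}_{s_2}(s_2-\varsigma)\bigr]\,g(\varsigma)\,\dd\varsigma\,.
\]
The first bracket is of size $|s_1-s_2|^{1/2}\|g\|_{C^{1/2}}$ by the frozen estimate applied at $s_0=s_1$; the second is controlled by combining the derivative bound \eqref{E.convolutionDerivativeHolder} with the $\tfrac12$-H\"older continuity of $s\mapsto(a(s),b(s))$ coming from \eqref{direct est}, integrating along the segment in parameter space between $(a(s_1),b(s_1))$ and $(a(s_2),b(s_2))$.

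Finally, since $|\zeta(s+i\be)|\lesssim |l-\ell|$ while $|b(s)|\gtrsim |l-\ell|$ uniformly, the apparent $|b|^{-1}$ loss in the $I_2$-bound is absorbed by the outer factor $\zeta(s+i\be)^\perp$. Combining this with $\|\digamma'(\cdot+i\be)^\perp\|_{C^{1/2}(\T)}\lesssim 1$ and $\|\zeta(\cdot+i\be)^\perp\|_{C^{1/2}(\T)}\lesssim |l-\ell|$ (both extracted from \eqref{direct est}), together with the fact that $C^{1/2}(\T)$ is an algebra, yields the claimed estimate after taking the supremum over $|\be|\leq\rho$. The main obstacle I foresee is the freezing-plus-H\"older-continuity step in the second paragraph: Lemma~\ref{SingInt} is tailored to fixed parameters $(a,b)$, so transferring its singular-integral bounds to an operator whose kernel genuinely depends on the base point~$s$ through $a(s),b(s)$ requires combining the derivative bounds \eqref{E.convolutionDerivativeHolder} with the H\"older regularity of $(a,b)$ carefully enough to preserve the uniformity in $\ep$ and in~$l,\ell$.
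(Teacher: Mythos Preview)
Your approach is correct and essentially identical to the paper's: the same decomposition $J_1=a^\perp\cdot(\text{kernel with }r=0,m=1)+b^\perp\cdot(\text{kernel with }r=1,m=0)$, the same freezing of parameters at $s_1$ to get the first half of the H\"older bound, and the same use of the derivative estimate~\eqref{E.convolutionDerivativeHolder} together with $|a(s_1)-a(s_2)|\lesssim|s_1-s_2|^{1/2}$, $|b(s_1)-b(s_2)|\lesssim|l-\ell|\,|s_1-s_2|^{1/2}$, and $|b|\gtrsim|l-\ell|$ to control the parameter variation (the paper writes this out as a four-term algebraic splitting rather than integrating along a segment, but the content is the same). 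The obstacle you flag---that Lemma~\ref{SingInt} requires intermediate parameters along the segment to satisfy \eqref{arg comp modulus}--\eqref{kinda ort}---is real and also implicit in the paper; it is resolved by noting that for $|s_1-s_2|$ bounded by a small constant (the only case needed, since otherwise the $L^\infty$ bound suffices) the conditions persist along the path, as they depend continuously on $(a,b)$ and hold with room to spare at the endpoints.
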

 
\begin{proof}
For $a$ and $b$ as in Lemma \ref{L.ab}, observe that
$$
J_1(x,s+i\be) = a^{\perp} \frac{x}{|xa+\ep b|_\C^2} + b^{\perp} \frac{\ep}{|xa+\ep b|_\C^2}\,.
$$
By Lemma \ref{SingInt}, applied first with $m = 1$ and $r = 0$ and then with $m = 0$ and $r =1$, we get that
\begin{align} \label{E.J1bounded}
\norm{\int_{\T} J_1(\cdot-\varsigma,\cdot+i\beta)w_4[\ell,\varsigma+i\beta]\dd\varsigma}_{L^{\infty}(\T)} \lesssim \norm{w_4[\ell,\cdot+i\beta]}_{C^\frac{1}{2}(\T)}\,, \quad \textup{for } |\be| \leq \rho\,.
\end{align}

Now, observe that
\begin{align}
& \left|\int_{\T} \big( J_1(s_1-\varsigma,s_1+i\beta)w_4[\ell,\varsigma+i\beta]-J_1(s_2-\varsigma,s_2+i\beta)w_4[\ell,\varsigma+i\beta] \big) \dd\varsigma\right|\\
& \quad \leq \left| \int_{\T} J_1(s_1-\varsigma,s_1+i\beta)w_4[\ell,\varsigma+i\beta] \dd \varsigma - \int_{\T} J_1(s_2-\varsigma,s_1+i\beta)w_4[\ell,\varsigma+i\beta] \dd \varsigma \right| \\
&\qquad+\left|\int_{\T} \left(
J_1(s_2-\varsigma,s_1+i\beta)-J_1(s_2-\varsigma,s_2+i\beta)\right)w_4[\ell,\varsigma+i\beta]\dd\varsigma\right| =: {\rm I}_1 + {\rm I}_2\,.
\end{align}
Arguing as in the proof of \eqref{E.J1bounded}, using Lemma \ref{SingInt} twice, we get that
\begin{equation} \label{E.conclussionI1J1}
     {\rm I}_1 \lesssim |s_1-s_2|^{\frac12} \norm{w_4[\ell,\cdot+i\beta]}_{C^\frac{1}{2}(\T)}\,, \quad \textup{for } |\be| \leq \rho\,.
\end{equation}
To analyze ${\rm I}_2$, using the notation $a_j := a (s_j+i\be)$ and $b_j:= b(s_j+i\be)$ for $a$ and $b$ as in Lemma \ref{L.ab}, we split the convolution kernel as
\begin{align}
    & J_1(x,s_1+i\be) - J_1(x,s_2+i\be) = (a_1-a_2)^{\perp} \frac{x}{|xa_1+\ep b_1|_\C^2} + (b_1-b_2)^{\perp} \frac{\ep}{|xa_1+\ep b_1|_\C^2} \\
    & \  + (x a_2+\ep b_2)^{\perp} \left( \frac{1}{|xa_1+\ep b_1|_\C^2} - \frac{1}{|xa_2+\ep b_1|_\C^2}\right) +  (x a_2+\ep b_2)^{\perp} \left(  \frac{1}{|xa_2+\ep b_1|_\C^2} - \frac{1}{|xa_2+\ep b_2|_\C^2} \right)\,.
\end{align}
Then, we observe that, by \eqref{direct est},
\begin{equation} \label{E.a1a2diff}
|a_1-a_2| = |\digamma'(s_1+i\be) - \digamma'(s_2+i\be)| \lesssim |s_1-s_2|^{\frac12}\,,
\end{equation}
and
$$
|b_1-b_2| = |\zeta(s_1+i\be) - \zeta(s_2+i\be)| \lesssim |l-\ell| |s_1-s_2|^{\frac12}\,.
$$
Also, by \eqref{E.complex-modulus} and \eqref{E.lowerboundReb}, it follows that
\begin{equation} \label{E.lowerbll}
|b_j| \geq ||b_j|_\C| \geq \frac12 |l-\ell| \,, \quad \textup{for } j \in \{1,2\}\,.
\end{equation}
Combining the decomposition of the kernel above with these estimates and Lemma \ref{SingInt}, we conclude that
\begin{equation} \label{E.conclussionI2J1}
     {\rm I}_2 \lesssim |s_1-s_2|^{\frac12} \norm{w_4[\ell,\cdot+i\beta]}_{C^\frac{1}{2}(\T)}\,, \quad \textup{for } |\be| \leq \rho\,.
\end{equation}
The result follows combining \eqref{E.J1bounded}, \eqref{E.conclussionI1J1} and \eqref{E.conclussionI2J1}.
\end{proof}

We now move on to the analysis of the part corresponding to $J_2$. A key role in this analysis is played by Lemma \ref{Lem double diff}.

\begin{lemma}\label{bds d}
Let $C_1> 0$ be as in Lemma \ref{L.ab}. There exists a constant $C_2 \in (0,C_1]$ depending only on $\Ga$ such that, if $w \in \cY^4_\rho$ satisfies \eqref{ass small} with $\mathcal{C}_2 \leq C_2$, \eqref{E.Zrho1} and \eqref{E.Zrho2}, then, for $s + i \be \in \T_\rho$ and $\varsigma \in \T$:
 \begin{align}
& \textup{\rm a) }\ |d_{l,\ell}^\eps(s+i\beta,\varsigma+i\beta)|+|\tilde{d}(s-\varsigma,s+i\beta)|\lesssim |s-\varsigma|+\eps|l-\ell|\,, \label{bd d up} \\
& \textup{\rm b) }\  |d_{l,\ell}^\eps(s+i\beta,\varsigma+i\beta)-\tilde{d}(s-\varsigma,s+i\beta)|\lesssim |s-\varsigma|^\frac{3}{2}\,, \label{bd diff} \\
&  \textup{\rm c) }\  \min \Big\{\Re |d_{l,\ell}^\eps(s+i\beta,\varsigma+i\beta)|_\C^2\,, \, \Re |\tilde{d}(s-\varsigma,s+i\beta)|_\C^2 \Big\} \gtrsim |s-\varsigma|^2+\eps^2|l-\ell|^2\,.\label{bd d low} 
\end{align}
\end{lemma}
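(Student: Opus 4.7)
The plan is to prove the three bounds in sequence, with parts (a) and (b) following routinely from the regularity provided by~\eqref{direct est}, while (c) will require careful use of the analytic extension of the arclength parametrization and will be split into two regimes.

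\emph{Part (a).} I would apply the triangle inequality and combine the Lipschitz bound $|\digamma(s+i\beta)-\digamma(\varsigma+i\beta)| \lesssim |s-\varsigma|$ (from $\digamma \in C^{3/2}$, with norm bounded by $1+\sum_j\sup_l\|w_j[l,\cdot]\|_\rho\lesssim 1$ thanks to \eqref{ass small} and \eqref{direct est}) with the bound $|\zeta(s+i\beta)|\lesssim |l-\ell|$ and the trivial $|\digamma'(s+i\beta)|\lesssim 1$.

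\emph{Part (b).} Since $\digamma$ is holomorphic I would integrate $\digamma'$ along the real segment from $\varsigma+i\beta$ to $s+i\beta$ and write
\begin{equation*}
d_{l,\ell}^\eps(s+i\beta,\varsigma+i\beta)-\tilde d(s-\varsigma,s+i\beta) = \int_\varsigma^s \big(\digamma'(\tau+i\beta)-\digamma'(s+i\beta)\big)\,\mathrm{d}\tau.
\end{equation*}
The integrand is $O(|\tau-s|^{1/2})$ by the $C^{1/2}$ bound on $\digamma'$ inherited from \eqref{direct est}, so the integral is $O(|s-\varsigma|^{3/2})$.

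\emph{Part (c).} This is where the main work lies. In the regime $|s-\varsigma|\leq \delta_0$ (for a small $\delta_0$ to be chosen) I would analyze $|\tilde d|_\C^2$ explicitly using the orthogonality structure coming from arclength. Since $|\dot\Gamma|_\C=1$ by analytic continuation from $\T$, differentiating $\langle\dot\Gamma,\dot\Gamma\rangle_\C=1$ yields $\langle\ddot\Gamma,\dot\Gamma\rangle_\C=0$, hence $\ddot\Gamma = \beta(z)\,\dot\Gamma^\perp$ for a bounded analytic scalar $\beta$ and consequently $\ddot\Gamma^\perp = -\beta\,\dot\Gamma$. Using the constraint $\partial_s w_1=w_2$ this gives the clean expression
\begin{equation*}
\digamma'(z) = (1-w_1[\ell,z]\beta(z))\,\dot\Gamma(z) + w_2[\ell,z]\,\dot\Gamma(z)^\perp,\qquad \zeta(z) = -(l-\ell)(1+\bar w_3(z))\,\dot\Gamma(z)^\perp,
\end{equation*}
with $\bar w_3 := \fint_l^\ell w_3[\mu,z]\,\mathrm{d}\mu$. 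Combining this with the complex-bilinear orthogonalities $\langle\dot\Gamma,\dot\Gamma^\perp\rangle_\C=0$ and $|\dot\Gamma^\perp|_\C=1$, I obtain the explicit identities
\begin{equation*}
|\digamma'|_\C^2 = (1-w_1\beta)^2+w_2^2,\quad |\zeta|_\C^2 = (l-\ell)^2(1+\bar w_3)^2,\quad \langle \digamma',\zeta\rangle_\C = -(l-\ell)(1+\bar w_3)\,w_2.
\end{equation*}
Taking real parts, the key point is that the cross term is $O(\mathcal{C}_2\,|l-\ell|)$, so by Young's inequality $|2(s-\varsigma)\eps\,\langle\digamma',\zeta\rangle_\C|\leq C\mathcal{C}_2[(s-\varsigma)^2+\eps^2(l-\ell)^2]$, while the two diagonal terms have real parts bounded below by $(1-C\mathcal{C}_2)(s-\varsigma)^2$ and $(1-C\mathcal{C}_2)\eps^2(l-\ell)^2$ respectively. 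For $\mathcal{C}_2$ small enough this gives $\Re|\tilde d|_\C^2 \geq \tfrac12[(s-\varsigma)^2+\eps^2(l-\ell)^2]$. To transfer this to $d_{l,\ell}^\eps$ I write $|d|_\C^2 - |\tilde d|_\C^2 = 2\langle \tilde d,d-\tilde d\rangle_\C + |d-\tilde d|_\C^2$ and use (a) and (b) together with Young's inequality: the error is $O(|s-\varsigma|^{1/2})[(s-\varsigma)^2+\eps^2(l-\ell)^2]$, which is absorbed into the main term provided $\delta_0$ is chosen small enough.

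In the complementary regime $|s-\varsigma|\geq \delta_0$ I would invoke the non-self-intersection assumption~\eqref{tec no self int}, which gives $\Re|\Gamma(s+i\beta)-\Gamma(\varsigma+i\beta)|_\C^2\geq c(\delta_0)>0$ uniformly in $|\beta|\leq\rho_0$. Writing $d_{l,\ell}^\eps = \Gamma(s+i\beta)-\Gamma(\varsigma+i\beta) + R'$ with $|R'|\lesssim \mathcal{C}_2 + \eps$, a perturbation argument (with $\mathcal{C}_2$ and $\eps$ small enough) yields $\Re|d|_\C^2\geq c(\delta_0)/2$, which is the desired bound because $|s-\varsigma|^2+\eps^2(l-\ell)^2$ is bounded. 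An identical argument applies to $\tilde d$. The principal obstacle is the small-$|s-\varsigma|$ case of (c): the delicate cancellation enforced by $\langle\dot\Gamma,\dot\Gamma^\perp\rangle_\C=0$ and $\ddot\Gamma\parallel\dot\Gamma^\perp$ is what prevents a potentially large cross term from destroying the positivity of $\Re|\tilde d|_\C^2$, and this must be set up carefully before everything else falls into place.
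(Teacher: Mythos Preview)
Your proposal is correct and follows essentially the same approach as the paper. The only difference is packaging: for part~(c), the paper obtains the $\tilde d$ lower bound by citing Lemma~\ref{L.ab} (which verifies conditions \eqref{arg comp modulus}--\eqref{kinda ort} for $a=\digamma'$, $b=\zeta$) together with the already-established bound \eqref{lower bd}, whereas you unfold that machinery and carry out the orthogonality computation (via $|\dot\Gamma|_\C=1$ and $\ddot\Gamma^\perp\parallel\dot\Gamma$) directly---the two are the same argument at different levels of modularity, and your explicit bound on $\Re|\tilde d|_\C^2$ actually holds for all $|s-\varsigma|$, so the far-regime remark for $\tilde d$ is unnecessary.
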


\begin{proof}
First, let us recall that
$$
d_{l,\ell}^\ep (s+i\be, \varsigma+i\be) = \digamma(s+i\be) - \digamma(\varsigma+i\be) + \ep \ze(s+i\be)\,, 
$$
and that
$$
\tilde{d}(s-\varsigma,s+i\be) = (s-\varsigma)\digamma'(s+i\be) + \ep\ze(s+i\be)\,.
$$
We prove each estimate separately. Since the upper bounds in \eqref{bd d up} are a straightforward consequence of \eqref{direct est}, we focus on b) and c). Using again \eqref{direct est} and the mean value theorem, it is also immediate that
\begin{align}
& |d_{l,\ell}^\eps(s+i\beta,\varsigma+i\beta)-\tilde{d}(s-\varsigma,s+i\beta)| \\
& = \left|\digamma(s+i\beta)-\digamma(\varsigma+i\beta)-(s-\varsigma)\digamma'(s+i\beta)\right| \lesssim |s-\varsigma|^\frac{3}{2}\norm{\digamma(\cdot+i\beta)}_{C^\frac{3}{2}(\T)}\lesssim |s-\varsigma|^\frac{3}{2}\,.
\end{align}

Finally, we prove the lower bounds in c). For $a$ and $b$ as in Lemma \ref{L.ab}, it follows that
$$
|\tilde{d}(s-\varsigma, \varsigma+i\be)|_\C^2 = |(s-\varsigma) a + \ep b|_\C^2\,.
$$
Hence, if \eqref{ass small} holds with $\cC_2 \leq C_1$ for $C_1> 0$ as in Lemma \ref{L.ab}, combining \eqref{lower bd} and \eqref{E.lowerbll}, we get that
$$
\Re |\tilde{d}(s-\varsigma, \varsigma+i\be)|_\C^2 \gtrsim  |s-\varsigma|^2 + \ep^2 |l-\ell|^2 \,.
$$
\allowdisplaybreaks
Next, using \eqref{E.complex-modulus}, \eqref{bd d up} and \eqref{bd diff}, we get that, whenever $|s-\varsigma| \leq 1$,
\begin{align}
    & ||\tilde{d}(s-\varsigma, \varsigma+i\be)|_\C^2-|d_{l,\ell}^\eps(s+i\beta,\varsigma+i\beta)|_\C^2| \\
    & \quad = |\scalar{\tilde{d}(s-\varsigma, \varsigma+i\be)-d_{l,\ell}^\eps(s+i\beta,\varsigma+i\beta)}{\tilde{d}(s-\varsigma, \varsigma+i\be)+d_{l,\ell}^\eps(s+i\beta,\varsigma+i\beta)}_\C| \\
    & \quad \leq |\tilde{d}(s-\varsigma, \varsigma+i\be)-d_{l,\ell}^\eps(s+i\beta,\varsigma+i\beta)|^2 \\
    & \qquad + 2 |\scalar{\tilde{d}(s-\varsigma, \varsigma+i\be)-d_{l,\ell}^\eps(s+i\beta,\varsigma+i\beta)}{d_{l,\ell}^\eps(s+i\beta,\varsigma+i\beta)}_\C| \\
     & \quad \leq |\tilde{d}(s-\varsigma, \varsigma+i\be)-d_{l,\ell}^\eps(s+i\beta,\varsigma+i\beta)|^2 \\
    & \qquad + 2 |d_{l,\ell}^\eps(s+i\beta,\varsigma+i\beta)||\tilde{d}(s-\varsigma, \varsigma+i\be)-d_{l,\ell}^\eps(s+i\beta,\varsigma+i\beta)| \\
    & \quad \lesssim |s-\varsigma|^{\frac12} \big( |s-\varsigma|^2 + \ep|l-\ell||s-\varsigma| \big)  \\
    & \quad \lesssim |s-\varsigma|^{\frac12} \big( |s-\varsigma|^2 + \ep^2 |l-\ell|^2 \big)\,. 
\end{align}
Also, observe that
\begin{align}
    & \Re |d_{l,\ell}^\eps(s+i\beta,\varsigma+i\beta)|_\C^2 = \Re \big\{ |\tilde{d}(s-\varsigma, \varsigma+i\be)|_\C^2 + |d_{l,\ell}^\eps(s+i\beta,\varsigma+i\beta)|_\C^2 - |\tilde{d}(s-\varsigma, \varsigma+i\be)|_\C^2 \big\} \\
    & \quad \geq \Re |\tilde{d}(s-\varsigma, \varsigma+i\be)|_\C^2 - ||\tilde{d}(s-\varsigma, \varsigma+i\be)|_\C^2-|d_{l,\ell}^\eps(s+i\beta,\varsigma+i\beta)|_\C^2|\,.
\end{align}
Hence, there exists $c \in (0,1)$ depending only on $\Ga$ such that, if $|s-\varsigma| \leq c$, then 
$$
\Re |d_{l,\ell}^\eps(s+i\beta,\varsigma+i\beta)|_\C^2 \gtrsim |s-\varsigma|^2 + \ep^2 |l-\ell|^2\,.
$$
We now deal with the case where $|s-\varsigma| \geq c$\,. In this case, by the assumption \eqref{tec no self int}, it holds that 
\begin{align}
\Re|\Gamma(s+i\beta)-\Gamma(\varsigma+i\beta)|_{\C}^2\geq \sigma,
\end{align}
for some $\sigma\in (0,1)$, independent of $\eps$.
Hence, if we pick $C_2 \in (0, C_1]$ sufficiently small and assume that  $\cC_2 \leq C_2$, we see from the triangle inequality and the definition of $d_{l,\ell}^\eps$ that
\begin{equation}
\Re |d_{l,\ell}^\eps(s+i\beta,\varsigma+i\beta)|_\C^2\geq \frac{1}{2}\sigma\gtrsim |s-\varsigma|^2+\eps^2|l-\ell|^2.
\end{equation}
This concludes the proof. 
\end{proof}

Having  this lemma at hand, and before going any further, let us point out that the fact that $K_\ep$ is holomorphic for $l \neq \ell$ is a straightforward consequence of the lower bound in c). More precisely, we have the following:

\begin{lemma} \label{L.holomorphicExtension}
Let $C_2 > 0$ be as in Lemma \ref{bds d}. For all $\ep \in (0,1)$, all $l \neq \ell$,   and every $w \in \cY_\rho^4$ satisfying \eqref{ass small} with $\cC_2 \leq C_2$, \eqref{E.Zrho1} and \eqref{E.Zrho2}, it follows that
$$
K_\ep\bigg[w_1[\ell,\cdot], w_2[\ell,\cdot], \int_l^\ell(1+w_3[\mu,\cdot]) \dd \mu, w_4[\ell,\cdot]\bigg] \in \cY_\rho^2\,.
$$
\end{lemma}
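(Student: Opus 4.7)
The key point is that, since $l \neq \ell$, Lemma~\ref{bds d}(c) yields
\[
\Re\,|\Xi^\ep(z,\varsigma)|_\C^2 \;\gtrsim\; |\varsigma|^2 + \ep^2(l-\ell)^2 \;>\; 0,
\]
uniformly in $z = s+i\beta \in \T_\rho$ and $\varsigma \in \T$. Consequently the denominator in the original representation
\[
K_\ep(z) = \frac{1}{2\pi}\int_\T \frac{\Xi^\ep(z,\varsigma)^\perp}{|\Xi^\ep(z,\varsigma)|_\C^2}\, w_4[\ell,\, z+\varsigma]\, \dd\varsigma
\]
is bounded away from zero (with a lower bound depending on $\ep$ and $|l-\ell|$), and the integrand is a smooth, nonsingular function of $(z,\varsigma) \in \T_\rho \times \T$.

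The plan has two steps. First, to prove holomorphy in $z$: since $\Gamma$, $w_1[\ell,\cdot]$, $w_3[\mu,\cdot]$, and $w_4[\ell,\cdot]$ all lie in $\cX_\rho$, for each fixed real $\varsigma \in \T$ the integrand is holomorphic in $z \in \T_\rho$. Combined with uniform boundedness in $\varsigma$, a standard application of Morera's theorem and Fubini gives holomorphy of the integral on all of $\T_\rho$. Periodicity in $s$ and real-valuedness on $\T$ are inherited from the integrand, so each component of $K_\ep[\ldots]$ lies in $\cX_\rho$.

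Second, for the $\cY_\rho^2$ norm bound: since the integrand and its $s$-derivative are uniformly bounded on $\T_\rho \times \T$ (with constants depending on $\ep$ and $|l-\ell|$), the resulting integral is uniformly Lipschitz in $s$, so that $\|K_\ep[\ldots](\cdot+i\beta)\|_{C^{1/2}(\T)}$ is bounded uniformly in $\beta$. Continuity of $l \mapsto K_\ep[\ldots]$ in $\cX_\rho^2$ then follows by dominated convergence from the assumed continuity of $l \mapsto w_j[l,\cdot]$ into $\cX_\rho$.

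I do not expect any serious obstacle here: the strict positivity of the denominator for $l \neq \ell$ trivializes the analysis, at the cost of constants that blow up as $|l-\ell| \to 0$. The genuine difficulty — extracting bounds \emph{uniform} in $\ep$ and $|l-\ell|$ — is precisely the content of Proposition~\ref{main est}, and will require the refined singular-integral analysis encoded in Lemmas~\ref{SingInt} and \ref{Lem double diff}.
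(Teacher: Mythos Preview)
Your proposal is correct and follows essentially the same approach as the paper: apply the lower bound from Lemma~\ref{bds d}(c), which for $l\neq\ell$ keeps the denominator uniformly away from zero on $\T_\rho\times\T$, and then observe that holomorphy and the $C^{1/2}$ bound follow by routine arguments. The paper's proof is simply a terser version of yours---it recalls the integral representation, quotes the lower bound $\Re|d_{l,\ell}^\ep|_\C^2\gtrsim \varsigma^2+\ep^2|l-\ell|^2$, and declares the result immediate---so your Morera/Fubini elaboration and your closing remark about non-uniformity in $|l-\ell|$ are accurate expansions of what the paper leaves implicit.
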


\begin{proof} First of all, let us recall that
\begin{align}
    K_\ep & \bigg[w_1[\ell,\cdot], w_2[\ell,\cdot], \int_l^\ell(1+w_3[\mu,\cdot]) \dd \mu, w_4[\ell,\cdot]\bigg] (s+i\be) \\
    & = \frac{1}{2\pi} \int_\TT \frac{(\digamma(s+i\beta)-\digamma(s+i\beta+ \varsigma)+\eps\zeta(s+i\beta))^{\perp}}{|\digamma(s+i\beta)-\digamma(s+i\beta+\varsigma)+\eps\zeta(s+i\beta)|_\C^2} w_4[\ell, s+i\be + \varsigma] \dd \varsigma 
\end{align}
Moreover, by Lemma \ref{bds d} c) we know that
$$
\Re |\digamma(s+i\beta)-\digamma(s+i\beta+ \varsigma)+\eps\zeta(s+i\beta)|_\C^2 \gtrsim \varsigma^2 + \ep^2 |l-\ell|^2\,.
$$
Hence, since $l \neq \ell$, the result immediately follows.
\end{proof}

\begin{lemma} \label{L.J2}
Let $C_2 > 0$ be as in Lemma \ref{bds d}. If $w \in \cY^4_\rho$ satisfies \eqref{ass small} with $\cC_2 \leq C_2$, \eqref{E.Zrho1} and \eqref{E.Zrho2}, then
$$
\sup_{l,\ell \in [-1,1]} \left\{ \sup_{|\be| \leq \rho} \norm{ \int_\T J_2(\cdot-\varsigma,\cdot+i\be) w_4[\ell,\varsigma+i\be] \dd \varsigma}_{C^{\frac12}(\T)} \right\} \lesssim\, \sup_{|l|\leq 1} \norm{w_4[l,\cdot]}_\rho\,.
$$
\end{lemma}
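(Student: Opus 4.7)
The key observation is that, unlike $J_1$, the kernel $J_2$ is integrable in $\varsigma$ with $L^1$-norm uniform in $\ep, l, \ell, \beta$, reflecting the cancellation between the full kernel and its linearization. This means I can avoid the singular-integral machinery of Lemma~\ref{SingInt} and work with direct pointwise estimates.

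The first step of the plan is a pointwise bound. Writing $p := d_{l,\ell}^\ep(s+i\be, \varsigma+i\be)$ and $q := \tilde d(s-\varsigma, s+i\be)$, the elementary identity
\[
\frac{p}{|p|_\C^2} - \frac{q}{|q|_\C^2} = \frac{p-q}{|p|_\C^2} + q\,\frac{\langle q-p,\, q+p\rangle_\C}{|p|_\C^2 |q|_\C^2},
\]
combined with the estimates $|p-q|\lesssim |s-\varsigma|^{3/2}$, $|p|+|q|\lesssim |s-\varsigma|+\ep|l-\ell|$, and $\min\{\Re|p|_\C^2,\Re|q|_\C^2\}\gtrsim |s-\varsigma|^2+\ep^2(l-\ell)^2$ from Lemma~\ref{bds d}, yields the pointwise bound
\[
|J_2(s+i\be,\varsigma+i\be)| \lesssim \frac{|s-\varsigma|^{3/2}}{|s-\varsigma|^2+\ep^2(l-\ell)^2}\,\phi(s-\varsigma) \lesssim |s-\varsigma|^{-1/2}\,\phi(s-\varsigma),
\]
uniformly in all parameters. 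Since $|s-\varsigma|^{-1/2}\phi(s-\varsigma)\in L^1(\T)$, integrating against $w_4[\ell,\varsigma+i\be]$ immediately gives the uniform $L^\infty$-bound $\lesssim \|w_4[\ell,\cdot+i\be]\|_{L^\infty(\T)}$.

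For the $C^{1/2}$ seminorm, fix $s_1,s_2\in\T$, set $h := |s_1-s_2|$, and split the $\varsigma$-integration over $\T=A\cup A^c$, where $A := \{\varsigma : \min(|s_1-\varsigma|,|s_2-\varsigma|)\leq 2h\}$. On $A$, which has measure $\lesssim h$, the pointwise bound from the previous step applied separately at $s_1$ and $s_2$ gives a contribution $\lesssim h^{1/2}\|w_4\|_{L^\infty}$. On $A^c$, I would apply Lemma~\ref{Lem double diff} with $p,q$ as above at $s_1$ and $r,t$ the analogous quantities at $s_2$: the double-difference $|p-q-r+t|$ is a second-order Taylor-type remainder controlled using the $C^{3/2}$-bound on $\digamma$ in~\eqref{direct est}, while $|p-r|+|q-t|\lesssim h$ follows from the boundedness of $\digamma'$ and $\zeta'$, and all denominators are controlled by Lemma~\ref{bds d}(c). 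This yields a pointwise bound on $J_2(s_1,\varsigma)-J_2(s_2,\varsigma)$ that is integrable over $A^c$ with total mass $\lesssim h^{1/2}\|w_4\|_{L^\infty}$.

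The main technical obstacle lies in the bookkeeping triggered by Lemma~\ref{Lem double diff}: one must verify that each of the seven terms identified in~\eqref{E.pqrtComplete} contributes, after multiplying by the cutoff $\phi(s-\varsigma)$, at most like $h^{1/2}|s-\varsigma|^{-3/2}$ modulo lower-order factors, and that the cancellations between $d_{l,\ell}^\ep$ and its linearization $\tilde d$ survive uniformly as $\ep\to 0$ and $|l-\ell|\to 0$. This guarantees that the final constant depends only on $\sup_{|l|\leq 1}\|w_4[l,\cdot]\|_\rho$, as stated; the use of just the $L^\infty$-norm of $w_4$ (rather than its full $C^{1/2}$-norm) is a genuine improvement afforded by the integrability of $J_2$ and explains why $J_2$ needs no Calderón--Zygmund analysis.
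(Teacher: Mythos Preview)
Your plan is correct up through the $L^\infty$ bound and the near-region part of the H\"older estimate; both match the paper. The gap is on the far region $A^c$, and it is precisely the claim that $J_2$ ``needs no Calder\'on--Zygmund analysis'' that fails.

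After applying Lemma~\ref{Lem double diff}, the two leading terms $(p-q-r+t)/|p|_\C^2$ and $2p\langle p,\,q-p+r-t\rangle_\C/|p|_\C^4$ are \emph{not} pointwise $O(h^{1/2}|s_1-\varsigma|^{-3/2})$ as you assert. Indeed, with the $\ep\zeta$-contributions cancelling,
\[
p-q-r+t \;=\; -\bigl[\digamma(s_1)-\digamma(s_2)-(s_1-s_2)\digamma'(s_2)\bigr]
\;+\;(s_1-\varsigma)\bigl[\digamma'(s_1)-\digamma'(s_2)\bigr].
\]
The first bracket is $O(h^{3/2})$ by the $C^{3/2}$ bound on $\digamma$, but the second piece is only $O\bigl(|s_1-\varsigma|\,h^{1/2}\bigr)$ since $\digamma'\in C^{1/2}$. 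Divided by $|p|_\C^2\gtrsim|s_1-\varsigma|^2$, this gives a kernel of size $h^{1/2}|s_1-\varsigma|^{-1}$, whose integral over $\{|s_1-\varsigma|\geq 3h\}$ is $h^{1/2}\log(1/h)$, not $h^{1/2}$. The same logarithmic obstruction appears in the second leading term after expanding $p\langle p,\cdot\rangle_\C/|p|_\C^4$.

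The paper resolves exactly these two pieces by invoking Lemma~\ref{SingInt} with the truncated odd kernel $\dfrac{s_1-\varsigma}{|p|_\C^2}\,\phi(s_1-\varsigma)\,\mathds{1}_{|s_1-\varsigma|\geq 3h}$ (parameters $m=1$, $r=0$, $c=3h$); see \eqref{E.Digamma3} and the analogous treatment culminating in \eqref{2nd sing}. That lemma bounds the operator from $C^{1/2}$ to $L^\infty$, but \emph{not} from $L^\infty$ to $L^\infty$, so the $C^{1/2}$-norm of $w_4$ is genuinely required and your claimed ``improvement'' to $\|w_4\|_{L^\infty}$ does not hold. The remaining terms of \eqref{E.pqrtComplete} can indeed be controlled by direct pointwise estimates, as in \eqref{rhs bd}, so your strategy is sound for those; it is only the two singular leading pieces that force the use of Lemma~\ref{SingInt}.
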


\begin{proof}
First of all, observe that
\begin{align}
|J_2(s+i\beta,\varsigma+i\beta)|&= \left|\frac{(d_{l,\ell}^{\eps}(s+i\beta,\varsigma+i\beta))^\perp}{|d_{l,\ell}^{\eps}(s+i\beta,\varsigma+i\beta)|_{\C}^2}-\frac{(\tilde{d}(s-\varsigma,s+i\beta))^\perp}{|\tilde{d}(s-\varsigma,s+i\beta)|_{\C}^2}\right|\phi(s-\varsigma)\\
&\leq \left|\frac{\big(d_{l,\ell}^{\eps}(s+i\beta,\varsigma+i\beta)-\tilde{d}(s-\varsigma,s+i\beta)\big)|\tilde{d}(s-\varsigma,s+i\beta)|_{\C}^2}{|d_{l,\ell}^{\eps}(s+i\beta,\varsigma+i\beta)|_{\C}^2|\tilde{d}(s-\varsigma,s+i\beta)|_{\C}^2}\right|\phi(s-\varsigma)\\
&\quad+\left|\frac{\tilde{d}(s-\varsigma,s+i\beta)\big(|\tilde{d}(s-\varsigma,s+i\beta)|_{\C}^2-|d_{l,\ell}^{\eps}(s+i\beta,\varsigma+i\beta)|_{\C}^2\big)}{|d_{l,\ell}^{\eps}(s+i\beta,\varsigma+i\beta)|_{\C}^2|\tilde{d}(s-\varsigma,s+i\beta)|_{\C}^2}\right|\phi(s-\varsigma)\,.
\end{align}
%
%

\noindent From Lemma \ref{bds d} and the elementary formula $||p|_\C^2-|q|_{\C}^2|\leq |p-q||p+q|$ we hence see that \begin{align}
\left|J_2(s+i\beta,\varsigma+i\beta)\right|\lesssim  |s-\varsigma|^{-\frac{1}{2}}\,\mathds{1}_{|s-\varsigma|\leq\frac{1}{4}}\,, \quad \textup{for }|\be| \leq \rho\,, \label{bd J2}
\end{align}
and so that 
\begin{equation} \label{E.J2bounded}
\begin{aligned}
& \norm{\int_{\T} J_2(\cdot-\varsigma,\cdot+i\beta)w_4[\ell,\varsigma+i\beta]\dd\varsigma}_{L^{\infty}(\T)} \\ & \qquad  \lesssim \norm{w_4[\ell,\cdot+i\beta]}_{L^\infty(\T)} \int_{|s-\varsigma|\leq\frac{1}{2}} \frac{\dd\varsigma}{|s-\varsigma|^{\frac{1}{2}}}\lesssim \norm{w_4[\ell,\cdot+i\beta]}_{L^\infty(\T)}\,, \quad \textup{for } |\be| \leq \rho\,.
\end{aligned}
\end{equation}

Now, to bound the corresponding H\"older seminorm, we use that
\begin{align}
&\left|\int_{\T} \big(J_2(s_1+i\beta,\varsigma+i\beta)-J_2(s_2+i\beta,\varsigma+i\beta)\big)w_4[\ell,\varsigma+i\beta]\dd\varsigma \right|\\
& \quad \leq\int_{|\varsigma-s_1|\leq 3|s_1-s_2|}\big(|J_2(s_1+i\beta,\varsigma+i\beta)|+|J_2(s_2+i\beta,\varsigma+i\beta)|\big)\left|w_4[\ell,\varsigma+i\beta] \right| \dd\varsigma\\
&\qquad+\left|\int_{|\varsigma-s_1|\geq 3|s_1-s_2|}\left(J_2(s_1+i\beta,\varsigma+i\beta)-J_2(s_2+i\beta,\varsigma+i\beta)\right)w_4[\ell,\varsigma+i\beta]\dd\varsigma\right| =: {\rm I}_1 + {\rm I}_2\,.
\end{align}
On one hand, using \eqref{bd J2} and arguing as in \eqref{E.J2bounded}, we get that that
\begin{equation} \label{E.conclussionI1J2}
     {\rm I}_1 \lesssim |s_1-s_2|^{\frac12} \norm{w_4[\ell,\cdot+i\beta]}_{L^{\infty}(\T)}\,, \quad \textup{for } |\be| \leq \rho\,.
\end{equation}
On the other hand, to estimate ${\rm I}_2$, we consider separately the cases where $|s_1 - s_2| \geq \frac{1}{32}$ and where $|s_1 - s_2| \leq \frac{1}{32}$.

\noindent \textit{Case 1:} $|s_1-s_2| \geq \frac1{32}$. In this case, using again \eqref{bd J2}, it is immediate to get that
\begin{equation} \label{E.conclussionI2J2}
     {\rm I}_2 \lesssim |s_1-s_2|^{\frac12} \norm{w_4[\ell,\cdot+i\beta]}_{L^{\infty}(\T)}\,, \quad \textup{for } |\be| \leq \rho\,.
\end{equation}

\noindent \textit{Case 2:} $|s_1-s_2| \leq \frac{1}{32}$. First of all, we set
\begin{align}
& p = p(\varsigma,s_1+i\be):= \tilde{d}(s_1-\varsigma,s_1+i\beta) \,,\\ 
& q = q(\varsigma, s_1+i\be) := d_{l,\ell}^\eps(s_1+i\beta,\varsigma+i\beta)\,,     \\
& r = r(\varsigma,s_2+i\be):= \tilde{d}(s_2-\varsigma,s_2+i\beta)\,,   \\
& t = t(\varsigma,s_2+i\be) := d_{l,\ell}^\eps(s_2+i\beta,\varsigma+i\beta)\,,
\end{align}
and split the kernel in ${\rm I}_2$ as
\begin{align}
& J_2(s_2+i\beta,\varsigma+i\beta)-J_2(s_1+i\beta,\varsigma+i\beta)=\left(\frac{p^\perp}{|p|_{\C}^2}-\frac{q^\perp}{|q|_{\C}^2}-\frac{r^\perp}{|r|_{\C}^2}+\frac{t^\perp}{|t|_{\C}^2}\right)\phi(s_1-\varsigma)\\
&\quad+\left(\frac{r^\perp}{|r|_{\C}^2}-\frac{t^\perp}{|t|_{\C}^2}\right)\big(\phi(s_1-\varsigma)-\phi(s_2-\varsigma)\big)\,.
\end{align}
Then, using the mean value theorem, we get that 
\begin{align}
    \left|\frac{r^\perp}{|r|_{\C}^2}-\frac{t^\perp}{|t|_{\C}^2}\right|\big|\phi(s_1-\varsigma)-\phi(s_2-\varsigma)\big| = \left|\frac{r^\perp}{|r|_{\C}^2}-\frac{t^\perp}{|t|_{\C}^2}\right| \left| \int_0^1 \phi'(\la(s_1-s_2) + s_2-\varsigma) \dd \la \right||s_1-s_2|\,. 
\end{align}
Moreover, observe that
\begin{equation} \label{E.supportPhiprime}
\phi'(\la(s_1-s_2) + s_2-\varsigma) \neq 0 \ \Longrightarrow \ |s_1-\varsigma| \geq \frac{1}{16} \ \Longrightarrow\ |s_2-\varsigma| \geq \frac{1}{32}\,.
\end{equation}
Hence, by \eqref{bd d low}, we get that 
$$
   \left|\frac{r^\perp}{|r|_{\C}^2}-\frac{t^\perp}{|t|_{\C}^2}\right|\big|\phi(s_1-\varsigma)-\phi(s_2-\varsigma)\big|  \lesssim |s_1-s_2|\,,  \quad \textup{for }|\be| \leq \rho\,,
$$
and so that
\begin{equation}
\begin{aligned}
    & \left| \int_{|\varsigma-s_1|\geq 3|s_1-s_2|}\left(\frac{r^\perp}{|r|_{\C}^2}-\frac{t^\perp}{|t|_{\C}^2}\right)\big(\phi(s_1-\varsigma)-\phi(s_2-\varsigma)\big) w_4[\ell,\varsigma+i\beta]\dd\varsigma \right| \\[0.4cm]
    & \qquad  \lesssim |s_1-s_2| \norm{w_4[\ell,\cdot+i\beta]}_{L^{\infty}(\T)} \lesssim |s_1-s_2|^{\frac12} \norm{w_4[\ell,\cdot+i\beta]}_{L^{\infty}(\T)}  \,, \quad \textup{for } |\be| \leq \rho\,.
    \end{aligned}
\end{equation}

To estimate the remaining part, we are going to use Lemma \ref{Lem double diff}. First of all, observe that
\begin{align}
    &\frac{p-q-r+t}{|p|_{\C}^2} = - \frac{\digamma(s_1+i\beta)-\digamma(s_2+i\beta)-(s_1-s_2)\digamma'(s_2+i\beta)}{|p|_\C^2} \\
    & \qquad + \frac{(s_1-\varsigma)}{|p|_\C^2} \big( \digamma'(s_1+i\be) - \digamma'(s_2+i\be) \big)\,.
\end{align}
On one hand, combining \eqref{direct est} with \eqref{bd d low} (see also the proof of \eqref{bd diff}), we get that
\begin{equation} \label{E.Digamma1}
\left|\frac{\digamma(s_1+i\beta)-\digamma(s_2+i\beta)-(s_1-s_2)\digamma'(s_2+i\beta)}{|p|_\C^2}\right| \lesssim \frac{|s_1-s_2|^{\frac32}}{|s_1-\varsigma|^{2}}\,, \quad \textup{for } |\be| \leq \rho\,,
\end{equation}
and so that
\begin{equation} \label{E.Digamma2}
\begin{aligned}
& \left| \int_{|\varsigma-s_1|\geq 3|s_1-s_2|} \frac{\digamma(s_1+i\beta)-\digamma(s_2+i\beta)-(s_1-s_2)\digamma'(s_2+i\beta)}{|p|_\C^2}  w_4[\ell, \varsigma+i\be] \dd \varsigma \right| \\
& \qquad \lesssim \norm{w_4[\ell,\cdot+i\beta]}_{L^{\infty}(\T)} |s_1-s_2|^{\frac32} \int_{|\varsigma-s_1|\geq 3|s_1-s_2|}  \frac{1}{|s_1-\varsigma|^{2}} \dd\varsigma \\
& \qquad \lesssim |s_1-s_2|^{\frac12} \norm{w_4[\ell,\cdot+i\beta]}_{L^{\infty}(\T)}\,, \quad \textup{for } |\be|\leq \rho\,.
\end{aligned}
\end{equation}
On the other hand, using Lemma \ref{SingInt} with $c = 3 |s_1-s_2|$, $r = 0$ and $m = 1$, Lemma \ref{L.ab}, and \eqref{E.a1a2diff}, we get that
\begin{equation} \label{E.Digamma3}
\begin{aligned}
    & \left| \int_{|\varsigma-s_1|\geq 3|s_1-s_2|}  \frac{(s_1-\varsigma)}{|p|_\C^2} \phi(s_1-\varsigma) \big( \digamma'(s_1+i\be) - \digamma'(s_2+i\be) \big) w_4[\ell, \varsigma+i\be] \dd \varsigma \right| \\
    & \quad = \left| \int_{\T}  \frac{(s_1-\varsigma)}{|p|_\C^2} \phi(s_1-\varsigma) \mathds{1}_{|s_1-\varsigma| \geq 3|s_1-s_2|}\, \big( \digamma'(s_1+i\be) - \digamma'(s_2+i\be) \big) w_4[\ell, \varsigma+i\be] \dd \varsigma \right|  \\[0.3cm]
    & \quad \lesssim |s_1-s_2|^{\frac12} \norm{w_4[\ell,\cdot+i\beta]}_{C^\frac{1}{2}(\T)}\,, \quad \textup{for }|\be| \leq \rho\,.
\end{aligned}
\end{equation}
Hence, we have that
\begin{equation} \label{1st sing}
\begin{aligned}
 & \left| \int_{|\varsigma-s_1|\geq 3|s_1-s_2|} \frac{p^{\perp}-q^{\perp}-r^{\perp}+t^{\perp}}{|p|_{\C}^2} \,  \phi(s_1-\varsigma) w_4[\ell, \varsigma+i\be] \dd \varsigma \right| \\[0.3cm]
 & \qquad \lesssim |s_1-s_2|^{\frac12} \norm{w_4[\ell,\cdot+i\beta]}_{C^\frac{1}{2}(\T)}\,, \quad \textup{for }|\be| \leq \rho\,.
\end{aligned}
\end{equation}

Next, observe that
\begin{align}
    & \frac{2p\scalar{p}{p-q+r-t}_\C}{|p|_\C^4} =  \frac{2p\scalar{p}{\digamma(s_1+i\beta)-\digamma(s_2+i\beta)-(s_1-s_2)\digamma'(s_2+i\beta)}_\C}{|p|_\C^4} \\
    & \quad + \frac{2p\scalar{p}{(s_1-\varsigma) ( \digamma'(s_1+i\be) - \digamma'(s_2+i\be) )}_\C}{|p|_\C^4}\,.
\end{align}
Moreover, using \eqref{E.complex-modulus}, it is immediate to check that
\begin{align}
    &\left|\frac{2p\scalar{p}{\digamma(s_1+i\beta)-\digamma(s_2+i\beta)-(s_1-s_2)\digamma'(s_2+i\beta)}_\C}{|p|_\C^4}\right| \\
    & \qquad \lesssim \left|\frac{\digamma(s_1+i\beta)-\digamma(s_2+i\beta)-(s_1-s_2)\digamma'(s_2+i\beta)}{|p|_\C^2}\right|\, \quad \textup{for } |\be| \leq \rho\,.
\end{align}
Thus, by \eqref{E.Digamma1} and \eqref{E.Digamma2}, it follows that
\begin{equation} 
\begin{aligned}
& \left| \int_{|\varsigma-s_1|\geq 3|s_1-s_2|} \frac{2p\scalar{p}{\digamma(s_1+i\beta)-\digamma(s_2+i\beta)-(s_1-s_2)\digamma'(s_2+i\beta)}_\C}{|p|_\C^4} \,  w_4[\ell, \varsigma+i\be] \dd \varsigma \right| \\[0.3cm]
& \qquad \lesssim |s_1-s_2|^{\frac12} \norm{w_4[\ell,\cdot+i\beta]}_{L^{\infty}(\T)}\,, \quad \textup{for } |\be|\leq \rho\,.
\end{aligned}
\end{equation}

On the other hand, letting $p_1,p_2$ and $\digamma_1,\digamma_2$ denote the components of $p$ and $\digamma$ respectively and expanding the scalar product, we observe that
\begin{align}
\mel \frac{p \scalar{p}{(s_1-\varsigma) ( \digamma'(s_1+i\be) - \digamma'(s_2+i\be) )}_\C}{|p|_\C^4}   \\
&=  \frac{(s_1-\varsigma) p_1p_2 \big( (\digamma'_2,\digamma'_1)(s_1+i\be) - ( (\digamma'_2,\digamma'_1)(s_2+i\be) \big)}{|p|_\C^4} \\
& \quad + \frac{(s_1-\varsigma) \left( p_1^2( \digamma_1'(s_1+i\be) - \digamma_1'(s_2+i\be)),\ p_2^2( \digamma_2'(s_1+i\be) - \digamma_2'(s_2+i\be))\right)}{|p|_\C^4} \,.
\end{align}
We then observe that each of the corresponding parts of the kernel has the structure in Lemma \ref{SingInt} by Lemma \ref{L.ab}. Indeed it holds that \begin{align}
&(s_1-\varsigma)\frac{p_1^2}{|p|_{\C}^4}=\frac{(s_1-\varsigma)^3a_1^2+2\eps(s_1-\varsigma)^2a_1b_1+\eps^2(s_1-\varsigma)b_1^2}{|(s_1-\varsigma)a+\eps b|_{\C}^4}\\
&(s_1-\varsigma)\frac{p_1p_2}{|p|_{\C}^4}=\frac{(s_1-\varsigma)^3a_1a_2+\eps(s_1-\varsigma)^2(a_2b_1+a_1b_2)+\eps^2(s_1-\varsigma)b_1b_2}{|(s_1-\varsigma)a+\eps b|_{\C}^4}\\
&(s_1-\varsigma)\frac{p_2^2}{|p|_{\C}^4}=\frac{(s_1-\varsigma)^3a_2^2+2\eps(s_1-\varsigma)^2a_2b_2+\eps^2(s_1-\varsigma)b_2^2}{|(s_1-\varsigma)a+\eps b|_{\C}^4}\,,
\end{align}
where $a_1,a_2,b_1,b_2$ are the components of $a$ and $b$ in Lemma \ref{L.ab} with $s = s_1$. Taking into account  \eqref{direct est} and \eqref{E.lowerbll}, we then conclude from Lemma \ref{SingInt} that  
\begin{equation} 
\begin{aligned}
    & \left| \int_{|\varsigma-s_1|\geq 3|s_1-s_2|}  \frac{2p \scalar{p}{(s_1-\varsigma) ( \digamma'(s_1+i\be) - \digamma'(s_2+i\be) )}_\C}{|p|_\C^4}  \phi(s_1-\varsigma) w_4[\ell, \varsigma+i\be] \dd \varsigma \right| \\[0.3cm]
    & \quad \lesssim |s_1-s_2|^{\frac12} \norm{w_4[\ell,\cdot+i\beta]}_{C^\frac{1}{2}(\T)}\,, \quad \textup{for }|\be| \leq \rho\,,
\end{aligned}
\end{equation}
and so that
\begin{equation} \label{2nd sing}
\begin{aligned}
 & \left| \int_{|\varsigma-s_1|\geq 3|s_1-s_2|} \frac{2p^{\perp}\scalar{p^{\perp}}{p^{\perp}-q^{\perp}+r^{\perp}-t^{\perp}}_\C}{|p|_\C^4} \,  \phi(s_1-\varsigma) w_4[\ell, \varsigma+i\be] \dd \varsigma \right| \\[0.3cm]
 & \qquad \lesssim |s_1-s_2|^{\frac12} \norm{w_4[\ell,\cdot+i\beta]}_{C^\frac{1}{2}(\T)}\,, \quad \textup{for }|\be| \leq \rho\,.
\end{aligned}
\end{equation}

Finally, we estimate the right-hand side in Lemma \ref{Lem double diff}. First, observe that, by \eqref{bd diff}, whenever $|s_1-\varsigma| \geq 3 |s_1-s_2|$, it follows that
\begin{align}
|r-t|+|p-q|\lesssim |s_1-\varsigma|^\frac{3}{2}\,, \quad \textup{for }|\be| \leq \rho\,.\label{rt bd}
\end{align}
Likewise, using \eqref{direct est}, we get that
\begin{equation}\begin{aligned}
 |p-r| &\lesssim \left|(s_1-\varsigma)\digamma'(s_1+i\beta)-(s_2-\varsigma)\digamma'(s_2+i\beta)\right|+\eps\left|\zeta(s_1+i\beta)-\zeta(s_2+i\beta)\right| \\
&\lesssim |s_1-\varsigma||s_1-s_2|^\frac{1}{2}+|s_1-s_2|\,, \quad \textup{for }|\be| \leq \rho\,, \label{pr bd}
\end{aligned}\end{equation}
and that, for $|\be| \leq \rho$,
 \begin{equation}\begin{aligned}
|q-t|&\lesssim \left|\digamma(s_1+i\beta)-\digamma(s_2+i\beta)\right|+\eps\left|\zeta(s_1+i\beta)-\zeta(s_2+i\beta)\right| \lesssim |s_1-s_2|\,, \label{qt bd}
\end{aligned}\end{equation}
Then, using the shortened notation
\begin{equation} \label{E.Kernelpqrt}
{\rm K}(p,q,r,t) :=  \frac{\max\{|p|,|q|,|r|,|t|\}^3}{\min\{||p|_{\C}|,||q|_{\C}|,||r|_{\C}|,||t|_{\C}|\}^6}\big(|p-q|+|r-t|\big)\big(|p-r|+|q-t|\big)\,,
\end{equation}
we infer from \eqref{bd d low},\eqref{bd d up}, \eqref{rt bd}, \eqref{pr bd} and \eqref{qt bd} that \begin{equation}
\begin{aligned} & \left|
\int_{|s_1-\varsigma|\geq 3|s_1-s_2|} {\rm K}(p,q,r,t) \,\phi(s_1-\varsigma) w_4[\ell, \varsigma+i\be] \dd \varsigma \right|\\
& \qquad \lesssim \norm{w_4[\ell,\cdot + i \be]}_{L^{\infty}(\T)}\, \int_{3|s_1-s_2|}^1x^{-\frac{3}{2}}(x|s_1-s_2|^\frac{1}{2}+|s_1-s_2|)\dx \\
& \qquad\lesssim |s_1-s_2|^\frac{1}{2}\norm{w_4[\ell,\cdot + i \be]}_{L^{\infty}(\T)} \,, \quad \textup{for } |\be| \leq \rho\,. \label{rhs bd}
\end{aligned}
\end{equation}
Combining \eqref{1st sing}, \eqref{2nd sing}, \eqref{rhs bd} and Lemma \ref{Lem double diff}, we get that 
\begin{equation} \label{E.conclussionI2J22}
     {\rm I}_2 \lesssim |s_1-s_2|^{\frac12} \norm{w_4[\ell,\cdot+i\beta]}_{C^{\frac12}(\T)}\,, \quad \textup{for } |\be| \leq \rho\,.
\end{equation}
The result follows from \eqref{E.J2bounded}, \eqref{E.conclussionI1J2}, \eqref{E.conclussionI2J2} and \eqref{E.conclussionI2J22}.
\end{proof}

Finally, concerning $J_3$, we have the following:

\begin{lemma} \label{L.J3}
Let $C_2 > 0$ be as in Lemma \ref{bds d}. If $w \in \cY^4_\rho$ satisfies \eqref{ass small} with $\cC_2 \leq C_2$, \eqref{E.Zrho1} and \eqref{E.Zrho2}, then
$$
\sup_{l,\ell \in [-1,1]} \left\{ \sup_{|\be| \leq \rho} \norm{ \int_\T J_3(\cdot-\varsigma,\cdot+i\be) w_4[\ell,\varsigma+i\be] \dd \varsigma}_{C^{\frac12}(\T)} \right\} \lesssim\, \sup_{|l|\leq 1} \norm{w_4[l,\cdot]}_\rho \,.
$$
\end{lemma}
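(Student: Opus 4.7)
The plan is to exploit the crucial fact that $J_3$ is a \emph{regular} (non-singular) kernel, because the cutoff $1-\phi(s-\varsigma)$ localises to the region $|s-\varsigma|\geq \frac18$, where by Lemma~\ref{bds d}(c) the denominator $|d_{l,\ell}^{\eps}|_\C^2$ is bounded below by a positive constant that is independent of $\ep$, $l$ and $\ell$. Thus the proof reduces to simple pointwise estimates on the kernel and its increment in the variable $s$, together with Young's convolution-type inequality.

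More precisely, the first step will be to show that, for all $\ep\in(0,1)$, all $l,\ell\in[-1,1]$ and all $|\be|\leq \rho$,
\begin{equation*}
\sup_{s,\varsigma\in\TT}|J_3(s+i\be,\varsigma+i\be)|\lesssim 1\,.
\end{equation*}
This follows by combining the upper bound $|d_{l,\ell}^\ep|\lesssim |s-\varsigma|+\ep|l-\ell|\lesssim 1$ from \eqref{bd d up} with the lower bound $\Re|d_{l,\ell}^\ep|_{\C}^2\gtrsim |s-\varsigma|^2+\ep^2|l-\ell|^2\gtrsim 1$ valid on $\supp(1-\phi(s-\varsigma))$ by \eqref{bd d low}. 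Integrating against $w_4[\ell,\varsigma+i\be]$ then immediately yields the $L^\infty$ part of the $C^{1/2}(\TT)$-norm, controlled by $\|w_4[\ell,\cdot+i\be]\|_{L^\infty(\TT)}$.

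The main (and only nontrivial) step will be the Hölder seminorm. I will prove the stronger \emph{Lipschitz} estimate
\begin{equation*}
|J_3(s_1+i\be,\varsigma+i\be)-J_3(s_2+i\be,\varsigma+i\be)|\lesssim |s_1-s_2|\,,
\end{equation*}
uniformly in $\varsigma\in\TT$, $|\be|\leq\rho$, $\ep\in(0,1)$ and $l,\ell\in[-1,1]$. To establish this, I will use the identity $d_{l,\ell}^\ep(s_j+i\be,\varsigma+i\be)=\digamma(s_j+i\be)-\digamma(\varsigma+i\be)+\ep\zeta(s_j+i\be)$ together with the $C^{3/2}$ bounds in \eqref{direct est}, which give the Lipschitz-in-$s$ control
\begin{equation*}
|\digamma(s_1+i\be)-\digamma(s_2+i\be)|+\ep|\zeta(s_1+i\be)-\zeta(s_2+i\be)|\lesssim |s_1-s_2|\,.
\end{equation*}
Combined with the uniform lower bound on $|d_{l,\ell}^\ep|_\C^2$ on the support of $1-\phi$, and the smoothness of $\phi$, this propagates to a Lipschitz-in-$s$ bound on the full kernel $J_3$. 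Integrating against $w_4[\ell,\varsigma+i\be]$ and using that $|s_1-s_2|\leq |s_1-s_2|^{1/2}$ on $\TT$ then yields
\begin{equation*}
\left|\int_{\TT}\bigl(J_3(s_1+i\be,\varsigma+i\be)-J_3(s_2+i\be,\varsigma+i\be)\bigr)\,w_4[\ell,\varsigma+i\be]\,\mathrm{d}\varsigma\right|\lesssim |s_1-s_2|^{1/2}\|w_4[\ell,\cdot+i\be]\|_{L^\infty(\TT)}\,.
\end{equation*}
Taking the supremum over $|\be|\leq\rho$ and over $l,\ell\in[-1,1]$ concludes the proof. Since the analysis is genuinely non-singular here, no singular integral machinery (unlike in Lemmas~\ref{L.J1} and~\ref{L.J2}) is needed, and the main obstacle, namely controlling the denominator away from zero independently of $\ep$ and $|l-\ell|$, has already been handled in Lemma~\ref{bds d}.
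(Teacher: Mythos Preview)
Your proposal is correct and follows exactly the same approach as the paper. The paper's proof is essentially a one-liner: it observes that on $\supp(1-\phi(s-\varsigma))$ one has $|s-\varsigma|\geq\frac18$, hence $\bigl||d_{l,\ell}^\ep|_\C^2\bigr|\gtrsim1$ by \eqref{bd d low}, and then states that the result follows from the definition of $d_{l,\ell}^\ep$ and \eqref{direct est}; your write-up simply spells out the Lipschitz-in-$s$ estimate that is implicit in that last sentence.
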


\begin{proof}
Note that $J_3$ is nonzero only whenever $|s-\varsigma| \geq \frac18$. Thus, by \eqref{bd d low}, we have that
$$
\big|d_{l,\ell}^{\eps}(s+i\beta,\varsigma+i\beta)\big|_{\C}^2 \gtrsim 1\,, \quad \textup{for } |\be| \leq \rho\,,
$$
in its support. The result then follows from the definition of $d_{l,\ell}^{\eps}(s+i\beta,\varsigma+i\beta)$ and \eqref{direct est}.
\end{proof}

Having  the previous lemmas at hand, we can now conclude.

\begin{proof}[Proof of \eqref{bd K}]
Taking into account Lemma \ref{L.holomorphicExtension} and \eqref{splitting J}, \eqref{bd K} immediately follows from Lemmas \ref{L.J1}, \ref{L.J2} and \ref{L.J3}. 
\end{proof}

\subsection{Proof of (\ref{E.KLipschitz})} In this subsection we prove \eqref{E.KLipschitz}. Since the proof is similar to the one of \eqref{bd K}, we will skip some details. First of all, taking into account \eqref{E.holomorphicKepsilon}, we see that
\begin{equation} \label{E.decompKLipschitz}
\begin{aligned}
    & K_{\ep}\left[w_1^1[\ell,\cdot],\, w_2^1[\ell,\cdot],\, \int_l^\ell (1+w_3^1[\mu,\cdot]) \dd \mu,\,w_4^1[\ell,\cdot]\right](s+i\beta) \\
        & \qquad- K_{\ep}\left[w_1^2[\ell,\cdot],\,w_2^2[\ell,\cdot],\, \int_l^\ell (1+w_3^2[\mu,\cdot]) \dd \mu,\,w_4^2[\ell,\cdot]\right](s+i\beta) \\
    & \quad = K_{\ep}\left[w_1^1[\ell,\cdot],\, w_2^1[\ell,\cdot],\, \int_l^\ell (1+w_3^1[\mu,\cdot]) \dd \mu,\,w_4^1[\ell,\cdot] - w_4^2[\ell,\cdot]\right](s+i\beta) \\
    & \qquad + K_{\ep}\left[w_1^1[\ell,\cdot],\, w_2^1[\ell,\cdot],\, \int_l^\ell (1+w_3^1[\mu,\cdot]) \dd \mu,\,w_4^2[\ell,\cdot]\right](s+i\beta) \\
    & \qquad- K_{\ep}\left[w_1^2[\ell,\cdot],\, w_2^2[\ell,\cdot],\, \int_l^\ell (1+w_3^2[\mu,\cdot]) \dd \mu,\,w_4^2[\ell,\cdot]\right](s+i\beta)\,.
\end{aligned}
\end{equation}
The first term on the right hand side can be directly estimated using \eqref{bd K}. Indeed, we have:

\begin{lemma} \label{L.LipschitzTrivial}
    Let $C_2 > 0$ be as in Lemma \ref{bds d}. If $w \in \cY^4_\rho$ satisfies \eqref{ass small} with $\cC_2 \leq C_2$, \eqref{E.Zrho1} and \eqref{E.Zrho2}, then
$$
\begin{aligned}
\sup_{l,\ell \in [-1,1]} & \norm{K_{\ep}\left[w_1^1[l,\cdot],\,w_1^1[\ell,\cdot],\, \int_l^\ell (1+w_3^1[\mu,\cdot]) \dd \mu,\,w_4^1[\ell,\cdot] - w_4^2[\ell,\cdot]\right](\cdot)}_\rho \\[0.3cm]
& \lesssim \sup_{|l| \leq 1} \norm{w_4^1[l,\cdot]-w_4^2[l,\cdot]}_\rho.
\end{aligned}
$$
\end{lemma}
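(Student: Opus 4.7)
The plan is to exploit linearity. The operator $\varpi \mapsto K_\ep[\nu_1,\nu_2,g,\varpi]$ defined in \eqref{E.defKep} is linear in its fourth slot for each fixed admissible triple $(\nu_1,\nu_2,g)$, since $\varpi$ enters the integrand only through the multiplicative factor $\varpi(z+\varsigma)$, whereas the kernel $\Xi^\ep[\nu_1,\nu_2,g]^\perp/|\Xi^\ep[\nu_1,\nu_2,g]|_\C^2$ depends solely on $(\nu_1,\nu_2,g)$. Consequently, the function whose $\|\cdot\|_\rho$-norm we want to control is, by this linearity, exactly the difference of two instances of $K_\ep$ sharing identical first three arguments, evaluated at $\varpi=w_4^1[\ell,\cdot]$ and $\varpi=w_4^2[\ell,\cdot]$ respectively.

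To conclude, I would revisit the proofs of Lemmas \ref{L.J1}, \ref{L.J2} and \ref{L.J3}: in each, the estimate of the form
\begin{equation*}
\sup_{|\beta|\leq\rho}\bigg\|\int_\TT J_j(\cdot-\varsigma,\cdot+i\beta)\,w_4[\ell,\varsigma+i\beta]\,\mathrm{d}\varsigma\bigg\|_{C^{1/2}(\TT)} \lesssim \sup_{|l|\leq 1}\|w_4[l,\cdot]\|_\rho
\end{equation*}
is \emph{linear} in the integrand~$w_4$, with implicit constant depending only on $\Ga$ and on the smallness parameter $\mathcal{C}_2$ that controls the first three arguments (entering through Lemma \ref{L.ab} and Lemma \ref{bds d}), and \emph{not} on any norm of $w_4$ itself. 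Substituting $w_4=w_4^1[\ell,\cdot]-w_4^2[\ell,\cdot]$, summing the three contributions via the decomposition \eqref{splitting J}, and taking the supremum over $l,\ell\in[-1,1]$ then yields the claimed bound.

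No substantial obstacle is expected here: the result is morally just \eqref{bd K} applied to the ``intensity'' $w_4^1-w_4^2$, which is permissible since the fourth argument never enters any denominator, so admissibility of $w_4^1-w_4^2$ (as opposed to smallness) is not required. The only bookkeeping point to verify is the aforementioned independence of the constants in Lemmas \ref{L.J1}--\ref{L.J3} from $\|w_4\|_\rho$, which is evident from inspection: in every estimate carried out there, $w_4$ appears purely as a multiplicative factor in the $C^{1/2}$-norm on the right-hand side.
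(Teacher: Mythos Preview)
Your proposal is correct and follows essentially the same approach as the paper. The paper's own proof is a single line stating that the term ``can be directly estimated using \eqref{bd K}''; your argument unpacks this by noting the linearity of $K_\ep$ in its fourth argument and that the bounds in Lemmas~\ref{L.J1}--\ref{L.J3} are linear in $w_4$ with implicit constants independent of $\|w_4\|_\rho$, which is exactly the content needed to justify that one-line invocation.
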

We now focus on the other two terms, and analyze them together. First, we prove the following:

\begin{lemma} \label{L.LipschitzLinfty}
    Let $C_2 > 0$ be as in Lemma \ref{bds d}. If $w \in \cY^4_\rho$ satisfies \eqref{ass small} with $\cC_2 \leq C_2$, \eqref{E.Zrho1} and \eqref{E.Zrho2}, then
$$
\begin{aligned}
\sup_{l,\ell \in [-1,1]} \bigg\{ \sup_{|\be| \leq \rho} \bigg\|  & K_{\ep}\left[w_1^1[l,\cdot],\,w_1^1[\ell,\cdot],\, \int_l^\ell (1+w_3^1[\mu,\cdot]) \dd \mu,\,w_4^2[\ell,\cdot]\right] \\
    & - K_{\ep}\left[w_1^2[l,\cdot],\,w_1^2[\ell,\cdot],\, \int_l^\ell (1+w_3^2[\mu,\cdot]) \dd \mu,\,w_4^2[\ell,\cdot]\right] \bigg\|_{L^{\infty}(\T)}
 \bigg\} \lesssim\, \|w^1-w^2\|_{\cY^4_\rho}\,.
\end{aligned}
$$
\end{lemma}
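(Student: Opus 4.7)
The plan is to mirror the strategy used for (\ref{bd K}), the key simplification being that here we only need an $L^\infty$ (not $C^{1/2}$) bound on the difference, so many of the delicate two-point decompositions of that proof collapse. Let $\digamma^i$, $\ze^i$, $d^{\ep,i}_{l,\ell}$ and $\tilde d^i$ denote the auxiliary quantities built from $w = w^i$, $i \in \{1,2\}$. Applying the splitting (\ref{splitting J}) to each kernel reduces the task to estimating
\[
\sup_{|\be|\leq\rho}\left| \int_{\T}\big(J_j^{(1)}-J_j^{(2)}\big)(s+i\be,\varsigma+i\be)\,w_4^2[\ell,\varsigma+i\be]\, \dd\varsigma\right|
\]
for $j \in \{1, 2, 3\}$. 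A preliminary computation, analogous to (\ref{direct est}), yields
\[
\sup_{|\be|\leq\rho}\Big\{\|(\digamma^1-\digamma^2)(\cdot+i\be)\|_{C^{3/2}(\T)}+\tfrac{1}{|l-\ell|}\|(\ze^1-\ze^2)(\cdot+i\be)\|_{C^{1/2}(\T)}\Big\}\lesssim \|w^1-w^2\|_{\cY^4_\rho},
\]
while the algebraic identity
\[
\frac{p^\perp}{|p|_\C^2}-\frac{q^\perp}{|q|_\C^2}=\frac{(p-q)^\perp}{|p|_\C^2}- q^\perp\frac{\scalar{p-q}{p+q}_\C}{|p|_\C^2|q|_\C^2}
\]
converts the difference of kernels into terms involving $w^1-w^2$, with the denominators bounded below by Lemma \ref{bds d}.

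For the smooth piece $J_3^{(1)}-J_3^{(2)}$, the cutoff restricts $|s-\varsigma|\geq 1/8$, so both denominators are bounded below by a positive constant independent of $\ep$ and $l-\ell$; the identity above then gives the pointwise bound $|J_3^{(1)}-J_3^{(2)}|\lesssim \|w^1-w^2\|_{\cY^4_\rho}$, which integrates trivially against $w_4^2$. For $J_1^{(1)}-J_1^{(2)}$, I would use a homotopy argument. Setting $a^i := (\digamma^i)'(s+i\be)$ and $b^i := \ze^i(s+i\be)$, every convex combination $((1-\te)a^2+\te a^1,(1-\te)b^2+\te b^1)$ satisfies the hypotheses of Lemma \ref{L.ab}. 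Writing $J_1^{(1)}-J_1^{(2)}=\int_0^1 \pd_\te J_1^{(\te)}\,\dd\te$ and invoking the parameter-derivative estimate (\ref{E.convolutionDerivativeHolder}) from Lemma \ref{SingInt} with $k_1,k_2\in\{0,1\}$ gives, uniformly in $\te$, a bound in terms of $|a^1-a^2|+\eps^{-1}|b^1-b^2|$, which after integration is controlled by $\|w^1-w^2\|_{\cY^4_\rho}$.

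The main obstacle is $J_2^{(1)}-J_2^{(2)}$, where the pointwise bound $|J_2^{(i)}|\lesssim|s-\varsigma|^{-1/2}\mathds{1}_{|s-\varsigma|\leq 1/4}$ alone is too crude. I would follow the structure of Lemma \ref{L.J2}, applying Lemma \ref{Lem double diff} with
\[
p=\tilde d^1,\qquad q=d^{\ep,1}_{l,\ell},\qquad r=\tilde d^2,\qquad t=d^{\ep,2}_{l,\ell}.
\]
Here one combines the old bound $|p-q|+|r-t|\lesssim|s-\varsigma|^{3/2}$ from (\ref{bd diff}) with the new estimates
\[
|p-r|+|q-t|\lesssim (|s-\varsigma|+\eps|l-\ell|)\|w^1-w^2\|_{\cY^4_\rho},
\]
which propagate the small parameter $\|w^1-w^2\|_{\cY^4_\rho}$ without destroying integrability near the diagonal. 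The linear terms $(p-q-r+t)/|p|_\C^2$ and $2p\scalar{p}{q-p+r-t}_\C/|p|_\C^4$ that appear in Lemma \ref{Lem double diff} are each handled by expanding $d^{\ep,i}_{l,\ell}$ according to (\ref{E.digamma})--(\ref{E.zeta}) and applying Lemma \ref{SingInt} to the resulting kernels, exactly as in Lemma \ref{L.J2}. The overall effect is to transfer the differentiability content of Lemma \ref{Lem double diff} from the spatial variable $s$ to the parameter $w$, which is what closes the $L^\infty$-Lipschitz estimate.
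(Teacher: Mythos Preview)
Your proposal is correct and follows the same architecture as the paper: the $J_1/J_2/J_3$ splitting, Lemma~\ref{Lem double diff} for $J_2$ with the identical choice $p=\tilde d^1$, $q=d^{\ep,1}_{l,\ell}$, $r=\tilde d^2$, $t=d^{\ep,2}_{l,\ell}$, and the trivial treatment of $J_3$. Two small points where the paper differs or where your write-up should be sharpened:

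\begin{itemize}
\item For $J_1$, the paper does not interpolate but instead telescopes
\[
\frac{(a_1x+\ep b_1)^\perp}{|a_1x+\ep b_1|_\C^2}-\frac{(a_2x+\ep b_2)^\perp}{|a_2x+\ep b_2|_\C^2}
\]
into four explicit pieces (difference of numerators, then step-by-step change in the denominator) and applies Lemma~\ref{SingInt} to each. Your homotopy argument is equivalent, but the factor you wrote as $\ep^{-1}$ should be $|b_\te|^{-1}\sim |l-\ell|^{-1}$, coming from~\eqref{E.convolutionDerivativeHolder} with $k_2=1$; the estimate still closes because $|b^1-b^2|\lesssim |l-\ell|\,\|w^1-w^2\|_{\cY^4_\rho}$ by~\eqref{E.b1,inusb2}.
\item For the linear terms in $J_2$, the paper does \emph{not} invoke Lemma~\ref{SingInt}. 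Since only an $L^\infty$ bound is needed, it recognizes $p-q-r+t$ as the first-order Taylor remainder of $\digamma^1-\digamma^2$, giving directly $|p-q-r+t|\lesssim |s-\varsigma|^{3/2}\|w^1-w^2\|_{\cY^4_\rho}$; dividing by $||p|_\C|^2\gtrsim|s-\varsigma|^2$ produces an integrable $|s-\varsigma|^{-1/2}$ singularity. The second linear term is handled the same way. Your proposed route via Lemma~\ref{SingInt} would also work but is more laborious than necessary at this stage.
\end{itemize}
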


\begin{proof}
Taking into account \eqref{splitting J}, we analyze the part corresponding to each kernel $J_j$ separately. With a slight abuse of notation, we write
\begin{equation} \label{E.JjwDependece}
\begin{aligned}
    & J_1(s-\varsigma, s+i\be;\, w) := \frac{\big(\tilde{d}(s-\varsigma,s+i\beta;\, w)\big)^\perp}{\big|\tilde{d}(s-\varsigma,s+i\beta;\, w)\big|_{\C}^2}\phi(s-\varsigma)\,,\\
    & J_2(s+i\beta,\varsigma+i\beta;\, w) := \left(\frac{\big(d_{l,\ell}^{\eps}(s+i\beta,\varsigma+i\beta;\, w)\big)^\perp}{\big|d_{l,\ell}^{\eps}(s+i\beta,\varsigma+i\beta;\, w)\big|_{\C}^2}-\frac{\big(\tilde{d}(s-\varsigma,s+i\beta;\, w)\big)^\perp}{\big|\tilde{d}(s-\varsigma,s+i\beta;\, w)\big|_{\C}^2}\right)\phi(s-\varsigma)\,,\\
    & J_3(s+i\beta,\varsigma+i\beta;\, w):= \frac{\big(d_{l,\ell}^{\eps}(s+i\beta,\varsigma+i\beta;\, w)\big)^\perp}{\big|d_{l,\ell}^{\eps}(s+i\beta,\varsigma+i\beta;\, w)\big|_{\C}^2}(1-\phi(s-\varsigma))\,,
\end{aligned}
\end{equation}
to emphasize the dependence on $w^1$ and $w^2$. Also, throughout the proof we assume that $|\be| \leq \rho \leq \rho_0$.

We first analyze the part corresponding to $J_1$. To that end, we set
$$
a_j = a_j(s+i\be) := \digamma'(s+i\be;\, w^j)\,, \quad \textup{and} \quad b_j = b_j(s+i\be) = \ze(s+i\be;\, w^j)\,,
$$
and point out that
$$
\begin{aligned}
& J_1(x, s+i\be;\, w^1) - J_1(x, s+i\be;\, w^2) = \frac{(a_1 x+ \ep b_1)^{\perp}}{|a_1 x+ \ep b_1|_\C^2} - \frac{(a_2 x+ \ep b_2)^{\perp}}{|a_2 x+ \ep b_2|_\C^2} \\
& \ = (a_1-a_2)^{\perp} \frac{x}{|xa_1+\ep b_1|_\C^2} + (b_1-b_2)^{\perp} \frac{\ep}{|xa_1+\ep b_1|_\C^2} \\
    & \quad  + (x a_2+\ep b_2)^{\perp} \left( \frac{1}{|xa_1+\ep b_1|_\C^2} - \frac{1}{|xa_2+\ep b_1|_\C^2}\right)  +  (x a_2+\ep b_2)^{\perp} \left(  \frac{1}{|xa_2+\ep b_1|_\C^2} - \frac{1}{|xa_2+\ep b_2|_\C^2} \right).
\end{aligned}
$$
Moreover, it follows that
\begin{equation} \label{E.a1minusa2}
|a_1-a_2| \lesssim \|(w_1^1-w_1^2)[\ell,\cdot+i\be]\|_{L^{\infty}(\T)} +  \|(w_2^1-w_2^2)[\ell,\cdot+i\be]\|_{L^{\infty}(\T)} \lesssim \|w^1-w^2\|_{\cY^4_\rho},
\end{equation}
and that
\begin{equation}\label{E.b1,inusb2}
|b_1-b_2| \lesssim |l-\ell| \sup_{|l| \leq 1} \|(w_3^1-w_3^2)[l,\cdot+i\be]\|_{L^{\infty}(\T)} \lesssim |l-\ell|\, \|w^1-w^2\|_{\cY^4_\rho}.
\end{equation}
Then, arguing exactly as we did in \eqref{E.a1a2diff}-\eqref{E.conclussionI2J1}, we conclude that
\begin{equation} \label{E.J1LinftyLipschitz}
\begin{aligned}
    \sup_{l,\ell \in [-1,1]} & \left\{ \sup_{|\be| \leq \rho} \norm{ \int_\T \big( J_1(\cdot-\varsigma,\cdot+i\be;\, w^1) - J_1(\cdot-\varsigma,\cdot+i\be;\, w^2) \big) w_4^2[\ell,\varsigma+i\be] \dd \varsigma}_{L^{\infty}(\T)} \right\} \\[0.3cm]
    & \lesssim \|w^1-w^2\|_{\cY^4_\rho}  \, \sup_{|l|\leq 1} \|w_4^2[l,\cdot]\|_\rho  \lesssim \|w^1-w^2\|_{\cY^4_\rho}.
\end{aligned}
\end{equation}

Next, to analyze the part corresponding to $J_2$, we rely on Lemma \ref{Lem double diff}. First of all, we set
\begin{align}
    & p = p(\varsigma,s+i\be;\, w^1) := \tilde{d}(s-\varsigma, s+i\be;\, w^1)\,,\\
    & q = q(\varsigma, s+i\be;\, w^1) := d_{l,\ell}^\ep(s+i\be, \varsigma+i\be;\, w^1)\,,\\
    & r = r(\varsigma,s+i\be;\, w^2) :=  \tilde{d}(s-\varsigma, s+i\be;\, w^2)\,, \\
    & t = t((\varsigma,s+i\be;\, w^2) := d_{l,\ell}^\ep(s+i\be, \varsigma+i\be;\, w^2)\,,
\end{align}
and observe that
\begin{align}
    J_2(s-\varsigma, \varsigma+i\be;\, w^1)- J_2(s-\varsigma, \varsigma+i\be;\, w^2) = - \left( \frac{p^{\perp}}{|p|_\C^2} - \frac{q^{\perp}}{|q|_\C^2} - \frac{r^{\perp}}{|r|_\C^2} + \frac{t^{\perp}}{|t|_\C^2} \right) \phi(s-\varsigma)\,.
\end{align}
Having this expression at hand, we argue as in the proof of Lemma \ref{L.J2}. First, observe that
$$
|p-q-r+t| \lesssim |s-\varsigma|^{\frac32} \|w^1-w^2\|_{\cY^4_\rho}\,.
$$
Then, by \eqref{bd d low} (see also the proof of \eqref{E.Digamma2}), we get that
\begin{align} \label{E.J2Lipschitz1}
    \left| \int_{\T} \frac{p^{\perp}-q^{\perp}-r^{\perp}+t^{\perp}}{|p|_\C^2} \, \phi(s-\varsigma) w_4^2[\ell,\varsigma+i\be] \dd \varsigma \right|  \lesssim \|w^1-w^2\|_{\cY^4_\rho}\,.
\end{align}
Likewise, we have that
\begin{align}
   \left| \frac{2p\scalar{p}{p-q+r-t}_\C}{|p|_\C^4} \right| \lesssim \frac{\|w^1-w^2\|_{\cY^4_\rho}}{|s-\varsigma|^{\frac12}}\,,
\end{align}
and so that 
\begin{align} \label{E.J2Lipschitz2}
    \left| \int_{\T}  \frac{2p\scalar{p}{p-q+r-t}_\C}{|p|_\C^4} \, \phi(s-\varsigma) w_4^2[\ell,\varsigma+i\be] \dd \varsigma \right|  \lesssim \|w^1-w^2\|_{\cY^4_\rho}\,.
\end{align}

On the other hand, we have that
$$
|p-q| + |r-t| \lesssim |s-\varsigma|^{\frac32}\,,
$$
and that
$$
|q-t|+|p-r| \lesssim \big(|s-\varsigma|+\ep |l-\ell| \big) \|w^1-w^2\|_{\cY^4_\rho}\,.
$$
Using then the shortened notation introduced in \eqref{E.Kernelpqrt}, we conclude that
\begin{align} \label{E.J2Lipschitz3}
    \left| \int_{\T} {\rm K}(p,q,r,t) \,\phi(s_1-\varsigma) w_4[\ell, \varsigma+i\be] \dd \varsigma \right| \lesssim \|w^1-w^2\|_{\cY^4_\rho}.
\end{align}
Combining \eqref{E.J2Lipschitz1}, \eqref{E.J2Lipschitz2} and \eqref{E.J2Lipschitz3} with Lemma \ref{Lem double diff}, we then obtain that
\begin{equation} \label{E.J2LinftyLipschitz}
\begin{aligned}
    \sup_{l,\ell \in [-1,1]} & \left\{ \sup_{|\be| \leq \rho} \norm{ \int_\T \big( J_2(\cdot-\varsigma,\cdot+i\be;\, w^1) - J_2(\cdot-\varsigma,\cdot+i\be;\, w^2) \big) w_4^2[\ell,\varsigma+i\be] \dd \varsigma}_{L^{\infty}(\T)} \right\} \\[0.3cm]
    &   \lesssim \|w^1-w^2\|_{\cY^4_\rho}.
\end{aligned}
\end{equation}
Finally, arguing as in the proof of Lemma \ref{L.J3}, it straightforward to check that
\begin{equation} \label{E.J3LinftyLipschitz}
\begin{aligned}
    \sup_{l,\ell \in [-1,1]} & \left\{ \sup_{|\be| \leq \rho} \norm{ \int_\T \big( J_3(\cdot-\varsigma,\cdot+i\be;\, w^1) - J_3(\cdot-\varsigma,\cdot+i\be;\, w^2) \big) w_4^2[\ell,\varsigma+i\be] \dd \varsigma}_{L^{\infty}(\T)} \right\} \\[0.3cm]
    &   \lesssim \|w^1-w^2\|_{\cY^4_\rho}.
\end{aligned}
\end{equation}
The result follows combining \eqref{E.J1LinftyLipschitz}, \eqref{E.J2LinftyLipschitz} and \eqref{E.J3LinftyLipschitz}.
\end{proof}

We are missing the bound of the corresponding H\"older seminorm. Using the notation introduced in \eqref{E.JjwDependece}, we first prove the following:

\begin{lemma} \label{L.J1LipschitzHolder}
    Let $C_2 > 0$ be as in Lemma \ref{bds d}. If $w \in \cY^4_\rho$ satisfies \eqref{ass small} with $\cC_2 \leq C_2$, \eqref{E.Zrho1} and \eqref{E.Zrho2}, then
\begin{equation} \label{E.J1HolderLipschitz}
\begin{aligned}
    \sup_{l,\ell \in [-1,1]} & \left\{ \sup_{|\be| \leq \rho} \bigg[ \int_\T \big( J_1(\cdot-\varsigma,\cdot+i\be;\, w^1) - J_1(\cdot-\varsigma,\cdot+i\be;\, w^2) \big) w_4^2[\ell,\varsigma+i\be] \dd \varsigma\bigg]_{C^{\frac12}(\T)} \right\} \\[0.3cm]
    & \lesssim \|w^1-w^2\|_{\cY^4_\rho}.
\end{aligned}
\end{equation}
\end{lemma}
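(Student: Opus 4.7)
The strategy is to combine the kernel decomposition used in the proof of Lemma~\ref{L.LipschitzLinfty} with the $C^{\frac12}$-splitting technique used in the proof of Lemma~\ref{L.J1}. Writing $a_j = a_j(s+i\beta) := \digamma'(s+i\beta;\,w^j)$ and $b_j = b_j(s+i\beta) := \zeta(s+i\beta;\,w^j)$, we will reuse the four-term decomposition
\[
J_1(x, s+i\beta;\,w^1) - J_1(x, s+i\beta;\,w^2) = T_1(x;\,s+i\beta) + T_2(x;\,s+i\beta) + T_3(x;\,s+i\beta) + T_4(x;\,s+i\beta),
\]
where $T_1, T_2$ have scalar prefactors $(a_1-a_2)^\perp$ and $\ep(b_1-b_2)^\perp$ respectively, and $T_3, T_4$ are the telescoping differences of $1/|xa+\ep b|_\C^2$ in the first and second arguments. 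Crucially, the prefactors depend on~$s+i\beta$ but not on~$\varsigma$, so each integral against $w_4^2[\ell,\varsigma+i\beta]$ factors as a product of a $(s+i\beta)$-dependent prefactor $P_k$ and a convolution $F_k$ whose scalar kernel is of the form covered by Lemma~\ref{SingInt}.

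Denoting $G_k(s) := P_k(s+i\beta)\cdot F_k(s; a_1(s+i\beta), b_1(s+i\beta), a_2(s+i\beta), b_2(s+i\beta))$, the first task is to control $|G_k(s_1)-G_k(s_2)|$ by $|s_1-s_2|^{\frac12}\|w^1-w^2\|_{\cY^4_\rho}$. I would telescope as
\[
G_k(s_1) - G_k(s_2) = \bigl[P_k(s_1+i\beta) - P_k(s_2+i\beta)\bigr] F_k(s_1;\dots) + P_k(s_2+i\beta)\bigl[F_k(s_1;\dots_{s_1}) - F_k(s_2;\dots_{s_2})\bigr],
\]
and further split the second bracket into a shift in the convolution variable (at fixed parameters) and a variation of the parameters $a_j, b_j$ themselves. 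The first split term is controlled by the fact that $\digamma', \zeta \in \cX_\rho$ (so $a_j, b_j$ are $C^{\frac12}$ in $s$, with the difference $a_1-a_2$ satisfying \eqref{E.a1minusa2} and the analogous $C^{\frac12}$-bound inherited from $\|w^1-w^2\|_{\cY^4_\rho}$), combined with the $L^\infty$-boundedness of $F_k$ supplied by Lemma~\ref{SingInt}. The shift part is exactly the scenario of Lemma~\ref{SingInt}, giving the $|s_1-s_2|^{\frac12}$-factor, and the parameter variation is handled via the derivative bounds \eqref{E.convolutionDerivativeHolder} of Lemma~\ref{SingInt}, applied with $k_1, k_2 \leq 1$, together with the H\"older regularity of $a_j, b_j$ in~$s$.

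The main technical obstacle will be the bookkeeping in terms $T_3$ and $T_4$: these do not have a prefactor linear in $w^1-w^2$, so the smallness must be extracted from the kernel itself by telescoping $1/|xa_1+\ep b_1|_\C^2 - 1/|xa_2+\ep b_1|_\C^2 = \int_0^1 \partial_\tau \bigl(1/|xa_\tau+\ep b_1|_\C^2\bigr)d\tau$ with $a_\tau := (1-\tau)a_2+\tau a_1$, producing an extra factor of $x(a_1-a_2)$ inside, which absorbs one power of $x$ while contributing $\|w^1-w^2\|_{\cY^4_\rho}$. One must then verify that the resulting kernels are of the form $x^m\ep^r/|xa_\tau+\ep b|_\C^{m+r+1}$ required by Lemma~\ref{SingInt} with valid parameters, and that the hypotheses \eqref{arg comp modulus}--\eqref{kinda ort} persist under the interpolation $a_\tau$. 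This is granted by the smallness assumption~\eqref{ass small} with $\cC_2 \leq C_2$, which ensures $a_\tau$ stays in the convex regime where Lemma~\ref{L.ab} applies. Collecting the four estimates yields \eqref{E.J1HolderLipschitz}.
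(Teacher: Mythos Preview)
Your proposal is correct and complete, but it organises the argument differently from the paper.

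The paper attacks the second-order difference
\[
J_1(s_1-\varsigma,s_1;w^1)-J_1(s_1-\varsigma,s_1;w^2)-J_1(s_2-\varsigma,s_2;w^1)+J_1(s_2-\varsigma,s_2;w^2)
\]
directly, telescoping first in the $a$-parameter and then in the $b$-parameter via auxiliary functions $g_a,g_b$ (plus a cutoff remainder $(\phi(s_1-\varsigma)-\phi(s_2-\varsigma))$ handled separately). The double differences of $g_a,g_b$ are then bounded in the style of Lemma~\ref{L.doubleDifference}: one estimates the ``mixed'' second difference $|a_1^1-a_1^2-a_2^1+a_2^2|\lesssim |s_1-s_2|^{1/2}\|w^1-w^2\|_{\cY^4_\rho}$ and feeds this, together with the first- and second-order parameter-derivative bounds \eqref{E.convolutionDerivativeHolder} of Lemma~\ref{SingInt}, into a Taylor-type inequality.

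You instead recycle the $L^\infty$ decomposition $T_1+\dots+T_4$ from Lemma~\ref{L.LipschitzLinfty}, write each contribution as (prefactor in $s$)$\times$(convolution in~$\varsigma$), and telescope each product in~$s$. This only uses first-order derivative bounds in Lemma~\ref{SingInt}, but for $T_3,T_4$ you must introduce the interpolation $a_\tau,b_\tau$ to manufacture the factor $a_1-a_2$ (resp.\ $b_1-b_2$), and then expand $(xa_2+\ep b_2)^\perp$ into several kernel terms of the form $x^m\ep^r/|xa_\tau+\ep b|_\C^{m+r+1}$. The paper's route is more economical (no $\tau$-integral, fewer terms), at the price of invoking second derivatives of the kernel; your route is more modular and reuses the $L^\infty$ machinery verbatim, at the price of more bookkeeping. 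Both rely on the same smallness via Lemma~\ref{L.ab} to keep the interpolated parameters in the admissible regime, and both are valid.
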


\begin{proof}
We start pointing out that there is no loss of generality in considering $s_1,s_2 \in \T$ with $|s_1-s_2| \leq \frac1{10}$. The case where $|s_1-s_2|\geq \frac1{10}$ immediately follows from Lemma \ref{L.LipschitzLinfty}. Also, we introduce the shortened notation
$$
a_j^h := a(s_j+i\be;\, w^j) = \digamma'(s_j+i\be;\, w^h)\,, \quad b_j^h := b(s_j + i \be;\, w^h) = \ze(s_j+i\be;\, w^h)\,, \quad j,\,h \in \{1,2\}\,,
$$
and observe that
\begin{equation} \label{E.splitJ1doubledifferences}
\begin{aligned}
    & J_1(s_1-\varsigma, s_1+i\be;\, w^1) - J_1(s_1-\varsigma, s_1 + i\be;\, w^2) \\
    & \qquad- J_1(s_2-\varsigma, s_2+i\be;\, w^1) + J_1(s_2-\varsigma, s_2 + i\be;\, w^2)\\
    & \quad = \bigg(\frac{((s_1-\varsigma)a_1^1+\ep b_1^1)^{\perp}}{|(s_1-\varsigma)a_1^1+\ep b_1^1|_\C^2} - \frac{((s_1-\varsigma)a_1^2+\ep b_1^2)^{\perp}}{|(s_1-\varsigma)a_1^2+\ep b_1^2|_\C^2} \bigg) \phi(s_1-\varsigma) \\
    & \qquad- \bigg( \frac{((s_2-\varsigma)a_2^1+\ep b_2^1)^{\perp}}{|(s_2-\varsigma)a_2^1+\ep b_2^1|_\C^2} - \frac{((s_2-\varsigma)a_2^2+\ep b_2^2)^{\perp}}{|(s_2-\varsigma)a_2^2+\ep b_2^2|_\C^2} \bigg) \phi(s_2-\varsigma) \\
    & \quad = \Big\{ \big( g_a(s_1-\varsigma, a_1^1) - g_a(s_1-\varsigma, a_1^2) - g_a(s_2-\varsigma, a_2^1) + g_a (s_2-\varsigma, a_2^2) \big) \\
    & \qquad + \big( g_b(s_1-\varsigma, b_1^1)  - g_b(s_1-\varsigma, b_1^2) - g_b(s_2-\varsigma, b_2^1) + g_b(s_2-\varsigma, b_2^2) \big) \Big\} \phi(s_1-\varsigma) \\
    & \qquad + \bigg\{ \frac{((s_2-\varsigma)a_2^1+\ep b_2^1)^{\perp}}{|(s_2-\varsigma)a_2^1+\ep b_2^1|_\C^2} - \frac{((s_2-\varsigma)a_2^2+\ep b_2^2)^{\perp}}{|(s_2-\varsigma)a_2^2+\ep b_2^2|_\C^2} \bigg\} \big( \phi(s_1-\varsigma) - \phi(s_2-\varsigma) \big)\,,
\end{aligned}
\end{equation}
with
$$
g_a(s_j-\varsigma, a_j^h) := \frac{((s_j-\varsigma)a_j^h+\ep b_j^1)^{\perp}}{|(s_j-\varsigma)a_j^h+\ep b_j^1|_\C^2} \quad \textup{and} \quad g_b(s_j-\varsigma, b_j^h) := \frac{((s_j-\varsigma)a_j^2+\ep b_j^h)^{\perp}}{|(s_j-\varsigma)a_j^2+\ep b_j^h|_\C^2}\,, \quad j,h \in \{1,2\}\,.
$$

Having  this decomposition at hand, we can estimate the convolution with the double differences in the first bracket using Lemmas \ref{SingInt} and \ref{L.ab}, and arguing as we did in the proof of Lemma \ref{L.geoEst}.  Through the rest of the proof we assume that $|\be| \leq \rho$. First, note that
\begin{align}
& |a_1^1-a_1^2-a_2^1+a_2^2|  = |\digamma'(s_1+i\be;\, w^1) - \digamma'(s_1+i\be;\, w^2) - \digamma'(s_2+i\be;\, w^1) + \digamma'(s_2+i\be; w^2)| \\
& \quad \lesssim |s_1-s_2|^{\frac12} \big( \big[(w_1^1-w_1^2)[\ell,\cdot + i\be]\big]_{C^{\frac12}(\T)} + \big[(w_2^1-w_2^2)[\ell,\cdot + i\be]\big]_{C^{\frac12}(\T)} \big) \\
& \qquad + |s_1-s_2|  \big( \norm{(w_1^1-w_1^2)[\ell,\cdot + i\be]}_{L^{\infty}(\T)} + \norm{(w_2^1-w_2^2)[\ell,\cdot + i\be]}_{L^{\infty}(\T)} \big) \\
& \quad \lesssim |s_1-s_2|^{\frac12} \|w^1-w^2\|_{\cY^4_\rho}\,.
\end{align}
Likewise, we have that (see \eqref{E.a1minusa2})
\begin{align}
    |a_1^1 - a_1^2| + |a_2^1-a_2^2| \lesssim \|w^1-w^2\|_{\cY^4_\rho},
\end{align}
and that
$$
|a_1^1 - a_2^1| + |a_1^2 - a_2^2| \lesssim  |s_1-s_2|^{\frac12}\,.
$$
Combining \eqref{E.convolutionDerivativeHolder} with Lemma \ref{L.ab} and these estimates, we conclude that 
\begin{align}
    & \left| \int_{\T}  \Big( g_a(s_1-\varsigma, a_1^1) - g_a(s_1-\varsigma, a_1^2) - g_a(s_2-\varsigma, a_2^1) + g_a (s_2-\varsigma, a_2^2) \Big)\phi(s_1-\varsigma)  w_4^2[\ell, \varsigma+i\be] \dd \varsigma \right| \\[0.3cm]
    & \qquad \lesssim |s_1-s_2|^{\frac12}\, \|w^1-w^2\|_{\cY^4_\rho}\,, \quad \textup{ for } |\be| \leq \rho\,.  \label{E.conclussionga}
    \end{align}

 On the other hand, we have that
\begin{align}
    & |b_1^1 - b_1^2 - b_2^1 + b_2^2| = |\zeta(s_1+i\be;\, w^1) - \zeta(s_1+i\be;\, w^2) - \zeta(s_2+i\be;\, w^1) + \ze(s_2+i\be;\, w^2)| \\
    & \quad = \left| \left(\int_l^{\ell} (w_3^1-w_3^2)[\mu,s_1+i\be] \dd \mu \right) \dot{\Ga}(s_1+i\be)^{\perp}  - \left( \int_l^\ell (w_3^1-w_3^2)[\mu,s_2+i\be] \dd \mu \right) \dot{\Ga}(s_2+i\be)^{\perp}  \right| \\
    & \quad \lesssim |s_1-s_2|^{\frac12} |l-\ell| \, \bigg( \sup_{|l| \leq 1} \big[(w_3^1-w_3^2)[l,\cdot+i\be]\big]_{C^{\frac12}(\T)} \\
    & \qquad +  |s_1-s_2|^{\frac12} \, \sup_{|l| \leq 1} \norm{(w_3^1-w_3^2)[l,\cdot+i\be]}_{L^{\infty}(\T)} \bigg) \\
    & \quad \lesssim |s_1-s_2|^{\frac12} |l-\ell| \|w^1-w^2\|_{\cY^4_\rho}.
\end{align}
Also, it follows that (see \eqref{E.b1,inusb2})
\begin{align}
    |b_1^1-b_1^2|+|b_2^1-b_2^2| \lesssim |l-\ell| \norm{w^1-w^2}_{\cY^4_\rho},
\end{align}
and that
$$
|b_1^1-b_2^1|+|b_1^2-b_2^2| \lesssim |l-\ell| |s_1-s_2|^{\frac12}.
$$
Combining \eqref{E.convolutionDerivativeHolder}, Lemma \ref{L.ab}, and the lower bound \eqref{E.lowerbll} with these estimates, we conclude that
\begin{align}
    & \left| \int_{\T}  \Big( g_b(s_1-\varsigma, b_1^1) - g_b(s_1-\varsigma, b_1^2) - g_b(s_2-\varsigma, b_2^1) + g_b (s_2-\varsigma, b_2^2) \Big) \phi(s_1-\varsigma) w_4^2[\ell, \varsigma+i\be] \dd \varsigma \right| \\[0.3cm]
    & \qquad \lesssim |s_1-s_2|^{\frac12}\, \|w^1-w^2\|_{\cY^4_\rho}\,, \quad \textup{ for } |\be| \leq \rho\,.  \label{E.conclussiongb}
    \end{align}

 Finally, to estimate the convolution with the remaining term in \eqref{E.splitJ1doubledifferences}, we point out that
\begin{align}
     & \left| \frac{((s_2-\varsigma)a_2^1+\ep b_2^1)^{\perp}}{|(s_2-\varsigma)a_2^1+\ep b_2^1|_\C^2} - \frac{((s_2-\varsigma)a_2^2+\ep b_2^2)^{\perp}}{|(s_2-\varsigma)a_2^2+\ep b_2^2|_\C^2} \right| \big| \phi(s_1-\varsigma) - \phi(s_2-\varsigma) \big| \\
     & \quad = \left| \frac{((s_2-\varsigma)a_2^1+\ep b_2^1)^{\perp}}{|(s_2-\varsigma)a_2^1+\ep b_2^1|_\C^2} - \frac{((s_2-\varsigma)a_2^2+\ep b_2^2)^{\perp}}{|(s_2-\varsigma)a_2^2+\ep b_2^2|_\C^2} \right|\left| \int_0^1 \phi'(\la(s_1-s_2) + s_2-\varsigma) \dd \la \right||s_1-s_2|\,.
\end{align}
Then, taking into account \eqref{E.supportPhiprime} and arguing as in the proof of \eqref{E.J1LinftyLipschitz}, one can see that

\begin{align}
    & \left| \int_{\T}  \bigg\{ \frac{((s_2-\varsigma)a_2^1+\ep b_2^1)^{\perp}}{|(s_2-\varsigma)a_2^1+\ep b_2^1|_\C^2} - \frac{((s_2-\varsigma)a_2^2+\ep b_2^2)^{\perp}}{|(s_2-\varsigma)a_2^2+\ep b_2^2|_\C^2} \bigg\} \big( \phi(s_1-\varsigma) - \phi(s_2-\varsigma) \big) w_4^2[\ell, \varsigma+i\be] \dd \varsigma \right| \\[0.3cm]
    & \qquad \lesssim |s_1-s_2|^{\frac12}\, \|w^1-w^2\|_{\cY^4_\rho}\,, \quad \textup{ for } |\be| \leq \rho\,. \label{E.conclussionLastBracket}
    \end{align}
 The result follows from \eqref{E.conclussionga}, \eqref{E.conclussiongb} and \eqref{E.conclussionLastBracket}.
\end{proof}

\begin{lemma} \label{L.J2LipschitzHolder}
    Let $C_2 > 0$ be as in Lemma \ref{bds d}. If $w \in \cY^4_\rho$ satisfies \eqref{ass small} with $\cC_2 \leq C_2$, \eqref{E.Zrho1} and \eqref{E.Zrho2}, then
\begin{equation} \label{E.J2HolderLipschitz}
\begin{aligned}
    \sup_{l,\ell \in [-1,1]} & \left\{ \sup_{|\be| \leq \rho} \bigg[ \int_\T \big( J_2(\cdot-\varsigma,\cdot+i\be;\, w^1) - J_2(\cdot-\varsigma,\cdot+i\be;\, w^2) \big) w_4^2[\ell,\varsigma+i\be] \dd \varsigma\bigg]_{C^{\frac12}(\T)} \right\} \\[0.3cm]
    & \lesssim \|w^1-w^2\|_{\cY^4_\rho}.
\end{aligned}
\end{equation}
\end{lemma}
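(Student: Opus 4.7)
The strategy is to parallel the proof of Lemma \ref{L.J2}, in which the Hölder seminorm in $s$ of the $J_2$-convolution was controlled, but now tracking an additional $w^1\leftrightarrow w^2$ difference throughout. As a first reduction, observe that when $|s_1-s_2|\geq \frac{1}{32}$ the estimate follows immediately from Lemma \ref{L.LipschitzLinfty} via the triangle inequality, so we may assume $|s_1-s_2|\leq \frac{1}{32}$ and split $\T$ into the near region $|\varsigma-s_1|\leq 3|s_1-s_2|$ and the far region $|\varsigma-s_1|\geq 3|s_1-s_2|$.

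For the near region, one extracts from the arguments of Lemmas \ref{L.J2} and \ref{L.LipschitzLinfty} the pointwise bounds
\[
|J_2(s+i\be,\varsigma+i\be;\,w^i)|\lesssim |s-\varsigma|^{-1/2}\mathds{1}_{|s-\varsigma|\le 1/4},
\]
and
\[
|J_2(s+i\be,\varsigma+i\be;\,w^1)-J_2(s+i\be,\varsigma+i\be;\,w^2)|\lesssim |s-\varsigma|^{-1/2}\|w^1-w^2\|_{\cY^4_\rho},
\]
with the latter obtained by redoing the Lemma \ref{Lem double diff} bookkeeping of Lemma \ref{L.LipschitzLinfty} before integrating in $\varsigma$. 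Applied to the four-term integrand (over $s_1,s_2$ and $w^1,w^2$) and integrated over the near region, these two estimates yield the desired $|s_1-s_2|^{1/2}\|w^1-w^2\|_{\cY^4_\rho}$ bound.

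For the far region, we set
\begin{align}
&p_i=\tilde{d}(s_1-\varsigma,s_1+i\be;\,w^i),\quad q_i=d^\ep_{l,\ell}(s_1+i\be,\varsigma+i\be;\,w^i),\\
&r_i=\tilde{d}(s_2-\varsigma,s_2+i\be;\,w^i),\quad t_i=d^\ep_{l,\ell}(s_2+i\be,\varsigma+i\be;\,w^i),
\end{align}
for $i\in\{1,2\}$, and apply Lemma \ref{Lem double diff} separately to $(p_1,q_1,r_1,t_1)$ and to $(p_2,q_2,r_2,t_2)$. Subtracting, the quadruple difference
\[
J_2(s_1+i\be,\varsigma+i\be;\,w^1)-J_2(s_2+i\be,\varsigma+i\be;\,w^1)-J_2(s_1+i\be,\varsigma+i\be;\,w^2)+J_2(s_2+i\be,\varsigma+i\be;\,w^2),
\]
up to $\phi(s_1-\varsigma)$ and a contribution from $\phi(s_1-\varsigma)-\phi(s_2-\varsigma)$ handled exactly as in Lemma \ref{L.J2}, decomposes into (a) the $w^1\!-\!w^2$ difference of the principal term $(p-q-r+t)^\perp/|p|_{\C}^2$, (b) the $w^1\!-\!w^2$ difference of the quadratic correction $2p^\perp\scalar{p}{p-q+r-t}_\C/|p|_{\C}^4$, and (c) the $w^1\!-\!w^2$ difference of the cubic remainder from \eqref{dd est}. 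Terms (a) and (b) are treated by Calderón--Zygmund theory: as in \eqref{1st sing} and \eqref{2nd sing}, after writing the numerators as Taylor remainders in $s$ and in $w$ (so that each $w^1\!-\!w^2$ gain carries a factor $\|w^1-w^2\|_{\cY^4_\rho}$, while each $s_1\to s_2$ gain still carries $|s_1-s_2|^{1/2}$), the resulting kernels are linear combinations of the Calderón--Zygmund kernels covered by Lemma \ref{SingInt}, and their derivatives with respect to the parameters $a,b$, so the desired $|s_1-s_2|^{1/2}\|w^1-w^2\|_{\cY^4_\rho}$ bound follows by \eqref{E.convolutionHolder}--\eqref{E.convolutionDerivativeHolder}. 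Term (c) is estimated directly, using Lemma \ref{bds d} together with the Lipschitz analogues
\[
|p_1-p_2|+|r_1-r_2|+|q_1-q_2|+|t_1-t_2|\lesssim |s_1-\varsigma|\,\|w^1-w^2\|_{\cY^4_\rho},
\]
and the bounds on $|p-q|+|r-t|$ and $|p-r|+|q-t|$ already recorded in \eqref{rt bd}--\eqref{qt bd}, which reduce the integral to a one-dimensional integral of the same type as \eqref{rhs bd}.

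The main obstacle is purely combinatorial: the quadruple difference in $(s,w)$ produces a proliferation of mixed terms to estimate, and each must be decomposed so that the $s$-smoothness of $\digamma,\zeta$ (via \eqref{direct est} and Lemma \ref{L.geoEst}) and the $w$-dependence (through $w_1,w_2,w_3$ entering $\digamma,\zeta$) yield the correct $|s_1-s_2|^{1/2}$ and $\|w^1-w^2\|_{\cY^4_\rho}$ gains simultaneously. All essential cancellations, however, are the same as in Lemma \ref{L.J2}: the symmetry of $\phi$, the fact that $\tilde{d}$ is the $s$-linearization of $d^\ep_{l,\ell}$ at $s=\varsigma$, and the applicability of Lemma \ref{SingInt} to the kernels arising from the two principal terms.
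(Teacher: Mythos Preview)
Your plan is correct and mirrors the paper's proof: the same near/far split, the same treatment of the near region via the pointwise $|s-\varsigma|^{-1/2}\|w^1-w^2\|_{\cY^4_\rho}$ bound, and the same handling of the far region by expanding the quadruple difference via the structure underlying Lemma~\ref{Lem double diff} applied at $(p_i,q_i,r_i,t_i)$ for $i=1,2$. One clarification: when you ``apply Lemma~\ref{Lem double diff} separately and subtract'', you must use the explicit identity~\eqref{E.pqrtComplete} from its proof rather than the inequality~\eqref{dd est}, since the remainder in~\eqref{dd est} is only an upper bound and its ``$w^1-w^2$ difference'' is not a priori meaningful; the paper does exactly this, writing out the nine-term decomposition~\eqref{decomp} and treating the first two terms via an interpolation $g(\lambda)=\frac{\lambda(\cdot)_1+(1-\lambda)(\cdot)_2}{|\lambda p_1+(1-\lambda)p_2|_\C^2}$ (your ``derivatives in $a,b$'') and the remaining seven by direct Lipschitz estimates as you outline.
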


\begin{proof}
We start pointing out that there is no loss of generality in considering $s_1, s_2 \in \T$ with $|s_1-s_2| \leq \frac1{32}$. The case where $|s_1-s_2| \geq \frac{1}{32}$ follows from Lemma \ref{L.LipschitzLinfty}. Now, observe that
\begin{equation} \label{E.J2LipschitzBeginning}
\begin{aligned}
    & \bigg| \int_\T \Big( J_2(s_1-\varsigma,s_1+i\be;\, w^1) - J_2(s_1-\varsigma,s_1+i\be;\, w^2) \\
    & \qquad- J_2(s_2-\varsigma,s_2+i\be;\, w^1) + J_2(s_2-\varsigma,s_2+i\be;\, w^2)  \Big)  w_4^2[\ell,\varsigma+i\be] \dd \varsigma \bigg| \\
    & \leq \bigg|\int_{|\varsigma-s_1| \leq 3|s_1-s_2|} \big( J_2(s_1-\varsigma,s_1+i\be;\, w^1) - J_2(s_1-\varsigma,s_1+i\be;\, w^2)\big) w_4^2[\ell,\varsigma+i\be] \dd \varsigma \bigg| \\
    & \quad + \bigg|\int_{|\varsigma-s_2| \leq 4|s_1-s_2|} \big( J_2(s_2-\varsigma,s_2+i\be;\, w^1) - J_2(s_2-\varsigma,s_2+i\be;\, w^2)\big) w_4^2[\ell,\varsigma+i\be] \dd \varsigma \bigg| \\
    & \quad + \bigg| \int_{|\varsigma-s_1| \geq 3|s_1-s_2|} \Big( J_2(s_1-\varsigma,s_1+i\be;\, w^1) - J_2(s_1-\varsigma,s_1+i\be;\, w^2) \\
    & \qquad- J_2(s_2-\varsigma,s_2+i\be;\, w^1) + J_2(s_2-\varsigma,s_2+i\be;\, w^2)  \Big)  w_4^2[\ell,\varsigma+i\be] \dd \varsigma \bigg|
\end{aligned}
\end{equation}
The first two terms can be handled as we did in Lemma \ref{L.LipschitzLinfty} to prove \eqref{E.J2LinftyLipschitz}, the main difference being the domain where we are integrating. This difference is precisely what makes appear the extra factor $|s_1-s_2|^{\frac12}$, and allows us to prove that
\begin{equation} \label{E.J2LipschitzLipschitz-1}
\begin{aligned}
     & \bigg|\int_{|\varsigma-s_1| \leq 3|s_1-s_2|} \big( J_2(s_1-\varsigma,s_1+i\be;\, w^1) - J_2(s_1-\varsigma,s_1+i\be;\, w^2)\big) w_4^2[\ell,\varsigma+i\be] \dd \varsigma \bigg| \\
    & \quad + \bigg|\int_{|\varsigma-s_2| \leq 4|s_1-s_2|} \big( J_2(s_2-\varsigma,s_2+i\be;\, w^1) - J_2(s_2-\varsigma,s_2+i\be;\, w^2)\big) w_4^2[\ell,\varsigma+i\be] \dd \varsigma \bigg| \\[0.3cm]
    & \lesssim |s_1-s_2|^{\frac12} \|w^1-w^2\|_{\cY^4_\rho}\,, \quad \textup{for }|\be| \leq \rho\,.
\end{aligned}
\end{equation}

Having  this estimate at hand, we are just missing to deal with the last term on the right-hand side of \eqref{E.J2LipschitzBeginning}. To that end, we first split the kernel there as \allowdisplaybreaks
\begin{align}
    & J_2(s_1-\varsigma,s_1+i\be;\, w^1) - J_2(s_1-\varsigma,s_1+i\be;\, w^2) - J_2(s_2-\varsigma,s_2+i\be;\, w^1) + J_2(s_2-\varsigma,s_2+i\be;\, w^2) \\[-0.7cm]
    & = \Bigg( \frac{\big(d_{l,\ell}^{\eps}(s_1+i\beta,\varsigma+i\beta;\, w^1)\big)^\perp}{\big|d_{l,\ell}^{\eps}(s_1+i\beta,\varsigma+i\beta;\, w^1)\big|_{\C}^2} - \frac{\big(\tilde{d}(s_1-\varsigma,s_1+i\beta;\, w^1)\big)^\perp}{\big|\tilde{d}(s_1-\varsigma,s_1+i\beta;\, w^1)\big|_{\C}^2} - \frac{\big(d_{l,\ell}^{\eps}(s_1+i\beta,\varsigma+i\beta;\, w^2)\big)^\perp}{\big|d_{l,\ell}^{\eps}(s_1+i\beta,\varsigma+i\beta;\, w^2)\big|_{\C}^2} \\
    & \quad + \frac{\big(\tilde{d}(s_1-\varsigma,s_1+i\beta;\, w^2)\big)^\perp}{\big|\tilde{d}(s_1-\varsigma,s_1+i\beta;\, w^2)\big|_{\C}^2} - \frac{\big(d_{l,\ell}^{\eps}(s_2+i\beta,\varsigma+i\beta;\, w^1)\big)^\perp}{\big|d_{l,\ell}^{\eps}(s_2+i\beta,\varsigma+i\beta;\, w^1)\big|_{\C}^2} + \frac{\big(\tilde{d}(s_2-\varsigma,s_2+i\beta;\, w^1)\big)^\perp}{\big|\tilde{d}(s_2-\varsigma,s_2+i\beta;\, w^1)\big|_{\C}^2} \\
    & \quad + \frac{\big(d_{l,\ell}^{\eps}(s_2+i\beta,\varsigma+i\beta;\, w^2)\big)^\perp}{\big|d_{l,\ell}^{\eps}(s_2+i\beta,\varsigma+i\beta;\, w^2)\big|_{\C}^2} - \frac{\big(\tilde{d}(s_2-\varsigma,s_2+i\beta;\, w^2)\big)^\perp}{\big|\tilde{d}(s_2-\varsigma,s_2+i\beta;\, w^2)\big|_{\C}^2} \Bigg) \phi(s_1- \varsigma) \\
    & \quad + \Bigg(\frac{\big(d_{l,\ell}^{\eps}(s_2+i\beta,\varsigma+i\beta;\, w^1)\big)^\perp}{\big|d_{l,\ell}^{\eps}(s_2+i\beta,\varsigma+i\beta;\, w^1)\big|_{\C}^2} - \frac{\big(\tilde{d}(s_2-\varsigma,s_2+i\beta;\, w^1)\big)^\perp}{\big|\tilde{d}(s_2-\varsigma,s_2+i\beta;\, w^1)\big|_{\C}^2} \\
    & \quad - \frac{\big(d_{l,\ell}^{\eps}(s_2+i\beta,\varsigma+i\beta;\, w^2)\big)^\perp}{\big|d_{l,\ell}^{\eps}(s_2+i\beta,\varsigma+i\beta;\, w^2)\big|_{\C}^2} + \frac{\big(\tilde{d}(s_2-\varsigma,s_2+i\beta;\, w^2)\big)^\perp}{\big|\tilde{d}(s_2-\varsigma,s_2+i\beta;\, w^2)\big|_{\C}^2} \Bigg) \big( \phi(s_1- \varsigma) - \phi(s_2-\varsigma) \big)\,.
\end{align}
The part corresponding to the second parenthesis can be analyzed using Lemma \ref{Lem double diff}. We set
\begin{equation} \label{E.p2q2r2t2}
\begin{aligned}
    & p = p(\varsigma,s_2+i\be;\, w^1) := \tilde{d}(s_2-\varsigma, s_2+i\be;\, w^1)\,,\\
    & q = q(\varsigma, s_2+i\be;\, w^1) := d_{l,\ell}^\ep(s_2+i\be, \varsigma+i\be;\, w^1)\,,\\
    & r = r(\varsigma,s_2+i\be;\, w^2) :=  \tilde{d}(s_2-\varsigma, s_2+i\be;\, w^2)\,, \\
    & t = t(\varsigma,s_2+i\be;\, w^2) := d_{l,\ell}^\ep(s_2+i\be, \varsigma+i\be;\, w^2)\,,
\end{aligned}
\end{equation}
and point out that
\begin{align}
    &\Bigg|\frac{\big(d_{l,\ell}^{\eps}(s_2+i\beta,\varsigma+i\beta;\, w^1)\big)^\perp}{\big|d_{l,\ell}^{\eps}(s_2+i\beta,\varsigma+i\beta;\, w^1)\big|_{\C}^2} - \frac{\big(\tilde{d}(s_2-\varsigma,s_2+i\beta;\, w^1)\big)^\perp}{\big|\tilde{d}(s_2-\varsigma,s_2+i\beta;\, w^1)\big|_{\C}^2} \\
    & \quad - \frac{\big(d_{l,\ell}^{\eps}(s_2+i\beta,\varsigma+i\beta;\, w^2)\big)^\perp}{\big|d_{l,\ell}^{\eps}(s_2+i\beta,\varsigma+i\beta;\, w^2)\big|_{\C}^2} + \frac{\big(\tilde{d}(s_2-\varsigma,s_2+i\beta;\, w^2)\big)^\perp}{\big|\tilde{d}(s_2-\varsigma,s_2+i\beta;\, w^2)\big|_{\C}^2} \Bigg| \big| \phi(s_1- \varsigma) - \phi(s_2-\varsigma) \big| \\
    & = \left| \frac{p^{\perp}}{|p|_\C^2} - \frac{q^{\perp}}{|q|_\C^2} - \frac{r^{\perp}}{|r|_\C^2} + \frac{t^{\perp}}{|t|_\C^2} \right| \big| \phi(s_1- \varsigma) - \phi(s_2-\varsigma) \big| \\
    & = \left| \frac{p^{\perp}}{|p|_\C^2} - \frac{q^{\perp}}{|q|_\C^2} - \frac{r^{\perp}}{|r|_\C^2} + \frac{t^{\perp}}{|t|_\C^2} \right| \left| \int_0^1 \phi'(\la(s_1-s_2) + s_2-\varsigma) \dd \la \right||s_1-s_2|
\end{align}
Taking into account \eqref{E.supportPhiprime}, one can argue as in the proof of \eqref{E.J2LinftyLipschitz} and conclude that
\begin{equation} \label{E.J2LipschitzLipschitz-2}
    \begin{aligned}
        & \left| \int_{|\varsigma-s_1|\geq 3|s_1-s_2|} \left(\frac{p^{\perp}}{|p|_\C^2} - \frac{q^{\perp}}{|q|_\C^2} - \frac{r^{\perp}}{|r|_\C^2} + \frac{t^{\perp}}{|t|_\C^2} \right) \big( \phi(s_1- \varsigma) - \phi(s_2-\varsigma) \big) w_4[\ell,\varsigma+i\be] \dd \varsigma \right| \\[0.3cm]
        & \qquad \lesssim |s_1-s_2|^{\frac12} \norm{w^1-w^2}_{\cY^4_\rho}, \quad \textup{for }|\be| \leq \rho\,.
    \end{aligned}
\end{equation}

To analyze the remaining term, we set
\begin{equation} \label{E.p1q1r1t1}
\begin{aligned}
    & p_1 := \tilde{d}(s_1-\varsigma, s_1+i\be;\, w^1)\,, \qquad && p_2 :=  \tilde{d}(s_1-\varsigma, s_1+i\be;\, w^2)\,,\\ 
    & q_1 := d_{l,\ell}^\ep(s_1+i\be, \varsigma+i\be;\, w^1)\,, \qquad && q_2 := d_{l,\ell}^\ep(s_1+i\be, \varsigma+i\be;\, w^2)\,,  \\
    & r_1 :=  \tilde{d}(s_2-\varsigma, s_2+i\be;\, w^1)\,, && r_2 := \tilde{d}(s_2-\varsigma, s_2+i\be;\, w^2)\,, \\
    & t_1 := d_{l,\ell}^\ep(s_2+i\be, \varsigma+i\be;\, w^1)\,, && t_2 := d_{l,\ell}^\ep(s_2+i\be, \varsigma+i\be;\, w^2)\,.
\end{aligned}
\end{equation}
The corresponding kernel can then be written as
\begin{equation}
    -\bigg( \frac{p_1^{\perp}}{|p_1|_\C^2} - \frac{q_1^{\perp}}{|q_1|_\C^2} - \frac{r_1^{\perp}}{|r_1|_\C^2} + \frac{t_1^{\perp}}{|t_1|_\C^2} - \frac{p_2^{\perp}}{|p_2|_\C^2} + \frac{q_2^{\perp}}{|q_2|_\C^2} + \frac{r_2^{\perp}}{|r_2|_\C^2} - \frac{t_2^{\perp}}{|t_2|_\C^2} \bigg) \phi(s_1-\varsigma)\,,
\end{equation}
and then, using \eqref{E.pqrtComplete} twice, we can rewrite this kernel in a more suitable way for our purposes. Indeed, note that
\allowdisplaybreaks
\begin{align}
& \frac{p_1}{|p_1|_{\C}^2}-\frac{q_1}{|q_1|_{\C}^2}-\frac{r_1}{|r_1|_{\C}^2}+\frac{t_1}{|t_1|_{\C}^2}-\frac{p_2}{|p_2|_{\C}^2}+\frac{q_2}{|q_2|_{\C}^2}+\frac{r_2}{|r_2|_{\C}^2}-\frac{t_2}{|t_2|_{\C}^2}\\
& \quad = \biggl(\frac{p_1-q_1-r_1+t_1}{|p_1|_{\C}^2}-\frac{p_2-q_2-r_2+t_2}{|p_2|_{\C}^2}\biggr)\\
&\qquad+ \biggl(\frac{2p_1\scalar{p_1}{q_1-p_1+r_1-t_1}_{\C}}{|p_1|_{\C}^4}-\frac{2p_2\scalar{p_2}{q_2-p_2+r_2-t_2}_{\C}}{|p_2|_{\C}^4}\biggr) \\  
    & \qquad + \left( (t_1-r_1) \frac{\scalar{p_1-r_1}{p_1+r_1}_{\C}}{|p_1|_{\C}^2|r_1|_{\C}^2}-(t_2-r_2) \frac{\scalar{p_2-r_2}{p_2+r_2}_{\C}}{|p_2|_{\C}^2|r_2|_{\C}^2}\right) \\
    &\qquad+ \biggl(2p_1 \frac{\scalar{q_1-p_1-t_1+r_1}{p_1}_\C \scalar{p_1-q_1}{p_1+q_1}_\C}{|p_1|_\C^4 |q_1|_\C^2}\\
    &\qquad\quad-2p_2 \frac{\scalar{q_2-p_2-t_2+r_2}{p_2}_\C \scalar{p_2-q_2}{p_2+q_2}_\C}{|p_2|_\C^4 |q_2|_\C^2}\biggr) \\
     & \qquad+ \left(q_1 \frac{\scalar{q_1-t_1-p_1+r_1}{q_1-p_1}_\C}{|p_1|^2_\C|q_1|^2_\C }-q_2 \frac{\scalar{q_2-t_2-p_2+r_2}{q_2-p_2}_\C}{|p_2|^2_\C|q_2|^2_\C}\right) \\
     &\qquad+ \left(2(q_1-p_1) \frac{\scalar{q_1-p_1-t_1+r_1}{p_1}_\C}{|p_1|^2_\C |q_1|^2_\C}- 2(q_2-p_2) \frac{\scalar{q_2-p_2-t_2+r_2}{p_2}_\C}{|p_2|^2_\C |q_2|^2_\C}\right)\\
    & \qquad -\biggl(q_1 \scalar{t_1-r_1}{q_1+p_1}_{\C} \left( \frac{1}{|r_1|^2_\C |t_1|^2_\C}  - \frac{1}{|p_1|^2_\C |q_1|^2_\C} \right)\\
    &\qquad\quad-q_2 \scalar{t_2-r_2}{q_2+p_2}_{\C} \left( \frac{1}{|r_2|^2_\C |t_2|^2_\C}  - \frac{1}{|p_2|^2_\C |q_2|^2_\C} \right) \biggr)\\
    &\qquad- \left(q_1 \frac{\scalar{t_1-r_1}{t_1-q_1+r_1-p_1}_\C }{|p_1|^2_\C |q_1|^2_\C}-q_2 \frac{\scalar{t_2-r_2}{t_2-q_2+r_2-p_2}_\C }{|p_2|^2_\C |q_2|^2_\C} \right)\\
    &\qquad+ \left((q_1-t_1) \frac{\scalar{t_1-r_1}{t_1+r_1}_\C}{|t_1|^2_\C |r_1|^2_\C}-(q_2-t_2) \frac{\scalar{t_2-r_2}{t_2+r_2}_\C}{|t_2|^2_\C |r_2|^2_\C}\right)\,. \label{decomp}
\end{align}
The parts corresponding to the first two terms on the right hand side need to be treated as singular integrals. We can estimate them arguing as we did in Lemma \ref{L.J2} to prove \eqref{1st sing} and \eqref{2nd sing}. On the other hand, the parts corresponding to the rest of the terms can be directly treated reasoning as we did in Lemma \ref{L.J2} to prove \eqref{rhs bd}. We show in detail how to deal with the parts corresponding to the first and third parenthesis on the right hand side. The corresponding part for the second one will then follow arguing as we are going to do with the first, and the rest will follow arguing as we are going to do with the third term.  

We first deal with the more difficult part corresponding to the first parenthesis on the right hand side of \eqref{decomp}. To that end, we introduce the auxiliary function 
$$
g: [0,1] \to \C^2\,, \quad \la \mapsto \frac{\la(p_1-q_1-r_1+t_1) + (1-\la)(p_2-q_2-r_2+t_2)}{|\la p_1 + (1-\la)p_2|_\C^2}\,,
$$
and point out that
$$
\frac{p_1-q_1-r_1+t_1}{|p_1|_{\C}^2}-\frac{p_2-q_2-r_2+t_2}{|p_2|_{\C}^2} = g(1) - g(0) = \int_0^1 g'(\la) \dd\la\,.
$$
Hence, it follows that
\begin{align}
    & \frac{p_1-q_1-r_1+t_1}{|p_1|_{\C}^2}-\frac{p_2-q_2-r_2+t_2}{|p_2|_{\C}^2} = \int_0^1 \frac{p_1-q_1-r_1+t_1-p_2+q_2+r_2-t_2}{|\la p_1+(1-\la)p_2|_{\C}^2} \dd \la \\
    & \quad - \int_0^1 \frac{\la(p_1-q_1-r_1+t_1) + (1-\la)(p_2-q_2-r_2+t_2)}{|\la p_1 + (1-\la)p_2|_{\C}^4} \Big(2\la|p_1-p_2|_\C^2+ \scalar{p_2}{p_1-p_2}_\C \Big) \dd \la\,.
\end{align}

We treat the part corresponding to each integral separately. First, setting
\begin{align}
& G(a+ib;\, w_1^j):= \digamma(a+ib;\, w_1^j) - \Gamma(a+ib) = w_1^j[\ell,a+ib] \dot{\Ga}(a+ib)^{\perp}\,, \quad j \in \{1,2\}\,, \\
& a_\la = a_\la(s_1+i\be) := \la \digamma'(s_1+i\be;\, w^1) + (1-\la) \digamma'(s_1+i\be;\, w^2)\,,\\
& b_\la = b_\la(s_1+i\be) := \la \ze(s_1+i\be;\, w^1) + (1-\la) \ze(s_1+i\be)\,,
\end{align}
we get that
\begin{align*}
 & \frac{p_1-q_1-r_1+t_1-p_2+q_2+r_2-t_2}{|\la p_1+(1-\la)p_2|_{\C}^2}  \\
 & \quad = - \frac{G(s_1+i\be;\, w_1^1-w_1^2)-G(s_2+i\be;\, w_1^1-w_1^2) - (s_1-s_2) G'(s_2+i\be; \, w_1^1-w_1^2)}{|(s_1-\varsigma)a_\la + \ep b_\la|_\C^2} \\
 & \qquad + \frac{(s_1-\varsigma)(G'(s_1+i\be;\, w_1^1-w_1^2)-G'(s_2+i\be;\, w_1^1-w_1^2))}{|(s_1-\varsigma)a_\la + \ep b_\la|_\C^2} \\
 & \quad=: {\rm K}_G^1 (s_1,s_2, \varsigma,\be;\, w_1^1-w_1^2) + {\rm K}_G^2 (s_1,s_2, \varsigma,\be;\, w_1^1-w_1^2)\,. 
\end{align*}
Here, $G'$ denotes the derivative of $G$ with respect to $s$. Then, arguing as in \eqref{E.Digamma1}, we get that
\begin{equation}
\begin{aligned}
  &\left|{\rm K}_G^1(s_1,s_2, \varsigma,\be;\, w_1^1-w_1^2)\right| \lesssim \frac{|s_1-s_2|^{\frac32}}{|s_1-\varsigma|^2} \|w^1 - w^2\|_{\cY^4_\rho}, \quad \textup{for }|\be| \leq \rho\,,
  \end{aligned}
\end{equation}
and so that
\begin{equation} 
\begin{aligned}
& \left| \int_{|\varsigma-s_1|\geq 3|s_1-s_2|} \left( \int_0^1{\rm K}_G^1(s_1,s_2, \varsigma,\be;\, w_1^1-w_1^2)  \dd\la \right) \phi(s_1-\varsigma) w_4[\ell, \varsigma+i\be] \dd \varsigma \right| \\
& \qquad \lesssim \|w^1 - w^2\|_{\cY^4_\rho} \norm{w_4[\ell,\cdot+i\beta]}_{L^{\infty}(\T)} |s_1-s_2|^{\frac32} \int_{|\varsigma-s_1|\geq 3|s_1-s_2|}  \frac{1}{|s_1-\varsigma|^{2}} \dd\varsigma \\
& \qquad \lesssim |s_1-s_2|^{\frac12} \norm{w^1 - w^2}_{\cY^4_\rho} \,, \quad \textup{for } |\be|\leq \rho\,.
\end{aligned}
\end{equation}
On the other hand, let us point out that
$$
|G'(s_1+i\be;\, w_1^1-w_1^2) - G'(s_2+i\be;\, w_1^1-w_1^2)| \lesssim |s_1-s_1|^{\frac12} \norm{w^1-w^2}_{\cY^4_\rho}\,.
$$
Arguing then as in the proof of \eqref{E.Digamma3} and using Fubini's theorem, we get that
\begin{equation} 
\begin{aligned}
    & \bigg| \int_{|\varsigma-s_1|\geq 3|s_1-s_2|} \bigg( \int_0^1  {\rm K}_G^2(s_1,s_2, \varsigma,\be;\, w_1^1-w_1^2)   \dd \la \bigg) \phi(s_1-\varsigma) w_4[\ell, \varsigma+i\be] \dd \varsigma  \bigg| \\
    & \quad \leq \int_0^1 \bigg| \int_{\T}  {\rm K}_G^2(s_1,s_2, \varsigma,\be;\, w_1^1-w_1^2) \phi(s_1-\varsigma) \mathds{1}_{|s_1-\varsigma| \geq 3|s_1-s_2|} w_4[\ell, \varsigma+i\be] \dd \varsigma \bigg| \dd \la  \\[0.3cm]
    & \quad \lesssim |s_1-s_2|^{\frac12} \norm{w^1-w^2}_{\cY^4_\rho}, \quad \textup{for }|\be| \leq \rho\,.
\end{aligned}
\end{equation}
Hence, it follows that
\begin{equation} \label{E.doubledouble1}
    \begin{aligned}
        &\bigg| \int_{|\varsigma-s_1|\geq 3|s_1-s_2|} \bigg( \int_0^1 \frac{p_1-q_1-r_1+t_1-p_2+q_2+r_2-t_2}{|\la p_1+(1-\la)p_2|_{\C}^2} \dd \la \bigg)  \phi(s_1-\varsigma) w_4[\ell, \varsigma+i\be] \dd \varsigma  \bigg| \\[0.3cm]
        &\qquad \lesssim |s_1-s_2|^{\frac12} \norm{w^1-w^2}_{\cY^4_\rho}\norm{w_4[\ell,\cdot+i\beta]}_{C^\frac{1}{2}(\T)}\,, \quad \textup{for }|\be| \leq \rho\,.
    \end{aligned}    
\end{equation}

Next, observe that
\begin{equation}
\begin{aligned} \label{E.p1p2Square}
& |p_1-p_2|^2_\C = (s_1-\varsigma)^2 \big( |(w_2^1-w_2^2)[\ell,s_1+i\be]|^2 +  |(w_1^1-w_1^2)[\ell,s_1+i\be]|^2 |\ddot{\Ga}(s_1+i\be)|^2_\C \big) \\
& \quad + \ep^2 \left( \int_l^\ell (w_3^2-w_3^1)[\mu,s_1+i\be] \dd \mu \right)^2\\
& \quad + 2 \ep (s_1-\varsigma) \left( \int_l^\ell (w_3^2-w_3^1)[\mu,s_1+i\be] \dd \mu \right) (w_2^1-w_2^2)[\ell,s_1+i\be]\,,
\end{aligned}
\end{equation}
and that
\begin{equation} \label{E.p2Scalarp1p2}
\begin{aligned} 
& \scalar{p^2}{p^1-p^2}_\C = (s_1-\varsigma)^2 \Big((w_1^1-w_1^2)[\ell,s_1+i\be] \scalar{\dot{\Ga}(s_1+i\be)}{\ddot{\Ga}(s_1+i\be)^{\perp}}_\C \\
& \quad + w_1^2[\ell,s_1+i\be] (w_1^1-w_1^2)[\ell,s_1+i\be]|\ddot{\Ga}(s_1+i\be)|^2_\C + w_2^2[\ell,s_1+i\be] (w_2^1-w_2^2)[\ell,s_1+i\be] \Big) \\
& \quad + \ep^2 \left( \int_l^\ell (w_3^1-w_3^2)[\mu, s_1+i\be] \dd \mu \right) \left( \int_l^\ell (1+w_3^2[\mu,s_1+i\be] ) \dd \mu \right) \\
& \quad + \ep (s_1-\varsigma) \bigg( w_2^2[\ell,s_1+i\be] \left( \int_l^\ell (w_3^2-w_3^1)[\mu, s_1+i\be] \dd \mu \right)\\
& \quad + (w_2^2-w_2^1)[\ell,s_1+i\be] \left( \int_l^\ell (1+w_3^2[\mu,s_1+i\be] ) \dd \mu \right) \bigg)\,.
\end{aligned}
\end{equation}
Also, let us point out that
\begin{align}
& \int_0^1 \frac{\la(p_1-q_1-r_1+t_1) + (1-\la)(p_2-q_2-r_2+t_2)}{|\la p_1 + (1-\la)p_2|_{\C}^4} \Big(2\la|p_1-p_2|_\C^2+ \scalar{p_2}{p_1-p_2}_\C \Big) \dd \la \\
& \quad =  \int_0^1 \frac{\la(p_1-q_1-r_1+t_1)}{|(s_1-\varsigma)a_\la + \ep b_\la|_{\C}^4} \Big(2\la|p_1-p_2|_\C^2+ \scalar{p_2}{p_1-p_2}_\C \Big) \dd \la  \\
& \qquad + \int_0^1 \frac{(1-\la)(p_2-q_2-r_2+t_2)}{|(s_1-\varsigma)a_\la + \ep b_\la|_{\C}^4} \Big(2\la|p_1-p_2|_\C^2+ \scalar{p_2}{p_1-p_2}_\C \Big) \dd \la\,.
\end{align}
Taking into account \eqref{E.p1p2Square} and \eqref{E.p2Scalarp1p2}, the part corresponding to each integral above can be estimated arguing as in the proofs of \eqref{2nd sing} and \eqref{E.doubledouble1}. The key here is that
$$
|b_\la| \geq \frac12|l-\ell|\,, \quad \textup{for all } \la \in [0,1]\,.
$$
This can be proved by arguing exactly as in the proof of \eqref{E.lowerbll}. Once we have this estimate, we can systematically use Lemma \ref{SingInt} and get that
\begin{equation} \label{E.doubledouble2}
    \begin{aligned}
         &\bigg| \int_{|\varsigma-s_1|\geq 3|s_1-s_2|} \bigg( \int_0^1\frac{\la(p_1-q_1-r_1+t_1) + (1-\la)(p_2-q_2-r_2+t_2)}{|\la p_1 + (1-\la)p_2|_{\C}^4} \\
         & \hspace{3cm} \times \Big(2\la|p_1-p_2|_\C^2+ \scalar{p_2}{p_1-p_2}_\C \Big)  \dd \la \bigg)  \phi(s_1-\varsigma) w_4[\ell, \varsigma+i\be] \dd \varsigma  \bigg| \\[0.3cm]
        &\qquad \lesssim |s_1-s_2|^{\frac12} \norm{w^1-w^2}_{\cY^4_\rho}, \quad \textup{for }|\be| \leq \rho\,.
    \end{aligned}
\end{equation}
Then, combining \eqref{E.doubledouble1} and \eqref{E.doubledouble2} we conclude that
\begin{equation}
    \begin{aligned}
        & \bigg| \int_{|\varsigma-s_1|\geq 3|s_1-s_2|} \bigg(\frac{p_1-q_1-r_1+t_1}{|p_1|_{\C}^2}-\frac{p_2-q_2-r_2+t_2}{|p_2|_{\C}^2}\bigg) \phi(s_1-\varsigma)w_4[\ell, \varsigma+i\be] \dd \varsigma  \bigg| \\[0.3cm]
        &\qquad \lesssim |s_1-s_2|^{\frac12} \norm{w^1-w^2}_{\cY^4_\rho}, \quad \textup{for }|\be| \leq \rho\,.
    \end{aligned}
\end{equation}

We now deal with part corresponding to the third term on \eqref{decomp}. By direct computations, 
\begin{align}
    & (t_1-r_1) \frac{\scalar{p_1-r_1}{p_1+r_1}_{\C}}{|p_1|_{\C}^2|r_1|_{\C}^2}-(t_2-r_2) \frac{\scalar{p_2-r_2}{p_2+r_2}_{\C}}{|p_2|_{\C}^2|r_2|_{\C}^2} \\
    & \quad = ((t_1-t_2)-(r_1-r_2)) \frac{\scalar{p_1-r_1}{p_1+r_1}_\C}{|p_1|^2_\C |r_1|^2_\C} \\
    & \qquad + (t_2-r_2)  \scalar{p_1-r_1}{p_1+r_1}_\C \left( \frac{1}{|p_1|_\C^2 |r_1|^2_\C} - \frac{1}{|p_2|_\C^2 |r_2|^2_\C} \right) \\
    & \qquad + \frac{t_2-r_2}{|p_2|^2_\C |r_2|^2_\C} \Big( \scalar{p_1-r_1}{p_1+r_1}_\C - \scalar{p_2-r_2}{p_2+r_2}_\C \Big) \\
    & \quad = ((t_1-t_2)-(r_1-r_2)) \frac{\scalar{p_1-r_1}{p_1+r_1}_\C}{|p_1|^2_\C |r_1|^2_\C} \\
    & \qquad + (t_2-r_2) \scalar{p_1-r_1}{p_1+r_1}_\C \left( \frac{\scalar{p_2-p_1}{p_2+p_1}_\C}{|p_1|^2_\C|r_1|^2_\C|p_2|^2_\C} +  \frac{\scalar{r_2-r_1}{r_2+r_1}_\C}{|r_1|^2_\C|p_2|^2_\C|r_2|^2_\C} \right) \\
    & \qquad + \frac{t_2-r_2}{|p_2|^2_\C |r_2|^2_\C} \Big( \scalar{p_1-p_2}{p_1+p_2}_\C + \scalar{r_2-r_1}{r_2+r_1}_\C \Big)\,. \label{E.doubledoublereminder}
\end{align}
Then, arguing as in the proof of \eqref{bd diff}, we get that
$$
|t_1-t_2-r_1+r_2| \lesssim |s_2 - \varsigma|^{\frac32} \norm{w^1-w^2}_{\cY^4_\rho}\,.
$$
Likewise, it follows that
$$
|p_2-p_1| \lesssim \norm{w^1-w^2}_{\cY^4_\rho} (|s_1-\varsigma| + \ep |l-\ell| )\,, \quad |r_2-r_1| \lesssim \norm{w^1-w^2}_{\cY^4_\rho} (|s_2-\varsigma| + \ep |l-\ell| )\,.
$$
Moreover, observe that $|s_1-\varsigma| \geq 3|s_1-s_1|$ implies that
$$
\frac23 |s_1-\varsigma| \leq |s_2-\varsigma| \leq \frac43 |s_1-\varsigma|\,.
$$
Thus, we can estimate the right hand side on \eqref{E.doubledoublereminder} as 
\begin{align}
    & \bigg|((t_1-t_2)-(r_1-r_2)) \frac{\scalar{p_1-r_1}{p_1+r_1}_\C}{|p_1|^2_\C |r_1|^2_\C} \\
    & \quad + (t_2-r_2) \scalar{p_1-r_1}{p_1+r_1}_\C \left( \frac{\scalar{p_2-p_1}{p_2+p_1}_\C}{|p_1|^2_\C|r_1|^2_\C|p_2|^2_\C} +  \frac{\scalar{r_2-r_1}{r_2+r_1}_\C}{|r_1|^2_\C|p_2|^2_\C|r_2|^2_\C} \right) \\
    & \quad + \frac{t_2-r_2}{|p_2|^2_\C |r_2|^2_\C} \Big( \scalar{p_1-p_2}{p_1+p_2}_\C + \scalar{r_2-r_1}{r_2+r_1}_\C \Big) \bigg| \\
    &  \lesssim \frac{|s_1-\varsigma||s_1-s_2|^{\frac12} + |s_1-s_2|}{|s_1-\varsigma|^{\frac32}} \norm{w^1-w^2}_{\cY^4_\rho}\,,
\end{align}
and arguing exactly as in \eqref{rhs bd}, we conclude that
\begin{equation}
    \begin{aligned}
        & \bigg| \int_{|\varsigma-s_1|\geq 3|s_1-s_2|} \bigg((t_1-r_1) \frac{\scalar{p_1-r_1}{p_1+r_1}_{\C}}{|p_1|_{\C}^2|r_1|_{\C}^2} \\
        & \hspace{3cm}-(t_2-r_2) \frac{\scalar{p_2-r_2}{p_2+r_2}_{\C}}{|p_2|_{\C}^2|r_2|_{\C}^2}\bigg) \phi(s_1-\varsigma)w_4[\ell, \varsigma+i\be] \dd \varsigma  \bigg| \\[0.3cm]
        &\quad \lesssim |s_1-s_2|^{\frac12} \norm{w^1-w^2}_{\cY^4_\rho}, \quad \textup{for }|\be| \leq \rho\,.
    \end{aligned}
\end{equation}

The rest of the terms on the right hand side of \eqref{decomp} can be handled arguing in a similar way. We conclude that
\begin{equation} \label{E.J2LipschitzLipschitz-3}
    \begin{aligned}
        & \bigg| \int_{|\varsigma-s_1|\geq 3|s_1-s_2|} \bigg( \frac{p_1^{\perp}}{|p_1|_\C^2} - \frac{q_1^{\perp}}{|q_1|_\C^2} - \frac{r_1^{\perp}}{|r_1|_\C^2} + \frac{t_1^{\perp}}{|t_1|_\C^2} \\
        & \hspace{3cm}- \frac{p_2^{\perp}}{|p_2|_\C^2} + \frac{q_2^{\perp}}{|q_2|_\C^2} + \frac{r_2^{\perp}}{|r_2|_\C^2} - \frac{t_2^{\perp}}{|t_2|_\C^2} \bigg)  \phi(s_1-\varsigma)w_4[\ell, \varsigma+i\be] \dd \varsigma  \bigg| \\[0.3cm]
        &\quad \lesssim |s_1-s_2|^{\frac12} \norm{w^1-w^2}_{\cY^4_\rho}, \quad \textup{for }|\be| \leq \rho\,.
    \end{aligned}
\end{equation}
Taking into account \eqref{E.J2LipschitzBeginning}, the result follows combining \eqref{E.J2LipschitzLipschitz-1}, \eqref{E.J2LipschitzLipschitz-2} and \eqref{E.J2LipschitzLipschitz-3}.
\end{proof}

\begin{lemma} \label{L.J3LipschitzHolder}
    Let $C_2 > 0$ be as in Lemma \ref{bds d}. If $w \in \cY^4_\rho$ satisfies \eqref{ass small} with $\cC_2 \leq C_2$, \eqref{E.Zrho1} and \eqref{E.Zrho2}, then
\begin{equation} \label{E.J3HolderLipschitz}
\begin{aligned}
    \sup_{l,\ell \in [-1,1]} & \left\{ \sup_{|\be| \leq \rho} \bigg[ \int_\T \big( J_3(\cdot-\varsigma,\cdot+i\be;\, w^1) - J_3(\cdot-\varsigma,\cdot+i\be;\, w^2) \big) w_4^2[\ell,\varsigma+i\be] \dd \varsigma\bigg]_{C^{\frac12}(\T)} \right\} \\[0.3cm]
    & \lesssim \|w^1-w^2\|_{\cY^4_\rho}.
\end{aligned}
\end{equation}
\end{lemma}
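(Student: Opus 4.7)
The plan is to exploit that $J_3$ is supported in the off-diagonal region $|s-\varsigma|\geq \tfrac{1}{8}$, where Lemma~\ref{bds d}(c) yields the uniform lower bound
\[
\min\Big\{\Re |d_{l,\ell}^\eps(s+i\beta,\varsigma+i\beta;w^j)|_\C^2\,,\,\Re|\tilde{d}(s-\varsigma,s+i\beta;w^j)|_\C^2\Big\}\gtrsim 1\,,
\]
for both $j=1,2$, uniformly in $|\beta|\leq \rho$. Together with the upper bounds \eqref{direct est} on $\digamma$, $\digamma'$, $\zeta$, this means $J_3(s+i\beta,\varsigma+i\beta;w^j)$ is a \emph{smooth}, uniformly bounded kernel whose parameter-derivatives are also uniformly bounded on its support. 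Thus the entire analysis reduces to direct calculus, with no singular-integral subtleties.

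First, I would write the difference kernel via the algebraic identity
\[
\frac{p^\perp}{|p|_\C^2}-\frac{q^\perp}{|q|_\C^2}=\frac{(p-q)^\perp}{|p|_\C^2}+\frac{q^\perp\,\scalar{q-p}{p+q}_\C}{|p|_\C^2|q|_\C^2}\,,
\]
with $p=d_{l,\ell}^\eps(s+i\beta,\varsigma+i\beta;w^1)$ and $q=d_{l,\ell}^\eps(s+i\beta,\varsigma+i\beta;w^2)$. Using the explicit expressions \eqref{E.digamma}--\eqref{E.zeta} for $\digamma$ and $\zeta$, one has
\[
|p-q|+|p+q|\cdot\text{(bounded factors)}\ \lesssim\ \sup_{|l|\leq 1}\Big(\|(w_1^1-w_1^2)[l,\cdot]\|_\rho+\|(w_3^1-w_3^2)[l,\cdot]\|_\rho\Big)\leq \|w^1-w^2\|_{\cY^4_\rho}\,,
\]
so that pointwise
\[
\sup_{|\beta|\leq \rho}\,|J_3(s+i\beta,\varsigma+i\beta;w^1)-J_3(s+i\beta,\varsigma+i\beta;w^2)|\ \lesssim\ \|w^1-w^2\|_{\cY^4_\rho}\,\mathds{1}_{|s-\varsigma|\geq 1/8}\,.
\]

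Next, I would show the same estimate for the $s$-derivative of this difference kernel. Since differentiating $J_3$ with respect to $s$ only produces further polynomial expressions in $\digamma,\digamma',\digamma'',\zeta,\zeta'$ divided by integer powers of $|d_{l,\ell}^\eps|_\C^2\gtrsim 1$, each such derivative is again bounded linearly in $\|w^1-w^2\|_{\cY^4_\rho}$. Consequently, the integral
\[
I(w)(s+i\beta):=\int_\T J_3(s+i\beta,\varsigma+i\beta;w)\,w_4^2[\ell,\varsigma+i\beta]\,\dd\varsigma
\]
satisfies
\[
\sup_{|\beta|\leq \rho}\Big(\|I(w^1)-I(w^2)\|_{L^\infty(\T)}+\|\partial_s(I(w^1)-I(w^2))\|_{L^\infty(\T)}\Big)\lesssim \|w^1-w^2\|_{\cY^4_\rho}\,\sup_{|l|\leq 1}\|w_4^2[l,\cdot]\|_\rho\,,
\]
which trivially implies the claimed $C^{1/2}(\T)$ bound since Lipschitz continuity in $s$ is stronger than $C^{1/2}$ continuity on the compact torus.

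I do not expect any real obstacle here: the lower bound on the denominator on $\supp(1-\phi)$ turns everything into elementary calculus, and the proof is considerably shorter than those of Lemmas~\ref{L.J1LipschitzHolder} and~\ref{L.J2LipschitzHolder}. The only bookkeeping step worth care is verifying that the extra factor $\mathds{1}_{|s-\varsigma|\geq 1/8}$ does not interfere with the differentiation in~$s$, which is immediate once one notes that the cutoff $1-\phi(s-\varsigma)$ is itself a fixed smooth function on $\T$ with bounded derivatives.
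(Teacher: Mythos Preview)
Your proposal is correct and is essentially an elaboration of the paper's one-line proof (``immediately follows from the chain rule and the lower bound''): both arguments rely on the fact that on $\supp(1-\phi)$ one has $|d_{l,\ell}^\eps|_\C^2\gtrsim 1$, so $J_3$ is a smooth function of $d$ and a single $s$-derivative gives Lipschitz control, hence $C^{1/2}$. One small inaccuracy: differentiating $J_3$ once in $s$ produces only $\digamma,\digamma',\eps\zeta,\eps\zeta'$ (via $\partial_s d=\digamma'(s)+\eps\zeta'(s)$), \emph{not} $\digamma''$; this matters because $\digamma''$ would involve $\partial_s w_2$, which is not controlled in $\cX_\rho$. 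Since $\digamma''$ does not actually appear, your argument goes through unchanged.
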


\begin{proof}
The result immediately follows from the chain rule and the lower bound in \eqref{L.J3}. 
\end{proof}

At this point, we can finally conclude the proof of Proposition \ref{main est}. 

\begin{proof}[Proof of \eqref{E.KLipschitz}]
 Taking into account \eqref{E.holomorphicKepsilon} and \eqref{splitting J}, \eqref{E.KLipschitz} immediately follows from the decomposition in \eqref{E.decompKLipschitz} and Lemmas \ref{L.LipschitzTrivial}, \ref{L.LipschitzLinfty}, \ref{L.J1LipschitzHolder}, \ref{L.J2LipschitzHolder} and \ref{L.J3LipschitzHolder}.
\end{proof}

\section{Convergence estimates}\label{Sec 7}



This section is devoted to prove Proposition \ref{Prop K conv} and Lemma \ref{lem K0}. We start by proving Lemma \ref{lem K0}. To that end, we start with the following preliminary result:

\begin{lemma}\label{lem lsc}
Let $\rho'\in (0,\rho)$. Then the embedding $\cX_{\rho}\hookrightarrow \cX_{\rho'}$ is compact. Furthermore, the $\cX_\rho$--norm is lower semicontinuous with respect to convergence in the distributional sense. 
\end{lemma}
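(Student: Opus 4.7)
The proof has two parts; the first (compactness) is a standard consequence of Cauchy's integral formula combined with Arzel\`a--Ascoli, while the second (lower semicontinuity) requires an additional Fourier-coefficient argument to reach the boundary of the strip.

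\smallskip

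\noindent\emph{Compactness of $\cX_\rho\hookrightarrow\cX_{\rho'}$.} Given $(f_n)\subset\cX_\rho$ with $\|f_n\|_\rho\leq M$, fix an intermediate $\rho''\in(\rho',\rho)$. Cauchy's integral formula on circles of radius $(\rho-\rho'')/2$ contained in $\T_\rho$ gives $\|\partial_s^k f_n\|_{L^\infty(\T_{\rho''})}\lesssim_k M(\rho-\rho'')^{-k}$ for every $k$, so $\{f_n\}$ is uniformly equicontinuous on the compact set $\T_{\rho''}\subset\C$. Arzel\`a--Ascoli yields a subsequence converging uniformly on $\T_{\rho''}$ to some continuous $f$; Weierstrass' theorem then gives holomorphy of $f$ on the interior, and real-valuedness on $\T$ places $f\in\cX_{\rho'}$. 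To upgrade uniform convergence to convergence in the $\cX_{\rho'}$-norm, apply the elementary interpolation $\|g\|_{C^{1/2}(\T)}\leq \|g\|_\infty+C\|g\|_\infty^{1/2}\|g'\|_\infty^{1/2}$ to $g=(f_n-f)(\cdot+i\beta)$ uniformly in $|\beta|\leq\rho'$: the $L^\infty$-factor vanishes by uniform convergence on $\T_{\rho''}$, while $\|g'\|_\infty$ stays uniformly bounded by a second application of Cauchy's formula with radius $(\rho-\rho')/2$ around each point of $\T_{\rho'}$.

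\smallskip

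\noindent\emph{Lower semicontinuity.} Assume $f_n\to f$ distributionally on $\T$ and set $L:=\liminf_n\|f_n\|_\rho$; after extraction, $\|f_n\|_\rho\to L$, which we may assume is finite. By a diagonal argument based on the compactness part, some subsequence converges in $\cX_{\rho'}$ for every $\rho'<\rho$; uniqueness of distributional limits identifies each of these limits with $f$, and analytic continuation glues them into a single holomorphic $\tilde f$ on the open strip $|\beta|<\rho$ with $\|\tilde f\|_{\rho'}\leq L$ for every $\rho'<\rho$. Expanding $f_n=\sum_{m\in\Z} c_m^n\,e^{2\pi imz}$ and deforming the integration contour in $c_m^n=\int_0^1 f_n(s+i\beta)\,e^{-2\pi im(s+i\beta)}\,\dd s$ onto $\beta=-\sgn(m)\rho$, the bound $\|f_n\|_{L^\infty(\T_\rho)}\leq L$ yields $|c_m^n|\leq L\,e^{-2\pi|m|\rho}$. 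Distributional convergence gives $c_m^n\to c_m$, hence $|c_m|\leq L\,e^{-2\pi|m|\rho}$, which forces $\tilde f=\sum_m c_m e^{2\pi imz}$ to extend holomorphically to an open neighborhood of $\T_\rho$. For the boundary $C^{1/2}$-bound, compact embedding $C^{1/2}(\T)\hookrightarrow C^0(\T)$ applied to the bounded sequences $f_n(\cdot\pm i\rho)$ produces (along a subsequence) $C^0$-limits $g_\pm\in C^{1/2}(\T)$ with $\|g_\pm\|_{C^{1/2}}\leq L$ by lower semicontinuity of the H\"older seminorm; matching Fourier coefficients of $g_\pm$ with $c_m e^{\mp 2\pi m\rho}=\widehat{\tilde f(\cdot\pm i\rho)}(m)$ identifies $g_\pm$ with the boundary traces of $\tilde f$, yielding $\|\tilde f\|_\rho\leq L$ and $\tilde f\in\cX_\rho$.

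\smallskip

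\noindent\emph{Main obstacle.} The delicate step is the boundary extension in the LSC argument: establishing that the holomorphic limit $\tilde f$ produced by the Fourier series genuinely has boundary slices in $C^{1/2}(\T)$ with norm $\leq L$, rather than merely lying in $\bigcap_{\rho'<\rho}\cX_{\rho'}$. The identification $g_\pm=\tilde f(\cdot\pm i\rho)$ is achieved by combining the Fourier-coefficient bound $|c_m|\leq Le^{-2\pi|m|\rho}$ with a dominated-convergence argument on Fourier series and the uniform $C^{1/2}$ control inherited from the $f_n$.
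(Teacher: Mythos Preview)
Your compactness argument matches the paper's (Cauchy estimates on a slightly smaller strip plus precompactness in $C^k$). For the lower semicontinuity the two approaches diverge. The paper assumes at the outset that the distributional limit $u$ already lies in $\cX_\rho$ and then argues in one line:
\[
\|u\|_\rho=\sup_{\rho'<\rho}\|u\|_{\rho'}=\sup_{\rho'<\rho}\lim_{n}\|u_n\|_{\rho'}\leq\liminf_n\sup_{\rho'<\rho}\|u_n\|_{\rho'}=\liminf_n\|u_n\|_\rho,
\]
using continuity of $u$ on the closed strip for the first equality and the compactness part (which upgrades distributional convergence to $\cX_{\rho'}$-convergence) for the second. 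Your route is more elaborate but aims at a stronger conclusion: you do not assume $f\in\cX_\rho$ and instead construct the extension to the closed strip.

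One step in your argument is incorrect, though: the Fourier decay $|c_m|\leq Le^{-2\pi|m|\rho}$ only guarantees convergence of $\sum_m c_m e^{2\pi imz}$ on the \emph{open} strip $|\operatorname{Im}z|<\rho$, not on an open neighborhood of the closed strip $\T_\rho$ as you assert. Fortunately this overclaim is not used downstream, since your boundary extraction via the compact embedding $C^{1/2}(\T)\hookrightarrow C^0(\T)$ is independent of it and is the right way to produce the boundary slices. The identification is also slightly circular as written, because ``$\widehat{\tilde f(\cdot\pm i\rho)}$'' is precisely what you are trying to define; cleaner is to compute $\widehat{g_\pm}(m)=\lim_n c_m^n e^{\mp2\pi m\rho}=c_m e^{\mp2\pi m\rho}$ directly, and then note that the family $\{\tilde f(\cdot+i\beta)\}_{|\beta|<\rho}$ is bounded in $C^{1/2}(\T)$, hence precompact in $C^0(\T)$, with every $C^0$-cluster point as $\beta\to\pm\rho$ having Fourier coefficients $c_m e^{\mp2\pi m\rho}$ and thus equal to $g_\pm$.
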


\begin{proof}
By Cauchy's integral theorem, it holds that \begin{align}
    \norm{u}_{C^k(\T_{\rho'})}\lesssim \norm{u}_{\rho},
\end{align}
for any $k\geq 0$.
Hence, by the compactness of the embedding $C^k(\T_{\rho'})\hookrightarrow C^\frac{1}{2}(\T_{\rho'})$ for all $k \geq 1$, any bounded sequence must have a convergent subsequence. The limit of this subsequence must satisfy the Cauchy--Riemann equations distributionally. It is therefore holomorphic, and it belongs to $\cX_{\rho'}$.

To prove the lower semicontinuity, we take a sequence $(u_n)_n \subset \cX_{\rho}$ converging in the distributional sense to $u \in \cX_\rho$. Then, it follows that \begin{align}
\norm{u}_{\rho}=\sup_{\rho'<\rho} \norm{u}_{\rho'}=\sup_{\rho'<\rho}\lim_{n\rightarrow \infty} \norm{u_n}_{\rho'}\leq \lim_{n\rightarrow \infty} \sup_{\rho'<\rho}\norm{u_n}_{\rho'}=\lim_{n\rightarrow \infty} \norm{u_n}_{\rho},
\end{align}
where we have used the compactness in the second step.
\end{proof}

For later purposes, we also set the notation
\begin{align} 
&K_0^{\ell_1,\ell_2}[w](s):= K_0\big[w_1[\ell_1,\cdot],w_{4}[\ell_2,\cdot]\big](s)\,,\\
&K_\eps^{\ell,l}[w](s):=K_\eps\left[w_1[\ell,\cdot],w_2[\ell,\cdot],\int_l^\ell(1+w_3[\mu,\cdot])\dd\mu,w_4[\ell,\cdot]\right](s)\,.
\end{align}
Moreover, through the rest of this section, we assume that $w \in \cZ_\rho$ and that it satisfies 
\begin{align} \label{E.smallness7}
\sum_{j=1}^3 \sup_{|l|\leq 1} \norm{w_j[l,\cdot]}_\rho < \mathcal{C}_0 \quad \textup{and} \quad \sup_{|l| \leq 1} \norm{w_4[l,\cdot]}_\rho \leq \mathcal{C}_3 < \infty\,.
\end{align}
Note that the constants $\mathcal{C}_0$ and $\mathcal{C}_3$ were chosen in Section \ref{S.proofMain}, with $\mathcal{C}_0$ being sufficiently small. Likewise, we assume that $0 < \ep \ll 1$. Also, let us stress that, for simplicity, here and during the rest of the section, we omit the dependence on $t$. 

Finally, we set 
\begin{align}
  \overline{e}_{\bs}^{\ell}(s):=\dot{\Gamma}(s)+w_2[\ell,s] \dot{\Ga}(s)^{\perp} + w_1[\ell,s] \ddot{\Ga}(s)^{\perp} \quad\text{and}\quad\overline{e}_{\bn}^\ell(s,t)=  \overline{e}_{\bs}^\ell(s)^{\perp}\,,
\end{align}
for the tangent and normal of $\gamma_{\eps,\ell}(s)$. 

The proof of both results relies on the following two lemmas, whose proofs we postpone.

\begin{lemma}\label{conv lem 2}
Let $l, \ell \in [-1,1]$ with $l \neq \ell$. Then, for all $0 \leq \rho < \rho_0$, it holds that
\begin{align}
&\lim_{\eps \to 0^+}\norm{\bar{e}_{\bn}^\ell(\cdot)\cdot(K_0^{\ell,\ell}[w]-K_\eps^{\ell,l}[w])}_{L^\infty(\T_\rho)}= 0\,,\label{conv 1}\\
&\lim_{\eps \to 0^+}\norm{\bar{e}_{\bs}^\ell(\cdot)\cdot\left(K_0^{\ell,\ell}[w]-K_\eps^{\ell,l}[w]\right)+\frac{1}{2}\sign(l-\ell)w_4[\ell,\cdot]}_{L^\infty(\T_\rho)}= 0\,.\label{conv 2}
\end{align}
\end{lemma}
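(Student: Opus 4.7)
The idea is to isolate the singular contribution to $K_\eps^{\ell,l}[w]-K_0^{\ell,\ell}[w]$ coming from $\varsigma$ close to $0$ and to compute its leading behaviour as $\eps\to0^+$. The starting observation is the identity
\[
\Xi^\eps(s,\varsigma)=\Xi^0_\ell(s,\varsigma)+Y(s),\qquad Y(s):=\eps\int_\ell^l(1+w_3[\mu,s])\dd\mu\ \dot\Gamma(s)^\perp,
\]
where $\Xi^0_\ell(s,\varsigma):=\gamma_{\eps,\ell}(s)-\gamma_{\eps,\ell}(s+\varsigma)$ is the standard Birkhoff--Rott displacement and $Y$ is \emph{independent} of $\varsigma$, of size $O(\eps|l-\ell|)$. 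Using the elementary algebraic identity
\[
\frac{(X+Y)^\perp}{|X+Y|_\C^2}-\frac{X^\perp}{|X|_\C^2}=\frac{Y^\perp}{|X+Y|_\C^2}-X^\perp\,\frac{2\scalar{X}{Y}_\C+|Y|_\C^2}{|X|_\C^2\,|X+Y|_\C^2},
\]
the integrand of $K_\eps^{\ell,l}[w]-K_0^{\ell,\ell}[w]$ splits into two pieces that can be handled separately.

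I would fix a small $\delta>0$ and split the $\varsigma$-integration over $\T$ into the local region $\{|\varsigma|\leq\delta\}$ and the far region $\{|\varsigma|>\delta\}$. The far region contributes $O(\eps)$ in $L^\infty(\T_\rho)$ because Lemma~\ref{bds d}~c) provides the uniform lower bound $\Re|\Xi^0_\ell|_\C^2\gtrsim\delta^2$ there, so the kernel difference is pointwise $O(\eps)$. For the local contribution I would plug in the leading-order expansion $\Xi^0_\ell(s,\varsigma)=-\varsigma\,\bar{e}_\bs^\ell(s)+O(\varsigma^2)$, perform the rescaling $\varsigma=\eps\tau$, and replace $w_4[\ell,s+\varsigma]$ by $w_4[\ell,s]$ modulo an $O(\eps^{1/2})$ error inherited from the $C^{1/2}$-regularity of $w_4$. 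Each of the resulting rational integrals in $\tau$ over $\R$ is then evaluated explicitly by completing the square: the basic building block is
\[
\int_{\R}\frac{d\tau}{|\bar{e}_\bs^\ell|_\C^2\,\tau^2-2\tau\alpha w_2+\alpha^2}=\frac{\pi}{|\alpha|\,(1-w_1\kappa_0)},\qquad \alpha:=\int_\ell^l(1+w_3[\mu,s])\dd\mu,
\]
with $\kappa_0$ the signed curvature of $\Gamma$, together with a partial-fraction analog for the piece carrying an extra factor $1/\varsigma$.

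Assembling these limits and projecting, the normal component $\bar{e}_\bn^\ell\cdot$ vanishes thanks to an exact cancellation between the two pieces, which relies on the geometric identities $\bar{e}_\bn^\ell\cdot\dot\Gamma=-w_2$ and $|\bar{e}_\bn^\ell|_\C^2=|\bar{e}_\bs^\ell|_\C^2$. The tangential component $\bar{e}_\bs^\ell\cdot$ instead survives and, after using $\bar{e}_\bs^\ell\cdot\dot\Gamma=1-w_1\kappa_0$ and $\bar{e}_\bs^\ell\cdot\bar{e}_\bn^\ell=0$, produces precisely the asserted $\tfrac12\sgn(l-\ell)w_4[\ell,s]$ jump, with the sign fixed by the orientation convention that $\dot\Gamma^\perp$ points inside $\Gamma$. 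The main difficulty lies in controlling the error terms \emph{uniformly} on the complexified strip $\T_\rho$: the higher-order Taylor remainder in $\Xi^0_\ell$, the replacement $w_4[\ell,s+\varsigma]\rightsquigarrow w_4[\ell,s]$, and the extension of the rescaling argument to complex $s+i\beta$ must all be estimated with the singular-integral machinery of Section~\ref{S.Kernel} (notably analogs of Lemmas~\ref{SingInt} and~\ref{bds d}), after which the pointwise bounds can be upgraded to $L^\infty(\T_\rho)$-convergence via Lemma~\ref{lem lsc}.
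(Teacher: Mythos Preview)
Your approach is a genuine alternative to the paper's, but it is considerably more laborious. The paper does \emph{not} compute the kernel difference directly. Instead it observes that for real $s$ one has
\[
K_\eps^{\ell,l}[w](s)=\nabla^\perp\Psi\bigl(\digamma(s)+\eps\zeta(s)\bigr),\qquad
\Psi(x):=\frac{1}{2\pi}\int_{\T}\log|x-\digamma(\varsigma)|\,w_4[\ell,\varsigma]\,\dd\varsigma,
\]
i.e.\ $K_\eps^{\ell,l}$ is the rotated gradient of a single-layer potential evaluated slightly off the curve $\mathbf{C}=\digamma(\T)$. Since $\Psi$ is harmonic off $\mathbf{C}$ and continuous across it, the tangential derivative $\digamma'(s)\cdot\nabla\Psi$ is continuous up to $\mathbf{C}$, which immediately gives \eqref{conv 1}. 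For \eqref{conv 2} the paper simply invokes the classical jump relations for the inner/outer normal derivatives of the single-layer potential, $\partial_n^\pm\Psi(\digamma(s))=\pm\tfrac12 w_4[\ell,s]/|\digamma'(s)|-\digamma'(s)\cdot K_0^{\ell,\ell}[w](s)/|\digamma'(s)|$, identifying the side via the sign of $l-\ell$ and the chosen orientation of $\Gamma$. No rescaling, no explicit rational integrals, no error-term bookkeeping.

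Your direct route---algebraic splitting of the kernel, near/far decomposition, rescaling $\varsigma=\eps\tau$, explicit evaluation of the model integrals, then cancellation via the geometric identities $\bar e_{\bn}^\ell\cdot\dot\Gamma=-w_2$ and $\bar e_{\bs}^\ell\cdot\dot\Gamma^\perp=w_2$---is essentially a by-hand derivation of those very jump formulas, and would succeed if carried out carefully. Two small caveats: first, when you split the kernel into the two pieces, the piece $X^\perp(2\scalar{X}{Y}_\C+|Y|_\C^2)/(|X|_\C^2|X+Y|_\C^2)$ still carries the $|X|_\C^{-2}$ singularity at $\varsigma=0$, so you must either keep both pieces together or interpret that piece as a principal value plus an explicit boundary contribution; second, your closing sentence misuses Lemma~\ref{lem lsc}: that lemma gives compactness of $\cX_\rho\hookrightarrow\cX_{\rho'}$ and lower semicontinuity, which together with the uniform $\cX_\rho$-bounds from Proposition~\ref{main est} upgrades real-$s$ convergence to convergence in $\cX_{\rho'}$ for $\rho'<\rho$ (this is exactly how the paper passes from Lemma~\ref{conv lem 2} to Proposition~\ref{pro conv}), but it does not by itself ``upgrade pointwise bounds to $L^\infty(\T_\rho)$.'' In short: your plan is sound as a direct computation, but the paper's potential-theoretic shortcut replaces all of it with a two-line appeal to classical layer-potential theory.
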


\begin{lemma}\label{norm deri lem}
There exists a constant $\cC_4 \in (0,\mathcal{C}_ 0]$, depending only on $\Ga$, such that, for all $l, \ell, \ell_1 \in [-1,1]$ with $l \neq \ell$, and every $0 < \rho < \rho_0$, if $w \in \cZ_\rho$ satisfies 
$$
\sum_{j=1}^3 \sup_{|l|\leq 1} \norm{w_j[l,\cdot]}_\rho < \cC_4 \quad \textup{and} \quad \cC_5:= \sup_{|l|\leq 1} \norm{w_4[l,\cdot]}_\rho < \infty\,,
$$
then, for all $0 \leq \rho' < \rho$, it follows that \begin{align}
\norm{\de_\eps K_\eps^{\ell,l}[w]}_{{\rho'}}\lesssim A(\rho-\rho') \,. \label{bd deri eps}
\end{align}
Here, $A:\R^+\rightarrow \R^+$ is some positive, decreasing function and the implicit constant depends on $\mathcal{C}_5$. 
\end{lemma}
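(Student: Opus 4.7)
The plan is to use a complex contour-shift argument in the $\varsigma$-variable. First, differentiating under the integral sign and noting that only the term $-\varepsilon g(z)\dot\Gamma(z)^\perp$ in $\Xi_\varepsilon$ depends on $\varepsilon$, where $g(z):=\int_l^\ell(1+w_3[\mu,z])\dd\mu$, and writing $v(z):=-g(z)\dot\Gamma(z)^\perp$, one obtains
\begin{equation*}
\partial_\varepsilon K_\varepsilon^{\ell,l}[w](z) \,=\, \frac{1}{2\pi}\int_\T\!\Bigl(\frac{v(z)^\perp}{|\Xi_\varepsilon|_\C^2} - \frac{2\Xi_\varepsilon^\perp\langle\Xi_\varepsilon, v(z)\rangle_\C}{|\Xi_\varepsilon|_\C^4}\Bigr)\, w_4[\ell, z+\varsigma]\,\dd\varsigma.
\end{equation*}
The naive pointwise estimate using only $|\Xi_\varepsilon|_\C^2\gtrsim (s-\varsigma)^2+\varepsilon^2(l-\ell)^2$ from Lemma \ref{bds d} would yield the unacceptable bound $O(1/\varepsilon)$: each of the two bracketed terms is of that size, but they cancel to leading order, and the proof must expose this cancellation.

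The key is the complex factorization $|\Xi_\varepsilon|_\C^2=(\Xi_1+i\Xi_2)(\Xi_1-i\Xi_2)=Z\,Z^*$, where $Z:=\Xi_1+i\Xi_2$ and $Z^*:=\Xi_1-i\Xi_2$ are $\C$-valued and holomorphic in $\varsigma$. A direct calculation gives
\begin{equation*}
Z^*(s,\varsigma) = \phi(s)-\phi(s+\varsigma)+i\varepsilon g(s)(\Gamma^*)'(s),\quad Z(s,\varsigma) = \psi(s)-\psi(s+\varsigma)-i\varepsilon g(s) Z_\Gamma'(s),
\end{equation*}
with $\Gamma^*:=\Gamma_1-i\Gamma_2$, $Z_\Gamma:=\Gamma_1+i\Gamma_2$, $\phi:=\Gamma^*-iw_1[\ell,\cdot](\Gamma^*)'$ and $\psi:=Z_\Gamma+iw_1[\ell,\cdot]Z_\Gamma'$. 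Moreover, the vector identity $\Xi^\perp/|\Xi|_\C^2=(i,1)/(2Z^*)+(-i,1)/(2Z)$ splits $\partial_\varepsilon K_\varepsilon^{\ell,l}[w]$ into a $1/(Z^*)^2$-piece and a $1/Z^2$-piece. For $w$ small enough (i.e.\ taking $\cC_4$ sufficiently small), a short computation shows that the unique zero of $Z^*$ in a neighbourhood of the origin satisfies $\sgn(\Im\varsigma^*)=\sgn(\ell-l)$, while the unique zero of $Z$ satisfies $\sgn(\Im\varsigma^\sharp)=-\sgn(\ell-l)$; the two poles therefore lie on opposite sides of the real axis.

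For $z=s+i\beta\in\T_{\rho'}$, I would then shift each contour away from its own pole: in the $(Z^*)^{-2}$ integral replace $\T$ by $\T-i\sigma\,\sgn(\ell-l)$, and in the $Z^{-2}$ integral replace $\T$ by $\T+i\sigma\,\sgn(\ell-l)$, with $\sigma:=(\rho-\rho')/2$. By Cauchy's theorem no residue is crossed, and the analyticity of $w_4$ on $\T_\rho$ (together with $|\beta|\leq\rho'$) guarantees that $w_4[\ell, z+\varsigma\pm i\sigma\sgn(\ell-l)]$ remains well-defined on the shifted contours. An expansion near $\varsigma=0$ shows that the shift-contribution and the $\varepsilon g(s)$-contribution to $\Im Z^*$ (resp.\ $\Im Z$) add constructively, so that
\begin{equation*}
|Z^*(s,\varsigma+i\sigma^*)|\,,\; |Z(s,\varsigma+i\sigma^\sharp)|\;\gtrsim\;\sigma+\varepsilon|g(s)|\;\geq\;\tfrac{\rho-\rho'}{2}
\end{equation*}
uniformly in $\varepsilon\in(0,1)$ and in $\varsigma\in\T$; away from $\varsigma=0$ the same lower bound follows from the non-self-intersection condition \eqref{tec no self int}. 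This yields the pointwise bound $|\partial_\varepsilon K_\varepsilon^{\ell,l}[w](z)|\lesssim \cC_5(\rho-\rho')^{-2}$, and an intermediate-radius Cauchy estimate (applied with an auxiliary radius between $\rho'$ and $\rho$) upgrades it to the $\cX_{\rho'}$-bound $\|\partial_\varepsilon K_\varepsilon^{\ell,l}[w]\|_{\rho'}\lesssim A(\rho-\rho')$ with, say, $A(t)=C t^{-3}$.

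The hard part is verifying the uniform lower bounds on $|Z^*|$ and $|Z|$ on the respective shifted contours. The leading-order expansion at $\varsigma=0$ reads $Z^*(s,-i\sigma\sgn(\ell-l))\approx (\Gamma^*)'(s)\bigl[i(\varepsilon|g(s)|+\sigma)-\sigma w_2[\ell,s]\bigr]$, so the cross-term $\sigma w_2[\ell,s]$—which is of size $\cC_4\,\sigma$—must be dominated by the imaginary part $\varepsilon|g|+\sigma\gtrsim \rho-\rho'$; this forces the smallness threshold $\cC_4\leq\cC_0$ to be chosen small enough, and is precisely where the sharp sign choice for the shift is used. The quadratic remainders from the Taylor expansions of $\phi$ and $\psi$ are absorbed by straightforward estimates on the shifted contour, while the analysis of the $Z^{-2}$-piece is entirely symmetric.
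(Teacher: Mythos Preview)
Your contour-shift approach is genuinely different from the paper's and, once one loose end is tied up, yields a valid proof. The paper proceeds via potential theory: it writes $K_\varepsilon^{\ell,l}[w](s)=\nabla^\perp\Psi(\digamma(s)+\varepsilon\zeta(s))$ for the single-layer potential $\Psi(x)=\tfrac1{2\pi}\int_\T\log|x-\digamma(\varsigma)|\,w_4[\ell,\varsigma]\,d\varsigma$, which is \emph{harmonic} away from the curve $\digamma(\T)$. After the diffeomorphism $(s,\varepsilon)\mapsto x=\digamma(s)+\varepsilon\zeta(s)$, the identity $\Delta_x\Psi=0$ lets one solve for $\partial_\varepsilon^2\psi$ (with $\psi(s,\varepsilon):=\Psi(x)$) as a combination of $\partial_s^2\psi$, $\partial_s\partial_\varepsilon\psi$, $\partial_s\psi$, $\partial_\varepsilon\psi$ with bounded geometric coefficients. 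Each of these is either $K_\varepsilon^{\ell,l}$ dotted with a bounded vector---hence controlled by Proposition~\ref{main est}---or a tangential $\partial_s$-derivative thereof, which costs one factor of $(\rho-\rho')^{-1}$ via Cauchy. The paper thus trades the problematic normal derivative for tangential ones through harmonicity, never touching the pole structure of the kernel.

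Your factorisation $|\Xi|_\C^2=ZZ^*$ with zeros on opposite sides of $\T$, followed by a contour shift in $\varsigma$, is a clean alternative that exploits the analytic room in $\cX_\rho$ more directly and gives an explicit $A$. One detail needs tightening: for the far-field lower bound on $|Z^*|$ you invoke condition~\eqref{tec no self int}, but that condition is stated only for points with the \emph{same} imaginary part, whereas after your shift the arguments $z$ and $z+\varsigma$ differ in imaginary part by $\sigma$. You therefore need the slightly stronger fact that $\Gamma^*$ (and $Z_\Gamma$) is injective on all of $\T_{\rho_0}$ with a quantitative lower bound $|\Gamma^*(z)-\Gamma^*(z')|\gtrsim 1$ for $|z-z'|\ge\delta$. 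This does hold after possibly shrinking $\rho_0$ (by continuity from the real, embedded case), but it is not literally~\eqref{tec no self int} and should be stated separately. A minor remark: since $|Z^*|\gtrsim|\tau|+\sigma$ on the shifted contour, integrating $(\tau^2+\sigma^2)^{-1}$ over $\T$ gives a pointwise bound $O(\sigma^{-1})$ rather than $O(\sigma^{-2})$, so one can take $A(t)\sim t^{-2}$.
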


We first prove Lemma \ref{lem K0}.

\begin{proof}[Proof of Lemma \ref{lem K0}]
First of all, observe that by \eqref{conv 1} and \eqref{conv 2}
\begin{align}
K_0^{\ell,\ell}[w]=\lim_{\eps\rightarrow 0}  K_{\eps}^{\ell,l}[w]-\frac{\digamma'(s)}{2|\digamma(s)|^2}w_4[\ell,\cdot] \,.
\end{align}
Moreover, by Proposition \ref{main est}, we know that $K_\ep^{\ell,l}$ is Lipschitz in $w$, uniformly in $\ep$, and by Lemma \ref{lem lsc}, this also holds for the limit. On the other hand, the second term on the right-hand side is Lipschitz in $w$ by Lemma \ref{L.geoEst}. Hence, the statements about $K_0$ follow.  

Finally, the statements about $U_0^{\,\bn}$ and $U_0^{\,\bs}$ follow from Proposition \ref{geo est}, the statements about $K_0$, and the fact that $\cX_\rho$ is an algebra. 
\end{proof}



%


%
%
%

We move on to the proof of Proposition \ref{Prop K conv} and prove the following result, which, taking into account \ref{lem K0}, directly implies Proposition \ref{Prop K conv} by integrating in $\ell$.

\begin{proposition} \label{pro conv}
Let $\cC_4$ be as in Lemma \ref{norm deri lem} and let $l, \ell, \ell_1 \in [-1,1]$ with $l \neq \ell$. For every $0 \leq \rho < \rho_0$, if $w \in \cZ_\rho$ satisfies 
$$
\sum_{j=1}^3 \sup_{|l|\leq 1} \norm{w_j[l,\cdot]}_\rho < \cC_4 \quad \textup{and} \quad \cC_5:= \sup_{|l|\leq 1} \norm{w_4[l,\cdot]}_\rho < \infty\,,
$$
then, for all $0 \leq \rho' < \rho <\rho_0$, it follows that
\begin{align}
&\norm{\overline{e}_{\bn}^{\ell_1}(\cdot)\cdot\left(K_0^{\ell_1,\ell}[w]-K_\eps^{\ell,l}[w]\right)}_{\rho'}\lesssim \ep A(\rho-\rho')\,,
\end{align}
and that
\begin{align}
&\norm{\overline{e}_{\bs}^{\ell_1}(\cdot)\cdot\left(K_0^{\ell_1,\ell}[w]-K_\eps^{\ell,l}[w]\right)+\frac{1}{2}\sign(l-\ell)w_4[\ell,\cdot]}_{\rho'}\lesssim \ep  A(\rho-\rho') \,.
\end{align}
Here, $A:\R^+\rightarrow \R^+$ is some positive, decreasing function and the implicit constants depend on $\mathcal{C}_5$. 
\end{proposition}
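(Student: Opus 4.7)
The plan is to combine the pointwise convergence in $L^\infty(\T_\rho)$ supplied by Lemma \ref{conv lem 2} with the quantitative derivative bound in Lemma \ref{norm deri lem} through the fundamental theorem of calculus in~$\ep$; a perturbation step then removes the mismatch between $\ell_1$ and $\ell$.

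\textbf{Step 1 (Reduction to $\ell_1=\ell$).} Since $w\in\cZ_\rho$, the coupling relation yields $\|w_1[\ell_1,\cdot]-w_1[\ell,\cdot]\|_{\rho'}\lesssim\ep$, and a Cauchy-formula derivative gives the same for $\partial_s w_1=w_2$. By Proposition \ref{geo est} and the algebra property of $\cX_{\rho'}$, this implies $\|\bar{e}_\bn^{\ell_1}-\bar{e}_\bn^\ell\|_{\rho'}+\|\bar{e}_\bs^{\ell_1}-\bar{e}_\bs^\ell\|_{\rho'}\lesssim\ep$, and by Lemma \ref{lem K0}, $\|K_0^{\ell_1,\ell}[w]-K_0^{\ell,\ell}[w]\|_{\rho'}\lesssim\ep$. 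Decomposing
\[
\bar{e}_\bn^{\ell_1}\cdot(K_0^{\ell_1,\ell}[w]-K_\eps^{\ell,l}[w])=\bar{e}_\bn^\ell\cdot(K_0^{\ell,\ell}[w]-K_\eps^{\ell,l}[w])+\bar{e}_\bn^{\ell_1}\cdot(K_0^{\ell_1,\ell}[w]-K_0^{\ell,\ell}[w])+(\bar{e}_\bn^{\ell_1}-\bar{e}_\bn^\ell)\cdot(K_0^{\ell,\ell}[w]-K_\eps^{\ell,l}[w]),
\]
and using the uniform $\cX_{\rho'}$-bound on $K_0-K_\eps$ coming from Proposition \ref{main est} and Lemma \ref{lem K0}, the last two terms contribute $O(\ep)$. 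The tangential case is handled identically, and the jump $\tfrac12\sign(l-\ell)w_4[\ell,\cdot]$ is independent of $\ell_1$. So it suffices to prove the estimate with $\ell_1=\ell$.

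\textbf{Step 2 (FTC in $\ep$).} For $0<\ep_0<\ep$, the fundamental theorem of calculus gives
\[
\bar{e}_\bn^\ell\cdot(K_\ep^{\ell,l}[w]-K_{\ep_0}^{\ell,l}[w])=\int_{\ep_0}^{\ep}\bar{e}_\bn^\ell\cdot\partial_{\tilde\ep}K_{\tilde\ep}^{\ell,l}[w]\,d\tilde\ep.
\]
Taking $\|\cdot\|_{\rho'}$-norms and invoking Lemma \ref{norm deri lem} together with the algebra property yields
\[
\|\bar{e}_\bn^\ell\cdot(K_\ep^{\ell,l}[w]-K_{\ep_0}^{\ell,l}[w])\|_{\rho'}\lesssim(\ep-\ep_0)\,A(\rho-\rho'),
\]
with the implicit constant depending on $\|\bar{e}_\bn^\ell\|_{\rho'}$ and $\cC_5$.

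\textbf{Step 3 (Passage to the limit).} By Lemma \ref{conv lem 2}, $\bar{e}_\bn^\ell\cdot K_{\ep_0}^{\ell,l}[w]\to\bar{e}_\bn^\ell\cdot K_0^{\ell,\ell}[w]$ in $L^\infty(\T_\rho)$ as $\ep_0\to0^+$. All these functions are holomorphic on $\T_\rho$, so Cauchy's integral formula gives the inequality $\|f\|_{\rho'}\lesssim_{\rho-\rho'}\|f\|_{L^\infty(\T_\rho)}$ for any $f\in\cH(\T_\rho)$ (the $L^\infty$ part is trivial and the $C^{1/2}$ seminorm is absorbed by the interior $C^1$ bound coming from Cauchy's estimate). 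Consequently the convergence holds in $\cX_{\rho'}$. Letting $\ep_0\to0^+$ in Step 2, the left-hand side converges to $\bar{e}_\bn^\ell\cdot(K_\ep^{\ell,l}[w]-K_0^{\ell,\ell}[w])$ in $\cX_{\rho'}$ and the right-hand side tends to $\ep\,A(\rho-\rho')$ times a constant, giving
\[
\|\bar{e}_\bn^\ell\cdot(K_\ep^{\ell,l}[w]-K_0^{\ell,\ell}[w])\|_{\rho'}\lesssim\ep\,A(\rho-\rho').
\]
The tangential estimate is obtained in exactly the same way, using the second identity of Lemma \ref{conv lem 2} to identify the limit with the extra jump term $-\tfrac12\sign(l-\ell)w_4[\ell,\cdot]$. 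Combined with Step 1, this proves the proposition.

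\textbf{Main obstacle.} The delicate point is the upgrade from the pointwise $L^\infty(\T_\rho)$ convergence provided by Lemma \ref{conv lem 2} to convergence in the stronger Hölder-type norm $\|\cdot\|_{\rho'}$, without which one cannot close the FTC bookkeeping. This is absorbed cleanly by a Cauchy-integral estimate, at the cost of a standard regularity loss $\rho\to\rho'$ encoded in the function~$A$.
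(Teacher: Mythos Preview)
Your proof is correct and follows essentially the same three-step structure as the paper: reduction to $\ell_1=\ell$ via Lemma~\ref{lem K0} and the $\cZ_\rho$ coupling, the fundamental theorem of calculus in $\ep$ using Lemma~\ref{norm deri lem}, and passage to the limit using Lemma~\ref{conv lem 2}. The only cosmetic difference is in Step~3, where you upgrade the $L^\infty(\T_\rho)$ convergence to $\cX_{\rho'}$ convergence by a direct Cauchy estimate, whereas the paper routes this through the compactness/lower-semicontinuity Lemma~\ref{lem lsc} together with the uniform bound~\eqref{bd K}; since Lemma~\ref{lem lsc} is itself proved by Cauchy's formula, the two arguments are equivalent.
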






\begin{proof}[Proof of Proposition \ref{pro conv}] We first point out that the proof is reduced to showing the existence of some positive, decreasing function $A$ such that, for all $0 \leq \rho' < \rho$,
\begin{align} \label{conv 3}
&\norm{\bar{e}_{\bn}^\ell(\cdot)\cdot\left(K_0^{\ell,\ell}[w]-K_\eps^{\ell,l}[w]\right)}_{\rho'}\lesssim \ep A(\rho-\rho')\,,
\end{align}
and 
\begin{align} \label{conv 4}
&\norm{\bar{e}_{\bs}^\ell(\cdot)\cdot\left(K_0^{\ell,\ell}[w]-K_\eps^{\ell,l}[w]\right)+\frac{1}{2}\sign(l-\ell)w_4[\ell,\cdot]}_{\rho'}\lesssim \ep  A(\rho-\rho') \,.
\end{align}
Indeed, by Lemma \ref{lem K0}, we get that
\begin{align}
& \norm{K_0^{\ell_1,\ell}[w]-K_0^{\ell,\ell}[w]}_{\rho'} \lesssim \norm{w_1[\ell_1,\cdot]-w_1[\ell,\cdot]}_{\rho'} + \norm{\pd_s(w_1[\ell_1,\cdot]-w_1[\ell,\cdot])}_{\rho'} \\
& \qquad \lesssim \frac{1}{\rho-\rho'} \|w_1[\ell_1,\cdot]- w_1[\ell,\cdot]\|_{\rho} \lesssim \frac{\ep}{\rho-\rho'} \norm{\int_\ell^{\ell_1} (1+w_3[\mu,\cdot] )\dd\mu }_\rho \lesssim \frac{\ep}{\rho-\rho'}\,.
\end{align}
Likewise, observe that it follows directly from the definition that
\begin{align}
\norm{\overline{e}_{\bn}^{\ell_1}-\bar{e}_{\bn}^\ell}_{\rho'} +    \norm{\overline{e}_{\bs}^{\ell_1} - \bar{e}_{\bn}^\ell}_{\rho'} \lesssim \norm{w_1[\ell_1,\cdot]-w_1[\ell,\cdot]}_{\rho'} + \norm{\pd_s(w_1[\ell_1,\cdot]-w_1[\ell,\cdot])}_{\rho'} \lesssim \frac{\ep}{\rho-\rho'}\,.
\end{align}
Hence, if we show that \eqref{conv 3} and \eqref{conv 4} hold, the result follows from the triangle inequality and the fact that $\cX_\rho$ is an algebra. 

We now prove \eqref{conv 3} and \eqref{conv 4}. First, taking into account \eqref{bd K} and Lemma \ref{lem lsc}, we infer that the convergence in  \eqref{conv 1} and \eqref{conv 2} actually holds in $\cX_{\rho'}$ for all $0 \leq \rho' < \rho$. On the other hand, by Lemma \ref{norm deri lem}, for all $0 \leq \rho' < \rho$ and all $\ep' \in (0,\ep]$, it follows that
\begin{align}
\norm{K_\eps^{\ell,l}[w]-K_{\eps'}^{\ell,l}[w]}_{\rho'}\lesssim \ep A(\rho-\rho')\,,
\end{align}
where $A$ is some positive, decreasing function. Using again the triangle inequality, and sending $\eps'\to 0^+$, we conclude that \eqref{conv 3} and \eqref{conv 4} hold, and thus the result follows.  
\end{proof}

%
%


Finally, we prove Lemmas \ref{conv lem 2} and \ref{norm deri lem}.

\begin{proof}[Proof of Lemma \ref{conv lem 2}]
We will reuse $\digamma$ and $\zeta$, which were defined in \eqref{E.digamma} and \eqref{E.zeta}.
Note that $\digamma'(s)=\bar{e}_{\bs}^\ell$ and $\digamma'(s)^\perp=\bar{e}_{\bn}^\ell$.

We consider the single-layer potential
\begin{align}
\Psi(x)= \frac{1}{2\pi} \int_{\T} \log|x-\digamma(\varsigma)|w_4[\ell,\varsigma]\dd \varsigma\,,\label{def psi}
\end{align}
and point out that it is a harmonic function, except on the curve $\mathbf{C}:=\digamma(\cdot)$. Moreover, by definition, it follows that
\begin{align}
K_\eps^{\ell,l}[w](s)=\nabla^\perp \Psi(\digamma(s)+\eps\zeta(s))\,, \quad \textup{for all } \ep > 0\,.
\label{psi eps}
\end{align}
%



\noindent Note that $\mathbf{C}$ is an analytic curve with (non-normalized) tangent and normal vectors $\digamma'(s)$ and $\digamma'(s)^\perp$. Also, since $\Ga$ is parametrized by arclength and \eqref{E.smallness7} holds, we have that $\digamma' \neq 0$. Moreover, by the orientation chosen for the curve $\Ga$ and \eqref{E.smallness7}, we get that $\digamma'(s)^{\perp}$ is the inner normal of the region enclosed by $\mathbf{C}$. In particular, this implies that $\digamma(s)+\eps\zeta(s)$ lies inside the region enclosed by $\mathbf{C}$ if $l>\ell$ and outside if $l<\ell$.

By classical potential theory (see e.g.\ \cite[Chapter 3]{Folland}), $\Psi$ is smooth up to the boundary in $\R^2\backslash\mathbf{C}$ and continuous in  $\R^2$. Moreover, using that the tangential derivative of $\Psi$ exists at the boundary, it follows that
\begin{equation} \label{E.SL1}
\begin{aligned}
    & \lim_{\ep \to 0^+} \digamma'(s)^{\perp} \cdot K_\ep^{\ell,l}[w](s) = \lim_{\ep \to 0^+} \digamma'(s) \cdot \nabla \Psi(\digamma(s)+ \ep \ze(s)) \\
    & \qquad= \digamma'(s) \cdot \nabla\Psi(\digamma(s)) = \digamma'(s)^{\perp} \cdot K_0^{\ell,\ell}[w](s)\,,
\end{aligned}
\end{equation}
and this limit is uniform in $s \in \T$. The tangential part of the velocity is more subtle, because the normal derivative of $\Psi$ on $\mathbf{C}$ does not exist; instead, the inner and the outer normal derivative exist, but are in general different and are related to $K_0^{\ell,\ell}$ through the formulas \begin{align}
&\de_n^+\Psi(\digamma(s))=\frac{1}{2}\frac{w_4[\ell,s]}{|\digamma'(s)|}-\frac{\digamma'(s)}{|\digamma'(s)|}\cdot K_{0}^{\ell,\ell}[w](s)\,,\\
&\de_n^-\Psi(\digamma(s))=-\frac{1}{2}\frac{w_4[\ell,s]}{|\digamma'(s)|}-\frac{\digamma'(s)}{|\digamma'(s)|}\cdot K_{0}^{\ell,\ell}[w](s)\,,
\end{align}
where $\de_n^+$ and $\de_n^-$ denote the outer and inner normal derivative of $\Psi$ at $\mathbf{C}$, and the extra denominator is due to the fact that $\mathbf{C}$ is not parametrized by arc-length in \eqref{def psi}.

Therefore, using that $\frac{\digamma'(s)^{\perp}}{|\digamma'(s)|} \cdot \nabla \Psi(\digamma(s)+ \ep\zeta(s))$ converges to the inner normal derivative if $l> \ell$ and to the outer normal derivative if $l<\ell$, we see that
\begin{equation} \label{E.SL2}
\begin{aligned}
    & \lim_{\ep \to 0^+} \digamma'(s) \cdot K_\ep^{\ell,l}[w](s) = - \lim_{\ep \to 0^+} \digamma'(s)^{\perp} \cdot \nabla \Psi(\digamma(s)+ \ep \zeta(s)) \\
    & \qquad = - |\digamma'(s)| \lim_{\ep \to 0^{+}} \frac{\digamma'(s)^{\perp}}{|\digamma'(s)|} \cdot \nabla \Psi(\digamma(s)+ \ep\zeta(s)) \\
    & \qquad = \digamma'(s) \cdot K_0^{\ell,\ell}[w](s) + \frac{\sign(l-\ell)}{2} w_4[\ell,s]\,,
\end{aligned}
\end{equation}
and this limit is also uniform in $\T$. Combining \eqref{E.SL1} and \eqref{E.SL2}, the result immediately follows. 
\end{proof}

To prove Lemma \ref{norm deri lem} we only need to consider the case $\ell = 0$ and $l = 1$. Otherwise, we just have to replace $\ep$ by $\ep |l-\ell|$ and $w_3$ by $w_3[(l-\ell)\cdot,\cdot]$. We thus fix from now on $\ell = 0$ and $l = 1$. We need the following auxiliary result:

\begin{lemma} \label{L.matrixInverse} The vectors $\digamma'(s)+\eps\zeta'(s)$ and $\zeta(s)$ are linearly independent and therefore the matrix \begin{align}
B:=\begin{pmatrix}\digamma'(s)+\eps\zeta'(s)\,, & \zeta(s)\end{pmatrix}^{-1}
\end{align} is well defined. Moreover, for all $0 \leq \rho' < \rho<\rho_0$, it holds that
 \begin{align}
\norm{B}_{\rho'}+\norm{\de_\eps B}_{\rho'}\lesssim A(\rho-\rho')\,.
\end{align}
Here, $A: \R^+ \to \R^+$ is some positive, decreasing function. 
\end{lemma}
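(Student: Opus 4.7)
The plan is to reduce everything to an explicit $2\times 2$ formula. By the rescaling $\ep \mapsto \ep|l-\ell|$ noted before the lemma, I would fix $\ell=0$, $l=1$, so that $\zeta(s) = \alpha(s)\,\dot\Gamma(s)^\perp$ with $\alpha(s):=\int_0^1(1+w_3[\mu,s])\,\mathrm{d}\mu$, and $\digamma'(s) = \dot\Gamma(s) + w_2[0,s]\,\dot\Gamma(s)^\perp + w_1[0,s]\,\ddot\Gamma(s)^\perp$. The starting observation is that the arclength identities $\scalar{\dot\Gamma}{\dot\Gamma^\perp}_\C=0$ and $|\dot\Gamma|_\C^2=1$ extend holomorphically to $\T_{\rho_0}$, and differentiating the latter gives $\scalar{\ddot\Gamma}{\dot\Gamma}_\C=0$, so that $\ddot\Gamma^\perp = -\kappa_\C\,\dot\Gamma$ in the complex sense, for an analytic scalar $\kappa_\C$. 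Substituting these relations into
\[
\det(B^{-1})(s) \;=\; (\digamma'(s)+\ep\zeta'(s))\wedge \zeta(s)
\]
(where $\wedge$ is the $2\times 2$ determinant) and expanding, I expect the formula to collapse to $\alpha(s)\,\bigl(1-w_1[0,s]\kappa_\C(s)-\ep\alpha(s)\kappa_\C(s)\bigr)\,|\dot\Gamma(s)|_\C^2$.

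Since $|\dot\Gamma|_\C^2=1$, $\alpha\approx 1$, and the remaining terms are $O(\mathcal{C}_0)+O(\ep)$, under the smallness hypothesis \eqref{E.smallness7} and for $\ep$ small enough we will obtain $\Re\det(B^{-1})(s)\geq c>0$ uniformly on $\T_\rho$. Linear independence follows, and Cramer's rule gives the explicit adjugate formula
\[
B(s) \;=\; \frac{1}{\det B^{-1}(s)}\begin{pmatrix} \zeta_2(s) & -\zeta_1(s) \\ -(\digamma_2'(s)+\ep\zeta_2'(s)) & \digamma_1'(s)+\ep\zeta_1'(s) \end{pmatrix}.
\]
To bound $\|B\|_{\rho'}$, I will first control $1/\det B^{-1}$ by composing $z\mapsto 1/z$ (which is holomorphic away from $0$) with the determinant, applying Lemma~\ref{L.geoEst} in the spirit used in Section~\ref{S.Geo}. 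For the adjugate entries, the terms involving $\digamma$, $\zeta$ are already in $\cX_\rho$ with uniform bounds; the only terms requiring regularity loss are $\zeta'(s)$, which I will bound via Cauchy's integral formula as $\|\zeta'\|_{\rho'}\lesssim (\rho-\rho')^{-1}\|\zeta\|_\rho$. Multiplying with the bound on $1/\det$ (and using that $\cX_{\rho'}$ is an algebra) yields $\|B\|_{\rho'}\lesssim A(\rho-\rho')$ for some positive decreasing $A$.

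For the derivative, I would use the standard identity $\partial_\ep B = -B\,(\partial_\ep B^{-1})\,B$, together with the observation that $\partial_\ep B^{-1}$ has columns $(\zeta'(s),\,0)$. The entries of $\partial_\ep B^{-1}$ are bounded in $\cX_{\rho'}$ by $(\rho-\rho')^{-1}$ via Cauchy; combining this with the bound on $\|B\|_{\rho'}$ already obtained and using the algebra property finishes the estimate, possibly shrinking the strip from $\rho$ to $\rho'$ in two stages (say, at $(\rho+\rho')/2$) to absorb the two consecutive regularity losses into a single decreasing function $A$. The main technical obstacle, and the only step where one cannot simply invoke earlier lemmas, is establishing the lower bound on $|\Re\det(B^{-1})|$ on the full complex strip $\T_\rho$: this requires carefully using the complexified arclength identities to verify that the algebraic cancellation $\dot\Gamma\wedge\dot\Gamma=0$ indeed holds in $\C$ (rather than only pointwise on $\T$), so that the determinant is genuinely perturbative in the small data.
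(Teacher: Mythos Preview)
Your proposal is correct and follows essentially the same route as the paper: both arguments reduce to the adjugate formula for a $2\times 2$ inverse, show the (complexified) determinant of $B^{-1}$ is bounded away from zero on $\T_\rho$ using the arclength identity $|\dot\Gamma|_\C^2=1$ and the smallness of $w_1,w_2,w_3,\ep$, and absorb the derivative $\zeta'$ via Cauchy's integral formula. Your treatment is in fact more explicit than the paper's---you compute the determinant as $\alpha(1-(w_1[0,\cdot]+\ep\alpha)\kappa_\C)$ and handle $\partial_\ep B$ via the identity $\partial_\ep B=-B(\partial_\ep B^{-1})B$, whereas the paper simply asserts the determinant lies in $[\tfrac12,\tfrac32]$ by reference to the computations in Section~\ref{S.Kernel} and leaves $\partial_\ep B$ implicit. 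Two minor remarks: the two-stage strip shrinking you mention is unnecessary (all three factors in $B(\partial_\ep B^{-1})B$ are already controlled in $\cX_{\rho'}$, so a single loss suffices), and the bound on $1/\det B^{-1}$ does not require Lemma~\ref{L.geoEst} per se---once $|\det B^{-1}|\geq c$ on $\T_\rho$, the $C^{1/2}$ control of the reciprocal is elementary.
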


\begin{proof}
First of all, taking into account  \eqref{E.digamma} and \eqref{E.zeta}, it is straightforward to check that 
$$
\norm{\digamma' + \ep \ze'}_{\rho'} + \norm{\ze}_{\rho'} + \norm{\ze'}_{\rho'} \lesssim \frac{1}{\rho-\rho'}\,,
$$
for all $0 \leq \rho' < \rho\leq\rho_0$. Therefore, using the formula $B^{-1}=\frac{1}{\det B}\,B^T$, we see that we only need to check that the complexification of the inverse determinant is bounded from below. This can be proved arguing as in Section \ref{S.Kernel}. Indeed, taking into account \eqref{E.smallness7} and the fact that $0 < \ep \ll 1$, we get that, for all $s \in \T$ and all $|\be| \leq \rho$,
\begin{align}
&| \det\left(\digamma'(s+i\beta)+\eps\zeta'(s+i\beta),\,  \zeta(s+i\beta)\right) | =|\scalar{\digamma'(s+i\beta)+\eps\zeta'(s+i\beta)}{  \zeta(s+i\beta)^\perp}_\C| \in \bigg[ \frac12, \frac32\bigg]\,.
\end{align}
\end{proof}


\begin{proof}[Proof of Lemma \ref{norm deri lem}]
First of all, note that, using the notation from the previous proof, we have that
\begin{align}
&K_\eps^{\ell,l}[w](s)\cdot\left(\digamma'(s)+\eps\zeta'(s) \right)^\perp=\de_s \Psi(\digamma(s)+\eps\zeta(s))\,.\label{deri stream 1}\\
&K_\eps^{\ell,l}[w](s)\cdot \zeta(s)^\perp =\de_\eps\Psi(\digamma(s)+\eps\zeta(s))\,.\label{deri stream 2}
\end{align}
Hence, by Lemma \ref{L.matrixInverse}, we only need to show that
\begin{align}
&\norm{\de_\eps^2 \Psi(\digamma(\cdot)+\eps\zeta(\cdot))}_{\rho'}\lesssim A(\rho-\rho')\,,\label{goal 1}\\
&\norm{\de_s\de_\eps \Psi(\digamma(\cdot)+\eps\zeta(\cdot))}_{\rho'}\lesssim A(\rho-\rho')\,,\label{goal 2}
\end{align}
for some positive, decreasing function $A$. 

Note that \eqref{goal 2} immediately follows from \eqref{deri stream 2}, Proposition \ref{main est}, Cauchy integral theorem and the fact that $\cX_{\rho'}$ is an algebra. We hence focus on proving \eqref{goal 1}.
 
We are going to use that $\Psi$ is a harmonic function in a neighborhood of $\digamma(s)+\eps\zeta(s)$. Indeed, since the curves $\digamma(\cdot)$ and $\digamma(\cdot)+\eps\zeta(\cdot)$ do not intersect if $w_3$ is sufficiently small, and $\Psi$ is harmonic everywhere except on the curve $\digamma(\cdot)$, this immediately follows. We want to rewrite the Laplacian in terms of the variables $\eps,s$ to replace the $\de_\eps^2$ in \eqref{goal 1} with derivatives we control.

First note that \begin{equation}(s,\ep)\mapsto \digamma(s)+\eps\zeta(s)=\bx\left(s, w_1[0,s] +\eps \int_0^11+w_3[\mu,s]\dd\mu\right)
\end{equation} 
is a diffeomorphism to its image because $\bx$ is one and because $\norm{w_3}_{L^\infty}<1$. We can write its inverse as 
\begin{align}
x \mapsto (\check{s}(x), \check{\ep}(x)) = \left( \bs(x), \frac{\bn(x)-w_1[0,\bs(x)]}{ \int_0^11+w_3[\mu,\bs(x)]\dd\mu} \right)\,.\label{exp s eps}
\end{align}
Then, setting
$$
\psi(s,\ep) = \Psi(\digamma(s) + \ep \ze(s)) \quad \textup{and} \quad y = \digamma(s) + \ep \ze(s)\,,
$$
we get that
\begin{align}
0 = \Delta_x \Psi(y) = &\ |\nabla_x \check{s}(y)|^2 \pd_s^2 \psi(s,\ep)  + 2( \nabla_x\check{s}(y)\cdot\nabla_x \check{\ep}(y))\pd_{s} \pd_\ep \psi(s,\ep) \\
&  +  \Delta_x \check{s}(y)\pd_s \psi(s,\ep)  + \Delta_x \check{\ep}(y)\pd_\ep \psi(s,\ep)  + |\nabla_x \check{\ep}(y)|^2 \pd_\ep^2 \psi(s,\ep)\,, 
\end{align}
and so, taking into account \eqref{deri stream 1} and \eqref{deri stream 2}, that
%
%
%
%
%
%
\begin{align}
& \de_\eps^2 \Psi(\digamma(s)+\eps\zeta(s)) = \pd_\ep^2 \psi(s,\ep) \\
&=\frac{-1}{|\nabla_x \check{\eps}(y)|^2}\Big(2 (\nabla_x \check{\eps}(y)\cdot\nabla_x \check{s}(y) ) \de_s\de_\eps+|\nabla_x \check{s}(y)|^2 \de_s^2 +\Delta_x \check{\eps}(y)\de_\eps +\Delta_x \check{s}(y)\de_s\Big) \psi(s,\ep) \\
&=\frac{-1}{|\nabla_x \check{\eps}(y)|^2}\biggl(2 (\nabla_x \check{\eps}(y)\cdot\nabla_x \check{s}(y)) \de_s \left(K_\eps^{0,1}[w](s)\cdot \zeta(s)^\perp\right)\\
&\quad+|\nabla_x \check{s}(y)|^2\de_s \left(K_\eps^{0,1}[w]\cdot (\digamma'(s)+\eps\zeta'(s))^\perp\right)+\Delta_x \check{\eps}(y)\left(K_\eps^{0,1}[w](s)\cdot \zeta(s)^\perp\right)\\
&\quad+\Delta_x \check{s}(y)\left(K_\eps^{0,1}[w](s)\cdot (\digamma'(s)+\eps\zeta'(s))^\perp\right)\biggr)\,.
\end{align}
It follows from Lemma \ref{L.geoEst} that $\check{s}$ and $\check{\eps}$, evaluated at $\digamma(s)+\eps\zeta(s)$ lie in $\cX_{\rho'}$. Even more, it follows that, for all $0 \leq \rho' < \rho$,
$$
\|\check{s}(\digamma(\cdot) + \ep \ze(\cdot))\|_{\rho'} + \|\check{\ep}(\digamma(\cdot) + \ep \ze(\cdot))\|_{\rho'}  \lesssim A(\rho-\rho')\,,
$$
for some positive, decreasing function $A$. Hence, by Proposition \ref{main est}, to conclude the proof we just have to show that $\frac{1}{|\nabla_x \check{\eps}(y)|^2}$ is bounded in $\cX_{\rho'}$ for all $0 \leq \rho' < \rho$. 

To see this, we note that \begin{align}
    \norm{\nabla_x \check{\eps}(\digamma(\cdot)+\eps\zeta(\cdot))-e_{\bn}(\digamma(\cdot)+\eps\zeta(\cdot))}_{\rho}\lesssim \sum_{j=1}^3 \sup_{|l| \leq 1} \norm{w_j[l,\cdot]}_{\rho}\,,
\end{align}
by the definitions of $\check{\ep}$, $e_{\bf{n}}$ and Lemma \ref{L.geoEst}. Hence, since $\scalar{e_\bn}{e_\bn}=1$ by \eqref{expr tang}, using Lemma \ref{L.geoEst} again, we conclude that, for all $0 \leq \rho' < \rho$, it follows that
\begin{align}
\norm{\frac{1}{|\nabla_x \check{\eps}(\digamma(\cdot)+\eps\zeta(\cdot))|^2}}_{\rho'}\lesssim A(\rho-\rho')\,.
\end{align}
Here, $A$ is some positive, decreasing function. 
\end{proof}

%
%

\section*{Acknowledgments} 
This work has received funding from the European Research Council (ERC) under the European Union's Horizon 2020 research and innovation programme through the grant agreement~862342 (A.E. and D.M.). A.E. is also partially supported by the grant PID2022-136795NB-I00 of the Spanish Science Agency and the ICMAT--Severo Ochoa grant CEX2019-000904-S. A.J.F. is partially supported by the grants PID2023-149451NA-I00 of MCIN/AEI/10.13039/ 501100011033/FEDER, UE and Proyecto de Consolidaci\'on Investigadora 2022, CNS2022-135640, MICINN (Spain).

\bibliographystyle{amsplain}

\end{document}